\newtheorem{theorem}{Theorem}[section]
\newtheorem{lemma}[theorem]{Lemma}
\newtheorem{corollary}[theorem]{Corollary}
\newtheorem{prop}[theorem]{Proposition}
\newtheorem{definition}{Definition}[section]
\newcommand{\y}{Y}
\newcommand{\x}{X}
\newcommand{\s}{S}
\newcommand{\e}{E}
\newcommand{\dt}{D}
\newcommand{\lt}{L}
\newcommand{\T}{T}
\newcommand{\lbb}{\mathbb{L}}
\newcommand{\sbb}{\mathbb{S}}
\newcommand{\ybb}{\mathbb{Y}}
\newcommand{\ubb}{\mathbb{U}}
\newcommand{\xbb}{\mathbb{X}}
\newcommand{\vbb}{\mathbb{V}}
\newcommand{\wbb}{\mathbb{W}}
\newcommand{\ebb}{\mathbb{E}}
\newcommand{\tbb}{\mathbb{T}}
\newcommand{\so}{\text{SO}(3)}
\newcommand{\su}{\text{SU}(2)}
\newcommand{\sone}{S_1}
\newcommand{\stwo}{S_2}
\newcommand{\sthree}{T}
\newcommand{\sthreebb}{\mathbb{T}}
\newcommand{\vothreebb}{\mathbb{V}}
\newcommand{\eothreebb}{\mathbb{E}}
\newcommand{\ih}{I}
\newcommand{\chain}{\text{C}}
\newcommand{\chainbf}{\textbf{C}}
\newcommand{\partialbf}{\boldsymbol{\partial}}
\newcommand{\moduli}{M}
\newcommand{\ind}{\mu}
\newcommand{\knot}{K}
\newcommand{\kbb}{\mathbb{K}}
\newcommand{\hbb}{\mathbb{H}}
\newcommand{\modx}{\moduli_{G(\sthree)}(a,\xbb_{03},b)}
\newcommand{\lime}{\lambda}
\newcommand{\cp}{-\mathbb{C}\mathbb{P}^2}
\newcommand{\kh}{\overline {\text{Kh}'}}
\newcommand{\hf}{\widehat{\text{HF}}}
\newcommand{\hm}{\widetilde{\text{HM}}}
\newcommand{\conna}{a}
\newcommand{\connag}{\mathfrak{a}}
\newcommand{\connb}{b}
\newcommand{\connbg}{\mathfrak{b}}
\newcommand{\connc}{c}
\newcommand{\conncg}{\mathfrak{c}}
\newcommand{\conndg}{\mathfrak{d}}
\newcommand{\connA}{A}
\newcommand{\triv}{\mathfrak{t}}
\newcommand{\ring}{F}
\newcommand{\relp}{\mathscr{P}}
\newcommand{\qgr}{q}
\newcommand{\hgr}{t}
\newcommand{\ext}{{\textstyle{\bigwedge}}}
\title{Instantons and odd Khovanov homology}
\author{Christopher W. Scaduto}
\begin{document}
\maketitle

\begin {abstract} 
	We construct a spectral sequence from the reduced odd Khovanov homology of a link
	converging to the framed instanton homology of the double cover branched over the link,
	with orientation reversed.
	Framed instanton homology 
	counts certain instantons on the cylinder of a 3-manifold 
	connect-summed with a 3-torus.
	En route, we provide a new proof of Floer's surgery exact triangle 
	for instanton homology using metric stretching maps,
	and generalize the exact triangle to a link surgeries spectral sequence. 
	Finally, we relate framed instanton homology to Floer's instanton homology 
	for admissible bundles.
\end {abstract}

\maketitle

\section{Introduction}

Given a closed, connected, oriented 3-manifold $\y$, we study the framed instanton 
homology $\ih^\#(\y)$, an absolutely $\mathbb{Z}/4$-graded abelian group 
which is an invariant of the oriented homeomorphism type of $\y$.
This group is obtained by counting, in a suitable sense, 
$\so$-instantons on a bundle over $\mathbb{R}\times(\y\# T^3)$ 
which is non-trivial when restricted to the 3-tori. These groups are a 
special case of Floer's instanton homology for admissible bundles from \cite[\S 1]{f2}
and are considered by Kronheimer and Mrowka in \cite[\S 4.1]{kmki} and \cite[\S 4.3]{kmu}.
The terminology {\em framed} is from \cite{kmki}.

Given an oriented link $L$ in $S^3$, we relate the framed instanton homology
of $\Sigma(L)$, the double cover of $S^3$ branched over $L$, 
to the reduced odd Khovanov homology of $L$. This latter invariant, 
written $\kh(L)$, was defined by Ozsv\'ath, Rasmussen and Szab\'o in \cite{ors}. 
It is an abelian group bigraded by a quantum grading, $\qgr$, and a homological 
grading, $\hgr$. It is a variant of Khovanov homology, defined originally by Khovanov in \cite{kh}.\\

\begin{theorem}\label{thm:1}
Given an oriented link $L$ in $S^3$, there is a spectral sequence whose second page 
is $\kh(L)$ that converges to $\ih^\#(\overline{\Sigma(L)})$. 
Each page of the spectral sequence comes equipped with a $\mathbb{Z}/4$-grading,
which on $\kh(L)$ is given by
\begin{equation}
	\frac{3}{2}\qgr -\hgr +\frac{1}{2}\left(\sigma+\nu\right) \mod 4,\label{eq:oddkhgr}
\end{equation}
where $\sigma$ and $\nu$ are the signature and nullity of $L$, respectively,
and the induced $\mathbb{Z}/4$-grading on $\ih^\#(\overline{\Sigma(L)})$ is the usual one.\\
\end{theorem}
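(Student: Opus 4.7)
My plan is to build the spectral sequence as a link surgeries spectral sequence for $\ih^\#$ of double branched covers, modelled on Ozsv\'ath-Szab\'o's original cube-of-resolutions spectral sequence for $\hf$ and Kronheimer-Mrowka's singular instanton variant. The essential ingredient, promised in the abstract, is the surgery exact triangle for $\ih^\#$ and its iteration into a link surgeries spectral sequence. Applied to a diagram $D$ of $L$ with $n$ crossings, the three Dehn surgery slopes on an unknot linking each crossing correspond under the double branched cover construction to the three local tangle replacements at that crossing: the crossing itself and its $0$- and $1$-resolutions.

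\textbf{The cube of resolutions.} For $v\in\{0,1\}^n$ let $L_v$ denote the complete resolution of $D$ at $v$ and let $k_v$ be its number of components. Then $\Sigma(L_v)$ is a connect sum of $k_v-1$ copies of $S^1\times S^2$, and a standard computation should identify $\ih^\#$ of this 3-manifold as a group of rank $2^{k_v-1}$, matching the number of reduced Khovanov state generators at $v$. Iterating the surgery exact triangle along each coordinate direction of $\{0,1\}^n$ assembles a spectral sequence with
\[
E_1 \;=\; \bigoplus_{v\in\{0,1\}^n} \ih^\#\bigl(\overline{\Sigma(L_v)}\bigr)
\]
converging to $\ih^\#(\overline{\Sigma(L)})$; the $E_1$-differential is a sum of cobordism-induced maps along edges of the cube, each of which is a two-handle (merge/split) cobordism induced by the band surgery connecting neighboring resolutions.

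\textbf{Identifying $E_2$ with $\kh(L)$.} Under the rank identification above, the merge and split maps of $\ih^\#$ should realise the two-dimensional Frobenius structure underlying both flavours of Khovanov homology \emph{up to signs}, so the $E_2$ page is chain-isomorphic to one of the two Khovanov complexes. To pin down the odd version, one must check that the edge signs forced by the instanton theory, i.e.\ by the orientation data on the $\so$-bundles over the two-handle cobordisms, organised using the canonical identification of the vertex groups, make the hypercube faces anticommute in the odd Khovanov pattern rather than commute.

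\textbf{Grading and main obstacle.} The absolute $\mathbb{Z}/4$-grading on $\ih^\#(\overline{\Sigma(L_v)})$ and the shifts imposed by the edge cobordisms can be read off from the index formula for the associated $\so$-ASD operator. Matching the resulting combinatorial shift against the bidegrees of odd Khovanov generators, and absorbing the writhe-dependent global offset into the signature-plus-nullity correction, should produce the formula $\tfrac{3}{2}\qgr-\hgr+\tfrac{1}{2}(\sigma+\nu)\pmod 4$, which on $E_\infty$ reduces to the intrinsic $\mathbb{Z}/4$-grading of $\ih^\#(\overline{\Sigma(L)})$. The main obstacle I anticipate is the sign-verification step of the $E_2$ identification: while matching ranks and underlying cobordism maps with the Khovanov Frobenius structure is essentially routine, extracting the correct signs canonically from the instanton theory and confirming that they produce the anticommuting odd Khovanov cube rather than the commuting even one is the genuinely delicate point, and is where the odd-versus-even dichotomy is decided.
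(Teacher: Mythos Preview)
Your strategy matches the paper's approach closely: apply the link surgeries spectral sequence for $\ih^\#$ to the framed link $L'\subset\overline{\Sigma(L)}$ obtained by lifting crossing arcs, identify the $E^1$-groups with exterior algebras via $\ih^\#(\#^k S^1\times S^2)\simeq\ext^\ast H_1$, and then match the $d^1$-differential with the odd Khovanov complex by a careful sign analysis. You are also right that the sign verification is the delicate core; the paper devotes a section to an associative, unital composition rule for homology orientations precisely in order to compute the edge signs and verify that type~X and type~Y faces carry opposite products, which is exactly the odd Khovanov edge-assignment condition.

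There is, however, one genuine omission in your outline. The link surgeries spectral sequence for $\ih^\#$, as stated in the paper (Theorem~\ref{thm:4}/\ref{thm:framed}), converges not to $\ih^\#(\overline{\Sigma(L)})$ but to the \emph{twisted} group $\ih^\#(\overline{\Sigma(L)};L')$, where $L'$ is the surgery link. This twist arises because, in order to arrange that all $2^m$ vertex bundles are trivial (so that the $E^1$-page consists of untwisted groups $\ih^\#(\Sigma(D_v))$), one is forced to use the third row of Figure~\ref{fig:ses}, which pushes a nontrivial bundle onto the $\infty$-vertex. To land in the untwisted group you must show $[L']=0\in H_1(\Sigma(L);\mathbb{F}_2)$. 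The paper does this by a Seifert-algorithm coloring argument: orient $L$, resolve the diagram into Seifert circles, and color certain regions black so that each crossing touches exactly one black region; the black regions lift to a surface in $\Sigma(L)$ bounding $L'$. Without this step your spectral sequence would a priori converge to the wrong target.
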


\noindent Our convention is that the signature of the right-handed trefoil is $+2$. 
The theorem immediately implies the four rank inequalities
\[
	\text{rk}_\mathbb{Z}\kh(\lt)_j \geq  \text{rk}_\mathbb{Z}\ih^\#(\overline{\Sigma(L)})_j
\]
where $j\in\mathbb{Z}/4$ and the gradings are as in the Theorem. Non-split alternating links, and 
more generally quasi-alternating links as introduced in \cite{os}, have the 
property that $\kh(\lt)$ is supported in the even gradings of (\ref{eq:oddkhgr}) and is a free 
abelian group of rank $\text{det}(L)$; see \cite[Thm. 1]{man}, \cite[\S 5]{ors} and the remarks 
in \cite[\S 9.3]{ls}. In these cases the spectral sequence collapses.

\begin{corollary}
	If $L$ is a quasi-alternating link, then $\ih^\#(\Sigma(L))$ is free
	abelian of rank $\text{det}(L)$ and is supported in even gradings. 
	The rank in grading $j\in\{0,2\}\subset\mathbb{Z}/4$ is given by
	\[
		\frac{1}{2}\left[\text{det}(L) + (-1)^{j/2}2^{\# L - 1}\right]
	\]
	where $\# L$ is the number of components of $L$. \label{cor:2}
\end{corollary}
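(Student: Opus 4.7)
The plan is to apply Theorem \ref{thm:1} to the mirror link $m(\lt)$. The class of quasi-alternating links is closed under mirroring, and $\det(m(\lt)) = \det(\lt)$, so $\kh(m(\lt))$ is again free abelian of rank $\det(\lt)$ and supported in the even $\Zmod{4}$-gradings of the formula (\ref{eq:oddkhgr}) applied to $m(\lt)$. By the remark immediately preceding the corollary, the spectral sequence therefore collapses at $E_2$. Combining this with the identification $\overline{\Sigma(m(\lt))} = \Sigma(\lt)$ (mirroring a link is the same as reversing the orientation of $S^3$, which reverses the orientation of the branched cover), one obtains an isomorphism of $\Zmod{4}$-graded abelian groups
\[
	\ih^\#(\Sigma(\lt)) \;\cong\; \kh(m(\lt)),
\]
which immediately yields the freeness statement, the total rank $\det(\lt)$, and the support in even gradings.

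It remains to compute the rank in each of the two even gradings $j \in \{0,2\}$. The key input is that a quasi-alternating link is $\sigma$-thin: $\kh(m(\lt))$ is supported on a single diagonal in the $(\qgr,\hgr)$-plane, so the grading formula (\ref{eq:oddkhgr}) reduces to a function of $\qgr$ modulo $4$ alone when restricted to this diagonal. The two graded ranks are thus sums of coefficients of the $\qgr$-graded Poincar\'e polynomial of $\kh(m(\lt))$ over two residue classes of $\qgr$ mod $4$, and so equal $\tfrac12(\det(\lt) \pm \epsilon)$, where $\epsilon$ is an evaluation of this Poincar\'e polynomial at a fourth root of unity. For a quasi-alternating link the Poincar\'e polynomial equals, up to an overall monomial shift, the Jones polynomial $V_{m(\lt)}$, and the classical identity $|V_\lt(i)| = (\sqrt{2})^{\#\lt - 1}$ then produces the asserted $2^{\#\lt - 1}$ correction.

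The step I expect to be the main obstacle is the careful bookkeeping needed to pin down signs: one must track the shifts of $\qgr$, $\hgr$, $\sigma$ and $\nu$ under mirroring, the parity of $\qgr$ on the reduced odd Khovanov homology of a link with $\#\lt$ components, and the exact sign in the evaluation $V_\lt(i) = \pm(i\sqrt{2})^{\#\lt-1}$, in order to verify that the factor $(-1)^{j/2}$ appears correctly and to determine which of the two even gradings receives the $+2^{\#\lt-1}$ contribution versus the $-2^{\#\lt-1}$ contribution.
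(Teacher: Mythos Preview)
Your overall strategy matches the paper's: use $\sigma$-thinness of quasi-alternating links to conclude that the $\mathbb{Z}/4$-grading (\ref{eq:oddkhgr}) is concentrated in even values, so the spectral sequence collapses at $E^2$; then extract the two graded ranks from the Jones polynomial. Passing to the mirror to turn $\overline{\Sigma(L)}$ into $\Sigma(L)$ is fine and equivalent to what the paper does (implicitly, together with the duality of \S\ref{sec:dual}, which for $b_1=0$ exchanges gradings $j\leftrightarrow -j$ and hence preserves $\{0,2\}$).

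Where your proposal diverges from the paper, and where it has a genuine gap, is the final rank computation. You claim that the $q$-Poincar\'e polynomial of $\kh$ equals the Jones polynomial up to a monomial shift, and then invoke $|V_L(i)|=(\sqrt{2})^{\#L-1}$. Neither step is correct as stated. The Poincar\'e polynomial has coefficients $|a_q|$ while the Jones polynomial $J_L(x)=\sum a_q x^q$ has signed coefficients $a_q$; for a thin link the relation between them is not a monomial shift but a sign twist, namely $\mathrm{sign}(a_q)=(-1)^{(q+\sigma)/2}$. This sign relation is exactly what the paper uses: writing $\delta^\#\equiv q+\sigma\pmod 4$ on the thin diagonal, one gets
\[
\mathrm{rk}\,\kh(L)_{j=0}-\mathrm{rk}\,\kh(L)_{j=2}=\sum_q(-1)^{(q+\sigma)/2}|a_q|=\sum_q a_q=J_L(1),
\]
and the relevant classical identity is $J_L(1)=2^{\#L-1}$ (equivalently $V_L(1)=(-2)^{\#L-1}$), \emph{not} the evaluation at $i$. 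Your identity $|V_L(i)|=(\sqrt{2})^{\#L-1}$ is true but produces the wrong magnitude; it does not yield the $2^{\#L-1}$ in the statement. So the ``careful bookkeeping'' you flagged as the main obstacle is not merely bookkeeping: the specific Jones evaluation you chose is the wrong one, and the correct route is through $J_L(1)$ together with the thinness-determined sign of $a_q$.
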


\noindent 
If rational coefficients are assumed, 
of the 250 prime knots that have at most 10 crossings, 
only 7 of them have potentially non-trivial differentials 
after the $E^2$-page of Theorem \ref{thm:1}. This follows 
from the computations of odd Khovanov homology in \cite{ors}.

To put Theorem \ref{thm:1} into context, we relate framed instanton 
homology to previously studied instanton invariants. To start,
framed instanton homology is a special case of 
Floer's instanton homology for admissible bundles. 
An $\so$-bundle $\ybb$ over a connected 3-manifold $\y$
is {\em admissible} if either $\y$ is a homology 3-sphere,
in which case $\ybb$ is trivial, or there is an oriented surface
$\Sigma\subset\y$ with $\ybb|_\Sigma$ non-trivial. 
This latter condition guarantees that $\ybb$ does not
support any reducible flat connections.

A \textit{geometric representative} for $\ybb$
is an unoriented, closed 
1-manifold $\omega\subset\y$ with $[\omega]\in H_1(\y;\mathbb{F}_2)$ 
Poincar\'{e} dual to $w_2(\ybb)$.
The non-trivial admissibility condition 
for $\ybb$ amounts to the existence of an oriented surface $\Sigma\subset\y$ 
that intersects $\omega$ in an odd number of points, or,
equivalently, to the conditions that $[\omega]$ is non-zero and lifts to a non-torsion 
class in $H_1(\y;\mathbb{Z})$.

For admissible $\ybb$, Floer defined in \cite{f1,f2} 
a relatively $\mathbb{Z}/8$-graded abelian group $\ih(\ybb)$. 
When $\y$ is a homology 3-sphere and $\ybb$ is trivial, 
we write $\ih(\y)$ for this group, 
and it comes equipped with an absolute $\mathbb{Z}/8$-grading.
The isomorphism class of $\ih(\ybb)$ depends only on the 
oriented homeomorphism type of $\y$ and $w_2(\ybb)$.

Now let $\ybb$ be any $\so$-bundle over a closed, connected, oriented 3-manifold $\y$ 
geometrically represented by $\lambda$.
Making some inessential choices, we can construct a bundle $\ybb^\#$ over $\y\# T^3$ by gluing 
together $\ybb$ and a non-trivial bundle over $T^3$. The bundle $\ybb^\#$ 
is always admissible, and the group $\ih(\ybb^\#)$ is always 4-periodic. 
The {\em framed instanton homology twisted by $\lambda$}, 
written $\ih^\#(\y;\lambda)$,
is relatively $\mathbb{Z}/4$-graded 
and isomorphic to four consecutive gradings 
of $\ih(\ybb^\#)$. When $\lambda$ is mod 2 null-homologous, 
we recover the framed instanton homology $\ih^\#(\y)$.

When $\y$ is a homology 3-sphere, we relate $\ih^\#(\y)$ to 
Floer's $\mathbb{Z}/8$-graded $\ih(\y)$.
It is convenient to employ 
Fr{\o}yshov's reduced groups $\widehat{\ih}(\y)$ from \cite{froy},
which are obtained from $\ih(\y)$ by considering interactions 
with the trivial connection.
They come equipped with an absolute $\mathbb{Z}/8$-grading and 
a degree 4 endomorphism $u$. Fr{\o}yshov's Theorem 
10 says $(u^2-64)^n=0$ for some $n>0$, when 
the coefficient ring used contains 
an inverse for $2$.

If $\ybb$ is non-trivial and admissible,
the situation is simpler, as there is no trivial connection 
to worry about. In this case $u$ is a degree $4$ endomorphism 
defined on $\ih(\ybb)$, and Fr{\o}yshov's Theorem 9 of\ \cite{froy} says
$(u^2-64)^n=0$ for some $n>0$.
The proof of the following is essentially 
an application of Fukaya's connected sum theorem of \cite{fukaya}.

\begin{theorem}\label{thm:integerhom}
Let $F$ be a field with char$(F)\neq 2$, and 
suppose all homology groups are taken with $F$-coefficients, 
unless indicated otherwise. 
If $H_1(\y;\mathbb{Z})=0$, then
\[
		\ih^\#(\y) \simeq  \text{{\em ker}}(u^2-64) \otimes H_\ast(S^3) \oplus H_\ast(\text{pt.})
\]
as $\mathbb{Z}/4$-graded $F$-modules, where $u^2-64$ 
is acting on $\bigoplus_{j=0}^3\widehat{\ih}(\y)_j$.
If $\ybb$ is non-trivial and admissible 
with geometric representative $\lambda$, then 
\[
		\ih^\#(\y;\lambda) \simeq \text{{\em ker}}(u^2-64)\otimes H_\ast(S^3)
\]
as relatively $\mathbb{Z}/4$-graded $F$-modules, where
$u^2-64$ is acting on four consecutive gradings of the relatively $\mathbb{Z}/4$-graded $F$-module $\ih(\ybb)$.
\end{theorem}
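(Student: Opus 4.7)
The plan is to apply Fukaya's connected sum theorem to the decomposition $\y \# T^3$ underlying the construction of $\ih^\#$. The first step is to compute the instanton Floer homology of the non-trivial $\so$-bundle over $T^3$: its flat connections are isolated, and the resulting Floer group has rank $2$, with the two generators sitting in $\mathbb{Z}/8$-gradings that differ by $4$. The degree four endomorphism $u$ acts by the eigenvalues $\pm 8$ on these generators, so $u^2 - 64$ annihilates this group identically. When reduced modulo $4$ in the grading, this group is isomorphic to $H_\ast(S^3)$ as a $\mathbb{Z}/4$-graded $F$-module, which accounts for the appearance of $H_\ast(S^3)$ in the statement.

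Next, in the admissible case, neither $\ybb$ nor the non-trivial bundle over $T^3$ supports any reducible flat connection, so Fukaya's theorem produces a spectral sequence for $\ih(\ybb^\#)$ whose $E^2$-page is built from the tensor product of the two Floer groups coupled by the action of $u$. Because $u^2 - 64 = 0$ on the $T^3$-factor, the coupling forces the same relation on the $\ybb$-side, cutting out the subgroup $\text{ker}(u^2 - 64) \otimes H_\ast(S^3)$. One then verifies collapse of the spectral sequence, and restricting the result to four consecutive $\mathbb{Z}/8$-gradings yields the claimed formula for $\ih^\#(\y;\lambda)$.

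For the homology sphere case, the trivial connection on $\y$ contributes additionally beyond the irreducible locus. Working with Fr{\o}yshov's reduced group $\widehat{\ih}(\y)$, which packages the interaction with the trivial connection into an enlarged complex, the connected sum then splits as the irreducible piece $\text{ker}(u^2 - 64) \otimes H_\ast(S^3)$, formed with $\widehat{\ih}(\y)$ in place of $\ih(\ybb)$ exactly as in the admissible case, plus a rank one summand $H_\ast(\text{pt.})$ arising from the pairing of the trivial connection with the Floer generators on $T^3$. Only one of the two eigenvalues $\pm 8$ of $u$ couples non-trivially to the trivial class, which is why a single copy of $H_\ast(\text{pt.})$ appears and not two.

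The principal obstacle is the careful bookkeeping around the trivial connection. One must verify that Fr{\o}yshov's reduced framework really does reduce the connected sum computation to a clean algebraic statement about the operator $u^2 - 64$ acting on $\widehat{\ih}(\y)$, that the spectral sequence collapses at a finite page (which should follow from $u^2 - 64 = 0$ on the $T^3$-factor together with the hypothesis $\text{char}(F) \neq 2$ needed to diagonalize this operator), and that the grading shifts arising from the reduction $\mathbb{Z}/8 \to \mathbb{Z}/4$ place the two summands in the stated $\mathbb{Z}/4$-gradings. Sign and orientation conventions in the gluing of connected-sum instantons require care but should not affect the rank of the outcome.
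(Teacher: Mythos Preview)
Your overall strategy matches the paper's: apply a connected-sum theorem to $\y \# T^3$ with the non-trivial bundle $\tbb^3$, and read off the answer from the resulting complex. One point you gloss over is that Fukaya's original theorem is stated only for integral homology spheres; the paper first proves the needed variants where one or both summands carry a non-trivial admissible bundle (these are simpler, since fewer reducibles appear), and only then carries out the computation.

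Two of your heuristics are incorrect, though. First, the factor $H_\ast(S^3)$ does not come from the $T^3$ Floer group reduced modulo $4$: that group is $F_0 \oplus F_4$, which collapses to $F_0^2$, not $F_0 \oplus F_3$. In the paper the Fukaya complex for $\ybb \# \tbb^3$ has the shape $(\chain \otimes V) \oplus (\chain[3] \otimes V)$ (plus a term $F \otimes V$ in the homology-sphere case), and it is the degree-$3$ shift between the two copies, together with the kernel/cokernel computation for the coupling map $\widehat{v}\otimes 1 + 1 \otimes \tau$, that produces $H_\ast(S^3)$.

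Second, your explanation for why a single $H_\ast(\text{pt.})$ appears --- that only one eigenvalue $\pm 8$ of $u$ couples to the trivial class --- is not the mechanism, and an argument along those lines would not go through. The trivial-connection summand $F \otimes V$ interacts with the irreducible part via Fr{\o}yshov's maps $\delta, \delta'$, not via $u$-eigenspaces. The paper reduces (by duality) to the case $h(\y)\leq 0$, so $\boldsymbol{\delta}'=0$, and then checks by a direct dimension count that the net contribution of the trivial-connection terms to $\ih^\#(\y)_0$ is exactly one: $\dim(\text{im}\,\boldsymbol{\delta}) + \dim(\ker\boldsymbol{\delta}'/\text{im}\,\boldsymbol{\delta}) = 1$. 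The first piece shows up inside $\text{coker}(\phi_1)$ and the second as a standalone summand; there is no eigenvalue selection involved.
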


\noindent When $L$ is the $(3,5)$ torus knot, the double cover of $S^3$ branched over $L$
is the Poincar\'{e} homology sphere $\Sigma(2,3,5)$. 
By the results in \cite{froy}, Fr{\o}yshov's reduced group 
for $\Sigma(2,3,5)$ is trivial. Theorem \ref{thm:integerhom} 
implies that $\ih^\#(\Sigma(2,3,5))$ has rank 1, supported in grading 0. 
This provides an example where the 
spectral sequence of Theorem \ref{thm:1} does not collapse, 
as the reduced odd Khovanov homology of $L$ has rank 3, 
as computed in \cite{ors}.

As another application of Theorem \ref{thm:integerhom}, a simple
inductive argument involving the exact triangle, 
which we present in \S \ref{sec:euler}, yields
the following.

\begin{corollary} For any $\y$ and $\lambda$, we have $\chi(\ih^\#(\y;\lambda)) = |H_1(\y;\mathbb{Z})|$.
\label{cor:euler}
\end{corollary}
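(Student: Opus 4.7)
The plan is to induct on the complexity of $H_1(\y;\mathbb{Z})$, using Theorem \ref{thm:integerhom} for the base cases and the twisted surgery exact triangle for framed instanton homology (established earlier in the paper) for the inductive step. Throughout I adopt the convention that $|A|=0$ for an infinite abelian group $A$.

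The base cases fall out of Theorem \ref{thm:integerhom}. When $H_1(\y;\mathbb{Z})=0$ and $\lambda$ is null-homologous mod $2$, the theorem identifies $\ih^\#(\y)\simeq \text{ker}(u^2-64)\otimes H_*(S^3) \oplus H_*(\text{pt.})$; since $\chi(S^3)=0$ and $\chi(\text{pt.})=1$, we obtain $\chi(\ih^\#(\y))=1=|H_1(\y;\mathbb{Z})|$. When $\lambda$ represents a non-trivial admissible bundle, admissibility forces $[\lambda]$ to lift to a non-torsion class in $H_1(\y;\mathbb{Z})$, so $|H_1(\y;\mathbb{Z})|=0$; the theorem then identifies $\ih^\#(\y;\lambda) \simeq \text{ker}(u^2-64)\otimes H_*(S^3)$, whose Euler characteristic is $0$ because $\chi(S^3)=0$.

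For the inductive step---where $\lambda$ is null-homologous mod $2$ but $H_1(\y;\mathbb{Z})\neq 0$---I would induct on $(b_1(\y),|\text{Tors}\,H_1(\y;\mathbb{Z})|)$ lexicographically. Choose a knot $K\subset\y$ so that the three surgeries $\y,\,\y_0,\,\y_1$ along $K$ fit into the twisted surgery exact triangle for $\ih^\#(\cdot\,;\lambda_\bullet)$, with at least one of the $\lambda_\bullet$ becoming a non-trivial admissible representative (handled by the base case) and the other surgery having strictly smaller complexity. Unwinding the $\mathbb{Z}/4$-graded long exact sequence yields an identity of the form
\[
\chi(\ih^\#(\y;\lambda)) = \pm\chi(\ih^\#(\y_0;\lambda_0)) \pm \chi(\ih^\#(\y_1;\lambda_1)),
\]
with signs determined by the $\mathbb{Z}/4$-degrees of the connecting maps in the triangle. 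A parallel Mayer--Vietoris computation shows $|H_1|$ satisfies the same signed recursion on the same triple of manifolds, closing the induction.

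The main obstacle will be the sign bookkeeping: one must verify that the two $\mathbb{Z}/4$-degree shifts in the triangle sum to $2\bmod 4$, so that the Euler characteristic recursion takes exactly the additive form enjoyed by $|H_1|$ under this family of surgeries (with the infinite-order convention). Arranging the surgery knot to strictly decrease complexity is straightforward when $b_1(\y)>0$ by taking $K$ to represent a primitive non-torsion class; when only torsion remains, $K$ must be chosen so that $0$-surgery introduces a non-torsion class and thereby lands in the admissible base case.
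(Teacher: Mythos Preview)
Your base cases are correctly identified: Theorem~\ref{thm:integerhom} handles both the integral homology sphere case and the non-trivial admissible bundle case. The second of these is a nice observation that the paper does not use. Your $b_1>0$ reduction is also essentially what the paper does (choosing $K$ with $[K]$ primitive non-torsion, both $\y_0$ and $\y_1$ have $b_1$ strictly smaller, so the induction proceeds).

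The genuine gap is in the rational homology sphere step. You want a framed knot $K\subset\y$ so that $\y_0$ carries an admissible twisting (forcing $\chi=0$) while $|H_1(\y_1)|<|H_1(\y)|$. But the only obvious way to make $\y_0$ have $b_1>0$ with $\mu$ non-torsion is to take $K$ null-homologous with its Seifert framing, and in that case $|H_1(\y_1)|=|H_1(\y)|$: the induction stalls. If instead $[K]$ has order $d>1$, one must exhibit a framing with both properties simultaneously, and for a general rational homology sphere there is no evident choice. (Try $L(p,q)$ with $K$ the core of a Heegaard solid torus: no framing gives $b_1(\y_0)>0$ unless $p=1$.) You also leave unaddressed the twistings $\lambda$ that are non-zero mod~2 but lift only to torsion classes---these are neither your admissible base case nor covered by your null-homologous inductive step.

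The paper circumvents this by a different route: it first proves the result for rational homology spheres that are integral surgery on an \emph{algebraically split} link in $S^3$, where an honest induction on $|H_1|$ is available, then invokes Ohtsuki's theorem that any rational homology sphere becomes such a surgery after connect-summing with lens spaces $L(p,1)$, combined with the K\"unneth multiplicativity $\chi(\ih^\#(\y\#\y'))=\chi(\ih^\#(\y))\chi(\ih^\#(\y'))$. This structural input is what your direct induction is missing. The sign bookkeeping you flag is real but minor: only the $\mathbb{Z}/2$-degrees matter for $\chi$, and those are read off from \cite[\S 42.3]{kmm}.
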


\noindent For a set $S$, the notation $|S|$ is to be interpreted as the cardinality 
of $S$ if it is finite, and $0$ otherwise.

Theorem \ref{thm:integerhom} suggests that 
knowledge of the smallest positive integer $n$ such that $(u^2-64)^n=0$ 
is useful in understanding the 
relationships between the various instanton groups. It is known, 
cf. \cite[\S 6]{froy}, that if $\ybb$ is non-trivial 
admissible and restricts non-trivially 
to a surface of genus $\leq 2$, then one can take $n=1$. 
For the following, let $h:\Theta_\mathbb{Z}^3\to\mathbb{Z}$ 
be Fr{\o}yshov's homomorphism from \cite{froy}, where $\Theta_\mathbb{Z}^3$ is 
the integral homology cobordism group.

\begin{corollary}\label{cor:3}
Let $\y$ be the result of $\pm 1$-surgery on a knot $K\subset S^3$ with genus $\leq 2$. 
Let $F$ be a field with char$(F)\neq 2$. Then, with all homology 
taken with $F$-coefficients, we have an isomorphism
\[
	\ih^\#(\y) \simeq H_\ast(\text{pt.})\oplus H_\ast(S^3)\otimes \bigoplus_{j=0}^3\widehat{\ih}(\y)_j 
\]
as $\mathbb{Z}/4$-graded $F$-modules. In particular, if in addition $h(\y)=0$, then the groups $\widehat{\ih}(\y)_j$ on the right can be replaced by $\ih(\y)_j$. 
\end{corollary}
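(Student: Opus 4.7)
The plan is to apply Theorem~\ref{thm:integerhom} and reduce everything to showing that $(u^2-64)$ annihilates $\widehat{\ih}(\y)$. Since $\y = S^3_{\pm 1}(\knot)$ is an integer homology sphere, Theorem~\ref{thm:integerhom} gives
\[
\ih^\#(\y) \simeq \text{ker}(u^2-64)\otimes H_\ast(S^3) \oplus H_\ast(\text{pt.})
\]
with $u^2-64$ acting on $\bigoplus_{j=0}^3\widehat{\ih}(\y)_j$. The displayed formula of the corollary is thus equivalent to the vanishing $(u^2-64) = 0$ on $\widehat{\ih}(\y)$, and the addendum concerning $h(\y)=0$ will follow from the standard identification $\widehat{\ih}(\y)_j \cong \ih(\y)_j$ in these four gradings when the Fr\o yshov invariant vanishes \cite{froy}.

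To establish the vanishing of $(u^2-64)$ on $\widehat{\ih}(\y)$, I would pass to the zero-surgery $\y_0 = S^3_0(\knot)$. A minimal-genus Seifert surface for $\knot$, of genus $g\leq 2$, caps off inside $\y_0$ to a closed oriented surface $\Sigma \subset \y_0$ of the same genus. The meridian $\mu$ of $\knot$ has intersection number one with $\Sigma$, so the $\so$-bundle $\ybb_\mu$ over $\y_0$ with geometric representative $\mu$ is non-trivial admissible and restricts non-trivially to a surface of genus $\leq 2$. Fr\o yshov's bound from \cite[\S 6]{froy} then gives $(u^2-64) = 0$ on $\ih(\ybb_\mu)$.

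To transfer this vanishing to $\widehat{\ih}(\y)$, I would apply the surgery exact triangle of this paper to the triple $(S^3, \y_0, \y)$ with bundle $\ybb_\mu$ on $\y_0$. Because $\widehat{\ih}(S^3) = 0$, the only contribution from $S^3$ to the triangle comes from the trivial connection; after isolating this contribution, what remains identifies four consecutive gradings of $\ih(\ybb_\mu)$ with $\widehat{\ih}(\y)$. This identification is $u$-equivariant by naturality of the $\mu$-map under the cobordisms defining the triangle, so $(u^2-64) = 0$ on $\ih(\ybb_\mu)$ forces $(u^2-64) = 0$ on $\widehat{\ih}(\y)$, completing the argument.

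The hard part will be this third step: both separating the trivial-connection contribution of $\ih^\#(S^3)$ cleanly from the framed surgery triangle and checking that the resulting identification $\ih(\ybb_\mu) \cong \widehat{\ih}(\y)$ intertwines the $u$-actions require a careful passage between the reduced and framed complexes, since the $u$-action on $\widehat{\ih}(\y)$ is defined via the reduced complex while the $u$-action appearing in the surgery triangle lives most naturally on the framed complex.
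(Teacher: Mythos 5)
Your outline coincides with the paper's: reduce via Theorem~\ref{thm:integerhom} to showing $(u^2-64)=0$ on $\widehat{\ih}(\y)$, observe that the capped-off Seifert surface in $\y_0$ has genus $\leq 2$ so that Fr\o yshov's genus bound forces $u^2=64$ on $\ih(\ybb_\mu)$, and transfer this across a surgery triangle. The transfer is where your plan has a gap.

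You invoke Theorem~\ref{thm:floer} and then propose to ``isolate the trivial-connection contribution'' of $S^3$ so that $\ih(\ybb_\mu)$ identifies with $\widehat{\ih}(\y)$. Two points. First, Theorem~\ref{thm:floer} concerns the unreduced groups $\ih(\cdot)$, and with $\ih(S^3)=0$ it gives $\ih(\ybb_\mu)\cong\ih(\y_{\pm1})$; but the endomorphism $u$ in Theorem~\ref{thm:integerhom} is defined only on the reduced group $\widehat{\ih}$, not on $\ih$ of a homology sphere (by~(\ref{eq:umap}), $v$ is not a chain map on $\chain(\y)$), so the isomorphism you would get from Theorem~\ref{thm:floer} alone does not match up the $u$-actions. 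The ingredient you actually need is Fr\o yshov's observation after Thm.~10 of \cite{froy}, which the paper cites: the surgery triangle with $\widehat{\ih}$ in place of $\ih$ at the two homology-sphere vertices remains exact at those vertices but not, in general, at $\ih(\ybb_0)$. Second, that weaker exactness means the map relating $\ih(\ybb_\mu)$ and $\widehat{\ih}(\y_{\pm1})$ is not an isomorphism but only a $u$-equivariant injection of $\widehat{\ih}(\y_{\pm1})$ into $\ih(\ybb_\mu)$, or a $u$-equivariant surjection of $\ih(\ybb_\mu)$ onto $\widehat{\ih}(\y_{\pm1})$, depending on the sign and the framing used to arrange the triad. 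Either of these suffices to transfer $u^2=64$ to $\widehat{\ih}(\y)$, so the conclusion follows, but replacing your claimed identification with Fr\o yshov's exactness statement for the reduced triangle is the step that closes the gap.
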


\noindent We apply this result using computations from the literature. 
Fr{\o}yshov proves in \cite{froy} that for the family of 
manifolds $\Sigma(2,3,6k+1)$ with $k$ a positive integer
we have $h=0$. 
Furthermore, $\Sigma(2,3,6k+1)$ can be realized as $1$-surgery on 
the twist knot $(2k+2)_1$ with $k$ full twists, which is a knot of genus 1. Using 
the computation of $\ih(\Sigma(2,3,6k+1))$ from \cite{fs} we obtain

\begin{corollary}\label{cor:4} With coefficients in a field $F$ with char$(F)\neq 2$, 
we have isomorphisms
\[
	\ih^\#(\Sigma(2,3,6k+1)) \simeq F_0^{\lfloor k/2\rfloor+1}\oplus
	F_1^{\lfloor k/2\rfloor}\oplus F_2^{\lceil k/2\rceil}\oplus F_3^{\lceil k/2\rceil} 
\]
where $k$ is a positive integer and the subscripts indicate the gradings.
\end{corollary}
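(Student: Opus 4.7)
The strategy is to combine Corollary \ref{cor:3} with the known computation of $\ih(\Sigma(2,3,6k+1))$ from \cite{fs}. The key inputs are (i) the realization of $\Sigma(2,3,6k+1)$ as $+1$-surgery on a genus-one twist knot, and (ii) Fr{\o}yshov's vanishing $h(\Sigma(2,3,6k+1))=0$, both recalled in the paragraph preceding the corollary.

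First I would invoke Corollary \ref{cor:3}: since $\Sigma(2,3,6k+1)$ is obtained by $(+1)$-surgery on a knot of genus $1\leq 2$, we have the $\mathbb{Z}/4$-graded isomorphism
\[
	\ih^\#(\y) \simeq H_\ast(\text{pt.})\oplus H_\ast(S^3)\otimes \bigoplus_{j=0}^3\widehat{\ih}(\y)_j,
\]
and since $h(\y)=0$ the second clause of Corollary \ref{cor:3} lets us replace $\widehat{\ih}(\y)_j$ by $\ih(\y)_j$. Here $\ih(\y)$ is Floer's $\mathbb{Z}/8$-graded group for the homology sphere $\y=\Sigma(2,3,6k+1)$, viewed as $\mathbb{Z}/4$-graded by adding pairs of gradings four apart.

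Next I would import from \cite{fs} the computation of $\ih(\Sigma(2,3,6k+1))$. As is standard for Brieskorn spheres, all generators are even-graded modulo $8$, so the odd $\mathbb{Z}/4$ gradings of $\ih(\y)$ vanish, while the two even $\mathbb{Z}/4$ gradings receive contributions summing to the total rank $2k$. The Fintushel--Stern tables give, after reduction mod $4$, ranks $\lfloor k/2\rfloor$ in grading $0$ and $\lceil k/2\rceil$ in grading $2$. Since $H_\ast(\text{pt.})$ contributes a single copy of $F$ in grading $0$ and $H_\ast(S^3)$ shifts by $0$ and $3$ mod $4$, the tensor-product expansion yields
\begin{align*}
	\ih^\#(\y)_0 &\simeq F \oplus \ih(\y)_0,\\
	\ih^\#(\y)_1 &\simeq \ih(\y)_2,\\
	\ih^\#(\y)_2 &\simeq \ih(\y)_2,\\
	\ih^\#(\y)_3 &\simeq \ih(\y)_0,
\end{align*}
where the $\ih$ groups on the right are $\mathbb{Z}/4$-graded. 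Plugging in the ranks above gives the claimed formula.

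The only genuinely delicate step is the grading bookkeeping: one must verify that the Fintushel--Stern $\mathbb{Z}/8$-grading on $\ih(\y)$ matches the convention under which $\ih^\#(\y)$ receives its absolute $\mathbb{Z}/4$-grading, so that after reduction mod $4$ the Floer ranks in gradings $0$ and $2$ are indeed $\lfloor k/2\rfloor$ and $\lceil k/2\rceil$ respectively, rather than swapped. This can be pinned down either by comparing the Casson-invariant calculation $\chi(\ih^\#(\y))=|H_1(\y;\mathbb{Z})|=1$ coming from Corollary \ref{cor:euler} (which forces the even--odd totals to match the formula above) or, more directly, by tracking the sign in Fr{\o}yshov's grading convention against Fintushel--Stern's explicit enumeration of flat $\su$-connections on $\Sigma(2,3,6k+1)$. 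Once this grading check is carried out, the rest is an elementary tallying of summands.
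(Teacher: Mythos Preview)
Your overall strategy matches the paper's exactly: apply Corollary~\ref{cor:3} with the inputs that $\Sigma(2,3,6k+1)$ is $+1$-surgery on a genus-one twist knot and that $h=0$, then feed in the Fintushel--Stern computation. That is precisely what the paper does in \S 9.

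However, the execution of the grading bookkeeping is incorrect. In the conventions fixed in \S\ref{sec:connect} (where $\text{gr}(a)=-3-\mu(\theta,a)$ and $\text{gr}(\theta)=0$), Fintushel and Stern's computation reads
\[
	\ih(\Sigma(2,3,6k+1)) = F_1^{\lfloor k/2\rfloor}\oplus F_3^{\lceil k/2\rceil}\oplus F_5^{\lfloor k/2\rfloor}\oplus F_7^{\lceil k/2\rceil},
\]
supported in \emph{odd} $\mathbb{Z}/8$-gradings, not even ones as you assert. Taking $\bigoplus_{j=0}^3 \ih(\y)_j$ therefore gives rank $\lfloor k/2\rfloor$ in grading $1$ and $\lceil k/2\rceil$ in grading $3$, with nothing in gradings $0$ and $2$. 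Tensoring with $H_\ast(S^3)=F_0\oplus F_3$ and adding $H_\ast(\text{pt.})=F_0$ then produces the stated result.

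With your assignment ($\lfloor k/2\rfloor$ in grading $0$, $\lceil k/2\rceil$ in grading $2$), your own displayed expansion yields $\ih^\#(\y)_1\simeq F^{\lceil k/2\rceil}$ and $\ih^\#(\y)_3\simeq F^{\lfloor k/2\rfloor}$, which for odd $k$ disagrees with the statement of the corollary. So ``plugging in the ranks above gives the claimed formula'' is false as written. Note also that your proposed consistency check via $\chi(\ih^\#(\y))=1$ cannot detect this: swapping gradings $1$ and $3$ leaves the Euler characteristic unchanged. The grading must be pinned down directly from the $\mathbb{Z}/8$-graded Fintushel--Stern answer in the paper's conventions.
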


\noindent We also consider the manifolds $\Sigma(2,3,6k-1)$ with $k$ a positive integer, 
which are obtained from $-1$-surgery on twist knots with $2k-1$ half twists. 
In this case $h(\Sigma(2,3,6k-1))=1$, and we can again use the computations of \cite{fs} to obtain

\begin{corollary}\label{cor:5} With coefficients in a field $F$ with char$(F)\neq 2$, 
we have isomorphisms
\[
	\ih^\#(\Sigma(2,3,6k-1)) \simeq F_0^{\lceil k/2\rceil}\oplus
	F_1^{\lceil k/2\rceil-1}\oplus F_2^{\lfloor k/2\rfloor}\oplus F_3^{\lfloor k/2\rfloor}
\]
where $k$ is a positive integer and the subscripts indicate the gradings.
\end{corollary}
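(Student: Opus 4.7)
The plan is to apply Corollary \ref{cor:3} to $\y = \Sigma(2,3,6k-1)$, which is realized as $-1$-surgery on a twist knot of genus $1$, so the hypotheses are satisfied. This yields
\[
\ih^\#(\y) \simeq H_\ast(\text{pt.}) \oplus H_\ast(S^3) \otimes \bigoplus_{j=0}^3 \widehat{\ih}(\y)_j
\]
as $\mathbb{Z}/4$-graded $F$-modules. Unlike the setting of Corollary \ref{cor:4}, here $h(\y) = 1$, so the $\widehat{\ih}(\y)_j$ on the right cannot be directly replaced by $\ih(\y)_j$.

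The next step is to import Fintushel-Stern's computation of $\ih(\Sigma(2,3,6k-1))$ from \cite{fs}, presented as a $\mathbb{Z}/8$-graded $F$-module, then fold the eight gradings to four via the identification used in the definition of $\ih^\#$, and finally pass from $\ih(\y)$ to $\widehat{\ih}(\y)$ using $h(\y) = 1$. By Fr{\o}yshov's construction, $\widehat{\ih}$ fits into a long exact sequence with $\ih$ whose third term is the ``bar'' group built from the trivial connection, essentially a Laurent-polynomial tower over $F$. With $h = 1$, the generator of this tower maps non-trivially into $\ih(\y)$ in one specific grading prescribed by Fr{\o}yshov's conventions, and removing this single $F$-summand (together with its image under the connecting map, if any) yields explicit ranks for $\widehat{\ih}(\y)_j$ in each $\mathbb{Z}/4$-grading.

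Finally, since $H_\ast(S^3)$ is concentrated in gradings $0$ and $3$ modulo $4$, the tensor factor $H_\ast(S^3) \otimes \bigoplus_j \widehat{\ih}(\y)_j$ contributes $\widehat{\ih}(\y)_j \oplus \widehat{\ih}(\y)_{j-3}$ in each grading $j$; adding the grading-$0$ summand from $H_\ast(\text{pt.})$ and matching against the claimed formula is direct bookkeeping. As a sanity check, the Euler characteristic of the stated answer is $\lceil k/2\rceil - (\lceil k/2\rceil - 1) + \lfloor k/2\rfloor - \lfloor k/2\rfloor = 1 = |H_1(\y;\mathbb{Z})|$, in agreement with Corollary \ref{cor:euler}. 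The principal obstacle lies in the passage from $\ih(\y)$ to $\widehat{\ih}(\y)$: one must correctly locate the grading in which the trivial-connection tower contributes to the Fintushel-Stern answer, and subtract its rank in the right place, so that the bookkeeping at the end reproduces exactly the asymmetric pattern $(\lceil k/2\rceil,\,\lceil k/2\rceil - 1,\,\lfloor k/2\rfloor,\,\lfloor k/2\rfloor)$ rather than a symmetric one.
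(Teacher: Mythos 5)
Your proposal follows essentially the same route as the paper: apply Corollary~\ref{cor:3} to $\Sigma(2,3,6k-1)$ realized as $-1$-surgery on a genus-$1$ twist knot, import the Fintushel--Stern computation of $I(\Sigma(2,3,6k-1))$, pass from $I$ to $\widehat{I}$ using the value of $h$, and fold $\mathbb{Z}/8$ to $\mathbb{Z}/4$ via the tensor decomposition. The Euler-characteristic sanity check against Corollary~\ref{cor:euler} is a nice touch.

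The one genuine gap is the unsupported assertion that $h(\Sigma(2,3,6k-1)) = 1$. The remarks in the introduction of \cite{froy} only give $h(\Sigma(2,3,6k-1)) > 0$; to pin down $h = 1$ exactly one needs an upper bound, and the paper obtains it from the genus inequality of Corollary~1 in \cite{froyh} (Fr{\o}yshov's inequality for the $h$-invariant), applied using the fact that the twist knot with $2k-1$ half twists has genus $1$. Without this, the passage from $I$ to $\widehat{I}$ is underdetermined — if $h$ were larger, more would be subtracted. Beyond that, your description of the $I \to \widehat{I}$ step in terms of a ``bar'' tower and a connecting map is vaguer than what the paper actually uses: §9.3 gives a direct structural statement (for $h(\y)>0$, $\widehat{I}_i = I_i$ away from two gradings where one subtracts $h(\y)$ from the rank), which is what produces the asymmetric pattern cleanly. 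These are gaps of omission rather than errors of approach; the overall strategy is the same as the paper's.
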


\noindent As $\Sigma(2,3,6k\pm 1)$ is the branched double cover over the $(3,6k\pm 1)$ torus knot, Corollaries \ref{cor:4} and \ref{cor:5} provide more examples in which the spectral sequence of Theorem \ref{thm:1} does not collapse. In \cite{bloom}, Bloom considers a spectral sequence
in monopole Floer homology with $\mathbb{F}_2$-coefficients 
analogous to that of Theorem \ref{thm:1}.
For the $(3,6k\pm 1)$ torus knot, he conjectures that the spectral sequence collapses at the fourth page, 
and guesses the higher differentials. 
We note that if his speculation were to hold in our setting (also with $\mathbb{F}_2$-coefficients), 
then we would recover, using Bloom's computations and our grading (\ref{eq:oddkhgr}), 
the formulae of Corollaries \ref{cor:4} and \ref{cor:5}, but 
with $F=\mathbb{F}_2$.\\

\subsection{Background \& Motivation}

In the setting of Heegaard Floer homology, Ozsv\'ath and Szab\'o in \cite{os} 
constructed a spectral sequence with $\mathbb{F}_2$-coefficients 
from the reduced Khovanov homology of a link $L$ to the Heegaard Floer hat homology
of $\Sigma(L)$, with orientation reversed. This was the first instance of a
structural relation between a Floer homology and a combinatorial link homology. 
For this result we write
\[
	\overline{\text{Kh}}(L;\mathbb{F}_2)\;\; \leadsto \;\;\hf(\overline{\Sigma(\lt)};\mathbb{F}_2).
\]
The notation $A\leadsto B$ is an abbreviation for the existence of a spectral 
sequence with some starting page $A$, converging to $B$. 
An essential ingredient for their construction 
was a link surgeries spectral sequence. 
See \S \ref{sec:main} for the instanton analogue.
Subsequently, Bloom \cite{bloom}
constructed a link surgeries spectral sequence in the setting 
of monopole Floer homology, and from this obtained a spectral sequence 
\[
	\overline{\text{Kh}}(L;\mathbb{F}_2)\;\; \leadsto \;\;\hm(\overline{\Sigma(\lt)};\mathbb{F}_2).
\]
It has since 
been shown, after much work, that the monopole Floer group $\hm(\y)$ is isomorphic to $\hf(\y)$, cf. \cite{klt,hgn,tech}.

Ozsv\'ath and Szab\'o speculated in \cite{os} 
that their spectral sequence, if lifted to $\mathbb{Z}$-coefficients, 
would not have reduced Khovanov homology as the $E^2$-page, 
but would have some other link homology with altered signs in the differentials. 
With this is mind, Ozsv\'ath, Rasmussen and Szab\'o \cite{ors} defined an abelian 
group $\text{Kh}'(L)$ called the {\em odd Khovanov homology of} $L$. With 
$\mathbb{F}_2$-coefficients, it coincides with Khovanov homology; but they 
are very different with $\mathbb{Z}$-coefficients.

Odd Khovanov homology is bigraded
by a homological grading, called $\hgr$, and a quantum grading, 
called $\qgr$.
There is a splitting
\[
	\text{Kh}'(\lt)_{\hgr,\qgr}\simeq \kh(\lt)_{\hgr,\qgr-1}\oplus \kh(\lt)_{\hgr,\qgr+1},
\]
where $\kh(\lt)$ is called the {\em reduced} odd Khovanov homology. 
The bigraded group $\kh(\lt)_{\hgr,\qgr}$ categorifies the Jones 
polynomial $J_L$ in the sense that
\[
	J_{L}(x) = \sum_{\hgr,\qgr} (-1)^\hgr\text{rk}_\mathbb{Z}(\kh(L)_{\hgr,\qgr})x^\qgr.
\]
Here, $J_\text{unknot}(x)=1$. It was conjectured in \cite{ors} 
that there is a spectral sequence
\begin{equation*}
	\kh(L) \leadsto \hf(\overline{\Sigma(L)})
\end{equation*}
with $\mathbb{Z}$-coefficients. Our Theorem \ref{thm:1} provides such a spectral sequence with instanton homology in place of Heegaard Floer homology.

The correct version of instanton homology for the task 
was suggested in the work of Kronheimer and Mrowka.
The framed instanton homology $\ih^\#(\y)$ is 
isomorphic to the sutured instanton group $\text{SHI}(M,\gamma)$ 
introduced in \cite{kms}, where $M$ is the complement of an open 3-ball in $\y$ and 
$\gamma$ is a suture on the 2-sphere boundary. 
Restating a conjecture of Kronheimer and Mrowka, transferred from the sutured setting, 
cf. \cite[Conj. 7.24]{kms}, it is expected that
\[
	\hf(\y;\mathbb{C})\simeq \ih^\#(\y;\mathbb{C})\simeq \hm(\y;\mathbb{C}).
\]
Theorem \ref{thm:1} was inspired by this conjecture, 
and offers some validation for it, as does Corollary \ref{cor:2}. 
Note that Corollary \ref{cor:euler} confirms that 
the Euler characteristics of the three Floer groups 
under consideration agree.

Recently, Kronheimer and Mrowka \cite{kmu} introduced singular
instanton homology groups. We only mention the groups $\ih^\#(\y,L)$, 
where $L$ is a link in $\y$, and from which $\ih^\#(\y)$ is obtained
by taking $L$ to be empty. The construction of this group involves counting instantons 
on $\mathbb{R}\times\ybb^\#$ singular along $\mathbb{R}\times L$. Writing $\ih^\#(L)=\ih^\#(S^3,L)$,
Kronheimer and Mrowka produced a spectral sequence
\[
\text{Kh}(\lt) \;\;\leadsto \;\; \ih^\#(L),
\]
where the left side is unreduced Khovanov homology. 
Their spectral sequence respects $\mathbb{Z}/4$-gradings. 
Using related singular instanton groups, 
they proved in \cite{kmu} that Khovanov homology detects the unknot. 
This paper adapts many details from the aforementioned paper.\\

\subsection{Outline \& Acknowledgments}
Many of the proofs in this paper are partially adapted from 
the papers so far mentioned, especially \cite{kmu}, and 
are credited throughout. For example, the proof of the exact triangle 
in \S 5 is inspired mostly by \cite{kmu}, although
the analysis of instanton counts is somewhat different.

To work out the signs in the differential 
of the spectral sequence in Theorem \ref{thm:1},
we introduce an algebro-topological way of composing homology orientations 
in \S \ref{sec:homor}. 
The composition rule is associative and has distinguished units. 
Indeed, homology orientations are part of the morphism data in 
an appropriate category on which framed instanton homology is a functor.

The proof of Theorem \ref{thm:integerhom} follows from versions 
of Fukaya's connected sum theorem of \cite{fukaya} where non-trivial admissible 
bundles are involved, which to the author's knowledge have not appeared 
in the literature. These versions are simpler than Fukaya's original theorem, 
as there are fewer trivial connections with which to deal. 
We outline Donaldson's proof of Fukaya's theorem from \cite[\S 7.4]{d},
more or less verbatim (except for notation), and show how the versions 
of interest to us follow, making the necessary modifications.

The structure of this paper is as follows. In \S 2, 
we outline the proof of Theorem \ref{thm:1}, 
introducing the surgery exact triangle and the link 
surgeries spectral sequence. In \S 3,
we construct the bundles that are relevant to 
the surgery exact triangle. In \S 4,
we review the construction of instanton homology 
for admissible bundles. 
In \S 5, we prove Floer's exact triangle, and 
in \S 6, we prove the link surgeries spectral sequence. 
In \S 7, we define framed instanton homology and 
discuss its basic properties.
In \S 8, we define reduced odd Khovanov homology and 
complete the proof of Theorem \ref{thm:1} and Corollary \ref{cor:2}.
In \S 9, we prove Theorem \ref{thm:integerhom} and Corollaries \ref{cor:3}, \ref{cor:4} and \ref{cor:5}. 
In \S 10, we prove Corollary \ref{cor:euler}.

The author would like to thank Ciprian Manolescu 
for suggesting the problem that lead to the results in this paper, 
and for his encouragement and enthusiasm. The author thanks Tye Lidman for helpful conversations, 
and Jianfeng Lin for providing an argument given in \S \ref{sec:main} and for helpful discussions regarding \S \ref{sec:homor}. The author has learned that some of the results in this paper 
have been obtained independently by Aliakbar Daemi.
\vspace{10px}
\section{The Surgery Story}\label{sec:main}

In this section we outline the proof of Theorem \ref{thm:1}. 
To begin, we recall the instanton surgery exact sequence, or exact triangle, 
introduced by Floer \cite{f2}.
Let $K$ be a framed knot in $\y$. That is, $K$ has a preferred meridian 
and longitude. Let $\omega$ be a geometric representative for $\ybb$ disjoint from 
$K$. Denote by $\y_0$ and $\y_1$ the results of performing $0$- and 
$1$-surgery on $K$, respectively. We can view $\omega$ inside $\y_0$ and 
$\y_1$ by keeping it away from the surgery neighborhood. Let $\omega_0=\omega\cup\mu\subset \y_0$ 
where $\mu$ is a core for the induced framed knot in $\y_0$,
and let $\omega_1=\omega\subset\y_1$. Finally, for $i=0,1$ choose a 
bundle $\ybb_i$ over $\y_i$ geometrically represented by $\omega_i$. If the 
ordered triple of bundles 
$\ybb,\ybb_0,\ybb_1$ can be geometrically represented in this way, 
we say they form a \textit{surgery triad}.

\begin{theorem}[Floer]\label{thm:floer} There is an exact sequence
\[
\cdots \ih(\ybb) \to \ih(\ybb_0)\to \ih(\ybb_1)\to \ih(\ybb)\cdots
\]
provided all three bundles are admissible and form a surgery triad.
\end{theorem}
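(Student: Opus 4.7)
The plan is to construct three chain maps among the Floer complexes of $\ybb$, $\ybb_0$, and $\ybb_1$ by counting instantons on $2$-handle cobordisms carrying extended $SO(3)$-bundles, show that consecutive compositions are chain-homotopic to zero via neck-stretching, and finally exhibit a higher chain homotopy whose boundary is the identity, which forces exactness on homology.

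For the chain maps, the surgery triad presents $\y$, $\y_0$, and $\y_1$ as three distinct Dehn fillings of a common knot complement, so each consecutive pair $(\y_i, \y_{i+1})$ (with cyclic indices mod $3$ and $\y_2 := \y$) is joined by an elementary $2$-handle cobordism $W_{i,i+1}$. The surgery-triad hypothesis guarantees that the geometric representatives of $\ybb_i$ and $\ybb_{i+1}$ extend across $W_{i,i+1}$ to an unoriented surface whose $\mathbb{Z}/2$-fundamental class is Poincar\'e dual to $w_2$ of an admissible $SO(3)$-bundle $\sdbb_{i,i+1}$. Standard moduli-theoretic arguments then produce chain maps
\[
f_{i,i+1}: \chain(\ybb_i) \longrightarrow \chain(\ybb_{i+1})
\]
by signed counts of zero-dimensional instanton moduli on $(W_{i,i+1},\sdbb_{i,i+1})$.

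To show each composition $f_{i+1,i+2} \circ f_{i,i+1}$ is null-homotopic, I would glue the relevant pair of cobordisms along $\y_{i+1}$ with a neck $[-T,T] \times \y_{i+1}$ and study the parametrized moduli space over $T \in [0,\infty)$. The end at $T \to \infty$ decomposes by the usual compactness theorem into pairs broken along $\y_{i+1}$, contributing exactly $f_{i+1,i+2} \circ f_{i,i+1}$, while the boundary of the parametrized $1$-dimensional locus contributes $\partial H_i + H_i \partial$, where $H_i$ counts parametrized $(-1)$-dimensional instantons. What remains is the count on the unstretched composite $X_i(0)$; I would then perform a secondary neck-stretch along a distinguished $3$-manifold sitting between the two handles (roughly the intermediate filling that exhibits the composite 4-manifold itself as arising from a surgery triad), and use an index-and-positivity argument to show this term is itself a boundary, thereby giving the full chain homotopy.

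For exactness, following the template of \cite{kmu}, I would construct a higher operator $K_i: \chain(\ybb_i) \to \chain(\ybb_i)$ by counting instantons on the cyclic composite $W_{i,i+1} \cup W_{i+1,i+2} \cup W_{i+2,i}$ equipped with three independent neck parameters. Organizing the codimension-one degenerations along the three intermediate 3-manifolds identifies the boundary of the parametrized $2$-dimensional locus with
\[
f_{i+2,i}\, H_i + H_{i+1}\, f_{i,i+1} + \partial K_i + K_i\, \partial - \mathrm{Id}.
\]
The appearance of $-\mathrm{Id}$ on the right is the key input converting the triangle of chain maps into an exact triangle on homology, and is where I expect the main obstacle to lie: it requires isolating a distinguished contribution on the cyclic composite and matching it with an explicit moduli computation on a local model (a neighbourhood that is topologically $I \times \y_i$ after a handle cancellation) giving a signed count of $\pm 1$. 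Once this is in hand, the remaining step is a purely algebraic triangle-detection lemma. Secondary difficulties, expected to be routine but technical, include achieving transversality compatibly with all three neck-stretching families at once, controlling bubbling and reducibles uniformly along every degeneration, and tracking orientations via the homology-orientation formalism developed in \S \ref{sec:homor}.
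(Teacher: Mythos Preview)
Your overall architecture is right --- the triangle detection lemma, chain maps from surgery cobordisms, and metric-stretching homotopies --- and you correctly anticipate that extracting the $\pm\mathrm{Id}$ is the crux. But the specific hypersurfaces you propose are not the ones that make the argument work, and this is a genuine gap rather than vagueness.

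For the null-homotopy $h_i$, the ``distinguished $3$-manifold sitting between the two handles'' is not an intermediate Dehn filling: it is a $3$-sphere $S_1$ that bounds a copy of $-\mathbb{CP}^2\setminus\mathrm{int}(D^4)$ inside $\x_{02}$. The family of metrics on $\x_{02}$ is an interval interpolating between a metric cut along $\y_1$ and one cut along $S_1$. The contribution from the $S_1$-face vanishes by an index argument, and the essential input here is that the bundle restricted to the $-\mathbb{CP}^2$ piece is \emph{non-trivial}, forcing any instanton there to have $h^0\leq 1$; combined with irreducibility on the other piece this contradicts the required index. A second stretch along another filling would not give this.

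For the map $k_i$, stretching along the two interior $\y$-hypersurfaces alone does not produce a two-parameter family (they intersect), and there are only two such hypersurfaces in $\x_{03}$, not three. The paper's family is a \emph{pentagon} built from five hypersurfaces $\{\y_1,\y_2,S_1,S_2,T\}$, where $S_1,S_2$ are $3$-spheres as above and $T\simeq S^1\times S^2$ separates $\x_{03}$ into a product-like piece $V\simeq [0,1]\times\y_0\setminus\nu(K)$ and a piece $E\simeq -\mathbb{CP}^2\setminus\nu(S^1)$. The $S_i$-faces vanish as before; the $\y_i$-faces give $f_{i+2}h_i+h_{i+1}f_i$; and the $T$-face is what must be shown chain homotopic to $\pm\mathrm{Id}$. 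This last step is not a simple handle-cancellation count: one writes the relevant moduli space as a fiber product over the interval $\mathfrak{C}(\tbb)\simeq[-1,1]$ of flat connections on $T$, shows the $E$-side consists of $S^1$-reducible instantons parametrized homeomorphically by the metric interval, and then uses an explicit involution of $\ebb$ (swapping the two $S_i$) to prove the limit map to $[-1,1]$ has degree $\pm 1$. Only after this does the comparison with the identity cobordism go through.
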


\begin{figure}[t]
\includegraphics[scale=.60]{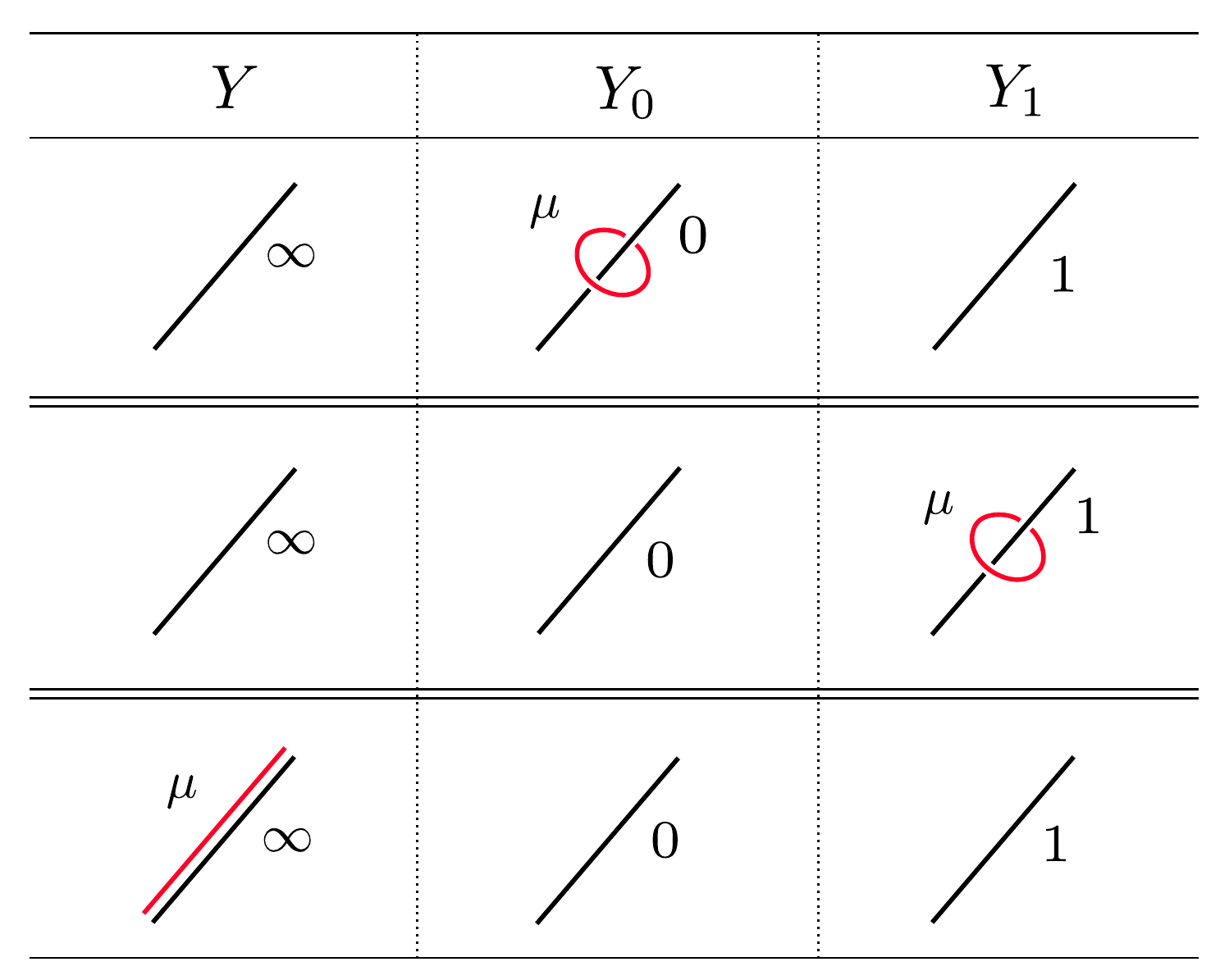}
\caption{Local surgery diagrams. The slanted line in each case is the knot $K$. 
Each row represents a possible construction for a surgery triad.}
\label{fig:ses}
\end{figure}

\noindent The loop $\mu$ in $\y_0$, pushed out of the surgery solid torus, becomes 
a small meridional loop around the surgered neighborhood of $K$ in $\y_0$. This is depicted 
in the top row of Figure \ref{fig:ses} in a local surgery diagram for $\y_0$.
One can view $\y_1$ (resp. $\y$) as obtained from $0$-surgery on the induced framed 
knot in $\y_0$ (resp. $\y_1$), see \S \ref{sec:topology}. 
Thus we obtain two more local surgery diagram depictions of where 
$\mu$ may be placed,
listed in the bottom two rows of Figure \ref{fig:ses}. See also \S \ref{sec:geomrep}.

Floer's exact triangle was studied by Braam and Donaldson in \cite{bd}, where a 
detailed proof following Floer's ideas can be found. In this paper we provide 
an alternative proof. 
The proof relies on an 
algebraic lemma which was first used by Ozsv\'ath and Szab\'o \cite{os}. 
The lemma requires the input of maps between the three 
relevant chain complexes satisfying certain properties. The maps we choose count 
instantons on families of metrics that are parameterized by convex polytopes. 
This approach was used by Kronheimer, Mrowka, Ozsv\'ath and Szab\'o to prove a 
surgery exact sequence in the monopole case \cite{kmos}. 
Our proof is largely an adaptation of Kronheimer and Mrowka's proof in \cite{kmu}
of an analogous exact triangle in singular instanton knot homology.
 
This method of proof leads to a generalization of Floer's theorem to a so-called 
link surgeries spectral sequence, as was first done by Ozsv\'ath and Szab\'o in Heegaard 
Floer homology \cite{os}. Let $\lt$ be a framed link in $\y$ with components 
$\lt_1,\ldots,\lt_m$. For each $v\in\{0,1,\infty\}^m$ let $\y_v$ be the result of
$v_i$ surgery on $\lt_i$ for $1\leq i \leq m$. Briefly, we say $\y_v$ is the result 
of $v$-surgery on $\lt$. Choose a geometric representative $\omega$ for $\ybb$ disjoint 
from $\lt$. Let $\omega_v\subset\y_v$ be $\omega$ together 
with a core for the knot in $\y_v$ induced by $L_i$ for each $i$ with $v_i=0$. Let $\ybb_v$ be 
bundles over $\y_v$ geometrically represented by the $\omega_v$. If the bundles $\ybb_v$ 
can be geometrically represented according to these rules we say that they form a 
\textit{surgery cube}.

\begin{theorem}\label{thm:2}
	Suppose the bundles {\em $\ybb_v$} for $v\in\{0,1,\infty\}^m$ are admissible and that they form a surgery cube. 
	Then there is a spectral sequence
\[
	\bigoplus_{v\in\{0,1\}^m}\ih(\ybb_v) \quad \leadsto \quad \ih(\ybb).
\]
That is, the left side is the {$\e^1$}-page and the sequence converges to the right side.
\end{theorem}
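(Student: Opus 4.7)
The plan is to upgrade Floer's exact triangle (Theorem \ref{thm:floer}) to an $m$-dimensional ``cube of chain complexes'', following the template introduced by Ozsv\'ath--Szab\'o \cite{os} in Heegaard Floer homology and adapted to the monopole and singular instanton settings in \cite{bloom,kmu}. The goal is to assemble a hypercube whose total complex is quasi-isomorphic to a chain complex for $\ih(\ybb)$, and whose cube filtration yields the desired spectral sequence.

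At each vertex $v \in \{0,1\}^m$ place a chain complex $C(\ybb_v)$ with differential $d_v$ computing $\ih(\ybb_v)$. For every pair $v \leq w$ in $\{0,1\}^m$ with $|w-v|=k \geq 1$, I would construct a map
\[
F^{v\to w}:C(\ybb_v) \longrightarrow C(\ybb_w)
\]
by counting instantons on the composite cobordism obtained by stacking the $k$ two-handle cobordisms that change the $i$-th coordinate from $v_i$ to $w_i$, equipped with a suitable extension of the $\so$-bundles. For $k \geq 2$ the relevant moduli spaces are cut out using a family of metrics parameterized by a $(k-1)$-dimensional convex polytope $P_k$ (the standard simplex suffices), whose interior points correspond to choices of relative ``stretch'' along the $k$ surgery necks and whose codimension-one faces correspond to degenerations where a proper subset of the necks has been fully pulled apart. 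For $k=1$ this is a single metric, and for $k=2$ it reduces precisely to the $1$-parameter family used to prove the exact triangle in \S 5.

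A standard gluing and compactness analysis over $\partial P_k$ then yields, for each $v<w$, the cube relation
\[
\sum_{v<u<w} F^{u\to w}\circ F^{v\to u} \;=\; d_w\circ F^{v\to w} \;\pm\; F^{v\to w}\circ d_v,
\]
with signs dictated by the homology-orientation composition law of \S\ref{sec:homor}. Admissibility of every $\ybb_v$ rules out reducibles and the analysis parallels that of \S 5; transversality is arranged by perturbing metric families inductively over the faces of each $P_k$. Once these relations are established, the total complex
\[
\Bigl(\bigoplus_{v\in\{0,1\}^m} C(\ybb_v),\;\; D = \sum_{v\leq w}F^{v\to w}\Bigr)
\]
is a chain complex, and filtering it by $|v|$ produces a spectral sequence whose $E^1$-page is $\bigoplus_v \ih(\ybb_v)$.

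To identify the total complex with a chain complex for $\ih(\ybb)$ I would induct on $m$. The base case $m=1$ is the exact triangle: the mapping cone of $F^{0\to 1}$ is quasi-isomorphic to $C(\ybb)$. For the inductive step, view the $m$-cube as the mapping cone of a chain map between its two $(m-1)$-dimensional subcubes at $v_m=0$ and $v_m=1$; by the inductive hypothesis each subcube is quasi-isomorphic to the chain complex of the 3-manifold obtained by performing only the $\lt_m$-surgery ($0$- or $1$-framed), and applying Theorem \ref{thm:floer} in the remaining coordinate closes the argument. The main obstacle will be the polytope-parameterized construction itself: one must choose metric families compatibly across all faces of the cube, verify that the boundary strata of the moduli spaces are exactly the compositions prescribed by the cube relation, and control signs via the homology-orientation formalism. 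The arguments of \cite{os,bloom,kmu} provide a close blueprint, and the instanton-specific analysis is already in place from \S 5 for the $k=2$ case.
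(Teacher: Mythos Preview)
Your overall architecture matches the paper's, but two specific points need correction.

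First, the standard simplex does \emph{not} suffice for the metric family. For $v<w$ with $|w-v|_1=k$, the codimension-one degenerations correspond to breaking along the hypersurface $\y_u$ for each $u$ with $v<u<w$; there are $2^k-2$ such $u$, many more than the $k$ facets of a $(k-1)$-simplex once $k\geq 3$. The paper builds $G_{vw}$ by stretching along all of the (mutually intersecting) hypersurfaces $\y_u$, and the resulting family is the permutahedron $P_k$: one glues together the $(k-1)$-cubes indexed by maximal chains $v=u(0)<u(1)<\cdots<u(k)=w$ along their common faces. Each facet $G(\y_u)\subset\partial G_{vw}$ is then identified with the product $G_{vu}\times G_{uw}$, which is precisely what is needed to obtain the cube relation from (\ref{met3}).

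Second, your inductive convergence step is incomplete. Knowing by induction that each $(m-1)$-subcube is quasi-isomorphic to $\chain(\ybb_{v_m=i})$ and then invoking only the long exact sequence of Theorem~\ref{thm:floer} does not identify the cone of $\mathbf{f}$ with $\chain(\ybb)$: you would need the two quasi-isomorphisms to intertwine $\mathbf{f}$ with the surgery-cobordism chain map up to an explicit homotopy, and producing that homotopy is itself new chain-level data you have not constructed. The paper instead builds a direct anti-chain map $\textbf{F}:\chainbf_{\boldsymbol{\infty}}\to\chainbf$ as the sum of the maps $\partial_{vw}$ with $v_1=-1$ and $w_1\in\{0,1\}$ (these have $|w-v|_\infty\leq 2$, so the $3$-sphere hypersurfaces $\s_0^1$ enter as well). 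Filtering both sides by $\sum_{i\geq 2}v_i$, the induced map on each associated graded piece is exactly $f_{-1}\oplus h_{-1}$ from the triangle proof, which the last clause of Lemma~\ref{alg} identifies as a quasi-isomorphism onto the cone; hence $\textbf{F}$ is a quasi-isomorphism. Iterating over the remaining link components yields the quasi-isomorphism $\textbf{Q}:\chain(\ybb)\to\chainbf$.
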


\noindent A more detailed statement is provided in Theorem \ref{ss1}. 
An analogous result in monopole Floer homology was proved by Bloom \cite{bloom} with $\mathbb{F}_2$-coefficients, 
and in singular instanton knot homology by Kronheimer and Mrowka \cite{kmu}.

From this we obtain a surgery spectral sequence for the groups $\ih^\#(\y)$, 
which generally must involve the twisted groups $\ih^\#(\y;\lambda)$.
The group $\ih^\#(\y;\lambda)$ is four consecutive gradings of $\ih(\ybb^\#)$, 
where $\ybb^\#$ is a bundle over $\y\# T^3$ geometrically 
represented by an $S^1$-factor of $T^3$ together with $\lambda\subset \y$. 
In this setting, the surgeries on the link $L$ are performed 
away from the 3-tori, and every bundle is automatically admissible.
To minimize the number of non-trivial bundles in the mix, 
we refer to the bottom row of Figure \ref{fig:ses}.
Using this, we can ensure that 
the bundles for $v$-surgeries with $v\in\{0,1\}^m$ are 
geometrically represented only by 
the $S^1$-factor of $T^3$. The trade-off is that the geometric representative 
for the bundle over $\y$ is the $S^1$-factor together with the link $L$. 
We obtain

\begin{theorem}\label{thm:4}
Let $\lt$ be a framed $m$-component link in $\y$. There is a spectral sequence
\[
	\bigoplus_{v\in\{0,1\}^m}\ih^\#(\y_v) \quad \leadsto \quad  \ih^{\#}(\y;L).
\]
That is, the left side is the {$\e^1$}-page and the sequence converges to the right side.
\end{theorem}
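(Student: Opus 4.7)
The plan is to apply the link surgeries spectral sequence of Theorem \ref{thm:2} to the link $\lt$ placed inside $\y \# T^3$, with the $T^3$-summand sitting in a 3-ball disjoint from $\lt$. The componentwise surgeries on $\lt$ then yield the cube of manifolds $\y_v \# T^3$ for $v \in \{0,1,\infty\}^m$, with the top vertex $v = \infty^m$ equal to $\y \# T^3$ itself.

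The key step is to choose a surgery cube of admissible bundles $\ybb_v^{\#}$ whose Floer groups match the framed groups appearing in the statement. For each component of $\lt$ I would invoke the bottom-row configuration of Figure \ref{fig:ses}, that is, the cyclic shift of the surgery triad in which the auxiliary meridional loop forced by the triad construction sits in the un-surgered manifold rather than in the $0$- or $1$-surgered ones. Iterated across all $m$ components, and combined with the distinguished $S^1$-factor of $T^3$, this forces the geometric representative $\omega_v$ at every vertex $v \in \{0,1\}^m$ to consist of only the $S^1$-factor, and concentrates the accumulated meridional data at the $\infty^m$-vertex, where the representative becomes the $S^1$-factor together with a cycle homologous mod $2$ to $\lt$ itself. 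Admissibility is automatic at every vertex because the $S^1$-factor meets a transverse $T^2 \subset T^3$ in a single point, so $w_2$ is nontrivial there. By the definition of framed instanton homology one then reads off $\ih(\ybb_v^{\#}) = \ih^{\#}(\y_v)$ for $v \in \{0,1\}^m$ and $\ih(\ybb_{\infty^m}^{\#}) = \ih^{\#}(\y;\lt)$, so Theorem \ref{thm:2} delivers the desired spectral sequence.

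The main obstacle I expect is the topological bookkeeping needed to justify the bottom-row prescription globally: verifying that the cyclic shifts chosen independently at each of the $m$ components of $\lt$ assemble into a single coherent surgery cube in the sense of \S \ref{sec:main}, with intermediate-vertex representatives $\omega_v$ that interpolate correctly across codimension-one faces, and checking that the accumulated meridional cycles at the $\infty^m$-vertex are in fact homologous mod $2$ to $\lt$ in $\y \# T^3$. Each ``extra meridian'' attached to a component $\lt_i$ can be identified with the mod $2$ class $[\lt_i]$ by pushing it across the cyclic cobordism using a Seifert-type push-off, so this step reduces to a routine handle-calculus check of the kind indicated in the discussion following Theorem \ref{thm:floer}.
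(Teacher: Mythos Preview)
Your proposal is correct and matches the paper's approach essentially line for line: the paper deduces Theorem~\ref{thm:4} (restated more precisely as Theorem~\ref{thm:framed}) from Theorem~\ref{thm:2} by viewing $\lt$ inside $\y\# T^3$, noting that the $S^1$-factor of $T^3$ guarantees admissibility at every vertex, and invoking the third row of Figure~\ref{fig:ses} at each component so that the geometric representatives at all $v\in\{0,1\}^m$ reduce to the $S^1$-factor alone while the representative at $\boldsymbol{\infty}$ becomes the $S^1$-factor together with $\lt$. The coherence of the surgery cube that you flag as the main bookkeeping point is handled in the paper by the machinery of \S\ref{sec:topology} (in particular the 3-periodicity $\Lambda^3=1$ and the discussion in \S\ref{sec:geomrep}), and is indeed routine.
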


\noindent A more detailed statement is given in Theorem \ref{thm:framed}.

Now we introduce branched double covers, following Ozsv\'ath and Szab\'o \cite{os}.
Let $\lt$ be a link in $S^3$ and 
$\Sigma(\lt)$ the double cover of $S^3$ branched over $\lt$. Let $D$ 
be a planar diagram for $\lt$ with $m$ crossings. For each $v\in\{0,1\}^m$ there 
is a resolution diagram $D_v$ which is a disjoint union of circles, obtained by 
performing $0$- and $1$-resolutions according to Figure \ref{fig:crossing_res}.
Each branched cover $\Sigma(D_v)$ is diffeomorphic to $\#^k S^1\times S^2$ 
where $D_v$ has $k+1$ circles. Further, there is a link 
$\lt'\subset\overline{\Sigma(\lt)}$ and a framing on $\lt'$ such that 
$\Sigma(D_v)$ is the result of $v$-surgery on $\lt'$. If we draw a small arc
between each crossing in $D$, the preimages in 
the branched cover $\Sigma(\lt)$ are loops, and the link $\lt'$ is the union 
of these preimages.

With this setup, from Theorem \ref{thm:4} we have a spectral sequence 
\begin{equation}
	\bigoplus_{v\in\{0,1\}^m}\ih^\#(\Sigma(\text{D}_v))
	\quad \leadsto \quad \ih^\#(\overline{\Sigma(\lt)};L').\label{sps}
\end{equation}
We claim that 
$[\lt']\in H_1(\Sigma(\lt);\mathbb{F}_2)$ is zero, so that 
the target of this spectral sequence is in fact $\ih^\#(\overline{\Sigma(\lt)})$.
The diagram $D$ divides the plane into regions. To show $[\lt']=0$, it suffices 
to color the regions black and white in a way such that each crossing touches exactly 
one black region. See Figure \ref{fig:orientedres}. 
For then the black regions can be lifted to a surface in $\Sigma(\lt)$ 
whose boundary is $\lt'$, implying $[\lt']=0$.

To color the regions, we follow an argument communicated to the author by Jianfeng Lin. 
We proceed as if performing the algorithm to construct a Seifert surface, as in 
\cite[\S 5.4]{rolfsen}. First, we orient $\lt$. Then we resolve each crossing as in 
Figure \ref{fig:orientedres}. 
We assign to each circle $z$ in the resolved diagram two signs, $a_z$ and $b_z$. The first 
sign $a_z$ is $+1$ if $z$ is oriented counter-clockwise in the plane, and $-1$ otherwise. 
The second sign $b_z$ is given by $(-1)^N$ where $N$ is the number of circles that 
surround $z$. Now color, with black, 
the regions that are directly interior to each circle $z$ with $a_zb_z=+1$. Transferring the coloring 
back to the unresolved diagram, each crossing touches exactly one such region.

\begin{figure}[t]
\includegraphics[scale=.75]{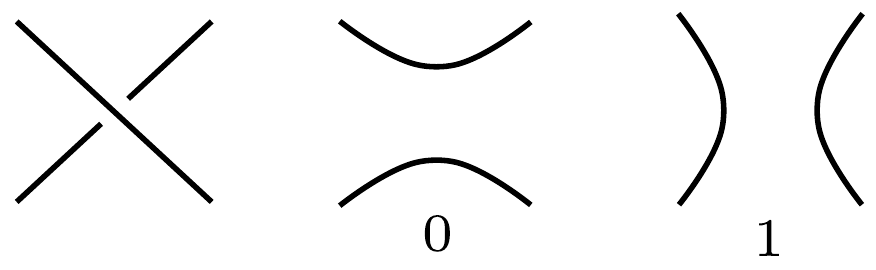}
\caption{From the diagram $\text{D}$ to a resolution diagram $\text{D}_v$.}
\label{fig:crossing_res}
\end{figure}

\begin{figure}[t]
\includegraphics[scale=.70]{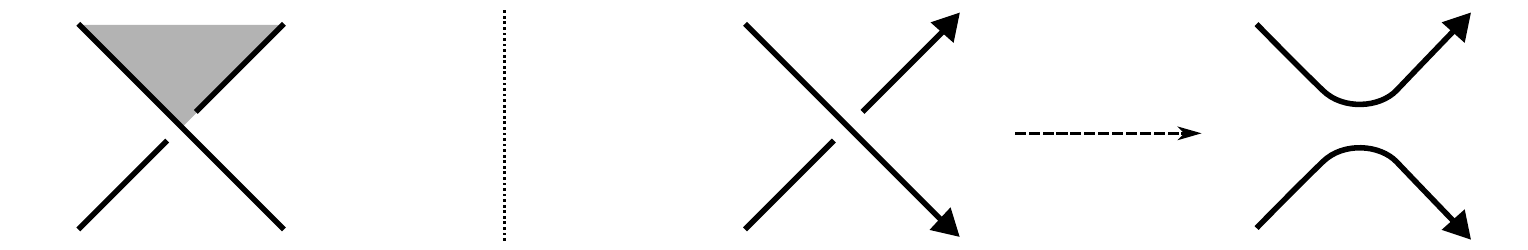}
\caption{On the left, we want to color the regions of the diagram so that at each crossing exactly one of the 
four regions is colored. On the right, we go from an oriented diagram to a disjoint union of oriented circles.}
\label{fig:orientedres}
\end{figure}

This reduces the proof of Theorem \ref{thm:1} to identifying the $E^1$-page of (\ref{sps})
and then understanding the gradings.
We can compute the groups $\ih^\#(\Sigma(D_v))$ because each $\Sigma(D_v)$ 
is of the form $\#^k S^1\times S^2$, and we can compute the $E^1$-differential 
because the cobordism maps involved are topologically simple. 
This is carried out in \S \ref{sec:branched}, where we identify the $E^1$-page 
as the chain complex used to compute $\kh(\lt)$ from the diagram $D$. 
We then check in \S \ref{sec:finalgr} that the relevant gradings are preserved, 
completing the proof of Theorem \ref{thm:1}.

\vspace{10px}
\section{Bundles in the Exact Triangle}\label{sec:topology}

In this section we introduce the manifolds and bundles that feature 
in the proof of Theorem \ref{thm:floer}. We take a systematic approach to the 
bundles $\ybb_i$ that appear in Floer's exact triangle by 
extending Dehn surgery to $\so$-bundles. This viewpoint was Floer's \cite{f2}, 
and is expanded upon in \cite{bd}. The construction of surgery cobordism bundles 
$\xbb_{ij}$ in \S \ref{sec:x} is straightforward in this setting. 
These bundles induce the maps in the exact triangle. We then introduce some 
hypersurfaces in $\x_{ij}$ that yield useful metric families; 
these were used in \cite{kmos,bloom,kmu}. In \S \ref{sec:geomrep}
we relate our new setup to that of the statement of Theorem \ref{thm:floer} in \S \ref{sec:main}.

In this section, we write $A\cup_f B$ for the space obtained from the disjoint union 
of $A$ and $B$, with points identified using the map $f$. Our convention is that the
gluing map $f$ is always from a subset of $B$ to a subset of $A$. We freely
use isomorphisms of the form $A\cup_f B\simeq A\cup_{fg} C$, where
$g$ is an isomorphism from a subset of $C$ to a subset of $B$. All constructions
that are not smooth have a canonical smoothing, as mentioned in \cite[Rmk. 1.3.3]{gs}. 
All (principal) $\so$-bundles have right actions. Thus our bundle gluing maps, 
in order to be equivariant, always involve left multiplication on trivialized fibers.\\

\subsection{Dehn Surgery with Bundles}\label{sec:dehn}

Let $\ybb$ be an $\so$-bundle over a closed, oriented 3-manifold $\y$. Let 
$\knot:S^1\times D^2\to \y$ be an embedding. We refer to $\knot$ as a framed 
knot in $\y$. We consider equivariant embeddings $\kbb:S^1\times D^2\times \so\to\ybb$ 
that lie above $\knot$, i.e. $\kbb/\so=\knot$. We refer to $\kbb$ as a framed knot 
in $\ybb$. 
The space of bundle automorphisms of $S^1\times D^2\times\so$ fixing the base space 
has two connected components. 
An automorphism $\tau$ not isotopic to the identity is
\[
	\tau(w,z,a)=(w,z,c(w)a)
\]
where $(w,z)\in S^1\times D^2$, $a\in\so$, and 
$c$ is a standard inclusion $S^1\to \so$ of a maximal torus. In particular, $c$ 
is a homomorphism and generates $\pi_1(\so)\simeq \mathbb{Z}/2$. 
If $\kbb$ is one embedding, another embedding lying above $\knot$ is given by 
$\kbb\tau$.

We generalize Dehn surgery to surgery on the framed knots $\kbb$. For 
$\Omega = (A,b)\in \text{SL}(2,\mathbb{Z})\ltimes (\mathbb{Z}/2)^2$ 
we define an automorphism $\psi_\Omega$ of $S^1\times\partial D^2\times \so$ by
\[
	\psi_\Omega(w,z,a) = (w^{A_{11}} z^{A_{12}},w^{A_{21}} z^{A_{22}}, c(w)^{b_1}c(z)^{b_2}a).
\]
Let $\kbb'$ be the interior of the image of $\kbb$. The result of $\Omega$-surgery on $\kbb$ 
is then defined to be the identification space
\[
	\ybb_\Omega(\kbb) = (\ybb\setminus \kbb')\cup_{\kbb\psi_\Omega} (S^1\times D^2\times\so).
\]
There is an induced framed knot $\Omega(\kbb)$ in $\ybb_\Omega(\kbb)$ given by the inclusion 
of $S^1\times D^2\times\so$ into the above expression for $\ybb_\Omega(\kbb)$. 
The product of elements in $G=\text{SL}(2,\mathbb{Z})\ltimes (\mathbb{Z}/2)^2$ is given by 
$(A',b')(A,b) = (A'A, b' A + b )$. The assignment $\Omega\mapsto \psi_\Omega$ 
induces an isomorphism from $G$ to the group of isotopy classes of orientation preserving 
equivariant automorphisms of $S^1\times \partial D^2 \times \text{SO}(3)$. 
We have an associativity rule 
\[
	\ybb_{\Omega'\Omega}(\kbb)  \simeq (\ybb_{\Omega'}(\kbb))_{\Omega}(\Omega'(\kbb)).
\]
The space $\ybb_\Omega(\kbb)$ is naturally a bundle over $\y_{p/q}(K)$, the result of 
$p/q$ Dehn surgery on the framed knot $K$ in $\y$, where $\Omega=(A,b)$, $p=A_{22}$, 
$q=A_{12}$ and of course $K=\kbb/\so$. Note that 
the automorphism $\tau$ above restricts to $\psi_\Theta$ where 
$\Theta=(1_{2\times 2},(1,0))\in G$. We have the transformation rule 
$\ybb_\Omega(\kbb\tau)\simeq \ybb_{\Theta\Omega}(\kbb)$.\\

\subsection{The Surgery Bundle $\ybb_i$} There is a particular choice of surgery parameter 
$\Omega$ that Floer used in the setting of his exact triangle:
\begin{equation}\label{lambda}
	\Lambda =  \left(\left[\begin{array}{cc} -1 & 1 \\ -1& 0 \end{array}\right],(1,0)  \right).
\end{equation}
To understand this, write $\Lambda = \Psi\Lambda'$, where
\begin{equation}
	\Psi = (1_{2\times 2},(0,1)), \quad \Lambda'=\left(\left[\begin{array}{cc} -1 & 1 \\ -1& 0 \end{array}\right],(0,0)  \right).\label{eq:psi}
\end{equation}
First, $\Psi$ twists the trivialization around $\partial D^2$. Then, 
$\Lambda'$ performs $0$-surgery on $K$, leaving bundles alone.
Note that $\Lambda^3=1$. With $\ybb$ and $\kbb$ 
fixed, we define for $i\in\mathbb{Z}$ the surgery bundles $\ybb_i = \ybb_{\Lambda^{i+1}}(\kbb)$, 
the surgery base manifolds $\y_i =\ybb_i/\so$, and the induced embeddings 
$\kbb_{i} = \Lambda^{i+1}(\kbb)$. The index offset is here so that $\y_0$ and $\y_1$ are 
simply $0$- and $1$-surgery on $\knot\subset\y$, respectively. Because $\Lambda^3=1$, 
there are isomorphisms $\ybb_i \simeq \ybb_{i+3}$.\\

\subsection{The Surgery Cobordism $\xbb_{ij}$}\label{sec:x} Our goal is to construct 
cobordism bundles $\xbb_{ij}:\ybb_i\to \ybb_j$ for $i< j$. Each $\xbb_{ij}$ will be 
an $\so$-bundle over a standard surgery cobordism $\x_{ij}:\y_i\to \y_j$. We first 
construct $\xbb_{ij}$ when $j-i=1$ and use these as building blocks for the general 
construction. Write $\hbb=D^2\times D^2\times \so$. We view $\hbb$ as a 2-handle thickened 
by $\so$. Also write

\begin{figure}[t]
\includegraphics[scale=.90]{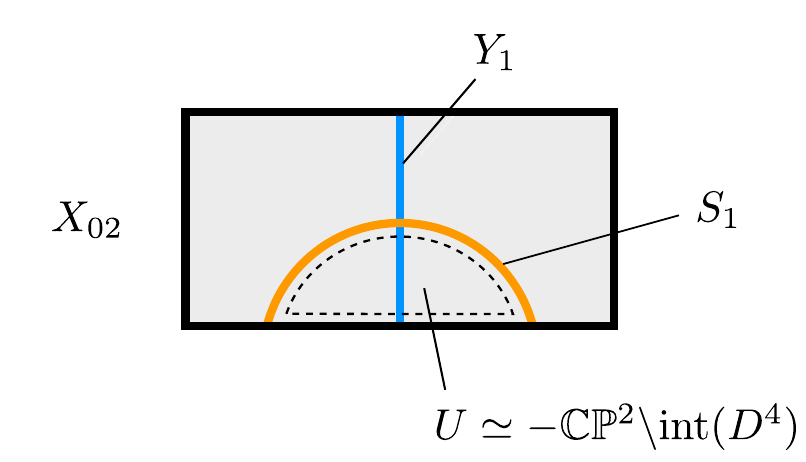}
\caption{The two hypersurfaces $\y_1$ and $S_1$ in the interior of $\x_{02}$. The 
3-sphere $S_1$ separates off a copy of $-\mathbb{C}\mathbb{P}^2$ minus a 4-ball.}
\label{fig:met13}
\end{figure}

\begin{center}
	$\partial \hbb = \hbb_1 \cup \hbb_2$,\\
	$\hbb_1 = \partial D^2\times D^2\times \so$,\\
	$\hbb_2 =  D^2\times \partial D^2\times \so$.\\
\end{center}
Viewing $\kbb_0$ as a map $\hbb_1 \to \{1\}\times \ybb_0$, we define $\xbb_{01}$ by setting
\[
	\xbb_{01} =([0,1] \times \ybb_0) \cup_{\kbb_0} \hbb.
\]
The definition of $\xbb_{ij}$ for general $j-i=1$ is similar. We want to define $\xbb_{02}$ as 
$\xbb_{01}\cup_{\ybb_1}\xbb_{12}$. To make sense of this expression we give an explicit 
identification of $\partial \xbb_{01}\setminus \ybb_0$ with $\ybb_1$. Let the interior of 
the image of $\kbb_0$ in $\ybb_0$ be denoted $\kbb'_0$. Note that
\[
	\partial\hbb_1=\hbb_1\cap\hbb_2=\partial\hbb_2
\]
is a trivial bundle over a 2-torus. Now we write
\[
	\partial \xbb_{01}\setminus \ybb_0 =  (\ybb_0 \setminus \kbb_0') \cup_{\kbb_0|{\hbb_1\cap\hbb_2}} \hbb_2.
\]
Let $\psi:\hbb_1\to \hbb_2$ be an isomorphism. Then
\[
	\partial \xbb_{01}\setminus \ybb_0 \simeq  (\ybb_0 \setminus \kbb'_0) 
	\cup_{\kbb_0\psi|{\hbb_1\cap\hbb_2}} \hbb_1 = (\ybb_0)_{\psi|{\hbb_1\cap\hbb_2}}(\kbb_0).
\]
To identify this bundle with $\ybb_1=(\ybb_0)_\Lambda(\kbb_0)$ we need $\psi$ such that 
$\psi|_{\hbb_1\cap\hbb_2}=\psi_\Lambda$; we choose
\[
	\psi:\hbb_1\to\hbb_2, \quad \psi(w,z,a) := (\overline{w}z,\overline{w},c(w)a).
\]
Making this choice, we have identified $\partial \xbb_{01}\setminus \ybb_0$ 
with $\ybb_1$. Finally, to construct $\xbb_{ij}$ for $j-i>1$,
we inductively define $\xbb_{ij}=\xbb_{i,j-1}\cup_{\ybb_{j-1}}\xbb_{j-1,j}$,
where the gluing is done according to the same identification process.\\

\subsection{The Bundle $\sbb_i$}\label{sec:decomp1} We construct a subset 
$\sbb_1\subset \xbb_{02}$ which is a bundle over a 3-sphere $\s_1\subset\x_{02}$. One 
gets $\sbb_i$ inside $\xbb_{i-1,i+1}$ for each $i$ in a similar fashion. Write
\begin{equation}
	\xbb_{02}= ([0,1]\times \ybb_0 \cup_{\kbb_0} \hbb) \cup_{\ybb_{1}} ([0,1]\times \ybb_1\cup_{\kbb_1} \hbb) \simeq ([0,1]\times \ybb_0) \cup_{\kbb_0} \hbb \cup_{\psi} \hbb\label{iso}
\end{equation}
with notation as in the construction of $\xbb_{01}$. Introduce the subset
\[
	\hbb(r,s) = D^2(r)\times D^2(s)\times\so \subset \hbb
\]
where $D^2(r)$ is the disk of radius $r$, $0<r\leq 1$, and consider the following restriction bundles of $\xbb_{02}$:
\[
	\ubb=\hbb(1/2,1) \cup \hbb(1,1/2)\subset\hbb\cup_\psi\hbb, \quad \sbb_1 = \partial\ubb.
\]
It is well-known that the base space $U$ of $\ubb$ 
is diffeomorphic to $-\mathbb{C}\mathbb{P}^2$ minus 
an embedded 4-ball, cf. \cite[Ex. 4.2.4]{gs}. It follows that $\sbb_1$
is a trivial bundle over a 3-sphere $\s_1$. We see that we can decompose $\x_{02}$ along $\s_1$ into a 
connected sum of $-\mathbb{C}\mathbb{P}^2$ with a manifold whose boundary is $\y_2 \sqcup \overline{\y}_0$. 
The intersection $\sone\cap\y_1$ is 2-torus. This decomposition is depicted in Figure \ref{fig:met13}.

We claim that $\ubb$ is a non-trivial bundle. 
We check that the restriction of $\ubb$ to an essential 
sphere is non-trivial. Define $\mathbb{D}_1 =  D^2\times \{0\} \times \so$ and 
$\mathbb{D}_2 =  \{0\}\times D^2\times \so$ as subsets of $\hbb$. Consider
\[
	\mathbb{D}_2 \cup_{\psi|{\partial \mathbb{D}_1}} \mathbb{D}_1\subset\ubb. 
\]
This is 
isomorphic to $D^2\times \so\cup_{f} D^2\times \so$ where $f$ is the automorphism 
of $\partial D^2\times\so$ given by $f(z,a)=(\overline{z},c(z)a)$. This 
is a nontrivial bundle over a 2-sphere.\\

\begin{figure}[t]
\includegraphics[scale=.75]{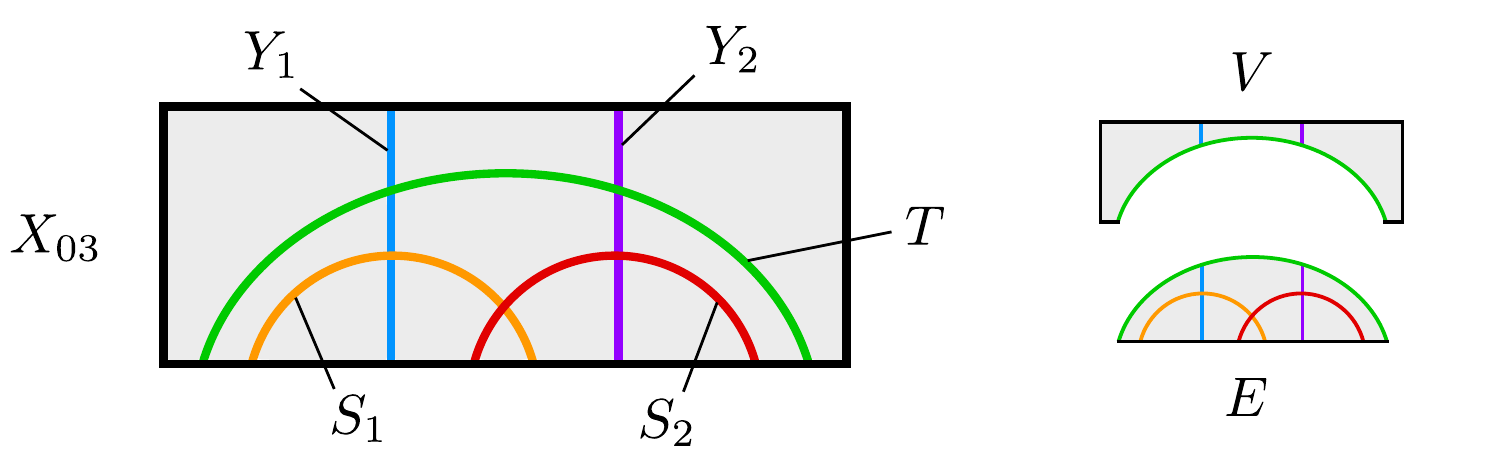}
\caption{The intersections of the five hypersurfaces in the interior of $\x_{03}$. 
The $S^1\times S^2$ hypersurface $T$ divides $\x_{03}$ into two pieces, $V$ and $E$. This picture first appeared in \protect\cite{kmos}.}
\label{fig:met3}
\end{figure}

\subsection{The Bundle $\tbb$}\label{sec:decomp2} We construct a subset 
$\tbb\subset\xbb_{03}$ which is a trivial bundle over $T\subset\x_{03}$ where 
$T$ is diffeomorphic to $S^1\times S^2$. By iterating (\ref{iso}) and 
stretching the ends we write $\xbb_{03}$ as
\[
	\xbb_{03} \simeq  
	([0,1]\times \ybb_0)\cup_{\kbb_0} \hbb \cup_{\psi} \hbb 
	\cup_{\psi} \hbb\cup_{\ybb_0} ([0,1] \times \ybb_0).
\]
Identifying $\xbb_{03}$ with the expression on the right, we define the restriction bundles
\[
	\ebb = \hbb \cup_{\psi} \hbb \cup_{\psi} \hbb, \quad \tbb = \partial \ebb,
\]
and their respective base spaces $E$ and $T$. We have an isomorphism
\begin{equation}
f:\sthreebb = \hbb_1\cup_{(\psi|_{\hbb_1\cap\hbb_2})^2}\hbb_2 \to S^1\times S^2\times \so\label{triv}
\end{equation}
where, viewing $S^2\subset S^1\times S^2$ as $\mathbb{C}\cup\infty$, we set
\begin{equation*}
f|_{\hbb_1}=\text{id}, \quad f|_{\hbb_2}(z,w,a) = (\overline{w},\overline{w}/\overline{z},c(w)a).
\end{equation*}
The triviality of the bundle $\sthreebb$ is also seen 
from the observation that it is the restriction of a 
bundle on a space in which $\sthree$ is contractible. We note that 
we could have also trivialized $\sthreebb$ 
by using a similar isomorphism in which $f|_{\hbb_2}=\text{id}$. 
These two isomorphisms determine trivializations 
that differ, in the terminology of \S \ref{sec:instantons}, 
by a non-even gauge transformation.

We remark that the 
intersections $\sthree\cap\y_1$ and $\sthree\cap\y_2$ are 2-tori. We illustrate the 
arrangement of intersections in Figure \ref{fig:met3}. We note that $T$ may 
be described as the boundary of a regular neighborhood of the union of the two 
essential spheres inside the copies of $-\mathbb{C}\mathbb{P}^2$ divided off by 
$\sone$ and $\stwo$. The hypersurface $T$ separates $\x_{03}$ into two 4-manifolds, 
$E$ and $V$, where $E$ is diffeomorphic to $-\mathbb{C}\mathbb{P}^2$ 
minus a neighborhood of an unknotted circle, and $V$ is diffeomorphic to $[0,1] \times \y_0$ minus a 
neighborhood of $\{1/2\}\times \knot$.\\

\subsection{An Involution of $\mathbb{E}$}\label{sec:involution}

We construct an involution $\sigma:\eothreebb\to\eothreebb$. 
We write
\[
 \ebb = \mathbb{H}^{-1}\cup_\psi \mathbb{H}^0\cup_\psi \mathbb{H}^{+1}
\]
where the superscripts have been added to distinguish the copies of $\mathbb{H}$. 
We write $[w,z,a,i]\in\ebb$ for the point represented by $(w,z,a)\in\mathbb{H}^i$. 
We define our involution by
\[
	\sigma[w,z,a,i] = [\overline{z},\overline{w},c(w)^{i(i+1)}c(z)^{i(i-1)}a,-i].
\]
Here we have extended $c^2:S^1\to\so$ to a map 
$c^2:D^2\to\so$ such that $c^2(\overline{w})=(c^2(w))^{-1}$. Note that $\sigma$ interchanges the outer copies of $\mathbb{H}$ and 
fixes the middle copy of $\mathbb{H}$. 
It is straightforward that $\sigma$ is well-defined: writing $\sigma$ 
as three maps $\sigma_i:\hbb^{+i}\to\hbb^{-i}$, one uses the relations
\[
	\sigma_0^2=\text{id}, \quad \sigma_{\pm 1} = \psi^{\pm 1}\sigma_0\psi^{\pm 1}, \quad \psi^3 = \text{id},
\]
whenever these compositions are defined.
The involution $\sigma$ is a bundle automorphism that restricts to an orientation-preserving 
diffeomorphism of $E$. It fixes $\tbb$ and swaps $\mathbb{S}_1$ with $\mathbb{S}_2$.

\begin{figure}[t]
\includegraphics[scale=.65]{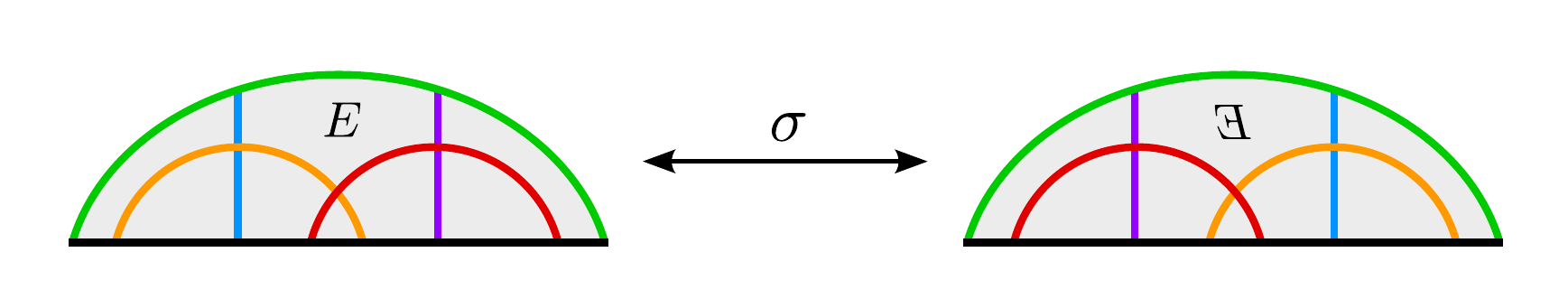}
\caption{The involution $\sigma$.}
\label{fig:involution}
\end{figure}

Let us look at how the involution affects $\mathbb{T}$. Recall the isomorphism (\ref{triv}). 
We have
\begin{equation}
	f\sigma f^{-1}(w,z,a) = (w,w/z,c(\overline{w})d(z)a)\label{eq:invdiff}
\end{equation}
where $w\in S^1, z\in\mathbb{C}\cup\infty$, $a\in\so$ 
and $d:S^2\to\so$ is the double of $c^2:D^2\to\so$. 
It is easily seen that $f\sigma f^{-1}$ is isotopic to a composition, $\theta\circ\upsilon$, where
\[
	\theta(w,z,a)= (w,w/z,a), \quad \upsilon(w,z,a) = (w,z,c(w)a).
\]
The map $\theta$ is a diffeomorphism of $S^1\times S^2$ that, with respect to our trivialization, is extended in trivial way to the overlying bundle.
In the terminology of \S \ref{sec:instantons}, $\upsilon$ is a non-even gauge transformation of the trivial bundle over $S^1\times S^2$. The involution $\sigma$ will be useful in the proof of the exact triangle.\\

\subsection{Geometric Representatives}\label{sec:geomrep}

Let $\omega$ be an embedded loop in $\y$. Extend this to an embedding 
$\kbb_\omega:S^1\times D^2\times\so\to\y\times\so$. Let $\Psi=(1_{2\times 2},(0,1))$ 
as in (\ref{eq:psi}). Then the result of 
$\Psi$-surgery on $\kbb_\omega$ as a framed knot in $\y\times\so$ is a bundle geometrically 
represented by $\omega$. More generally, $\omega$ can be a collection of embedded loops, 
and $\Psi$-surgery for each component gives a bundle geometrically represented by $\omega$.

This relates our current framework to the statement of Theorem \ref{thm:floer} in \S \ref{sec:main}. 
Let $\omega$ be a closed, unoriented 1-manifold in $\y$, and $\knot$ a framed knot in $\y$ 
disjoint from $\omega$. We set
\begin{equation}\label{geomrep}
	\ybb = (\y\times\so)_\Psi(\kbb_\omega),
\end{equation}
where it is understood that if $\omega$ has multiple components, we do $\Psi$-surgery for 
each component. This description of $\ybb$ gives a preferred trivialization away from a 
neighborhood of $\omega$. We let $\kbb$ be the $\so$-thickening of $\knot$ using this 
preferred data, precomposed with $\tau$. That is,
\[
	\kbb = (K\times\text{id}_{\so})\tau.
\]
Recall that $\tau$ restricts to $\psi_\Theta$ where 
$\Theta=(1_{2\times 2},(1,0))$, and that $\ybb_0$ is defined as 
$\ybb_\Lambda(\kbb)$. Using $\Theta\Lambda = \Lambda'\Psi$
with notation as in (\ref{eq:psi}), we have
\[
	\ybb_0 \simeq \ybb_{\Lambda'\Psi}(K\times\text{id}_{\so}).
\]
Because $\Lambda'$ is $0$-surgery without bundle-twisting, 
we see $\ybb_0$ is of the form (\ref{geomrep}),
where $\y$ is replaced by $\y_0$ and $\omega$ replaced by
$\omega\cup K_0$, where $K_0$ is the induced knot in $\y_0$.
Thus $\ybb_0$ is geometrically represented by $\omega\cup K_0$.
Pushing $K_0$ away from the surgered 
neighborhood makes it a small meridional loop $\mu$ as in Figure \ref{fig:ses}, by the 
nature of $0$-surgery.

We may deduce that $\ybb_1$ is geometrically represented by $\omega\subset\y_1$ by 
either of two ways. First, we may interpret $\y_1$ as $0$-surgery on the induced knot 
$\knot_0\subset\y_0$ and iterate the rule already established, forgetting about bundles 
altogether. Alternatively, we can 
repeat the above argument for $\Lambda^2$ in place of $\Lambda$. The difference in 
this case is that $\Theta\Lambda^2 = (\Lambda')^2$. This is $1$-surgery on $K$ without 
bundle-twisting.
\vspace{10px}
\section{Instanton Homology}\label{sec:instantons}

In this section we review the relevant aspects of instanton homology 
for admissible bundles. 
Our main technical references are \cite{d,kmu}. Other useful references 
include \cite{f1,f2,bd,froy,s}. In \S \ref{sec:index} 
we define the index $\mu$ that encodes 
the expected dimension of instanton moduli spaces. Section \S \ref{sec:met} 
is an adaptation of some results from \cite[\S 3.9]{kmu} regarding 
maps obtained from families of metrics on cobordisms.
In \S \ref{sec:indexbounds} we discuss how to use the index to put constraints 
on the existence of instantons, and in \S \ref{sec:instgrading} we discuss
the $\mathbb{Z}/2$-grading on $\ih(\ybb)$.\\

\subsection{Instanton Groups}\label{sec:instantongroups} 

Let $\mathbb{Y}$ be an $\so$-bundle over a closed, connected, oriented Riemannian 
3-manifold $\y$. The group $\ih(\ybb)$ is heuristically a Morse homology group 
computed using a suitably perturbed Chern-Simons functional $\textbf{cs}_\pi:
\mathscr{C}(\ybb)\to\mathbb{R}$ modulo a group of gauge transformations:
\[
	\textbf{cs}_\pi(a) = -\frac{1}{8\pi^2}\int_{[0,1]\times Y} \text{tr}(F_A^2) + f_\pi(a).
\]
Here $A$ is a connection on $[0,1]\times \ybb$ which restricts to a base connection $a_0$ on $\{0\}\times\ybb$ and the connection $a$ on $\{1\}\times \ybb$, and $f_\pi$ is a small perturbation, see \cite[\S 3.4]{kmu}.
We have written $\mathscr{C}(\ybb)$ for the space of smooth connections on $\ybb$, an 
affine space modelled on $\Omega^1(\ybb_\text{ad})$, where 
$\ybb_\text{ad}=\ybb\times_\text{ad}\mathfrak{so}(3)$ is the adjoint bundle 
of $\ybb$.

Let $\xbb$ be an $\so$-bundle over an $n$-dimensional manifold $\x$.
In our constructions we do not use the full automorphism group 
$\mathscr{G}(\xbb)$ of $\xbb$, but rather, 
following the terminology of \cite{froy}, we use the subgroup 
$\mathscr{G}_\text{ev}=\mathscr{G}_\text{ev}(\xbb)$ of
\emph{even} gauge transformations. Elements 
of $\mathscr{G}_\text{ev}$ are called determinant-1 gauge transformations 
in \cite{kmu} and restricted gauge transformations in \cite{bd}. Viewing 
gauge transformations as sections of the bundle $\xbb\times_{\text{Ad}}\so$, 
the even transformations are the ones that lift to sections of 
$\xbb\times_{\text{Ad}}\su$. There is an exact sequence 
\begin{equation}
	1 \longrightarrow \mathscr{G}_\text{ev}(\xbb) \longrightarrow \mathscr{G}(\xbb)
	\overset{\eta}{\longrightarrow} H^1(\x;\mathbb{F}_2)\longrightarrow 1, \label{eta}
\end{equation}
where $\eta$ measures the obstruction to deforming a gauge transformation over 
the 1-skeleton of $\x$. For a connection $\connA$ on $\xbb$ we write $h^0(\connA)$ 
for the dimension of its $\mathscr{G}_\text{ev}$-stabilizer. The possible values 
of $h^0(\connA)$ are $0,1,3$. For us, the only stabilizers that will appear will be 
$1,S^1,\so$.
We call $\connA$ {\em irreducible} if $h^0(\connA)=0$, and {\em reducible} otherwise.

We will write $\conna,\connb,\connc,\ldots$ for typical connections on bundles over 3-manifolds and 
$\connag,\connbg,\conncg,\ldots$ for their respective $\mathscr{G}_\text{ev}$-classes.
A typical connection on a bundle over a 4-manifold $\x$ is written as $\connA$, 
and simply $[\connA]$ for its $\mathscr{G}_\text{ev}$-class.

Let $\mathscr{B}(\ybb)$ denote the quotient $\mathscr{C}(\ybb)/\mathscr{G}_\text{ev}$. 
The functional $\textbf{cs}_\pi$ induces a map 
$\textbf{cs}'_\pi:\mathscr{B}(\ybb)\to \mathbb{R}/\mathbb{Z}$. 
The set of critical points of $\textbf{cs}'_\pi$ 
is denoted $\mathfrak{C}$ or $\mathfrak{C}(\ybb)$; 
when the perturbation $\pi$ is zero this is the set of flat connection classes on $\ybb$.
We write $h^1(\conna)$ for the dimension of the Zariski tangent space of $\connag$ in 
$\mathfrak{C}$. Following \cite{d}, when $h^0(\conna)=h^1(\conna)=0$, 
the connection $\conna$ is called \textit{acyclic}.
Let $\mathfrak{C}^{\text{irr}}$ denote the subset of irreducibles in $\mathfrak{C}$. 
When $\ybb$ is admissible and a suitable perturbation is chosen, $\mathfrak{C}^{\text{irr}}$ is a 
finite set of acyclic classes, 
and it is in fact all of $\mathfrak{C}$ or is missing only the trivial class, 
according to whether $b_1(\y)\neq 0$ or not. Assume such a perturbation is chosen. 

Fix a base connection $\conna_0$ on $\ybb$. We define the chain group 
\[
	\chain(\ybb) = \bigoplus_{\connag\in\mathfrak{C}^\text{irr}} \mathbb{Z}\Lambda(\connag)
\]
where $\Lambda(\connag)$ is the 2-element set of orientations of the real line $\text{det}(D_{\connA})$, 
where $\connA$ is a connection on $\mathbb{R}\times\ybb$ with $\connA|_{\ybb\times \{t\}}$ equivalent to 
$\conna_0$ for $t \ll 0$ and in the class $\connag$ for $t\gg 0$, and $D_{\connA}$ is the Fredholm 
operator $-d_A\oplus d_A^+$ defined on suitable Sobolev spaces in \S \ref{sec:index}; see also \cite[\S 3.6]{kmu}. Here $\mathbb{Z}\Lambda(\connag)$ means the 
infinite cyclic group with generators the elements of $\Lambda(\connag)$. We often think 
of $\chain(\ybb)$ as generated by $\mathfrak{C}^{\text{irr}}$; when doing this it is 
understood that we have chosen distinguished elements from each set $\Lambda(\connag)$.

A connection $\connA$ on an $\so$-bundle over a Riemannian 4-manifold is an \emph{instanton} or is 
\emph{anti-self-dual} (ASD) if its curvature $F_{\connA}$ satisfies 
\[
	\star F_{\connA}=-F_{\connA}
\]
where $\star$ is the Hodge star. 
The \textit{energy} of a connection $\connA$ is given by $\| F_{\connA} \|_{L^2}^2=-\int\text{tr}(F_{\connA}\wedge\star F_{\connA})$.
Instantons on $\xbb=\mathbb{R}\times\ybb$ may be interpreted as gradient flow-lines 
for the Chern-Simons functional. In actuality we consider a perturbed instanton equation\ 
involving $\pi$, and call the solutions instantons as well. 
Given acyclic $\conna,\connb\in\mathscr{C}(\ybb)$ we let 
$\moduli(\conna,\connb)$ be the space of $\mathscr{G}_\text{ev}$-classes of finite-energy 
instantons on $\xbb$ 
asymptotic at $-\infty$ to $\conna$ and at $+\infty$ to $\connb$. 
When the perturbation is zero, elements $[\connA] \in\moduli(\conna,\connb)$ 
are distinguished by the property $\frac{1}{8\pi^2}\|F_{\connA}\|_{L^2}^2 = \textbf{cs}(\connb)-\textbf{cs}(\conna)$.

For a small, generic perturbation $\moduli(\conna,\connb)$ is a smooth manifold,
and we write
\[
	\ind(\conna,\connb)=\dim\moduli(\conna,\connb).
\]
Passing to $\mathscr{G}_\text{ev}$-classes, the number $\ind(\connag,\connbg)$ is well-defined modulo $8$, 
and equips $\chain(\ybb)$ with a relative $\mathbb{Z}/8$-grading given by
$\text{gr}(\connag)-\text{gr}(\connbg)\equiv \ind(\connag,\connbg)$. The space 
$\moduli(\conna,\connb)$ has an $\mathbb{R}$-action by translation along the $\mathbb{R}$-factor 
of $\mathbb{R}\times\ybb$, and we write
\begin{align*}
	& \check{\moduli}(\conna,\connb) = \moduli(\conna,\connb)/\mathbb{R}.
\end{align*}
The data of $\connag,\connbg$ and 
the lift of $\ind(\connag,\connbg)\in\mathbb{Z}/8$ to $d\in\mathbb{Z}$ are sufficient 
to describe $\moduli(\conna,\connb)$; viewing $[A]\in M(a,b)$ as a path in $\mathscr{B}(\ybb)$, the index $d$ faithfully records the homotopy class of $[A]$ relative to the endpoints $\connag,\connbg$.
That said, if $d=\ind(\conna,\connb)$, we also write $\moduli(\connag,\connbg)_d$ for the space $\moduli(\conna,\connb)$, and similarly $\check{\moduli}(\connag,\connbg)_{d-1}$ for $\check{\moduli}(\conna,\connb)$.
Thus $\moduli(\connag,\connbg)_d$ is a $d$-dimensional component of instanton classes
whose limits are in the classes $\connag$ and $\connbg$. 

Suppose $\connag,\connbg \in\mathfrak{C}^\text{irr}$ with $\ind(\connag,\connbg)\equiv 1$.
With suitable perturbation, $\check{\moduli}(\connag,\connbg)_0$ is a 
finite set, and as explained in \cite[\S 3.6]{kmu}, each of its elements determines 
an isomorphism $\Lambda(\connag)\to\Lambda(\connbg)$. Denoting the induced isomorphism 
$\mathbb{Z}\Lambda(\connag)\to\mathbb{Z}\Lambda(\connbg)$ corresponding to 
$[A]\in \check{\moduli}(\connag,\connbg)_0$ by the symbol $\epsilon[A]$, the differential $\partial$ 
for $\chain(\ybb)$ is defined in pieces by
\[
	\partial|_{\mathbb{Z}\Lambda(\connag)\to\mathbb{Z}\Lambda(\connbg)} = 
	\sum_{[A]\in\check{\moduli}(\connag,\connbg)_0}\epsilon[A].
\]
If we choose an element from each $\Lambda(\connag)$, then we may view $\partial$ as a map 
on $\mathfrak{C}^\text{irr}$ and write 
$\langle \partial \connag,\connbg\rangle = \#\check{\moduli}(\connag,\connbg)_0$, where $\#$ indicates a 
signed count. The differential lowers the relative $\mathbb{Z}/8$-grading by $1$. 
The identity $\partial^2=0$ is obtained by interpreting the boundary of 
a 1-dimensional moduli space $\check{\moduli}(\connag,\connbg)_1$ as a disjoint union 
of broken trajectories $\check{\moduli}(\connag,\conncg)_0\times\check{\moduli}(\conncg,\connbg)_0$. 
The relatively $\mathbb{Z}/8$-graded abelian group $\ih(\ybb)$ is defined to 
be $H_\ast(\chain(\ybb),\partial)$.

In defining the complex $\chain(\ybb)$ we have chosen a Riemannian 3-manifold $\y$, an 
admissible $\so$-bundle $\ybb$ over $\y$, a perturbation $\pi$, and a base connection $a_0$ on $\ybb$. 
When working with the chain group we always assume such data is chosen. 
The isomorphism class of the relatively $\mathbb{Z}/8$-graded group $\ih(\ybb)$ depends only 
on the oriented homeomorphism type of $\y$ and $w_2(\ybb)$.\\

\subsection{Maps from Cobordisms}\label{sec:cob}

Let $\x:\y_1\to\y_2$ be a cobordism from $\y_1$ to $\y_2$. That is, $\x$ is a compact, 
connected, oriented 4-manifold 
with an orientation preserving diffeomorphism $\partial \x \simeq \y_2\sqcup \overline{\y}_1$. 
As before, each $\y_i$ is connected.
Assume $\x$ is equipped with a metric that is product-like near its boundary. Suppose further 
that $\xbb$ is an $\so$-bundle over $\x$ with $\xbb|_{\y_i} = \ybb_i$ where each 
$\ybb_i$ is admissible. We abbreviate this setup as $\xbb:\ybb_1\to\ybb_2$. To obtain a chain map
\[
	m(\xbb):\chain(\ybb_1)\to \chain(\ybb_2),
\]
first form the bundle $(\mathbb{R}_{\leq 0}\times \ybb_1) \cup \xbb \cup (\mathbb{R}_{\geq0}\times \ybb_2)$ 
over the Riemannian 4-manifold obtained from $\x$ by attaching cylindrical 
ends to the boundary. We define $\moduli(a,\xbb,b)$ to be the space of $\mathscr{G}_\text{ev}$-classes of 
finite-energy instantons on this bundle. With suitable perturbations chosen, $\moduli(a,\xbb,b)$ is a smooth 
manifold, and we write $\ind(\conna,\xbb,\connb)=\dim\moduli(\conna,\xbb,\connb)$. As before, $\ind(\connag,\xbb,\connbg)$ 
is well-defined modulo $8$, and we write
$\moduli(\connag,\xbb,\connbg)_d=\moduli(\conna,\xbb,\connb)$ for $d=\ind(\conna,\xbb,\connb)$.

Now suppose 
$\connag \in\mathfrak{C}^\text{irr}(\ybb_1)$ and $\connbg\in\mathfrak{C}^\text{irr}(\ybb_2)$ with 
$\ind(\connag,\xbb,\connbg) \equiv 0$. With suitable perturbations, $\moduli(\connag,\xbb,\connbg)_0$ is 
a finite set of points. In defining $\chain(\ybb_i)$, basepoint connections $\conna_{i,0}$ are chosen. 
Let $A$ be a connection on $\xbb$ (with cylindrical ends attached) with limits at the ends 
equivalent to the $\conna_{i,0}$. An orientation of the line $\text{det}(D_A)$ will be called 
an {\em I-orientation} of $\xbb$, following 
\cite[Def. 3.9]{kmu}. With an I-orientation of $\xbb$, an element $[A]\in\moduli(\connag,\xbb,\connbg)_0$ 
determines an isomorphism $\epsilon[A]:\mathbb{Z}\Lambda(\connag)\to\mathbb{Z}\Lambda(\connbg)$, 
and $m(\xbb)$ is defined in pieces by
\[
	m(\xbb)|_{\mathbb{Z}\Lambda(\connag)\to\mathbb{Z}\Lambda(\connbg)} = \sum_{[A]\in\moduli(\connag,\xbb,\connbg)_0}\epsilon[A].
\]
In shorthand, $\langle m(\xbb)\connag,\connbg \rangle  = \#\moduli(\connag,\xbb,\connbg)_0$. 
When $\ind(\connag,\xbb,\connbg)\not\equiv 0$ this 
part of the differential is zero. Different choices 
of I-orientations only affect the overall sign of the map $m(\xbb)$. The notation we use for 
composing bundle cobordisms is given by 
\[
	\xbb_2\circ\xbb_1 = \xbb_1\cup_{\ybb_2}\xbb_2:\ybb_1\to \ybb_3
\]
where $\xbb_i:\ybb_i\to\ybb_{i+1}$ for $i=1,2$. We write $\ih(\xbb_1):\ih(\ybb_1)\to\ih(\ybb_2)$ for the map on homology induced by $m(\xbb_1)$.
Having assumed $\y_i$ is connected for $i=1,2$, we have the composition law
\[	
	\ih(\xbb_2\circ\xbb_1) = \ih(\xbb_2)\circ\ih(\xbb_1).
\]
There is a well-defined 
notion of composing I-orientations using (\ref{detiso}) below, 
and this is needed to make sense of this expression. For a general discussion of the composition law involving disconnected
3-manifolds see \cite[\S 5.2]{kmu}. We mention that the composition law follows from the homotopy formula (\ref{met1}) 
below, using a 1-dimensional family of metrics that stretches along $\y_2$.\\

\subsection{Index Formulas}\label{sec:index}

The numbers $\ind(a,b)$ and $\ind(a,\xbb,b)$ above are more properly described as the indices of 
certain Fredholm operators. Let $\xbb:\ybb_1\to\ybb_2$ as above. The $\ybb_i$ 
are not assumed to be admissible. Let $a$ and $b$ be connections on $\ybb_1$ and $\ybb_2$, 
respectively. Attach cylindrical ends to $\xbb$ as above and call the result $\xbb$ as well. 
Choose a connection $A$ on $\xbb$ with $A|_{\ybb_1\times \{t\}}$ equal to $a$ for 
$t \ll0$ and $A|_{\ybb_2\times \{t\}}$ equal
to $b$ for $t \gg 0$, and consider the operator 
\[
	D_A=-d_A^\ast\oplus d_A^+: L^p_{s,\phi}(\Lambda^1\otimes\xbb_\text{ad})\to 
	L^p_{s-1,\phi}((\Lambda^0\oplus\Lambda^+)\otimes\xbb_\text{ad})
\]
where $L^p_{s,\phi}=\phi L^p_s$ are Sobolev spaces weighted by the real function $\phi$, equal 
to $e^{-\epsilon t}$ for some sufficiently small $\epsilon > 0$ on the ends 
$\mathbb{R}_{\leq 0}\times\y_1$ and $\mathbb{R}_{\geq 0}\times\y_2$, and equal to $1$ otherwise. 
This operator arises from linearizing the instanton equation and using a Coulomb gauge condition. 
If $\xbb':\ybb_2\to\ybb_3$ and $A'$ is a connection on $\xbb'$ with limit $b$ over $\ybb_2$, 
there is a natural isomorphism
\begin{equation}
	\text{det}(D_{A})\otimes\text{det}(D_{A'}) \simeq \text{det}(D_{A\cup A'})\label{detiso}
\end{equation}
and the index relation $\text{ind}(D_A)+\text{ind}(D_{A'})=\text{ind}(D_{A\cup A'})$ 
holds, see for example \cite[Prop. 5.11]{d}. In the definition of $\chain(\ybb)$ in \S \ref{sec:instantongroups} we take $\xbb=[0,1]\times\ybb$ to define $D_A$.

Note that the two ends $\ybb_1$ and $\ybb_2$ of the cobordism $\xbb$ have opposite Sobolev weights 
in the description of $D_A$. If we instead view $\xbb:\emptyset\to \ybb_2\sqcup \overline{\ybb}_1$ then the 
construction yields a different operator $D_A'$. That is, 
$D_A'$ differs from $D_A$ by using the weight function $\phi'$ in place of $\phi$, 
where $\phi'$ is obtained by altering $\phi$ over $\mathbb{R}_{\leq 0}\times\y_1$ 
from $e^{-\epsilon t}$ to $e^{+\epsilon t}$. We have the relation
\[
	\text{ind}(D_A')-\text{ind}(D_A) = h^0(a) + h^1(a),
\]
cf. \cite[Prop. 3.10]{d}. When there is one cylindrical end, 
the number $\text{ind}(D_A')$ is the same as 
$\text{ind}^-(A)$ in the notation of \cite{bd} and $\text{ind}^+(A)$ in the notation of \cite{d}.

The index $\text{ind}(D_A')$ is the expected dimension of the moduli space $\moduli(a,\xbb,b)^\text{irr}$ 
of irreducible instanton classes. It is this number that we refer to in computations, so we define
\[
	\ind(a,\xbb,b) =\ind(A) = \text{ind}(D_A'), \label{index}
\]
and this agrees with our earlier usage of $\ind(a,\xbb,b)$. Note that the order of the symbols 
$a,\xbb,b$ does not matter, and is only suggestive of the situation in mind.
If $\xbb_1$ and $\xbb_2$ are bundles over 
cobordisms and are composable, we have the gluing formula
\begin{equation}
	\ind(a,\xbb_2\circ\xbb_1,c) = \ind(a,\xbb_1,b) + \ind(b,\xbb_2,c) + h^0(b) + h^1(b).\label{glue}
\end{equation}
If $\xbb$ is over a closed 4-manifold $\x$ then we also have
\begin{equation}
	\ind(\xbb) = -2p_1(\xbb) -3(1-b_1(\x)+b_+(\x)).\label{closed}
\end{equation}
Here $b_+(\x)$ is the dimension of a maximal positive definite subspace for the intersection form on 
$H_2(\x;\mathbb{R})$. The term $1-b_1(\x)+b_+(\x)$ may also be written as $(\chi(\x)+\sigma(\x))/2$, 
where $\chi$ is the Euler characteristic and $\sigma$ the signature.\\

\subsection{Maps from Families of Metrics on Cobordisms}\label{sec:met}

This section extracts formulae due to Kronheimer and Mrowka from \cite[\S 3.9]{kmu}.
We first consider families of metrics in a general context. Let $\x$ be any smooth manifold and 
$\s$ a hypersurface in the interior of $\x$. We assume $\s$ has a neighborhood 
$N\subset\x$ diffeomorphic to $(-1,1)\times\s$. A \emph{metric on} 
$\x$ \emph{cut along} $\s$ is a Riemannian metric $g$ on $\x\setminus \s$ that on the 
neighborhood $N$ is of the form
\[
	dr^2/r^2 + g_0,
\]
where $g_0$ is a metric 
on $\s$ and $r$ is the parameter of $(-1,1)$. We also call $g$ simply a \emph{cut metric}. 
We may regard a Riemannian manifold with a cut metric as one with
two opposing cylindrical ends that along the cut hypersurface $S$ meet only at infinity. 

Given a collection of hypersurfaces $\mathcal{H}=\{\s_i\}$ in the interior of $\x$ with 
similar neighborhoods we construct a set of metrics $G=G(\mathcal{H})$ on $\x$ that are 
cut along various subsets of $\mathcal{H}$. The construction is intuitively 
simple: stretch an initial metric in all possible ways along each hypersurface.

First, suppose that $\mathcal{H}$ has no intersecting hypersurfaces. We will parameterize 
the family $G$ by $[0,1]^d$ where $d=|\mathcal{H}|$. Let $b_t$ be a family of positive 
smooth functions on $[-1,1]$ parameterized smoothly by $t\in[0,1)$ such that $b_t(r)$ 
approaches $1/r^2$ as $t$ goes to $1$. For some fixed $\varepsilon$, $0 < \varepsilon < 1$, we require that $b_t(r)=1$ for $|r|>\varepsilon$. We also require $b_t \neq b_{s}$ when $t\neq s$. We choose the 
initial metric $G(0)$ on $\x$ so that it is of the form $dr^2 + g_i$ in the neighborhood 
of $\s_i\subset\x$ diffeomorphic to $(-1,1)\times \s_i$. Here $\s_i\in\mathcal{H}$ and 
$g_i$ is a metric on $\s_i$. For $t\in[0,1]^{d}$ we define $G(t)$ on $\x$ by changing 
$G(0)$ in the neighborhood of $\s_i$ to $b_{t_i}(r)dr^2+g_i$.

Now consider an arbitrary set of hypersurfaces $\mathcal{H}$. Let $\mathcal{H}_0$ be 
a subset of $\mathcal{H}$ with no intersecting hypersurfaces. We have constructed a family 
$G(\mathcal{H}_0)$ for each such $\mathcal{H}_0$. We glue the hypercubes $[0,1]^{d_0}$ where 
$d_0=|\mathcal{H}_0|$ together to form a space in the obvious way: when two points
correspond to the same metric, identify them. This defines the family $G(\mathcal{H})$.

Now suppose $\xbb:\ybb_1\to \ybb_2$ as in \S \ref{sec:cob}. Let $G=G(\mathcal{H})$ be 
a family of metrics on $\x$ constructed as above. We extend $G$ to a family of metrics on 
$\x$ with cylindrical ends attached, 
product-like on the ends, which we also call $G$. Let $\moduli_G(a,\xbb,b)$ be the moduli 
space of pairs $([A],g)$ where $g\in G$ and $A$ is a finite-energy instanton with respect to $g$. 
The meaning of this is staightforward if $g$ is an uncut, smooth metric. An instanton with a 
metric cut along $S\subset \x$ is an instanton on the complement of $\text{S}$, with 
its limits on the two cylindrical ends $[0,\infty)\times\text{S}$ agreeing. 
More details can be found in \cite[\S 3.9]{kmu}.

Let $G=G(\mathcal{H})$ be a family of metrics on $\x$ as constructed above. 
In the cases in which we are interested, $G$ will 
have the structure of a convex polytope. The metrics parameterized by a face of $G$
consist of cut metrics, cut along a hypersurface in 
$\mathcal{H}$. 
The expected dimension of $\moduli_G(a,\xbb,b)$ is
$\ind(a,\xbb,b)  + \dim G$. A map
\[
	m_G(\xbb):\chain(\ybb_1)\to \chain(\ybb_2)
\]
is defined just as for cobordisms. To fix the sign of $m_G(\xbb)$, 
in addition to an I-orientation of $\xbb$, we must orient the metric family $G$. 
The following three formulae are due to Kronheimer and Mrowka, \cite[\S 3.9]{kmu}, and 
arise from understanding the compactification and gluing of certain moduli spaces. First,
\begin{equation}
	(-1)^{\dim G}m_G(\xbb)\partial - \partial m_G(\xbb) =m_{\partial G}(\xbb). \label{met1}
\end{equation}
In writing this we have inherited the orientation conventions of \cite{kmu}, 
with the exception that the quotients $\check{\moduli}(a,b)$ are oriented oppositely, 
changing the signs of the maps $\partial$.
For the polytopes $G$ that we will consider, $\partial G$ decomposes 
into a union of faces $G(\s)$, one for each hypersurface $\s\in\mathcal{H}$. In 
this case
\begin{equation}
	m_{\partial G}(\xbb)=\sum_{\s\in\mathcal{H}} m_{G(\s)}(\xbb). \label{met2}
\end{equation}
Finally, suppose $\xbb$ is the composite of two bundle cobordisms: $\xbb = \xbb_2\circ\xbb_1$. 
Also suppose that $G=G_1\times G_2$ where $G_1$ is a family of metrics that only varies on 
$\x_1$ and $G_2$ on $\x_2$, and all metrics are cut along $\x_1\cap\x_2$. Then
\begin{equation}
	m_G(\xbb) =(-1)^{\dim G_1\dim G_2}m_{G_2}(\xbb_2)m_{G_1}(\xbb_1) \label{met3}
\end{equation}
where we interpret $G_1$ as a family of metrics on $\x_1$ and $G_2$ as a family on $\x_2$. Here 
the metric families are oriented, and $G=G_1\times G_2$ is an orientation preserving identification.\\

\subsection{Index Bounds} \label{sec:indexbounds}
The following discussion is based on \cite[\S 3.4]{bd} and \cite[\S 4]{d}, with the 
material of \cite[\S 3.9]{kmu} in mind.
So far we have only mentioned moduli spaces for which the limiting connections are 
acyclic. This guarantees, in particular, that all instantons are irreducible. 

For simplicity, suppose $\xbb$ has one cylindrical end.
We consider moduli spaces $\moduli(\xbb,a)$ where $a$ is any almost flat connection 
(i.e., an element of $\mathfrak{C}$), 
where the finite-energy instantons exponentially approach $a$ over the cylindrical end. Then, 
with suitable perturbation,
the subset of irreducibles $\moduli(\xbb,a)^\text{irr}$ is a smooth manifold of 
dimension $\ind(\xbb,a)$.
In this case, the existence of $[A]\in\moduli(\xbb,a)^\text{irr}$ 
implies $\ind(\xbb,a)=\ind(A)\geq 0$. 
On the other hand, if all the instantons are reducible with common isotropy group $\Gamma$, 
the space $\moduli(\xbb,a)$ has dimension $\ind(\xbb,a)+ \dim \Gamma$. Recall 
$h^0(A)=\dim\Gamma$.
In this case, after perturbation, 
the existence of an instanton $[A]$ in the moduli space implies the bound 
\begin{equation}
	\ind(A) + h^0(A)\geq 0. \label{boundred}
\end{equation}
More generally, suppose 
$([A],g)\in\moduli_G(\xbb,a)$ for a family of metrics $G$. Then we obtain
\begin{equation}
  \ind(A) + h^0(A) + \dim G \geq 0.  \label{bound1}
\end{equation}
We also consider the case in which some of the limiting connections are allowed to vary.
Suppose $[0,\infty)\times \ybb$ is the cylindrical end of $\xbb$,  
and consider a smooth manifold $\mathfrak{F}\subset \mathfrak{C}(\ybb)$ of critical points
to which the Chern-Simons functional is non-degenerate transverse. 
We consider $\moduli(\xbb,\mathfrak{F})$,
the instanton classes that exponentially approach the set $\mathfrak{F}$.
The irreducibles within typically form a smooth manifold whose components have dimensions
mod $8$ congruent to $\ind(\xbb,\connag)+\dim\mathfrak{F}$, where $\connag\in \mathfrak{F}$. 
We write $\moduli(\xbb,\mathfrak{F})_d^{\text{irr}}$ for the $d$-dimensional component.

We can introduce metrics into all of these situations. 
The most general situation we consider is the following. 
Suppose $\mathfrak{F}$ is as above, 
and consider the moduli space $\moduli_G(\xbb,\mathfrak{F})$. 
If $([A],g)$ is a member, in the generic case we obtain a bound
\begin{equation}
  \ind(A) + h^0(A) + \dim G  + \dim\mathfrak{F} \geq 0. \label{bound2}
\end{equation}
We write $\moduli_G(\xbb,\mathfrak{F})^{\circ}_d$ 
for the $d$-dimensional moduli space of instantons $([A],g)$ 
with $d$ equal to the left side of (\ref{bound2})
and where $\circ=\text{irr},\text{red},\text{flat}$ describes the respective stabilizer-types 
$h^0(A)=0,1,3$. One can drop the assumption that $\mathfrak{F}$ is smooth and obtain moduli spaces that 
are stratified according to the structure of $\mathfrak{F}$. Such spaces have been studied 
in \cite{t,mmr}.\\

\subsection{Gradings}\label{sec:instgrading}

In addition to the relative $\mathbb{Z}/8$-grading on $\ih(\ybb)$,
we can define an absolute $\mathbb{Z}/2$-grading 
following \cite[\S 2.1]{froy} and \cite[\S 5.6]{d}. It is more 
generally defined on the critical sets $\mathfrak{C}$.
If $\connag\in\mathfrak{C}$, its grading is given by
\[
	\text{gr}(\connag) = b_1(E)+b_+(E)+\ind(\ebb,\connag) \mod 2,
\]
where $\ebb:\emptyset\to\ybb$ is an $\so$-bundle 
over a connected 4-manifold $E$ with $\partial E=\y$ that restricts to $\ybb$ over $\y$. 
The differential of $\chain(\ybb)$ shifts this grading by $1$.
A map $m(\xbb):\chain(\ybb_1)\to\chain(\ybb_2)$ shifts the grading by the parity of
\begin{equation}
	\text{deg}(\x) = -\frac{3}{2}(\chi(\x)+\sigma(\x))+\frac{1}{2}(b_1(\y_2)-b_1(\y_1)) ,\label{degofmap}
\end{equation}
cf. \cite[\S 4.5]{kmu}. More generally, a map $m_G(\xbb)$ shifts the grading by 
$\text{deg}(\x)+\dim G$. As an example, suppose $\tbb^3$ is the bundle over 
$T^3$ with $w_2(\tbb^3)$ Poincar\'{e} dual to an $S^1$-factor. Then 
$\ih(\tbb^3)$ is two copies of $\mathbb{Z}$ supported in the even grading. 
Note that the trivial connection 
$\theta$ on $S^3$ has $\text{gr}(\theta)\equiv 1$.
We note that $\ih(\overline{\ybb})_i$ is the same as 
the cohomology group $\ih(\ybb)^{b_1(\y)+1+i}$, where $\overline{\ybb}$ means the orientation of 
the base space $\y$ is reversed. 
For our conventions regarding the absolute $\mathbb{Z}/8$-grading in the 
case that $\y$ is a homology 3-sphere, see \S \ref{sec:connect}.
\vspace{10px}
\section{Proving the Exact Triangle}

In this section we prove Theorem \ref{thm:floer}, Floer's exact triangle.
We use an algebraic lemma first 
used in \cite{os} by Ozsv\'ath and Szab\'o 
to prove an exact sequence in 
Heegaard Floer homology. 
The use of metric stretching maps in this context was 
applied in \cite{kmos} by Kronheimer, Mrowka, and the previous two authors 
to prove an exact triangle in monopole Floer homology. 
Bloom \cite{bloom} also treats 
the monopole case. 
Our proof is largely an adaptation of Kronheimer and Mrowka's proof \cite{kmu}
in the singular instanton knot homology setting. 
In particular, while \S \ref{sec:firstmaps} is 
essentially part of Floer's original proof, see \cite[\S 4]{bd}, the contents of 
\S \ref{sec:secondmaps}, notably the idea for Lemma \ref{lem:fiber} and its proof, 
are based on ideas from \cite[\S 7.1]{kmu}.\\

\subsection{The Triangle Detection Lemma}

The following statement is adapted from \cite[\S 7.1]{kmu} 
and first appeared in \cite{os}.

\begin{lemma}
	Let $(\emph{\text{C}}_i,\partial_i)$ be a sequence of complexes, 
	$i\in\mathbb{Z}$. Suppose that there are chain maps $f_i:\emph{\text{C}}_i\to
\emph{\text{C}}_{i+1}$ and maps $h_i:\emph{\text{C}}_i\to \emph{\text{C}}_{i+2}$ satisfying
\[
        f_{i+1}f_i + \partial_{i+2}h_i + h_{i}\partial_i=0.
\]
Suppose further that each sum
\[
        f_{i+2}h_{i} + h_{i+1}f_{i}
\]
induces an isomorphism $H(\emph{\text{C}}_i)\to H(\emph{\text{C}}_{i+3})$. Then
\[
       \cdots \to H(\emph{\text{C}}_i) \xrightarrow[]{{H}(f_i)} 
       H(\emph{\text{C}}_{i+1}) \xrightarrow[]{{H}(f_{i+1})} 
       H(\emph{\text{C}}_{i+2}) \to \cdots
\]
is an exact sequence. Furthermore, the anti-chain map 
$f_i\oplus h_i:\emph{\text{C}}_i\to \text{\emph{Cone}}(f_{i+1})$ is a
quasi-isomorphism for each $i\in\mathbb{Z}$.\label{alg}
\end{lemma}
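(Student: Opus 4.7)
The plan is to show that $\phi_i := f_i \oplus h_i : C_i \to \text{Cone}(f_{i+1})$ is a quasi-isomorphism (after an appropriate degree shift, i.e., as an anti-chain map). Given this, the standard long exact sequence
\[
\cdots \to H(C_{i+1}) \xrightarrow{H(f_{i+1})} H(C_{i+2}) \to H(\text{Cone}(f_{i+1})) \to H(C_{i+1})[-1] \to \cdots
\]
of the mapping cone translates, through the identification $H(C_i) \simeq H(\text{Cone}(f_{i+1}))$, into the claimed exact triangle. The final sentence of the lemma is then an immediate restatement of this quasi-isomorphism.

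With the standard convention $\text{Cone}(f_{i+1}) = C_{i+1}[1] \oplus C_{i+2}$ and differential $d(b,c) = (-\partial b,\, f_{i+1}(b) + \partial c)$, a direct check using the hypothesis $f_{i+1}f_i + \partial h_i + h_i\partial = 0$ shows $d\phi_i + \phi_i \partial_i = 0$, so $\phi_i$ is the anti-chain map asserted. Symmetrically, introduce the partner $\psi_{i+1} := (-h_{i+1}) \oplus f_{i+2} : \text{Cone}(f_{i+1}) \to C_{i+3}$, verify via the hypothesis at index $i+1$ that $\psi_{i+1}$ is a chain map (with appropriate signs chosen to match the conventions), and compute
\[
\psi_{i+1} \circ \phi_i \;=\; f_{i+2}h_i \,\pm\, h_{i+1}f_i : C_i \to C_{i+3},
\]
which up to the ambient sign agrees with the composition appearing in the hypothesis, and hence induces an isomorphism on homology.

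The main obstacle is to pass from the composition being a quasi-isomorphism to $\phi_i$ itself being one. The octahedral axiom, applied to the composable pair $(\phi_i, \psi_{i+1})$ whose total cone is acyclic, yields a quasi-isomorphism $\text{Cone}(\phi_i) \simeq \text{Cone}(\psi_{i+1})[-1]$, so $\phi_i$ is a quasi-isomorphism iff $\psi_{i+1}$ is. To close the loop, I exploit the three-fold cyclic structure: view the datum $\{(C_i, \partial_i), f_i, h_i\}_{i \in \mathbb{Z}}$ as a twisted complex, and consider the iterated cone strung together from three consecutive $C_j$'s. This iterated cone carries a natural filtration whose associated spectral sequence has $E^1$-differentials given by the $H(f_j)$'s and $E^2$-differentials induced by the $h_j$'s. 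The hypothesis that each $f_{i+2}h_i + h_{i+1}f_i$ induces an isomorphism on homology is precisely what forces this spectral sequence to collapse to zero at the $E^3$-page, so the iterated cone is acyclic and $\phi_i$ is a quasi-isomorphism.

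With $\phi_i$ in hand, the long exact sequence of $\text{Cone}(f_{i+1})$ becomes the claimed exact triangle. The identification of the connecting map with $H(f_i)$ follows from the observation that composing $\phi_i$ with the canonical projection $\text{Cone}(f_{i+1}) \twoheadrightarrow C_{i+1}[1]$ recovers $f_i$ itself (up to the shift), so that under $H(C_i) \simeq H(\text{Cone}(f_{i+1}))$ the LES quotient map becomes $H(f_i)$, as required.
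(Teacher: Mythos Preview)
The paper does not actually prove this lemma; it simply cites it as adapted from Kronheimer--Mrowka \cite{kmu} and originating in Ozsv\'ath--Szab\'o \cite{os}, and then proceeds to verify its hypotheses in the instanton setting. So there is no in-paper argument to compare against, and your proposal must be judged on its own.

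Your outline is sound through the construction of $\phi_i$ and $\psi_{i+1}$, the verification that $\psi_{i+1}\circ\phi_i$ is the hypothesised quasi-isomorphism, and the octahedral step giving $\mathrm{Cone}(\phi_i)\simeq\mathrm{Cone}(\psi_{i+1})[-1]$. The final identification of the connecting map with $H(f_i)$ is also correct.

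The genuine gap is the spectral-sequence paragraph. The three-term iterated cone $\mathrm{Cone}(\phi_i)$ involves only $C_i$, $C_{i+1}$, $C_{i+2}$; the hypothesis, however, is about the map $f_{i+2}h_i+h_{i+1}f_i:C_i\to C_{i+3}$, and $C_{i+3}$ never appears in that filtered complex. There is no mechanism by which the hypothesis feeds into the spectral sequence you describe. Worse, the $E^2$-term in filtration weight $1$ is $\ker H(f_{i+1})/\mathrm{im}\,H(f_i)$, and both $d^2$ (which jumps two filtration steps) and all higher differentials vanish on it for support reasons; so this term survives to $E^\infty$. Its vanishing is exactly the exactness at $H(C_{i+1})$ that you are trying to prove, so the argument is circular.

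The standard repair (and this is what the references actually do) is to exhibit a \emph{second} composition that is a quasi-isomorphism. You already know $H(\psi_{i+1})\circ H(\phi_i)$ is an isomorphism, so $H(\phi_i)$ is injective. One then checks directly that $\phi_{i+3}\circ\psi_{i+1}:\mathrm{Cone}(f_{i+1})\to\mathrm{Cone}(f_{i+4})$ is chain-homotopic, via a homotopy built from $h_{i+2}$, to a filtered map whose diagonal blocks are $-(f_{i+3}h_{i+1}+h_{i+2}f_{i+1})$ and $f_{i+4}h_{i+2}+h_{i+3}f_{i+2}$; both are quasi-isomorphisms by hypothesis, so the composite is one too. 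This makes $H(\psi_{i+1})$ injective as well as surjective, hence an isomorphism, and then your octahedral step finishes the job. Equivalently, one can note that $\mathrm{Cone}(\psi_{i+1})$ and $\mathrm{Cone}(\phi_{i+1})$ are isomorphic as complexes (same underlying pieces $C_{i+1},C_{i+2},C_{i+3}$, differentials matching after a sign change on one summand) and run the same left/right quasi-inverse argument for $\phi_i$ directly.
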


\noindent To apply this lemma, we use the notation of \S \ref{sec:topology}, 
so that we have a 3-periodic sequence of surgery bundles $\ybb_i$, $i\in\mathbb{Z}$,
and surgery cobordism bundles $\xbb_{ij}:\ybb_i\to\ybb_j$ whenever $j> i$. 
We let $(\chain_i,\partial_i)$ be the instanton 
chain complex $\chain(\ybb_i)$ with its differential. We take $f_i$ to be $m(\xbb_{i,i+1}):\chain(\ybb_i)\to \chain(\ybb_{i+1})$. 
The map $h_i$ is defined in \S \ref{sec:firstmaps}, and in \S \ref{sec:secondmaps} we define a chain homotopy $k_i$ from
$f_{i+2}h_{i} + h_{i+1}f_{i}$ to an intermediate map, and then show that this 
intermediate map is chain homotopic to the identity map of $\chain_i$ up to sign. 
All maps are of the form $m_G(\xbb)$.\\

\subsection{The $h_i$ maps}
\label{sec:firstmaps}

We define $h_0:\chain_0\to \chain_{2}$ in this section. Recall from \S \ref{sec:decomp1} 
that we can write
\[
	\x_{02}=W\cup_{\s_1} U
\]
where $U$ is diffeomorphic to $\cp$ minus a 4-ball, and $W$
has boundary $\y_2 \sqcup \overline{\y}_0\sqcup S_1$. The map $h_0$ 
is taken to be $m_G(\xbb_{02})$ where $G$ is a family of metrics on $\x_{02}$ induced 
by the set of two intersecting hypersurfaces $\mathcal{H}=\{\s_1,\y_{1}\}$. Thus $G$ is 
parameterized by an interval, with endpoint metrics $G(\s_1)$ and $G(\y_1)$, cut
along $\s_1$ and $\y_1$, respectively, as depicted in Figure \ref{fig:met2}. Equations 
(\ref{met1}) and (\ref{met2}) yield
\[
	-h_0\partial_0 -\partial_{2}h_0 = m_{G(\s_1)}(\xbb_{02}) + m_{G(\y_{1})}(\xbb_{02}).
\] 
By equation (\ref{met3}), we also have $m_{G(\y_{1})}(\xbb_{02}) = m(\xbb_{12})m(\xbb_{01}) = f_{1}f_0$. 
It remains to show that $m_{G(\s_1)}(\xbb_{02})=0$.

\begin{figure}[t]
\includegraphics[scale=.85]{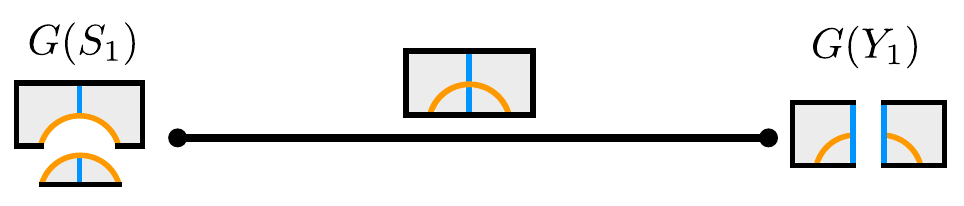}
\caption{The family of metrics on $\x_{02}$ used to define the $h_0$ map.}
\label{fig:met2}
\end{figure}

\begin{figure}[t]
\includegraphics[scale=.85]{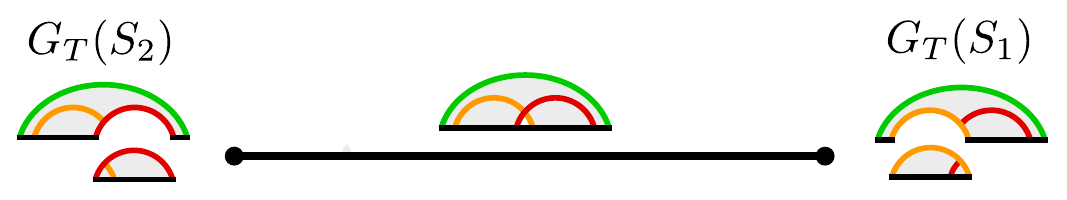}
\caption{The family of metrics $G_T$ on $E\subset\x_{03}$.}
\label{fig:met7}
\end{figure}

Let $a$ and $b$ be given with $\ind(a,\xbb_{02},b)=0$. To show that $m_{G(\s_1)}(\xbb_{02})=0$, 
it suffices to show that $\moduli_{G(\s_1)}(a,\xbb_{02},b)$ is empty for any such $a,b$. 
We prove this by contradiction. Suppose $[A]\in\moduli_{G(\s_1)}(a,\xbb_{02},b)$. 
Write 
$\ubb$ and $\wbb$ for the restriction of $\xbb_{02}$ to $U$ and 
$W$, respectively. Because 
$G(\s_1)$ is cut along $\s_1$, $[A]$ is a pair $[A_{W}],[A_{U}]$ in
$\moduli(a,\wbb,b,c)\times \moduli(c,\ubb)$ for 
some flat connection $c$ on $\sbb_1$. We arrange that the perturbation data near $\s_1$ is $0$. 
The gluing formula (\ref{glue}) says
\[
	\ind(A) =  \ind(A_{W}) + \ind(A_{U}) + h^0(c) + h^1(c).
\]
The flat connection $c$ is on a 3-sphere, so $h^1(c)=0$ and $h^0(c)=3$. Since $a$ and $b$ are 
irreducible, so is $A_{W}$. It follows that $\ind(A_W)\geq 0$, see inequality 
(\ref{bound1}). The connection $A_{U}$ may be reducible to $S^1$, 
but no further, because $\ubb$ is non-trivial, so $h^0(A_{U})\leq 1$. 
It follows from (\ref{boundred}) that $\ind(A_{U})\geq -1$, implying $\ind(A)=\ind(a,\xbb_{02},b)\geq 2$, a contradiction. \\

\subsection{The $k_i$ maps}\label{sec:secondmaps}

We define $k_0:\chain_0\to \chain_0$ in this section. Recall from \S \ref{sec:decomp2} that we 
have five hypersurfaces $\y_1,\y_2,\s_1,\s_2,T$ in $\x_{03}$ 
that intersect one another as in Figure \ref{fig:met3}. We define $k_0$ to be 
$m_G(\xbb_{03})$ where $G$ is the family of metrics on $\x_{03}$ induced by the set of 
hypersurfaces $\mathcal{H}=\{\y_1,\y_2,\s_1,\s_2,T\}$. The family $G$ 
is parameterized by a pentagon and has faces $G(\y_{1}), G(\y_{2}), G(\s_1), G(\s_2), G(T)$, 
each of which is an interval of metrics broken along the indicated 
hypersurface. See Figure \ref{fig:met4}. Equations (\ref{met1}) and (\ref{met2}) yield
\[
	k_0\partial_0-\partial_{0}k_0 = \sum_{\s\in\mathcal{H}}m_{G(\s)}(\xbb_{03})
\]
and the argument from \S \ref{sec:firstmaps} shows that 
$m_{G(\s_1)}(\xbb_{03}) = m_{G(\s_2)}(\xbb_{03}) = 0$. 
We also have $m_{G(\y_{1})}(\xbb_{03}) = h_{1}f_0$ and $m_{G(\y_{2})}(\xbb_{03}) = f_{2}h_0$ by 
(\ref{met3}). Thus
\[
	k_0\partial_0 -\partial_0k_0= m_{G( T )}(\xbb_{03}) + f_2h_0 + h_1f_0,
\]
or in other words, $k_0$ is a chain homotopy from $-m_{G(T)}(\xbb_{03})$ to $f_{2}h_{0} + h_{1}f_{0}$. 
The proof is thus complete if we establish

\begin{figure}[t]
\includegraphics[scale=.8]{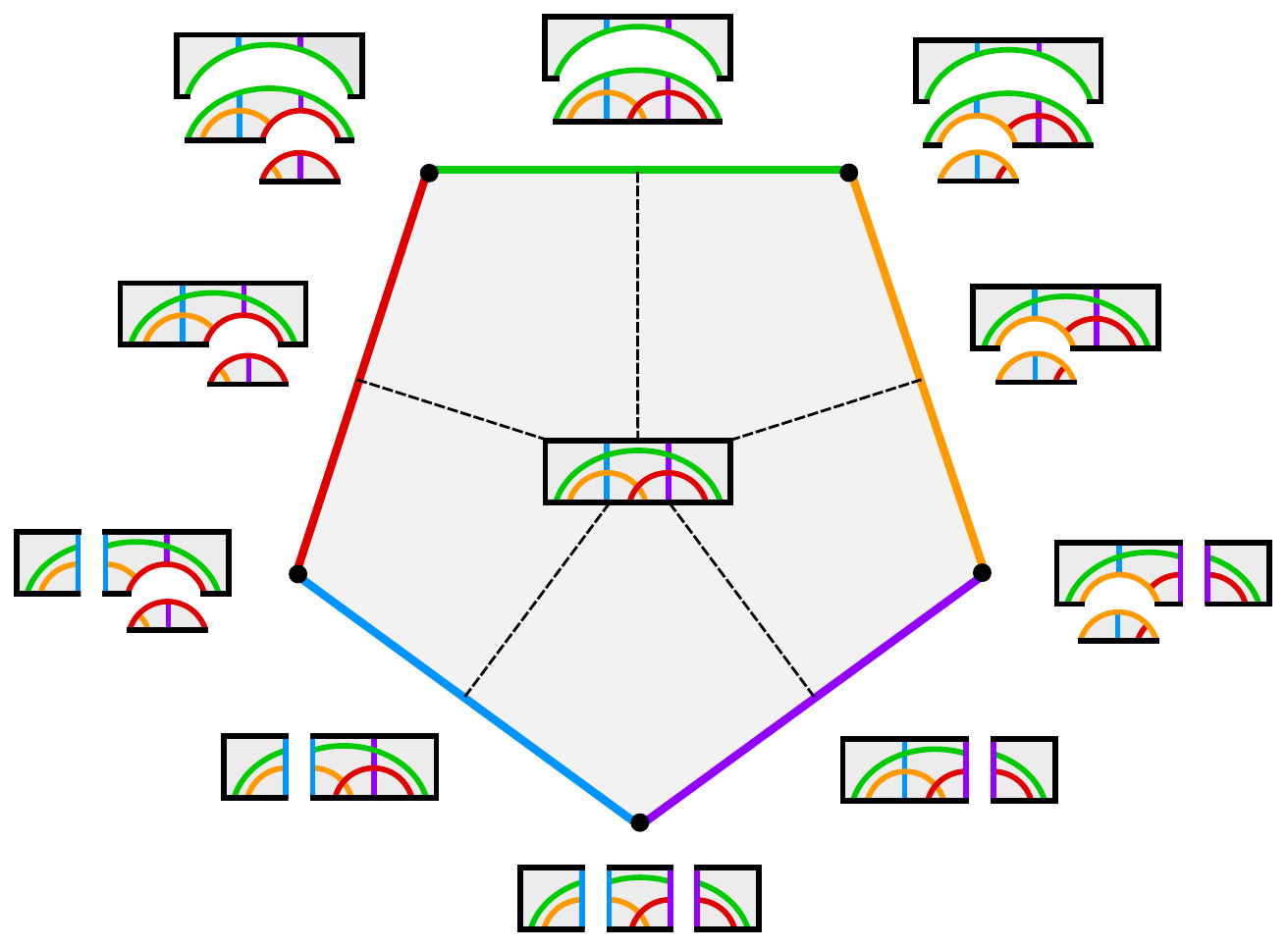}
\caption{The family of metrics on $\x_{03}$ used to define the $k_0$ map. This picture 
is modelled on Bloom's from \protect\cite{bloom}.}
\label{fig:met4}
\end{figure}

\begin{lemma}
	\emph{$m_{G(T)}(\xbb_{03})$} is chain homotopic to \emph{$\pm\text{id}:\chain_0\to \chain_0$}. \label{chainhom}
\end{lemma}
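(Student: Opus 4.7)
The plan is to analyze $m_{G(T)}(\xbb_{03})$ by decomposing cut instantons along $T \cong S^1 \times S^2$ into matched pairs on the two pieces $V$ and $E$, where $V \cong ([0,1] \times \y_0) \setminus \nu(K)$ and $E \cong \cp \setminus \nu(\text{unknot})$. The bundle $\tbb$ over $T$ is trivial (as established in \S \ref{sec:decomp2}), so its character variety $\flatc(\tbb)$ is the closed interval of conjugacy classes of holonomy of the $S^1$-factor, with endpoints the trivial connection $\theta$ and a central reducible with stabilizer $S^1$.

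First, for a cut instanton $[A] = ([A_V], [A_E])$ asymptotic at $T$ to some $c \in \flatc(\tbb)$, the gluing identity (\ref{glue}) gives
\[
\ind(A) = \ind(A_V) + \ind(A_E) + h^0(c) + h^1(c),
\]
and the index bound (\ref{bound2}) applied to each piece constrains the possible $c$. I would argue that the only surviving contribution comes from the trivial connection $\theta$ on $\tbb$. Contributions from non-trivial $c \in \flatc(\tbb)$ should be ruled out by combining the index bounds with the involution $\sigma$ of $\ebb$ from \S \ref{sec:involution}, which fixes $\tbb$ pointwise and exchanges $\sonebb$ with $\stwobb$; any $\sigma$-noninvariant instantons on $E$ pair up and cancel in the signed count, while the non-triviality of $\ubb$ on the essential $2$-spheres of $E$ (established in \S \ref{sec:decomp1}) obstructs the relevant $\sigma$-invariant reducibles.

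Next, I would identify the contribution from $c = \theta$. On $V$, the moduli $\moduli(a, \vbb, \theta, b)$ is identified with the cylinder moduli $\moduli(a, b)$ on $\mathbb{R}\times\ybb_0$: the canonical trivialization coming from $\theta$ on $\tbb$ fills in the removed knot neighborhood to give $[0,1]\times\ybb_0$, so this factor contributes the identity endomorphism of $\chain_0$ up to an overall sign. On $E$, the zero-dimensional count $\#\moduli(\theta, \ebb)$ should equal $\pm 1$ by a direct computation, using the topology of $\cp \setminus \nu(\text{unknot})$ together with the involution $\sigma$ to pin down the sign.

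The main obstacle is the rigorous enumeration of contributions across the continuum $\flatc(\tbb)$: reducibles $c$ with $h^0(c) \geq 1$ weaken the bound (\ref{bound2}) and could a priori support additional instantons on $V$ or $E$. Handling these requires a careful combination of the non-triviality of $\ubb$, the $\sigma$-symmetry, and perturbation-theoretic control near $\partial \flatc(\tbb)$. Once the count is identified as $\pm \text{id}$, the desired chain homotopy is produced by introducing an auxiliary one-parameter family of metrics interpolating between the $T$-cut metric on $\x_{03}$ and a product metric on $[0,1]\times\ybb_0$, and applying the homotopy formula (\ref{met1}).
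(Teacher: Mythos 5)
Your proposal contains a fundamental error in identifying where the surviving contributions come from, and consequently the whole enumeration strategy is backwards. You claim "the only surviving contribution comes from the trivial connection $\theta$ on $\tbb$," but the index gluing formula rules out exactly those: for $[c]$ at an endpoint of $\mathfrak{T}\simeq[-1,1]$ (the central connections, including $\theta$), one has $h^0(c)=h^1(c)=3$, and combining the gluing formula $\ind(A)=\ind(A_V)+\ind(A_E)+h^0(c)+h^1(c)$ with $\ind(A)=-1$ and the bound $\ind(A_V)\geq 0$ (since $A_V$ is irreducible and $\partial\mathfrak{T}$ is zero-dimensional) against the bound $\ind(A_E)\geq -\dim G_T - h^0(A_E)\geq -2$ yields a contradiction. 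The contributing cut instantons have flat limit in the \emph{interior} $\mathfrak{T}^0$, where $h^0(c)=h^1(c)=1$ and $A_E$ is reducible to $S^1$. Relatedly, your description of $\mathfrak{C}(\tbb)$ is off: both endpoints of $[-1,1]$ are central with stabilizer $\so$ (so $h^0=3$), while the interior points are the ones with stabilizer $S^1$; you have this structure essentially inverted.

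The role of the involution $\sigma$ is also misassigned. You propose using it to cancel nontrivial contributions from $\mathfrak{C}(\tbb)$, but one cannot cancel what must in fact be counted. The paper instead uses $\sigma$ to show the flat limit map $\lambda:\moduli_{G_T}(\mathfrak{T},\ebb)_1^\text{red}\to\mathfrak{T}$ — a map between closed intervals — has degree $\pm 1$, by observing that $\sigma$ reflects both $G_T$ (hence the interval $\moduli$) and $\mathfrak{T}$ (via the non-even gauge transformation factor in $f\sigma f^{-1}$) while intertwining $\lambda$. Without that degree computation there is no way to replace the fiber product over $\mathfrak{T}^0$ by a bare count on $V$. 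Your final paragraph (comparison with a stretching family $Q$ on $[0,1]\times\y_0$ cut along $T$, and the homotopy formula) is the correct mechanism and matches the paper, but you cannot reach it by your route: the $V$-side moduli space you need is $\moduli(a,\vbb,b,\mathfrak{T}^0)_0$, with the flat limit moving in the interval interior, not the fixed-limit-at-$\theta$ space $\moduli(a,\vbb,\theta,b)$ you describe.
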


\noindent The remainder of this section goes towards proving this lemma. 
From \S \ref{sec:decomp2}, we know the hypersurface $\T$ induces a decomposition 
\[
	\x_{03}=V \cup_{T} E
\]
where $E$ is diffeomorphic to 
$\cp$ minus a regular neighborhood of an unknotted $S^1$. Let 
$\vbb, \ebb$ be the restrictions of $\xbb_{03}$ to $V, E$, respectively. 
The restriction of $G(T)$ to $V$ is a single metric. On the other hand, the 
restriction of $G(T)$ to $E$ is an interval of metrics, and we denote this 
family by $G_T$, see Figure \ref{fig:met7}. We arrange that the perturbations 
used near $T$ are zero, so that the relevant limiting connections are flat. 

The map $m_{G(\sthree)}(\xbb_{03})$ is defined by counting isolated points 
$[\connA]\in \moduli_{G(\sthree)}(\connag,\xbb_{03},\connbg)_0$. That is,
\[
	\langle m_{G( T )}(\xbb_{03}) \connag,\connbg\rangle = \#\moduli_{G_T}(\connag,\xbb_{03},\connbg)_0
\]
where $\#$ means a signed count determined by orienting moduli spaces.
Note that $\ind(\connA)=-1$ since $\dim G(T) = 1$. 
Let $a$ and $b$ be the limiting connections of $A$ on $\ybb_0$ and $\ybb_3$, respectively, 
so $[\conna]=\connag$ and $[b]=\connbg$. 
Each such $A$ can be written as a pair 
\begin{equation}
	A_V,(A_E,g)\label{pair}
\end{equation}
where $A_V$ is an instanton on $\vothreebb$
with limit $a$ over $\ybb_0$, $b$ over $\ybb_3$, 
and some flat limit $c$ over $\tbb$; and $A_E$ is a $g$-instanton on $\eothreebb$ 
where $g\in G_T$, and $A_E$ has the same flat limit $c$ over $\sthreebb$.

First, let us understand $\mathfrak{T}=\mathfrak{C}(\tbb)$, the 
space of $\mathscr{G}_\text{ev}$-classes of flat connections on $\tbb$. 
Recall that $\tbb$ is a trivial $\so$-bundle over an $S^1\times S^2$. 
Choose a spin structure for $\tbb$, i.e. a lift to an $\su$-bundle. Lifting connections 
sets up a bijection between flat $\so$-connections modulo $\mathcal{G}_\text{ev}$ on $\tbb$
with flat $\su$-connections modulo $\su$ gauge transformations. It is well-known that this 
latter set is in correspondence with $\text{Hom}(\pi_1(T),\su)$ modulo conjugation, which is 
essentially the set of conjugacy classes of $\su$.
The space of conjugacy classes of $\su$ is $[-1,1]$, given 
by the trace map divided by $2$.

The isomorphism $\mathfrak{T} \simeq [-1,1]$ depends on the spin structure of $\tbb$ 
chosen. There are two such choices, and they
are related by any \textit{non}-even gauge transformation of $\tbb$; 
using such a transformation the 
isomorphisms $\mathfrak{T}\simeq [-1,1]$ are related by reflecting $[-1,1]$ about $0$. The choice of 
isomorphism can also be determined by choosing a trivial holonomy flat connection 
on $\tbb$; this choice corresponds to $1\in[-1,1]$.
We record the following.

\begin{lemma}
	A choice of spin structure for $\tbb$ determines an isomorphism $\mathfrak{T}\simeq [-1,1]$. 
	The action on $\mathfrak{T}$ by $\mathscr{G}/\mathscr{G}_\text{ev}\simeq\mathbb{Z}/2$ under 
	this isomorphism is reflection about $0$. \label{lem:flat}
\end{lemma}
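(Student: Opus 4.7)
The plan is to construct the bijection $\mathfrak{T}\simeq[-1,1]$ by lifting flat $\so$-connections to flat $\su$-connections via a spin structure, reinterpreting them as conjugacy classes of representations $\pi_1(T)\to\su$, and using the trace; and then to identify the action of $\mathscr{G}/\mathscr{G}_\text{ev}$ by showing that a non-even gauge transformation negates the $\su$-holonomy of a generator of $\pi_1(T)$.

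For the first part, fix a spin structure on $\tbb$, namely an $\su$-bundle $\tilde{\tbb}$ double-covering $\tbb$. Since $\mathfrak{su}(2)=\mathfrak{so}(3)$, connections on the two bundles are in canonical bijection, while the gauge groups sit in the short exact sequence
\[
1\longrightarrow\{\pm 1\}\longrightarrow\mathscr{G}(\tilde{\tbb})\longrightarrow\mathscr{G}_\text{ev}(\tbb)\longrightarrow 1,
\]
whose kernel consists of the two constant central gauge transformations, which act trivially on connections. Passing to quotients gives $\mathscr{C}(\tbb)/\mathscr{G}_\text{ev}\simeq\mathscr{C}(\tilde{\tbb})/\mathscr{G}(\tilde{\tbb})$, and restricting to flat connections on the trivial bundle over $T\cong S^1\times S^2$ yields $\mathfrak{T}\simeq\text{Hom}(\pi_1(T),\su)/\text{conj}$. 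Since $\pi_1(T)=\mathbb{Z}$, a representation is determined by the image of a generator, and the map $\su/\text{conj}\to[-1,1]$, $[h]\mapsto\frac{1}{2}\text{tr}(h)$, is the classical homeomorphism coming from the fact that every element of $\su$ is conjugate to $\text{diag}(e^{i\theta},e^{-i\theta})$, with the conjugacy class determined by $\cos\theta$.

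For the action of $\mathscr{G}/\mathscr{G}_\text{ev}\simeq H^1(T;\mathbb{F}_2)\simeq\mathbb{Z}/2$, let $g$ represent the non-trivial class, so that the obstruction $\eta(g)$ from (\ref{eta}) evaluates non-trivially on the generator $\gamma$ of $\pi_1(T)$. Choose a local $\su$-lift $\tilde{g}$ of $g$ near a basepoint $p_0$; continuing $\tilde{g}$ along $\gamma$ returns $-\tilde{g}(p_0)$, so the $\su$-holonomy of $g\cdot A$ around $\gamma$ becomes $-\tilde{g}(p_0)\tilde{R}\tilde{g}(p_0)^{-1}$, where $\tilde{R}\in\su$ is the holonomy of $A$. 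Its trace is $-\text{tr}(\tilde{R})$, which on $[-1,1]$ is the map $t\mapsto -t$. The main obstacle will be the sign bookkeeping in this last step, specifically the assertion that $\eta(g)(\gamma)$ records exactly the discrepancy between local $\su$-lifts of $g$ and their monodromy around $\gamma$; a cleaner alternative is to observe that the two spin structures on $\tbb$ differ by twisting with the non-trivial principal $\mathbb{Z}/2$-bundle over $T$ classified by the generator of $H^1(T;\mathbb{F}_2)$, so changing spin structure negates the lifted holonomy along $\gamma$, and this operation on $\mathfrak{T}$ coincides with the action of any non-even gauge transformation.
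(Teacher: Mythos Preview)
Your proposal is correct and follows essentially the same approach as the paper. The paper's ``proof'' is actually the paragraph of discussion preceding the lemma statement (the lemma is introduced with ``We record the following''), which sketches exactly your argument: lift via a spin structure to identify $\mathfrak{T}$ with $\text{Hom}(\pi_1(T),\su)/\text{conj}\simeq[-1,1]$ via $\tfrac{1}{2}\text{tr}$, and observe that the two spin structures are exchanged by any non-even gauge transformation, giving the reflection. Your ``cleaner alternative'' at the end is precisely the paper's observation; your first, more hands-on monodromy computation supplies detail the paper omits.
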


\noindent We can now understand the structure of the relevant moduli space 
following basic index computations.
Write $\mathfrak{T}^0$ for the interior of 
$\mathfrak{T}$, and $G_T^0$ for the interior of $G_T$.

\begin{lemma}\label{lem:fiber}
The moduli space $\moduli(\connag,\xbb_{03},\connbg)_0$ can be identified 
with the fiber product 
\[	
	\moduli(\connag,\vbb,\connbg,\mathfrak{T}^0)_0\times_{\mathfrak{T}^0}\moduli_{G_T^0}(\mathfrak{T}^0,\ebb)^\text{red}_1
\]
after a suitable perturbation.\label{lemma:fiber}
\end{lemma}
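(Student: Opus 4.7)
The plan is to exploit the fact that every metric in the face $G(T)$ is cut along the hypersurface $T$, so each instanton in the moduli space decomposes uniquely as a pair on the two sides, sharing a flat limit on $T$. Via the isomorphism of Lemma~\ref{lem:flat}, the flat limits live in $\mathfrak{T}\simeq[-1,1]$, and the fiber product structure asserted in the lemma just records this matching condition together with the stabilizer type of the pieces. Concretely, I would begin by observing that an element $[A]\in\moduli_{G(T)}(\connag,\xbb_{03},\connbg)_0$ corresponds, after choosing perturbations equal to zero in a neighborhood of $T$, to a pair as in (\ref{pair}) with a common flat limit $c\in\mathfrak{T}$, and that the gluing formula (\ref{glue}) gives
\[
\ind(A)=\ind(A_V)+\ind(A_E)+h^0(c)+h^1(c).
\]

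Next I would use index bookkeeping to determine which $c$, and which stabilizer types, can occur. For the moduli space to be zero-dimensional at a cut metric inside the one-parameter family $G_T$, we need $\ind(A)=-1$. At any interior point $c\in\mathfrak{T}^0$ the stabilizer is $S^1$ and the Zariski tangent space to $\mathfrak{C}(\tbb)$ at $c$ is one-dimensional, so $h^0(c)+h^1(c)=2$ and the formula forces $\ind(A_V)+\ind(A_E)=-3$. Combining the existence bounds (\ref{bound1}) and (\ref{bound2}), applied on each side while allowing $c$ to vary in $\mathfrak{T}^0$, one finds that the only way to saturate these inequalities is $\ind(A_V)=-1$ with $A_V$ irreducible, and $\ind(A_E)=-2$ with $A_E$ having $h^0(A_E)=1$, i.e.\ reducible to $S^1$. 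The reducibility on the $E$-side is consistent with the fact, established in \S\ref{sec:decomp2}, that $\ebb|_E$ is a non-trivial $\so$-bundle (its restriction to either essential sphere inside the $-\mathbb{C}\mathbb{P}^2$ summands separated off by $\sbb_1$ and $\sbb_2$ is non-trivial), so the stabilizer can be no larger than $S^1$.

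I would then rule out contributions from $c\in\partial\mathfrak{T}$, corresponding to the two trivial-holonomy classes at $\pm 1\in[-1,1]$. At these points $h^0(c)=3$ while the effective dimension of $\mathfrak{T}$ drops, and a short index computation using (\ref{glue}) together with the irreducibility of $a$ and $b$ and the non-triviality of the bundle over the $S^1$-loop lifted into $E$ shows that no instanton $A_V$ with the prescribed endpoints exists with $c$ trivial; alternatively, one notes that generic perturbations on the $\ybb_0$ and $\ybb_3$ sides render the stratum over $\partial\mathfrak{T}$ empty in dimension zero. With the limits confined to $\mathfrak{T}^0$ and the stabilizer types pinned down, the assignment $[A]\mapsto\bigl([A_V],([A_E],g)\bigr)$ is exactly the fiber product identification claimed in the lemma, and a standard gluing argument of the type carried out in \cite[\S 7.1]{kmu} shows that this is a diffeomorphism after suitable perturbation.

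The main obstacle is the $E$-side analysis: one must know that, for the bundle $\ebb$ over $E\simeq-\mathbb{C}\mathbb{P}^2\setminus N(S^1)$, irreducible instantons with flat limits on $\tbb$ and metric in $G_T^0$ do not fall into the critical index range, so that all contributions to the zero-dimensional moduli space are forced into the reducible stratum. This is essentially a topological input about the bundle $\ebb$ combined with the index formula (\ref{closed}) applied to the closed-up double of $E$, and it is here that the specific construction of $\ebb$ via the twisted gluings $\psi$ in \S\ref{sec:decomp2} becomes essential.
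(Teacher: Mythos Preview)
Your outline has the right architecture but three genuine gaps remain, two of which the paper handles by arguments quite different from what you sketch.

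First, your index bookkeeping is incomplete. Knowing only $\ind(A_V)+\ind(A_E)=-3$ together with the existence bounds (\ref{bound1}) and (\ref{bound2}) does \emph{not} force $\ind(A_V)=-1$ and $\ind(A_E)=-2$: a priori one could have $\ind(A_V)=0,\ \ind(A_E)=-3$. What is missing is the mod-$8$ periodicity of the index. The paper closes up $A_E$ by gluing on a connection over $S^1\times D^3$ to obtain a connection on a bundle $\ebb'$ over $-\mathbb{CP}^2$ with $p_1(\ebb')=4k-1$; this yields $\ind(A_V)=8k-h(c)$ and $\ind(A_E)=-8k-1-h(c)$ for some integer $k$, and only then do the bounds force $k=0$ and $h(c)=1$. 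The same computation is what actually rules out $c\in\partial\mathfrak{T}$: when $h(c)=3$ the two bounds give $k>0$ and $k<0$ simultaneously. Your ``short index computation'' and the alternative appeal to generic perturbations on $\ybb_0,\ybb_3$ do not supply this.

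Second, your treatment of irreducible $A_E$ is not right. You say one must show irreducibles on $\ebb$ ``do not fall into the critical index range''; but they \emph{can}, and the paper does not exclude them that way. Instead it observes that $\moduli_{G_T}(\mathfrak{T}^0,\ebb)^{\mathrm{irr}}_0$ and $\moduli(\connag,\vbb,\connbg,\mathfrak{T}^0)_0$ are both finite sets of points, each equipped with a limit map to the one-dimensional $\mathfrak{T}^0$; after perturbation these images are disjoint, so the fiber product over $\mathfrak{T}^0$ is empty. This is a transversality-in-the-target argument, not an index or bundle-topology argument, and it is essential because an irreducible pairing would introduce a nontrivial $S^1$ gluing-parameter space that would spoil the fiber-product description.

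Third, you never argue that $g\in G_T^0$. The lemma asserts the fiber product over the \emph{open} interval $G_T^0$, so you must exclude the two endpoint metrics $G_T(S_1),G_T(S_2)$. The paper does this by a further decomposition $E=X\cup_{S_1}U$: over $X\simeq D^2\times S^2\setminus\mathrm{int}(D^4)$ the only instanton is flat, forcing the limit $c$ to be trivial and hence $h(c)=3$; a short index calculation then gives $\ind(A_E)=8k-4\neq -2$, a contradiction. This step is entirely absent from your sketch.
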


\noindent The moduli space on the right is the space of pairs $([A_E],g)$ where 
$g\in G_T^0$ and $A_E$ is a $g$-instanton on $\ebb$ (exponentially decaying over the ends),
such that the flat limit class of $A_E$ over $T$ lies in the interior of $\mathfrak{T}$;
$h^0(A_E)=1$, i.e. $A_E$ has gauge-stabilizer $S^1$; and 
$\mu(A_E)=1-h^0(A_E)-\dim G_T^0-\dim\mathfrak{T}^0=-2$.
In other words, the lemma says that in the pair (\ref{pair}) representing 
$[A]\in\moduli(\connag,\xbb_{03},\connbg)_0$,
we have the constraints 
\begin{align}
	\conncg = [\connc] \in\mathfrak{T}^0, \quad g\in G_T^0,\quad \ind(A_V)=-1,\quad \ind(A_E)=-2.\label{eq:constraints}
\end{align}
The fiber product is taken with respect to limit maps $\lambda:\moduli\to\mathfrak{T}^0$ 
that send an instanton class to its flat limit class over $\tbb$, where 
$\moduli$ is one of the two moduli spaces appearing in the lemma.
This fiber product description is an application of the Morse-Bott gluing 
theory as discussed in \cite[\S 4.5.2]{d} and \cite{mrowka,mmr,t}. Our situation, 
that of instantons broken along $S^1\times S^2$ with flat limits in 
$\mathfrak{T}\simeq [-1,1]$, is similar to that of Fintushel and Stern's in \cite{fs2tor}, 
where results of Mrowka's thesis \cite{mrowka} are used, and we will refer the reader 
to these sources for more details. 
We mention that for the above fiber product it is important that 
the stabilizers of $\connc$ and $A_E$, each a circle, can be identified. 
In general, one must record a gluing parameter in $\Gamma_\connc/\Gamma_{A_V}\times\Gamma_{A_E}$ where 
$\Gamma_A$ is the stabilizer of $A$. 
For instance, if both $[A_V]$ and $[A_E]$ were irreducible, there would be more than one choice of such a parameter.
We proceed to prove that the constraints (\ref{eq:constraints}) characterize
the possible gluing data.

\begin{proof}[Proof of Lemma \ref{lemma:fiber}]
We first show $\conncg\in\mathfrak{T}^0$. 
For convenience we set
\[
	h(\connc)=(h^0(\connc)+h^1(\connc))/2.
\]
We note that $h(\connc)=1$ or $3$, depending on whether $\conncg$ is in the 
interior or boundary of $\mathfrak{T}$, respectively, cf. \cite[\S 3]{fs2tor}.
By assumption $\ind(\connA)=-1$, so (\ref{glue}) yields
\[
	-1 = \ind(\connA) = \ind(\connA_V) +\ind(\connA_E) + 2h(\connc).
\]
Let $A_{S^1\times D^3}$ be a connection on the trivial bundle over $S^1\times D^3$
with one cylindrical end attached.
We identify the bundle over cross-sections of the end with $\sthreebb$, 
with the base having the opposite orientation of $T$. Suppose 
$A_{S^1\times D^3}$ has flat limit $\connc$. We glue $\connA_{S^1\times D^3}$ to $\connA_E$ to obtain a 
connection $\connA_{\cp}$ on a non-trivial bundle $\ebb'$ over 
$\cp$. The isomorphism class of $\ebb'$ depends on $\connc$, but we know 
$p_1(\ebb')=4k-1$ for some $k\in\mathbb{Z}$, cf. \cite[\S 4.1.4]{dk}. We have
\[
	\ind(\connA_E) + \ind(\connA_{S^1\times D^3}) + 2h(c)=\ind(\connA_{\cp}).
\]
We compute $\ind(A_{S^1\times D^3})$.
Two copies of $S^1\times D^3\times\so$, each with a cylindrical end, 
glue, overlapping the ends, to give $S^1\times S^3\times\so$. Index additivity yields 
\[
	2\ind(A_{S^1\times D^3})+ 2h(c) = \ind(S^1\times S^3\times\so).
\]
On the other hand, (\ref{closed}) says the right hand side is
\[
	-3(1-b_1+b_2^+)(S^1\times S^3) = 0.
\]
Thus $\ind(A_{S^1\times D^3}) = -h(c)$. This can also 
be deduced from the Atiyah-Patodi-Singer index theorem, 
cf. \cite[Thm. 3.10]{aps}. From (\ref{closed}) we obtain 
$\ind(A_{\cp})=-8k-1$, and then
\[
	\ind(A_V) = 8k - h(c), \qquad \ind(A_E) = -8k - 1 - h(c).
\]
Suppose for contradiction that $\conncg$ is on the boundary of $\mathfrak{T}$, so that $h(c)=3$. 
Since $A_V$ is irreducible and the boundary of $\mathfrak{T}$ has dimension $0$, we have
\[
	8k-3=\ind(A_V) \geq 0
\]
in the generic case, so $k > 0$. Since $\ebb'$ is nontrivial, $h^0(A_E)\in\{0,1\}$. 
Using (\ref{bound1}), we find 
\[
	-8k-4 = \ind(A_E) \geq -\dim G_T -\dim \partial \mathfrak{T} - h^0(A_E) \geq -2.
\]
Then $k < 0$, a contradiction. Thus $h(c)=1$ and $\conncg\in\mathfrak{T}^0$. 
It follows that $\ind(A_V)=8k-1$ and $\ind(A_E)=-8k-2$. 
Applying (\ref{bound1}) in this case, 
\[
	\ind(A_E) \geq -\dim\mathfrak{T}^0 -\dim G_T -h^0(A_E) \geq -3,
\]
so $k \leq 0$. Similarly, $\ind(A_V) \geq -\dim\mathfrak{T}^0 = -1$ 
gives $k\geq 0$. Thus $k=0$, yielding $\ind(A_V) = -1$ and $\ind(A_E)=-2$, 
as claimed.

Next, we rule out the possibility that $h^0(A_E)=0$, 
that is, that $[A]\in M_{G(T)}(\connag,\xbb_{03},\connbg)_0$ 
can be written as a gluing of $[A_V]$ and $([A_E],g)$ where 
$A_E$ is {\em irreducible}, i.e.
\[
	([A_E],g)\in\moduli_{G_T}(\mathfrak{T}^0,\ebb)^\text{irr}_0.
\]
Note that if there were such a gluing, we would have to keep track of a gluing 
parameter, as mentioned earlier. 
However, this moduli space of irreducibles and $\moduli(\connag,\vbb,\connbg,\mathfrak{T}^0)_0$ 
are both finite sets after perturbation, by standard compactness 
results, cf. \cite[\S 5]{fs2tor}. Further, the intersection of their flat limits in $\mathfrak{T}^0$ 
can be made transverse, in which case they have empty intersection. 
Thus, after a suitable perturbation, $h^0(A_E)=1$.

Finally, we show $g\in G_T^0$. Suppose for contradiction that $g\in\partial G_T$. Then 
$g$ is one of two metrics on $E$, $G_T(\s_1)$ or $G_T(\s_2)$, cut along $\s_1$ or 
$\s_2$, respectively. See Figure \ref{fig:met7}. Suppose $g=G_T(\s_1)$; the other case is similar.
Write 
\[
	E  =  X\cup_{\s_1} U
\]
where $U\simeq \cp\setminus\text{int}(D^4)$
and $X\simeq D^2\times S^2\setminus \text{int}(D^4)$. Note that the restriction of $\ebb$ 
over $X$ is trivial, while the restriction over $U$, as in \S \ref{sec:firstmaps},
is non-trivial; write $A_X$ and $A_{U}$ 
for the restriction of $A_E$
over these respective bundles. They have a common 
flat limit $d$ on $\sbb_{1}$. In particular, $h^0(d)=3$ and $h^1(d)=0$. 
The connection $A_X$ has the limit $c$ over $\tbb$ from before.

We compute $\ind(A_X)$ and $\ind(A_{U})$. There
is only one instanton class on $X$: 
the trivial class, cf. \cite[\S 7.4.1]{d}. Thus $A_X$ is trivial, 
so $h(c)=3$. Let $A_{S^1\times D^3}$ be a connection on the trivial bundle over 
$S^1\times D^3$ with one cylindrical end attached 
whose flat limit is $c$. Then $A_X$ and $A_{S^1\times D^3}$ glue, 
overlapping ends, to give a connection $A_{D^4}$ over $D^4$ with one cylindrical end attached. 
Then (\ref{glue}) and (\ref{closed}) yield
\[
	\ind(A_X) + \ind(A_{S^1\times D^3}) + 2h(c) = \ind(A_{D^4}) = -3.
\]
From above, $\ind(A_{S^1\times D^3})=-h(c)=-3$. Thus $\ind(A_X) = -6$. 
With $\ind(A_{U})=8k-1$ for some $k\in\mathbb{Z}$, we apply (\ref{glue}) once more to get
\[
	\ind(A_E) = \ind(A_X) + \ind(A_{U}) + 2h(d) = 8k-4.
\]
It follows that $\ind(A_E)\neq-2$, a contradiction.
\end{proof}

\begin{lemma}
The projection $\moduli_{G_T^0}(\mathfrak{T}^0,\ebb)_1^\text{red}\to G_T^0$ is a smooth homeomorphism.\label{lem:met}
\end{lemma}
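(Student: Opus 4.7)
The plan is to identify reducibles with $h^0 = 1$ as arising from $U(1)$-instantons on a complex line bundle $L \to E$, and then to apply $L^2$-Hodge theory on the cylindrical-end manifold $(E, g)$ to produce, for each $g \in G_T^0$, a unique ASD $U(1)$-connection on $L$ with flat limit in $\mathfrak{T}^0$. Smoothness of the projection then follows from the implicit function theorem.

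First, a reducible $\so$-connection $A_E$ with $h^0(A_E) = 1$ orthogonally splits $\ebb|_E = \underline{\mathbb{R}} \oplus L$ for some complex line bundle $L$, and is induced from a $U(1)$-connection on $L$; the $g$-ASD equations for the two connections coincide. The topological constraints already verified in the proof of Lemma \ref{lemma:fiber} --- namely $p_1(\ebb') = -1$ for the extension to $\cp$, $w_2(\ebb) \neq 0$, and the flat limit on $T$ lies in $\mathfrak{T}^0$ --- pin down the isomorphism class of $L$ up to the $\mathbb{Z}/2$-action of $\mathscr{G}(E)/\mathscr{G}_\text{ev}(E)$, giving a unique even-gauge class of reduction.

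Next, for fixed $g \in G_T^0$ and this $L$, the linear theory of $U(1)$-ASD connections on $(E,g)$ is controlled by $L^2$-Hodge theory in an exponentially weighted Sobolev setting. Existence of a solution with flat limit in $\mathfrak{T}^0$ reduces to finding an anti-self-dual $L^2$-representative of a specified relative cohomology class, which is possible because $b_+(\cp) = 0$ pushes forward to the vanishing of self-dual $L^2$-harmonic $2$-forms on $(E, g)$ (via an excision argument filling $E$ back into $\cp$). Uniqueness up to $U(1)$-gauge, and then up to even $\so$-gauge, follows from the non-trivial asymptotic holonomy in $\mathfrak{T}^0$, which rules out the Picard torus contribution from $L^2$-harmonic $1$-forms. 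Index bookkeeping matches the resulting solution to the constraint $\mu(A_E) = -2$ demanded by the dimension formula, so the fiber of the projection over each $g$ is a single even-gauge class.

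For smoothness, the implicit function theorem applies once the linearization of the ASD equation at the reducible has zero weighted cokernel, which is again a consequence of $b_+ = 0$ together with the Morse-Bott framework for flat connections along $\mathfrak{T}^0$. Thus $\moduli_{G_T^0}(\mathfrak{T}^0, \ebb)_1^\text{red}$ is a smooth $1$-manifold, and the projection to $G_T^0$ is a smooth bijection, hence a smooth homeomorphism. The main obstacle is the analysis on the non-compact $(E, g)$: the weighted Sobolev and Morse-Bott machinery of \cite{mrowka, mmr, t} already invoked in the proof of Lemma \ref{lemma:fiber} is needed to handle the fact that $\mathfrak{T}^0$ is a positive-dimensional family of critical points rather than isolated ones, and to rule out the emergence of additional reducibles with distinct flat limits over the same metric as $g$ traverses $G_T^0$.
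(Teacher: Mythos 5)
Your overall strategy matches the paper's: identify the reducibles with $U(1)$-instantons on a complex line bundle $\mathbb{L}\to E$, use $L^2$-Hodge theory on the cylindrical-end manifold $(E,g)$ to produce a unique solution for each $g$, and appeal to transversality for smoothness. However, two of your steps rest on incorrect mechanisms even though their conclusions are right, so the proposal does not actually close the argument.

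First, you claim the isomorphism class of $\mathbb{L}$ is pinned down "up to the $\mathbb{Z}/2$-action of $\mathscr{G}(E)/\mathscr{G}_\text{ev}(E)$." But a Mayer--Vietoris computation for $\cp = E\cup(S^1\times D^3)$ gives $H_1(E;\mathbb{Z}) = 0$, hence $H^1(E;\mathbb{F}_2)=0$, and so by (\ref{eta}) we have $\mathscr{G}(\ebb)=\mathscr{G}_\text{ev}(\ebb)$: there is no residual $\mathbb{Z}/2$. What one actually needs to observe is that the two choices of generator for $H^2(E;\mathbb{Z})\simeq\mathbb{Z}$ give $\mathbb{L}$ and its conjugate $\mathbb{L}^\ast$, and that there is an explicit gauge transformation of $\ebb$ relating the two splittings $\underline{\mathbb{R}}\oplus\mathbb{L}$ and $\underline{\mathbb{R}}\oplus\mathbb{L}^\ast$ (fiberwise conjugation on $\mathbb{L}$ together with fiberwise reflection on $\underline{\mathbb{R}}$); that this transformation is automatically \emph{even} is exactly the consequence of $\mathscr{G}=\mathscr{G}_\text{ev}$.

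Second, you attribute uniqueness of the $U(1)$-ASD connection (for fixed $g$ and fixed $\mathbb{L}$) to "the non-trivial asymptotic holonomy in $\mathfrak{T}^0$, which rules out the Picard torus contribution from $L^2$-harmonic $1$-forms." That is not the mechanism. The actual reason is that $H^1(E;\mathbb{R})=0$: the space of $L^2$-harmonic $1$-forms on $(E,g)$ vanishes, so there is no Picard torus to rule out in the first place. Once one knows the curvature (the unique anti-self-dual $L^2$-harmonic $2$-form representing $-2\pi c_1(\mathbb{L})$), the argument of \cite[\S 2.2.1]{dk}, which uses $H^1(E;\mathbb{R})=0$, produces a connection on $\mathbb{L}$ with that curvature, unique up to gauge. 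The holonomy of the flat limit lying in the interior of $\mathfrak{T}$ is used elsewhere (in Lemma \ref{lemma:fiber}, to force $h(\connc)=1$), not here. As written, your uniqueness step is a genuine gap: it would not survive if $b_1(E)>0$, and the argument you give does not verify the fact, $H^1(E;\mathbb{R})=0$, on which everything actually hinges.
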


\begin{proof}
The moduli space here is topologized as a subset of $\mathscr{B}\times G_T^0$, 
so the projection map is a continuous, open map. It is also smooth, 
in the transverse case, by general theory. It suffices to show 
bijectivity. The argument is a standard account of counting reducible instantons. 

Let $([A_E],g)$ be such that $\ind(A_E)=-2,h^0(A_E)=1$ and $g\in G_T^0$.
Because $H^1(E;\mathbb{R})=0$, $E$ admits no non-trivial real line bundles. 
Thus $h^0(A_E)=1$ implies $A_E$ is compatible 
with a splitting $\lbb \oplus \underline{\mathbb{R}}$ of the associated vector bundle of $\ebb$, 
where $\mathbb{L}$ is a complex line bundle and $\underline{\mathbb{R}}$ 
is a trivial real line bundle.
Gluing $A_E$ to a connection $A_{S^1\times D^3}$ on a trivial 
bundle over $S^1\times D^3$ with one cylindrical end attached 
gives an instanton $A_{\cp}$ on a bundle 
$\underline{\mathbb{R}}'\oplus \mathbb{L}'$ over $\cp$ where $\underline{\mathbb{R}}'$ and 
$\mathbb{L}'$ are extensions of $\underline{\mathbb{R}}$ and $\mathbb{L}$. 
The gluing formula says
\begin{equation}\label{p1}
	\ind(A_E) + \ind(A_{S^1\times D^3}) + h^0(c) + h^1(c) = \ind(A_{\cp}) =
	-2p_1(\underline{\mathbb{R}}'\oplus \mathbb{L}')-3.
\end{equation}
Using that $p_1(\underline{\mathbb{R}}'\oplus \mathbb{L}')=c_1(\mathbb{L}')^2$ 
we have $\ind(A_E)= -2c_1(\mathbb{L}')^2-4$. Since $\ind(A_E)=-2$, we conclude that $c_1(\mathbb{L}')^2=-1$. 
Let $P(E)$ denote the image of the map $H^2(E,\partial E;\mathbb{Z})\to H^2(E;\mathbb{Z})$.
Note that inclusion $E\to\cp$ induces an isomorphism of intersection forms from $H^2(\cp;\mathbb{Z})$ to $P(E)$, 
both negative definite of rank 1, under which $c_1(\mathbb{L}')$ is sent to $c_1(\mathbb{L})$. It follows that $c_1(\mathbb{L})$ is a generator of $H^2(E;\mathbb{Z})$.

There are thus two choices of $\mathbb{L}$ corresponding 
to the choices of generator for $H^2(E;\mathbb{Z})$. To get one from the other take the 
conjugate $\lbb^*$. The choice we make does not matter in the end, as we can relate 
the two by an even gauge transformation, by combining the conjugation map $\mathbb{L}\to\mathbb{L}^\ast$ 
with the involution of $\underline{\mathbb{R}}$ that 
reflects each fiber. Note 
that $\mathscr{G} = \mathscr{G}_\text{ev}$ for $\ebb$.

We are left with the problem of finding $g$-instantons on $\lbb$.
According to \cite[Prop. 4.9]{aps}, the space of $L^2$ 
harmonic 2-forms on $\e$ 
is isomorphic to the image of $H^2(E,\partial E;\mathbb{R})\to H^2(E;\mathbb{R})$, 
and under this isomorphism a harmonic form $x$ corresponds to its de Rham 
class $[x]$. In our case this map is an isomorphism $\mathbb{R}\to\mathbb{R}$. Further, 
any such harmonic $x$ satisfies $\star x = -x$, as follows: 
$\star x$ is $L^2$ harmonic, so $\star x =cx$ for some $c\in\mathbb{R}$; then 
$\star^2=1$, $\int x\wedge x<0$, and 
$0\leq \left\| x \right\|_{L^2}^2 = \int x\wedge \star x = c\int x\wedge x$ imply that $c=-1$.
Conversely, a closed $L^2$ 2-form $x$ satisfying $\star x = -x$ is easily seen to be 
$L^2$ harmonic.

The arguments from \cite[\S 2.2.1]{dk} easily 
adapt here, since $H^1(E;\mathbb{R})=0$, to show 
that given a closed $L^2$ 2-form $x$ on $E$, there is a 
connection $A$ on $\lbb$ with curvature $ix$ which is unique  
up to gauge equivalence. In this way, the unique $L^2$ harmonic 
2-form representing $-2\pi c_1(\lbb)$ specifies 
a unique $g$-instanton class on $\lbb$.
\end{proof}

\begin{lemma}
The moduli space $\moduli_{\partial G_T}(\partial \mathfrak{T},\ebb)_0^\text{red}$ consists of two points, 
and is the natural boundary of the open interval $\moduli_{G_T^0}(\mathfrak{T}^0,\ebb)_1^\text{red}$.
\end{lemma}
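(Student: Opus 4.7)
The plan is to extend the harmonic-form correspondence of Lemma \ref{lem:met} to the two endpoints of $G_T$, identifying $\moduli_{\partial G_T}(\partial\mathfrak{T},\ebb)^{\text{red}}_0$ as the natural boundary of the open interval $\moduli_{G_T^0}(\mathfrak{T}^0,\ebb)^{\text{red}}_1 \simeq G_T^0$. At $g_i = G_T(\s_i)$ for $i = 1,2$, the manifold $\e$ is cut into $X\cup_{\s_i} U$ with $U \simeq \cp\setminus\text{int}(D^4)$, as in \S \ref{sec:firstmaps}; the case $i=2$ is related to $i=1$ by the involution of \S \ref{sec:involution}. A reducible $g_i$-instanton $A_E$ with $h^0(A_E) = 1$ decomposes as a pair $(A_X,A_U)$ with a common flat limit $d$ on $\s_i\simeq S^3$, necessarily trivial. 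Since $\ubb$ is nontrivial, $A_U$ cannot have stabilizer $\so$ and must carry the $S^1$-stabilizer; correspondingly, $A_X$ is a $U(1)$-reduction of the trivial bundle over $X$.

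Next I would adapt the harmonic-form argument of Lemma \ref{lem:met} to each piece. On $X \simeq D^2\times S^2\setminus\text{int}(D^4)$, although $H^2(X,\partial X;\mathbb{R})\to H^2(X;\mathbb{R})$ is nontrivial, its image carries the null intersection form since the generating $S^2$-factor has self-intersection zero; consequently there are no nonzero $L^2$-harmonic anti-self-dual $2$-forms on $X$, so $A_X$ is flat and, because $X$ is simply connected, trivial. On $U$ the argument of Lemma \ref{lem:met} applies essentially verbatim, producing a unique reducible up to gauge whose underlying line bundle $\mathbb{L}_U$ has $c_1$ a generator of $H^2(U;\mathbb{Z})$. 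This yields one reducible at each endpoint $g_i$, so $\moduli_{\partial G_T}(\partial\mathfrak{T},\ebb)^{\text{red}}_0$ consists of two points.

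Then I would identify the flat limit on $\tbb$. The line bundle $\mathbb{L}|_T$ is trivial, because $c_1(\mathbb{L})$ lies in the image $P(E)$ of $H^2(E,\partial E;\mathbb{Z})\to H^2(E;\mathbb{Z})$, which restricts to $0$ on $\partial E = T$. Reducible flat connections on $\tbb$ compatible with a $U(1)$-splitting are therefore classified by $S^1$-holonomy, giving a surjection onto $\mathfrak{T}\simeq[-1,1]$, with $\partial\mathfrak{T}$ corresponding to holonomy $\pm 1$, i.e., to trivial holonomy at the $\so$-level. Since $A_X$ is trivial, its restriction to $T$ has trivial holonomy, so the common flat limit $\conncg$ on $\tbb$ lies in $\partial\mathfrak{T}$; the two cuts $g_1,g_2$ produce the two distinct points of $\partial\mathfrak{T}$, the correspondence being fixed by the involution of \S \ref{sec:involution} and a choice of spin structure.

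Finally, I would invoke standard Uhlenbeck-type compactness for stretching-metric families, in the Morse-Bott framework of \cite{mrowka,mmr} as applied in \cite{fs2tor}, to conclude that these two reducibles are precisely the limits of the open $1$-parameter family from Lemma \ref{lem:met} as $g\to\partial G_T$, so that the homeomorphism $\moduli_{G_T^0}(\mathfrak{T}^0,\ebb)^{\text{red}}_1\simeq G_T^0$ extends continuously to the closure. The main obstacle is this compactness-and-gluing step: one must verify that as $g$ stretches along $\s_i$, the $1$-parameter family of $L^2$-harmonic $2$-forms on $(\e,g)$ converges after normalization to the decomposed harmonic form on $(X\cup_{\s_i} U, g_i)$, and that the $\tbb$-holonomy varies continuously with $g$ and surjects onto the closure $\mathfrak{T}$.
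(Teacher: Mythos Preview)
Your approach is correct and parallels the paper's overall structure---decompose $E$ along $S_i$ into $X\cup U$, show $A_X$ is trivial, and identify a unique reducible on $U$---but the execution at each step differs. For $A_X$, the paper simply cites that every instanton on the trivial bundle over $X$ is trivial (via \cite[\S 7.4.1]{d}), whereas your null-intersection-form argument is more self-contained. For $A_U$, the paper instead applies Uhlenbeck's removable singularities theorem to extend $A_U$ across the $S^3$ end to an instanton on a bundle $\mathbb{W}$ over $-\mathbb{CP}^2$, observes that $p_1(\mathbb{W})=-1$ is forced by requiring $A_E$ to be a limit of the interior family, and then invokes the known uniqueness of instantons in that topological type (\cite[\S 2.7]{kmu}); your direct harmonic-form argument on $U$ reaches the same conclusion, but you should say explicitly why $c_1(\mathbb{L}_U)$ is a generator---this is exactly where the ``limit of the interior family'' constraint enters. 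The paper dispatches your ``main obstacle'' in one sentence: since Lemma~\ref{lem:met} already identifies the interior moduli space with $G_T^0$, its ends are the ends of $G_T$, and one need only name the limiting instanton at each. Finally, your claim that the two endpoints land on \emph{distinct} points of $\partial\mathfrak{T}$ is not part of this lemma---that is the content of the subsequent lemma on $\deg(\lambda)$, which uses the involution $\sigma$; here one only needs that the limits lie in $\partial\mathfrak{T}$, which follows immediately from $A_X$ being trivial.
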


\begin{proof}
The previous lemma tells us that the ends of the latter moduli space 
are essentially the ends of $G_T$.
There are two endpoint metrics of $G_T$, labelled 
$G_T(\s_1)$ and $G_T(\s_2)$, each broken along the indicated 3-sphere. 
Any instanton $A$ on $\ebb$
compatible with $G_T(\s_1)$ is a gluing of 
the trivial instanton on the trivial bundle over $X\simeq D^2\times S^2\setminus \text{int}(D^4)$ 
with two cylindrical ends attached 
and an instanton $A_{U}$ on 
$U\simeq-\mathbb{C}\mathbb{P}^2\setminus \text{int}(D^4)$
with one cylindrical end attached. 
By the removable singularities theorem of Uhlenbeck, 
cf. \cite[Thm. 4.4.12]{dk}, the instanton $A_{U}$ 
uniquely extends to an instanton $A$ on a bundle $\mathbb{W}$ 
over $-\mathbb{C}\mathbb{P}^2$. If $A$ is to be a 
limit of elements in $\moduli_E$, then $p_1(\mathbb{W})=-1$. 
There is only one such instanton class
on $\mathbb{W}$, cf. \cite[\S 2.7]{kmu}.
Thus $[A]$ is uniquely determined. Similarly, there is one instanton 
class to add for $G_T(\s_2)$. That $A$ is trivial over $X$ 
implies the flat limits over $\tbb$ of these two instanton classes lie in $\partial\mathfrak{T}$. 
\end{proof}

Note that the map in Lemma \ref{lem:met} extends 
to a homeomorphism of closed intervals. We write
$\moduli_{G_T}(\mathfrak{T},\ebb)_1^\text{red}$
for the completed closed interval moduli space. We 
call a map between closed intervals \textit{proper} 
if it sends boundary to boundary. A proper map between
oriented, closed intervals has 
a well-defined degree, which is $0$ or $\pm 1$. Indeed, 
one can define the degree by looking at the induced map 
$S^1\to S^1$ obtained by identifying boundary points.

\begin{lemma}
	The map $\lambda:\moduli_{G_T}(\mathfrak{T},\ebb)_1^\text{red}\to\mathfrak{T}$ defined 
	by sending an instanton class to its flat limit class over $\tbb$ has degree $\pm 1$.
\end{lemma}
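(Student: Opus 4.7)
The plan is to leverage the involution $\sigma:\ebb\to\ebb$ constructed in \S \ref{sec:involution} to force the two boundary points of the closed interval $\moduli_{G_T}(\mathfrak{T},\ebb)_1^\text{red}$ to map under $\lambda$ to the two distinct endpoints of $\mathfrak{T}\simeq[-1,1]$. Under the degree framework stated immediately before the lemma, this bijection on the boundary is enough to conclude that $\lambda$ has degree $\pm 1$.

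First I would verify that $\sigma$ induces an involution on the moduli space that swaps its two boundary points. Since $\sigma$ is an orientation-preserving bundle automorphism of $\ebb$ that interchanges $\sonebb$ with $\stwobb$ while fixing $\tbb$ setwise, its pullback action on the family $G_T$ is the orientation-reversing homeomorphism that exchanges the endpoint metrics $G_T(\s_1)$ and $G_T(\s_2)$. The $\sigma$-pullback of a reducible instanton $([A_E],g)$ is again a reducible instanton of the same index and stabilizer type, so $\sigma$ acts on $\moduli_{G_T}(\mathfrak{T},\ebb)_1^\text{red}$ covering the endpoint-swap of $G_T$; in particular it interchanges the two boundary instantons identified in the previous lemma.

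Next I would compute the induced action of $\sigma$ on $\mathfrak{T}$. Equation (\ref{eq:invdiff}) identifies $\sigma|_\tbb$, under the trivialization (\ref{triv}), with the composite $\theta\circ\upsilon$, where $\theta(w,z,a)=(w,w/z,a)$ is a diffeomorphism preserving the $w$-coordinate, and $\upsilon(w,z,a)=(w,z,c(w)a)$ is a non-even gauge transformation. Because $\theta$ fixes the $S^1$-factor of $T$ pointwise, it preserves the holonomy of any flat connection around that $S^1$, and hence acts trivially on $\mathfrak{T}$. By Lemma \ref{lem:flat}, the non-even $\upsilon$ acts on $\mathfrak{T}\simeq[-1,1]$ by reflection about $0$. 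Therefore $\sigma$ acts on $\mathfrak{T}$ by $t\mapsto -t$, which nontrivially permutes $\partial\mathfrak{T}=\{-1,+1\}$.

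Combining these: if $\lambda$ sends one boundary point of the moduli interval to $t\in\{-1,+1\}$, naturality under $\sigma$ forces it to send the other boundary point to $-t\neq t$. Hence $\lambda|_\partial$ is a bijection onto $\partial\mathfrak{T}$, giving $\lambda$ degree $\pm 1$. The main obstacle is bookkeeping: checking that $\sigma$ genuinely lifts to the moduli space, which amounts to verifying that the trivialization of $\tbb$ used to identify $\mathfrak{T}$ with $[-1,1]$ is chosen coherently on both sides of $T$, and that the even gauge equivalence between instantons on $\lbb$ and $\lbb^\ast$ used in the proof of Lemma \ref{lem:met} commutes with the $\sigma$-action. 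The explicit formula (\ref{eq:invdiff}) makes both verifications direct.
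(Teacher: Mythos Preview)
Your proposal is correct and follows essentially the same argument as the paper: both use the involution $\sigma$ of \S\ref{sec:involution} to show that the limit map $\lambda$ intertwines the endpoint-swapping action of $\sigma$ on the moduli interval with the reflection of $\mathfrak{T}$, the latter being deduced from the decomposition $\sigma|_{\tbb}\simeq\theta\circ\upsilon$ together with Lemma~\ref{lem:flat}. The only minor difference is that the paper phrases the triviality of $\theta$'s action on $\mathfrak{T}$ via its action on $\pi_1(T)$, whereas you observe directly that $\theta$ fixes the $S^1$-factor and hence preserves holonomies; these are equivalent.
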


\begin{proof}
	We use the involution $\sigma:\eothreebb\to\eothreebb$ of \S \ref{sec:involution}.
	Write $\moduli$ for the moduli space in the lemma. 
	We see that $\sigma$ induces an action on $\moduli$, and
	because $\sigma(\tbb)=\tbb$, an action on $\mathfrak{T}$. 
	We can arrange the family of metrics $G_T$ so that 
	$\sigma$ restricts to an isometry of the base space and reflects $G_T$,
	in turn swapping the endpoints of the interval $\moduli$. 
	If we establish that $\sigma$ also swaps the endpoints of the interval $\mathfrak{T}$, 
	we are done, because the limit map $\lambda$ respects the 
	action of $\sigma$. From \S \ref{sec:involution} we know that with respect to a fixed trivialization $\tbb\simeq S^1\times S^2\times\so$, $\sigma$ 
	is isotopic to a composition $\theta\circ\upsilon$, where $\theta$ is a diffeomorphism of $S^1\times S^2$ lifted in a trivial way to $S^1\times S^2\times\so$. The diffeomorphism under consideration acts trivially on $\pi_1(T)$, and hence $\theta$ acts trivially on $\mathfrak{T}$. The map $\upsilon$ is a non-even gauge transformation, so by Lemma \ref{lem:flat}, it reflects the interval $\mathfrak{T}$. It follows that $\sigma$ reflects $\mathfrak{T}$.
\end{proof}

\begin{proof}[Proof of Lemma \ref{chainhom}]
By our fiber product description of $\modx$ we can write
\[
	\#\moduli_{G_T}(\connag,\xbb_{03},\connbg)_0 =\pm\sum\varepsilon(x)\varepsilon(y)
\]
where the sum is over pairs
\[
	(x,y)\in\moduli(\connag,\vbb,\connbg,\mathfrak{T}^0)_0\times\moduli_{G_T^0}(\mathfrak{T}^0,\ebb)_1^\text{red}
\]
having equal flat limit class $\lambda(x)=\lambda(y)\in\mathfrak{T}^0$. 
Each $x$ and $y$ has a sign, $\varepsilon(x)$ and $\varepsilon(y)$ respectively, prescribed by orienting 
moduli spaces. In the generic case, the sum of the 
$\varepsilon(y)$ for a fixed value $\lambda(y)$ equals $\pm\text{deg}(\lime)=\pm 1$. In this way 
we obtain
\[
	\#\moduli_{G_T}(\connag,\xbb_{03},\connbg)_0=\pm\#\moduli(\connag,\vbb,\connbg,\mathfrak{T}^0)_0
\]
where the sign does not depend on the pair $(\connag,\connbg)$.
Thinking of cobordisms as morphisms, we abbreviate $[0,1]\times \ybb_0$ to $1_{\ybb_0}$. Write 
$1_{\ybb_0} = \vothreebb \cup_{\sthreebb} \mathbb{W}$ where 
$\mathbb{W}$ is a trivial bundle over $W=S^1\times D^3$. We choose 
the perturbation data for $\wbb$ to be $0$. Let $Q$ be the family of metrics 
on $[0,1]\times\y_0$ induced by $\mathcal{H}=\{\sthree\}$. The boundary of $Q$ consists of an initial 
product metric on $[0,1]\times\y_0$ and a metric $Q(\sthree)$ cut along $\sthree$. Thus (\ref{met1}) and 
(\ref{met2}) yield 
\[
	-m_{Q}(1_{\ybb_0}) \partial_0-\partial_0 m_{Q}(1_{\ybb_0}) = m_{Q(\sthree)}(1_{\ybb_0}) + m(1_{\ybb_0}).
\] 
Of course, $m(1_{\ybb_0})$ is the identity. It remains to show 
$m_{Q(\sthree)}(1_{\ybb_0})=\pm m_{G(\sthree)}(\xbb_{03})$, or
\begin{equation}
	\#\moduli_{Q(\sthree)}(\connag,\connbg)_0=\pm\#\moduli(\connag,\vbb,\connbg,\mathfrak{T}^0)_0 \label{eq:fib2}
\end{equation}
where again the sign does not depend on the pair $(\connag,\connbg)$.
In the spirit of our previous arguments, we establish this by arguing that 
$\moduli(\connag,\vbb,\connbg,\mathfrak{T}^0)_0$
can be written as a fiber product
\[
	\moduli(\connag,\vbb,\connbg,\mathfrak{T}^0)_0\times_{\mathfrak{T}^0}\moduli(\mathfrak{T}^0,\mathbb{W})_1^\text{flat}.
\]
Here $\moduli(\mathfrak{T}^0,\mathbb{W})_1^\text{flat}$ is the 1-dimensional family of flat connection
classes on $\mathbb{W}$ with arbitrary flat limit class in $\mathfrak{T}^0$. 
Indeed, any flat connection class on $\tbb$ uniquely extends to a flat connection class
on $\mathbb{W}$ over $S^1\times D^3$.
We conclude that all instantons on $\mathbb{W}$ are flat, cf. \cite[\S 7.4]{d}.
In particular, the limit map $\lambda:\moduli(\mathfrak{T}^0,\mathbb{W})_1^\text{flat}\to\mathfrak{T}^0$ 
is a smooth homeomorphism. Now suppose $[A]\in\moduli_{Q(\sthree)}(\connag,\connbg)_0$ 
restricts to a pair $[A_V],[A_\text{W}]$ of instantons on $\vbb$ and $\wbb$, respectively, 
with equal limit $\conncg$ over $\tbb$. Then
\[
 0 = \ind(A) = \ind(A_V)+\ind(A_\text{W}) + 2h(c).
\]
We saw in Lemma \ref{lemma:fiber} that $\ind(A_W)=-h(c)$, so $\ind(A_V)= -h(c)$. The space of $[A_V]$ with 
$\ind(A_V)=-2$ is generically empty, so we conclude that $\ind(A_V)=-1$. It follows that 
$\conncg\in\mathfrak{T}^0$. Because the stabilizer of each $A_W$ is $\su$, 
the gluing parameter space is trivial, and our fiber product description is verified, cf. \cite[\S 4]{fs2tor}.
Because the limit map $\lambda:\moduli(\mathfrak{T}^0,\mathbb{W})_1^\text{flat}\to\mathfrak{T}^0$ 
is a homeomorphism, our fiber product yields (\ref{eq:fib2}). This completes the proof of Lemma \ref{chainhom}, 
and consequently the proof of Theorem \ref{thm:floer}.
\end{proof}

\vspace{10px}
\section{A Link Surgeries Spectral Sequence}

In this section we prove Theorem \ref{thm:2}. We follow
\cite{kmu} and \cite{bloom}. In \cite{kmu}, Kronheimer and Mrowka work over 
$\mathbb{Z}$, taking care with signs, and we adapt many of the details from their setup. 
Bloom's paper \cite{bloom} is especially 
descriptive of the combinatorics involved here, and provides many illustrations. 
As mentioned in the introduction, the idea for this spectral sequence 
originates from Ozsv\'ath and Szab\'o's paper \cite{os}.\\

\subsection{The Cobordisms \& Metric Families}

Let $\ybb$ be an admissible bundle over $\y$ and $\lt\subset \y$ a framed link 
with $m$ components $\lt_1,\ldots,\lt_m$. Suppose we have admissible bundles 
$\ybb_v$ for $v\in\{\infty,0,1 \}^m$ that form a surgery cube as in 
\S \ref{sec:main}. We conflate the 
subscript $\infty$ with $-1$ and write $\ybb_v$ for $v\in\{-1,0,1\}^m$. Further, 
we write $\ybb_v$ for $v\in\mathbb{Z}^m$ by taking the modulo $3$ reduction of $v$. 
Define the norms
\[
	|v|_1=\sum_{i=1}^m |v_i|, \qquad |v|_\infty = \max_{1\leq i \leq m}\{|v_i|\}.
\]
We use the partial order on $\mathbb{Z}^m$ that says $v \leq w$ whenever $v_i \leq w_i$ 
for $i=1,\ldots,m$.

Since the $\ybb_v$ form a surgery cube, they can be generated by the data of
$\ybb$ and a framed link $\lbb=\lbb_1\cup\cdots\cup\lbb_m$ in $\ybb$ as in 
\S \ref{sec:dehn}, where each $\lbb_i$ is an equivariant embedding of
$S^1\times D^2\times\so$ into $\ybb$. For $v < w$ we have surgery bundle cobordisms 
$\xbb_{vw}:\ybb_v\to\ybb_w$ constructed 
by iterating the construction for $\xbb_{ij}$ from \S \ref{sec:x} for each $\lbb_i$. 
To give a definition, first set $k=|w-v|_1$.
We choose a maximal chain $v=v(0)<v(1)<\cdots < v(k) = w$. Each $\xbb_{v(i)v(i+1)}$ 
may be viewed as a surgery bundle as defined in \S \ref{sec:x}, and we may set
\[
	\xbb_{vw} = \xbb_{v(k-1)v(k)}\circ\cdots\circ\xbb_{v(0)v(1)}.
\]
The choice of maximal chain does not affect the isomorphism type of $\xbb_{vw}$. 
In fact, the identification of (\ref{iso}) lends a more invariant 
interpretation: we may view $\xbb_{vw}$ as $\ybb_v\times [0,1]$ 
with, for each $i=1,\ldots,m$, a copy of $\hbb \cup_\psi \cdots \cup_\psi \hbb$ 
($w_i-v_i$ copies of $\hbb$) 
attached to $\ybb_v\times \{1\}$ via the framed knot $\Lambda^{v_i+1}(\lbb_i)$. 
We have the isomorphism
\[
	\xbb_{vw}\simeq\xbb_{uw}\circ\xbb_{vu}
\]
whenever $v<u<w$. We write $\bf{0}$ for the element of $\mathbb{Z}^m$ with all zeros, and similarly 
$\bf{n}$ for the element with all elements equal to $n\in\mathbb{Z}$. Note 
that $\xbb_{\bf{0}\bf{3}}$ is not $\xbb_{\bf{0}\bf{0}}=\ybb_{\bf{0}}\times [0,1]$, 
but for instance $\xbb_{\bf{0}\bf{1}}\simeq \xbb_{\bf{3}\bf{4}}$. The base space of 
$\xbb_{vw}$ is written $\x_{vw}$. In the sequel we will only consider $\xbb_{vw}$ with 
$|w-v|_\infty \leq 3$.

As in the case when $\lt$ had one component, we have distinguished hypersurfaces in 
the interior of $\x_{vw}$. Of course, the 3-manifolds $\y_{u}\subset \x_{vw}$ for 
$v < u < w$ are the first examples. Note that $\y_{u}$ and $\y_{u'}$ are disjoint if 
and only if $u < u'$ or $u' < u$. For each $i\in\{1,\ldots,m\}$ and $k$ with 
$v_i<k<w_i$ we have a 3-sphere $\s_k^i$ in $\x_{vw}$ which generalizes $\s_{1}\subset\x_{02}$ 
from \S \ref{sec:decomp1}. The spheres $\s_k^i$ and $\s_l^j$ intersect if and only if $i=j$ and 
$|k-j|\leq 1$, and $\s_k^i$ intersects $\y_u$ if and only if $u_i=k$. For $v,w\in\mathbb{Z}^m$ 
with $v< w$ and $|w-v|_\infty \leq 2$ we define a set of hypersurfaces in $\x_{vw}$:
\[
	\mathcal{H}_{vw} = \{ \y_u : v < u < w \} \cup \{ \s^i_{k} : 1\leq i\leq m,\; v_i < k < w_i\}.
\]
Note that the second set is empty if $|w-v|_\infty<2$.

We obtain a family of metrics $G_{vw}=G(\mathcal{H}_{vw})$ on $\x_{vw}$ as constructed in \S \ref{sec:met}.
The space of metrics $G_{vw}$ is a convex polytope called a graph-associahedron, and 
\[
	\dim G_{vw} = |w-v|_1-1,
\]
as Bloom explains in \cite[Thm. 5.3]{bloom}. In fact, 
when $|w-v|_\infty<2$, $G_{vw}$ is the permutahedron $P_N$, the convex polytope 
defined as the convex hull in $\mathbb{R}^N$ of all permutations of 
$(1,2,\ldots,N)\in\mathbb{R}^N$ where $N= |w-v|_1$. For example, $P_3$ is a hexagon, 
and the polytope $P_4$ is shown (hollowed out) in Figure \ref{fig:perm}. 
Write $m_{vw}=m_{G_{vw}}(\xbb_{vw})$ and $\partial_v$ for the differential of 
$\chain(\ybb_v)$. From the formulae in \S \ref{sec:met} we obtain
\begin{equation}
	 (-1)^{|w-v|_1-1}m_{vw}\partial_{v}-\partial_w m_{vw} = \sum_{v < u < w} m_{G(\y_u)}(\xbb_{vw})
	 + \sum_{\substack{1\leq i \leq m\\w_i <k < v_i}} m_{G(\s_k^i)}(\xbb_{vw}).\label{mets}
\end{equation}
As in \S \ref{sec:firstmaps}, each $m_{G(\s_k^i)}(\xbb_{vw})=0$. Also, the family $G(\y_u)$ 
can be identified with the product $G_{vu}\times G_{uw}$. Before we apply equation (\ref{met3}), 
we discuss the arrangement of signs.

\begin{figure}[t]
\includegraphics[scale=.5]{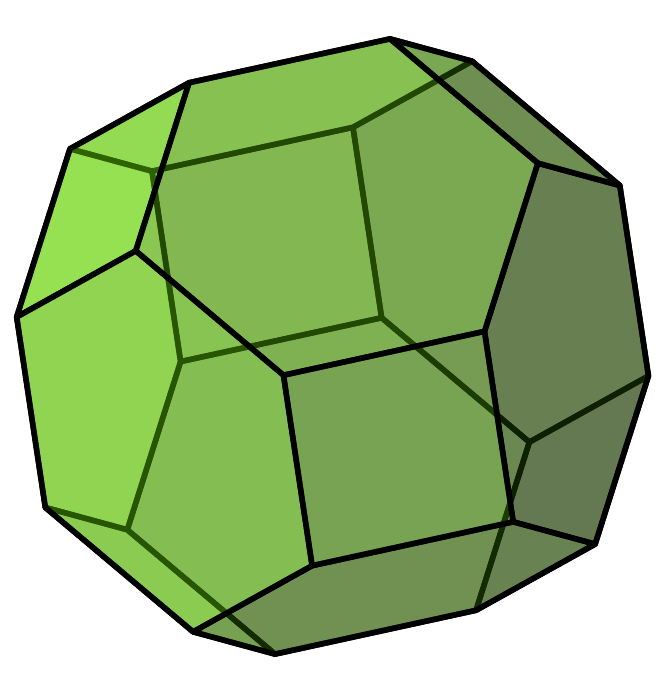}
\caption{The permutahedron $P_4$.}
\label{fig:perm}
\end{figure}

It is possible to choose I-orientations $\mu_{vw}$ for $\xbb_{vw}$ such that 
$\mu_{vw}=\mu_{uw}\circ\mu_{vu}$ whenever $v < u < w$, and we do so. 
For a proof, see \cite[Lemma 6.1]{kmu}. We can orient each $G_{vw}$ such that 
the identification of $G_{vu}\times G_{uw}$ with $G(\y_u)\subset \partial G_{vw}$ 
has orientation deficiency $(-1)^{\dim G_{vu}}$. That is, the product orientation for 
$G_{vu}\times G_{uw}$ using our chosen orientations differs from the
boundary orientation as induced from $G_{vw}$ by the sign $(-1)^{\dim G_{vu}}$. This essentially
follows from the discussion in \cite{kmu} following Prop. 6.4. With this understood, equation 
(\ref{met3}) yields
\[
	m_{G(\y_u)}(\xbb_{vw}) = (-1)^{(\dim G_{uw}+1)\dim G_{vu}}m_{uw}m_{vu}.
\]
Writing $m_{vv}=\partial_v$, equation (\ref{mets}) becomes
\[
	\sum_{v\leq u \leq w} (-1)^{|w-u|_1(|u-v|_1-1)}m_{uw}m_{vu}=0.
\]
We remind the reader that this holds under the assumptions 
that $v < w$ and $|w-v|_\infty \leq 2$. The case $v=w$ also holds, encoding the relation $\partial_v^2=0$.\\

\subsection{Constructing the Spectral Sequence}

We now construct the spectral sequence of Theorem \ref{thm:2}. We define a chain complex 
$(\chainbf,\partialbf)$ with a filtration $F^i \chainbf$. 
The filtration will induce the spectral sequence we desire. To begin, set
\begin{equation}
	\chainbf  = \bigoplus_{v\in\{0,1\}^m} \chain(\ybb_v), \quad \partialbf = 
	\sum_{v \leq w} \partial_{vw}\label{complex}
\end{equation}
where $\partial_{vw}= (-1)^{s(v,w)}m_{vw}$. The sign here is given by 
\[	
	s(v,w) = (|w-v|_1^2- |w-v|_1)/2 + |v|_1,
\]
as lifted from \cite[eq. 38]{kmu}. We compute the $\chain(\ybb_v)\to\chain(\ybb_w)$ component of $\partialbf^2$ to be
\[
	(-1)^{s(v,w)+|w|_1}
	\sum_{v\leq u\leq w} (-1)^{|w-u|_1(|u-v|_1-1)}m_{uw}m_{vu}=0.
\]
We call $(\chainbf,\partialbf)$ the \textit{link surgeries complex associated to} 
$(\ybb,\lbb)$, with the understanding that
the necessary auxiliary choices we've made have been fixed.

We define the filtration on $(\chainbf,\partialbf)$ by setting
\begin{equation}
	F^i{\chainbf} = \bigoplus_{|v|\geq i} \chain(\ybb_v) \subseteq \chainbf.\label{eq:filt}
\end{equation}
Since $\partialbf$ involves only terms with $v\leq w$, it is immediate that 
$\partialbf F^i{\chainbf} \subseteq F^i{\chainbf}$. This filtered complex 
induces a spectral sequence whose $E^1$-page and $E^1$-differential $d^1$ are given by
\[
	\e^1 = \bigoplus_{v\in\{0,1\}^m}\ih(\ybb_v), \quad d^1 = 
	\sum_{\substack{v < w\\|w-v|_1=1}} (-1)^{\delta(v,w)}m(\xbb_{vw}),
\]
where $\delta(v,w)\equiv \sum_{1\leq i \leq j}v_i$, in which
$j$ is the unique index where $v$ and $w$ differ. This carries over from the discussion 
following \cite[Cor. 6.9]{kmu}. To prove Theorem \ref{thm:2} it remains 
to identify the $\e^\infty$-page: we must show that the homology of 
$(\chainbf,\partialbf)$ is the instanton homology $\ih(\ybb)$.\\

\subsection{Convergence}

Let $(\chainbf,\partialbf)$ be the link surgeries complex associated to $(\ybb,\lbb)$. 
For $i\in\mathbb{Z}$ define the chain complex $(\chainbf_i,\partialbf_i)$
to be the link surgeries complex associated 
to $(\ybb_{\Lambda^{i+1}}(\lbb_1),\lbb\setminus \lbb_1)$.
Recall that the notation $\ybb_{\Lambda^{i+1}}(\lbb_1)$ is from \S 3.2, and stands for $\Lambda^{i+1}$-surgery on $\lbb_1$ in $\ybb$.
We conflate $\infty$ and $-1$ in the following.
Note that for $i=\infty,0,1$ and $a,b\in\mathbb{Z}^{m-1}$ 
we have $(\partialbf_i)_{ab} = \partial_{vw}$ 
where $v=(i,a)$ and $w=(i,b)$. Thus we can work exclusively with the maps $\partial_{vw}$ 
with $v,w\in\mathbb{Z}^m$. Consider the map $\textbf{f}_0:\chainbf_0\to \chainbf_1$ given by
\[
	\textbf{f}_0= \sum_{\substack{v,w\in\{0,1\}^{m}\\ v_1=0, w_1 = 1}} \partial_{vw}.
\]
It should be understood that $\partial_{vw}=0$ if $v\not\leq w$. In words, $\textbf{f}_0$ 
is the sum of the components in the differential $\partialbf$ that correspond to 
surgery-cobordisms that include surgery on $\lt_1$. This is an anti-chain map, 
and the larger complex $(\chainbf,\partialbf)$ is the cone-complex of $\textbf{f}_0$. That is,
\[
	\chainbf = \chainbf_0\oplus\chainbf_1, \quad \partialbf  =  
	\left( \begin{array}{cc} \boldsymbol{\partial}_0 & 0 \\ \mathbf{f}_0 &
\boldsymbol{\partial}_1 \end{array} \right).
\]
Define a map $\textbf{F}:\chainbf_{\boldsymbol{\infty}}\to\chainbf$ by
\[
	\textbf{F} = \sum_{\substack{v_1=-1\\w_1\in\{0,1\}}} \partial_{vw}.
\]
This is an anti-chain map: the relation $\textbf{F}\partialbf_{\boldsymbol{\infty}}+ \partialbf\textbf{F}=0$ 
is an encoding of (\ref{mets}) via
\[
	\sum_{\substack{v_1=u_1=-1\\w_1\in\{0,1\}}} \partial_{uw}\partial_{vu} + 
	\sum_{\substack{v_1=-1\\u_1,w_1\in\{0,1\}}} \partial_{uw}\partial_{vu}=0.
\]
Equip $\chainbf$ and $\chainbf_\infty$ with filtrations as in (\ref{eq:filt}) but using 
the sum $\sum_{i=2}^m v_i$ instead of $|v|_1$. Then $\textbf{F}$ respects these 
filtrations, and on the $E^0_p$-components of the induced spectral sequences, 
the map induced by $\textbf{F}$ takes the form
\[
	\textbf{F}^0_p:\bigoplus_{\substack{v_1=-1\\ \sum_{i \geq 2} v_i = p}}\chain(\ybb_v) \to 
	\bigoplus_{\substack{v_1\in\{0,1\} \\ \sum_{i \geq 2} v_i = p}}\chain(\ybb_v)
\]
and for $v$ with $v_1=-1$ is given by
\[
	\textbf{F}^0_p|_{\chain(\ybb_v)} = \partial_{vv'}\oplus \partial_{vv''}
\]
where $v'$, $v''$ have $v'_1=0$ and $v''_1=1$, and otherwise agree with $v$. 
But $\partial_{vv'}$ is the map $f_{-1}$ in \S \ref{sec:firstmaps} for the surgery triangle 
involving $\ybb_v$ and $\lt_1$; and likewise $\partial_{vv''}$ is the map $h_{-1}$. 
It follows from Lemma \ref{alg} that $\textbf{F}^0$ is a quasi-isomorphism, and 
hence so is $\mathbf{F}$. By
removing each link component as we have just done for $L_1$, and composing the $m$ 
maps $\textbf{F}$ associated to each removal, we get a 
quasi-isomorphism $\mathbf{Q}$ from $(\chain(\ybb),\partial)$ to $(\chainbf,\partialbf)$, 
completing the proof of Theorem \ref{thm:2}.\\

\subsection{Gradings}\label{sec:linksgr}

We follow Bloom's \cite{bloom} treatment of gradings for the spectral sequence. 
We refer to the mod 2 grading on the complex $\chain(\ybb)$ defined 
in \S \ref{sec:instgrading} as $\text{gr}[\ybb]$.
We define a grading $\text{gr}[{\chainbf}]$ on the 
complex $\chainbf$ in (\ref{complex}). For $x\in \chain(\ybb_v)\subset\mathbf{C}$ 
with homogeneous $\text{gr}[{\ybb_v}]$ grading, we define
\begin{equation}
	\text{gr}[\chainbf](x) \equiv \text{gr}[{\ybb_v}](x) + \text{deg}(\x_{\boldsymbol{\infty}v}) + |v|_1 \mod 2.\label{eq:gr1}
\end{equation}
Recall that we conflate $\boldsymbol{\infty}$ with $-\mathbf{1}\in\mathbb{Z}^m$. Let $\pi_w:\chainbf\to\chain(\ybb_w)$ be the projection. Note that
\begin{equation}
	\text{gr}[\chainbf](\pi_w(\partialbf(x))) \equiv \text{gr}[\ybb_w](m_{vw}(x)) + \text{deg}(\x_{\boldsymbol{\infty}w})+|w|_1 \mod 2.\label{eq:gr2}
\end{equation}
We have the additivity relation $\text{deg}(\x_{\boldsymbol{\infty}w}) \equiv \text{deg}(\x_{\boldsymbol{\infty}v}) + \text{deg}(\x_{vw})$, and also
\[
	\text{gr}[\ybb_w](m_{vw}(x)) = \text{gr}[\ybb_v](x)+ \dim(G_{vw}) + \text{deg}(\x_{vw}).
\]
Knowing $\dim(G_{vw})=|w-v|_1-1$ shows that the expressions (\ref{eq:gr1}) and 
(\ref{eq:gr2}) differ by $1$ mod $2$, and 
thus the differential $\partialbf$ alters $\text{gr}[\chainbf]$ by $1$.

The quasi-isomorphism 
$\mathbf{Q}:\chain(\ybb)\to \chainbf$ is a composition of $m$ maps $\mathbf{F}$ as in the
previous section. Thus it is a sum of maps of the form $m_G(\xbb_{\boldsymbol{\infty}v})$, 
where $v\in \{0,1\}^m$ and $G = G_1\times\cdots\times G_m$. Here $G_i=G_{v(i)v(i+1)}$ 
only varies on $\x_{v(i)v(i+1)}\subset\x_{\boldsymbol{\infty}v}$ and 
$\boldsymbol{\infty}=-\mathbf{1} = v(1) < v(2) < \cdots < v(m+1)=v$.
Using $\text{dim}(G_{vw})=|w-v|_1-1$ 
for $v<w$, we find $\dim(G)=|v|_1$. Since the $\text{gr}[\ybb]$ to $\text{gr}[\ybb_v]$ degree of $m_G(\xbb_{\boldsymbol{\infty}v})$ is $\dim(G)+\text{deg}(\x_{\boldsymbol{\infty}v})$, it follows that $\mathbf{Q}$ preserves the $\mathbb{Z}/2$-gradings.

There is also a $\mathbb{Z}$-grading on $\chainbf$ given by the vertex weight $|v|_1$ for a homogeneous 
element in $\chain(\ybb_v)\subset\chainbf$, and by construction $\partialbf$ 
increases this by $1$. We summarize a more detailed statement of Theorem \ref{thm:2}; compare 
\cite[Cors. 6.9, 6.10]{kmu}  and \cite[Thm. 1.1]{bloom}.

\begin{theorem}
Let $\lt$ be an oriented, framed link with m components in $\y$. For each 
$v\in \{\infty,0,1\}^m$ denote by $\y_v$ the result of $v$-surgery on $\lt$ 
and let $\ybb_v$ be an admissible bundle over $\y_v$ such that the total 
collection of $\ybb_v$ forms a surgery cube. 
For $v<w$ there are surgery cobordism bundles $\xbb_{vw}$ from $\ybb_v$ to $\ybb_w$ with 
I-orientations $\mu_{vw}$ satisfying $\mu_{uw}\circ\mu_{vu}=\mu_{vw}$ whenever $v<u<w$,
such that there is a spectral sequence $(E^r,d^r)$ with
\[
	\text{{\em $E^1 = \bigoplus_{v\in\{0,1\}^m} \ih(\ybb_v), \qquad d^1 = \sum_{\substack{v < w\\ |w-v|_1 = 1}} (-1)^{\delta(v,w)}\ih(\xbb_{vw})$}}
\]
where $\delta(v,w)=\sum_{1\leq i \leq j} v_j$, in which $j$ is the unique index where 
$v$ and $w$ differ. 
The spectral sequence is graded by $\mathbb{Z}/2\times\mathbb{Z}$, where
$d^r$ has bi-degree $(1,r)$.
The $\mathbb{Z}/2$-grading is given by (\ref{eq:gr1}) while the $\mathbb{Z}$-grading 
is by vertex weight.
The spectral sequence converges by the $E^{m+1}$-page to $\ih(\ybb)$, and it induces the 
usual $\mathbb{Z}/2$-grading on $\ih(\ybb)$. \label{ss1}
\end{theorem}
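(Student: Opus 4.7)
The plan is to assemble the constructions already developed in Sections 6.1--6.4 and check they combine into the desired filtered quasi-isomorphism. First, the complex $(\chainbf,\partialbf)$ of (\ref{complex}) is built with signs $(-1)^{s(v,w)}$, and the identity $\partialbf^2=0$ follows from the master relation (\ref{mets}) together with two inputs: the vanishing $m_{G(\s_k^i)}(\xbb_{vw})=0$, proved exactly as in Section 5.2 since each $\s_k^i$ separates off a copy of $-\mathbb{C}\mathbb{P}^2$ carrying a non-trivial bundle; and the identification of the face $G(\y_u)\subset\partial G_{vw}$ with $G_{vu}\times G_{uw}$ having orientation deficiency $(-1)^{\dim G_{vu}}$. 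Compatible I-orientations $\mu_{vw}$ satisfying $\mu_{uw}\circ\mu_{vu}=\mu_{vw}$ exist by the argument of \cite[Lem.~6.1]{kmu}.

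Next I introduce the filtration $F^i\chainbf=\bigoplus_{|v|_1\geq i}\chain(\ybb_v)$, which is preserved by $\partialbf$ because every component runs from $v$ to some $w\geq v$. The associated graded differential $d^0$ is the internal $\bigoplus\partial_v$, so $E^1=\bigoplus_{v\in\{0,1\}^m}\ih(\ybb_v)$, and $d^1$ extracts precisely the $|w-v|_1=1$ terms of $\partialbf$, yielding the stated formula with signs $(-1)^{\delta(v,w)}$ read off from $s(v,w)$. Since $|v|_1$ ranges over $\{0,1,\ldots,m\}$, the filtration has length $m+1$, forcing degeneration by the $E^{m+1}$-page.

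Convergence to $\ih(\ybb)$ is established by iterated cones. Writing $\chainbf=\text{Cone}(\textbf{f}_0\colon\chainbf_0\to\chainbf_1)$ for the link surgeries complexes on the reduced link $\lbb\setminus\lbb_1$, one builds the auxiliary anti-chain map $\textbf{F}\colon\chainbf_{\boldsymbol{\infty}}\to\chainbf$ from the $\partial_{vw}$-components with $v_1=-1$. Filtering both sides by $\sum_{i\geq 2}v_i$ and examining $\textbf{F}^0$ on each $\chain(\ybb_v)$ summand, one recognizes the pair of maps $f_{-1}$ and $h_{-1}$ from Section 5 associated to the surgery triangle on $\lbb_1$ inside $\ybb_v$; the Triangle Detection Lemma \ref{alg} then certifies $\textbf{F}^0$, and hence $\textbf{F}$, to be a quasi-isomorphism. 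Iterating this procedure over the $m$ components produces a quasi-isomorphism $\mathbf{Q}\colon\chain(\ybb)\to\chainbf$.

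Finally, for the gradings, the $\mathbb{Z}/2$-grading of (\ref{eq:gr1}) is raised by $\partialbf$ because the two expressions (\ref{eq:gr1}) and (\ref{eq:gr2}) differ by $1$ mod $2$, via $\dim(G_{vw})=|w-v|_1-1$ and additivity of $\deg(\x)$. The map $\mathbf{Q}$ is a sum of pieces $m_G(\xbb_{\boldsymbol{\infty}v})$ with $\dim(G)=|v|_1$, so its degree shift cancels the $|v|_1$ offset in (\ref{eq:gr1}) and $\mathbf{Q}$ respects the $\mathbb{Z}/2$-grading, matching the usual grading on $\ih(\ybb)$ from Section 4.6. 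The $\mathbb{Z}$-grading by vertex weight is preserved by construction. The main obstacle throughout this programme is the sign bookkeeping, in particular producing a coherent system of orientations on the graph-associahedra $G_{vw}$ exhibiting the precise boundary orientation deficiency $(-1)^{\dim G_{vu}}$ along each face $G(\y_u)$; here I follow the treatment of \cite{kmu} essentially verbatim.
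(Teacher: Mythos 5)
Your proposal matches the paper's argument step for step: the construction of $(\chainbf,\partialbf)$ from the metric families $G_{vw}$, the vanishing of the $m_{G(\s^i_k)}$ terms, the sign-coherent I-orientations and polytope orientations yielding $\partialbf^2=0$, the filtration by $|v|_1$ giving the stated $E^1$-page and $d^1$, the iterated-cone argument with $\textbf{F}$ filtered by $\sum_{i\geq 2}v_i$ and identified with $f_{-1}\oplus h_{-1}$ so that Lemma~\ref{alg} certifies the quasi-isomorphism $\mathbf{Q}$, and the grading check via $\dim G_{vw}=|w-v|_1-1$. This is the paper's own proof, and your added remark that the length-$(m+1)$ filtration forces collapse by the $E^{m+1}$-page is the correct reason for the convergence claim.
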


\vspace{10px}
\section{Framed Instanton Homology}

In this section we discuss the basic constructions and properties of the 
groups $\ih^\#(\y)$. These are a special case of the groups 
$\ih^\#(\y,\knot)$ introduced by Kronheimer and Mrowka in \cite{kmu}. 
Here $\y$ is a 3-manifold and $\knot$ is a knot or link in $\y$, 
and we have $\ih^\#(\y)=\ih^\#(\y,\emptyset)$. The name 
{\em framed instanton homology} comes from \cite{kmki}. The group $\ih^\#(\y)$ 
is isomorphic to the sutured instanton group $\text{SHI}(M,\gamma)$ 
from \cite{kms}, where $M$ 
is the complement of an open 3-ball in $\y$ and 
$\gamma$ is a suture on the 2-sphere boundary.\\

\subsection{Framed Instanton Groups}\label{sec:framed}
Let $\y$ be a connected, oriented, closed 3-manifold. 
Consider an $\so$-bundle $\ybb^\#$ over $\y\# T^3$ 
with $\ybb^\#$ trivial over $\y$ and non-trivial over $T^3$.
To make the construction of $\ybb^\#$ from $\y$ more precise, we can 
once and for all pick a point $x\in T^3$, a bundle $\tbb^3$ over $ T^3$ 
geometrically represented by an $S^1$-factor, and an isomorphism $\tbb^3_x\simeq\so$. 
Then, up to inessential choices, $\ybb^\#$ can be constructed from $\y$ and a basepoint 
$y\in\y$. Indeed, we can perform the connected sum $\y\# T^3$ between 
3-balls around $y$ and $x$, and glue the bundles $\y\times\so$ and $\tbb^3$ 
by expanding the isomorphism $\so\simeq \tbb_x^3$ near $x$.

We describe a useful operation for cobordisms in this context. 
Let $\x:\y_1\to\y_2$ be a cobordism and let $\gamma:[0,1]\to \x$ be a properly 
embedded path with $\gamma(0)$ and $\gamma(1)$ being the chosen basepoints in 
$\y_1$ and $\y_2$, respectively. Given another such pair $\x',\gamma'$ where 
$\x':\y'_1\to\y'_2$, we form a cobordism
\[
	\x\Join\x':\y_1\#\y_1'\to\y_2\#\y_2'
\]
as follows: let $\Gamma$ be a neighborhood of $\gamma$ diffeomorphic to 
$\text{int}(D^3)\times [0,1]$, and write
\[
	\partial(\x\setminus \Gamma) \setminus (\y_1\cup \y_2\setminus \partial\Gamma) = S^2\times [0,1];
\]
do the same for $\x'$, and identify the copies of $S^2\times [0,1]$ by
an orientation reversing homeomorphism. See 
Figure \ref{fig:join}. We omit the paths from the notation $\x\Join\x'$ 
because for all of our cobordisms there will be a natural choice of path 
up isotopy relative to the boundaries. The operation $\Join$ extends to 
glue together cobordisms of bundles $\xbb$ and $\xbb'$ if a path of 
isomorphisms $\xbb_{\gamma(t)}\simeq \xbb'_{\gamma'(t)}$ is chosen.

Let $g$ be a gauge transformation of $\ybb^\#$ with $\eta(g)\in H^1(\y\# T^3;\mathbb{F}_2)$ 
Poincar\'{e} dual to a 2-torus $\Sigma\subset T^3$ over which $\ybb^\#$ is non-trivial.
Here $\eta:\mathscr{G}(\xbb)\to H^1(\x;\mathbb{F}_2)$ is from the exact sequence (\ref{eta}).
Such a transformation may be constructed explicitly as in 
\cite[Lemma A.2]{ds}.
Define the framed gauge transformations $\mathscr{G}^\#$ 
to be the subgroup of $\mathscr{G}(\ybb^\#)$ generated by $\mathscr{G}_\text{ev}(\ybb^\#)$ 
and $g$. We let $\mathfrak{C}^\#$ denote the critical set of a perturbed 
Chern-Simons functional $\textbf{cs}_\pi$ on $\mathscr{C}/\mathscr{G}^\#$.
Note that $\mathfrak{C}^\#$ is obtained from $\mathfrak{C}(\ybb^\#)$ 
by modding out by the $\mathbb{Z}/2$-action of degree 4 
induced by the gauge transformation $g$. 

We define the chain complex $\chain^\#(\y)$ for $\ih^\#(\y)$
following ideas from \cite[\S 4.4]{kmu}. 
This definition transparently replaces the notion of an I-orientation 
with that of a homology orientation.
Fix once and for all a bundle 
$\mathbb{W}:S^3\times\so\to \tbb^3$ over $T^2\times D^2\setminus \text{int}(D^4):S^3\to T^3$ 
extending $\tbb^3$. Fix a path $\gamma$ in $W$ beginning in $S^3$, 
ending at $x\in T^3$, and 
a path of isomorphisms $\mathbb{W}_{\gamma(t)}\simeq\so$, the isomorphisms at the ends being the 
natural choices. 
We define
\[
	\chain^\#(\y) = \bigoplus_{\connag\in\mathfrak{C}^\#} \mathbb{Z}\Lambda^\#(\connag)
\]
where $\Lambda^\#(\connag)$ is the 2-element set of orientations of the 
line $\text{det}(D_A)$; here $A$ is a connection on $[0,1]\times\ybb\Join\mathbb{W}$ 
(with cylindrical ends attached) where the 
limit of $A$ over the $\mathbb{R}\times \ybb$ cylindrical end is equivalent to 
the trivial connection, and the limit of $A$ over the 
$\mathbb{R}\times \ybb^\#$ end is in the class $\connag$. The operator $D_A$ 
is as in \S \ref{sec:index}.

The differential for $\chain^\#(\y)$ is straight-forward to define,
following the construction of the differential for $\ih(\ybb)$ in \S \ref{sec:instantongroups}, which followed 
\cite[\S 3.6]{kmu}. Note that a base connection as in the definition for $\chain(\ybb)$ is no longer needed. 
In summary, given $\y$ with a basepoint, with suitable metric and 
perturbation, the complex $\chain^\#(\y)$ and hence the group $\ih^\#(\y)$ are determined. 
The isomorphism class of $\ih^\#(\y)$ depends only on $\y$.\\

\begin{figure}[t]
\includegraphics[scale=.38]{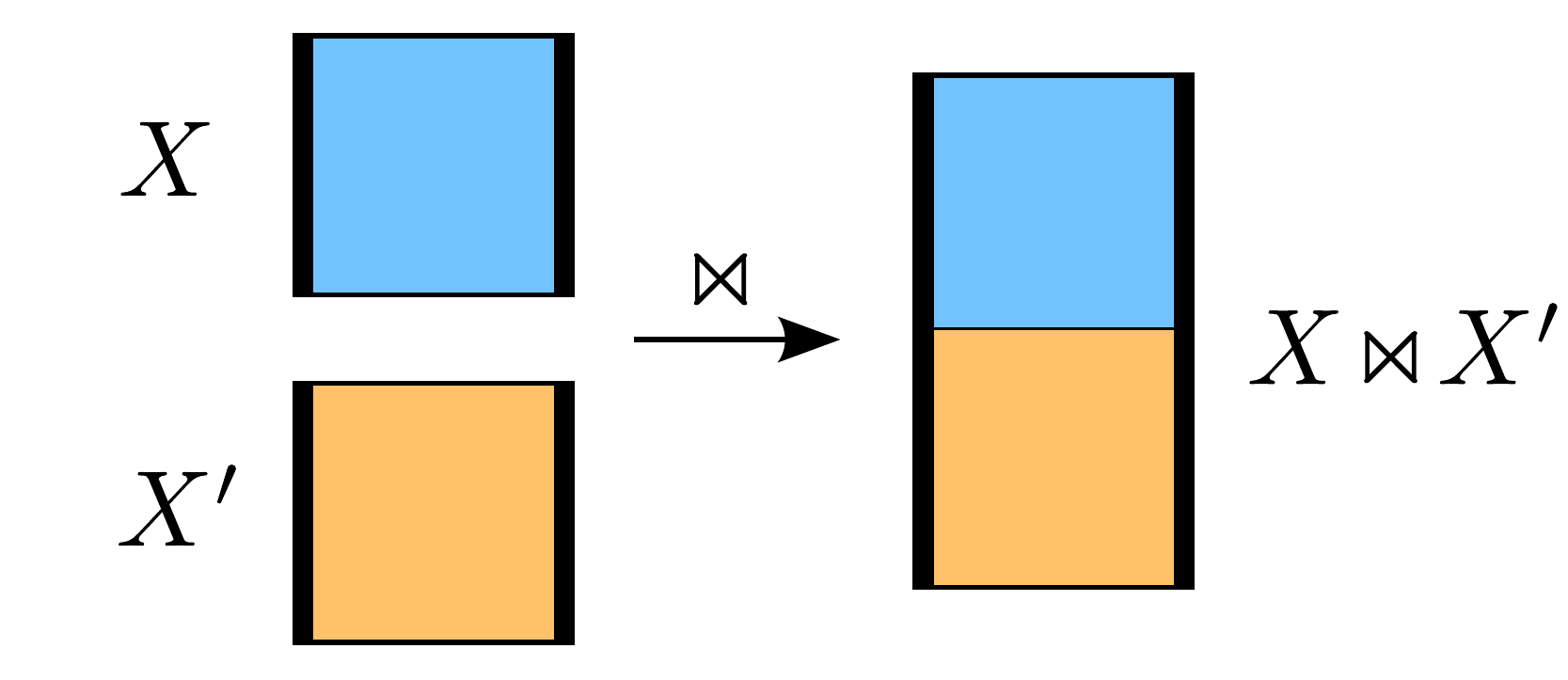}
\caption{A schematic depiction of the $\Join$ operation. The thicker
lines represent actual boundary components.}
\label{fig:join}
\end{figure}

\subsection{Maps from Cobordisms}\label{sec:cobs}
We describe how a cobordism $\x:\y_1\to\y_2$ with a path $\gamma$ as above 
gives rise to a map $\ih^\#(\x):\ih^\#(\y_1)\to\ih^\#(\y_2)$. Again, 
we omit $\gamma$ from the notation because there will always be a natural choice for us. 
We always assume $\x$ and $\y_1,\y_2$ are connected.
Take the path in $ T^3\times [0,1]$ given by $t\mapsto (x,t)$. 
Using this we form a cobordism
\[
	\x^\#=\x\Join( T^3\times [0,1]):\y_1\# T^3\to \y_2\# T^3.
\]
Further, there is a natural choice for bundle $\xbb^\#$ over $\x^\#$ by 
performing the $\Join$ operation between $\x\times\so$ and $\tbb^3\times [0,1]$ 
using the constant path of isomorphisms $\so\simeq \tbb_x^3$.
We enlarge the even gauge transformation 
group used for $\xbb^\#$ to include gauge transformations 
whose restriction to each $T^3$ is of the form $g$ from \S \ref{sec:framed} above. 
See \cite[\S 5.1]{kmu} for a general discussion. Then, in the usual way,
we obtain a chain map $m^\#(\x):\chain^\#(\y_1)\to\chain^\#(\y_2)$ and an induced map 
$\ih^\#(\x)$ on homology.

The data of an I-orientation 
may be replaced by an orientation of the line $\text{det}(D_A)$ where $A$ is the 
trivial connection on $\x\times\so$. Following \cite[\S 3.8]{kmki}, but using 
homology instead of cohomology, this amounts to an orientation of the vector space
\[
	\mathcal{L}(\x):=H_1(\y_1;\mathbb{R})\oplus H_1(\x;\mathbb{R})\oplus H_2^+(\x;\mathbb{R}),
\]
where $H_2^+(\x;\mathbb{R})$ is a maximal positive definite subspace for the 
intersection form on $H_2(\x;\mathbb{R})$. 
A choice of such an orientation is called a \textit{homology orientation} 
for the cobordism $\x$, and is typically denoted $\mu_\x$. In summary, given 
$\x:\y_1\to\y_2$, a path $\gamma$ from the basepoint of $\y_1$ to the basepoint of $\y_2$, 
a suitable perturbation and metric, and a homology orientation 
$\mu_\x$, the chain map $m^\#(\x)$ is determined. The induced map $\ih^\#(\x)$ depends on $\x$, $\mu_\x$, and presumably $\gamma$.

We define $\ih^\#(\emptyset)=\ih^\#(S^3)$, and when $\x:\emptyset\to \partial \x$, 
we define the map $\ih^\#(\x)$ by deleting a 4-ball in $\x$. In particular, when 
$\x$ is a compact, connected, oriented 4-manifold with connected boundary, and an orientation of 
$H_1(\x;\mathbb{R})\oplus H_2^+(\x;\mathbb{R})$ is chosen, we obtain an element
\[
	[\x]^\# \in \ih^\#(\partial \x).
\]
We also obtain a map $[\x]_\#:\ih^\#(\overline{\partial \x})\to\mathbb{Z}$
by viewing $\x:\overline{\partial \x}\to\emptyset$ and orienting 
$H_1(\partial\x;\mathbb{R})\oplus H_1(\x;\mathbb{R})\oplus H_2^+(\x;\mathbb{R})$. 
If $\x$ is a closed, connected, oriented 4-manifold and $H_1(\x;\mathbb{R})\oplus H_2^+(\x;\mathbb{R})$ 
is oriented, then we have a number $[\x]^\#\in\mathbb{Z}$.

Finally, we mention another topological operation that arises naturally in 
this setting. This is the boundary sum $W\natural W'$ of two 4-manifolds 
with boundary, as used in \cite{gs}; one deletes a model half-4-ball along the boundaries of 
$W$ and $W'$ and glues them together with an 
orientation-reversing homeomorphism, so that $\partial(W\natural W')=\partial W\#\partial W'$. 
We have
\[
	\left(\x\Join\x'\right)\circ \left(W\natural W'\right) \simeq 
	\left(\x\circ W\right)\natural\left(\x'\circ W'\right)
\]
where compositions involved are of course assumed to make sense, and the same relation holds 
with the compositions reversed. See Figure \ref{fig:natural}.\\

\begin{figure}[t]
\includegraphics[scale=.33]{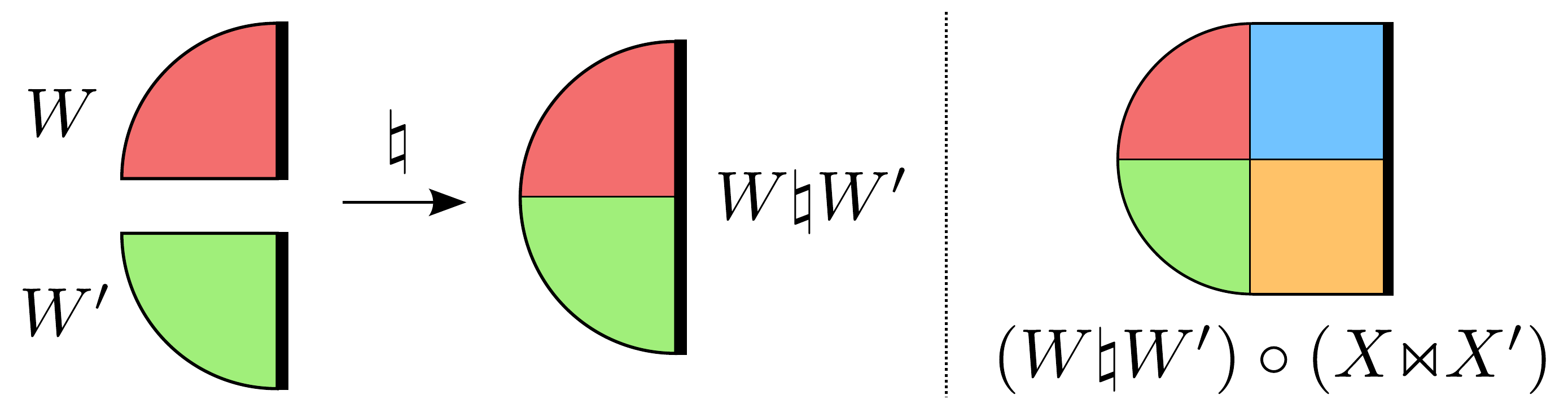}
\caption{On the left, a schematic depiction of the boundary sum $\natural$ operation. On the right, 
we compose the $\Join$ operation against $\natural$, and the result may be interpreted as involving only $\natural$. The thick lines represent actual boundary components.}
\label{fig:natural}
\end{figure}

\subsection{Grading} \label{sec:framedgr}

We now define the absolute $\mathbb{Z}/4$-grading on $\ih^\#(\y)$. 
Let $\mathbb{W}'$ be a completion of $\mathbb{W}$ from \S \ref{sec:framed} 
with the 4-ball filled in, so that it is 
a non-trivial bundle over $T^2\times D^2$, and we may write $\mathbb{W}':\emptyset \to \mathbb{T}^3$. Fix an integer $k$. 
For $\connag\in\mathfrak{C}^\#(\y)$ we define 
\[
	\text{gr}(\connag) := -\mu(\ebb \;\natural\; \mathbb{W}',\connag) -b_1(E)+b_+(E)-b_1(Y) + k \mod 4
\]
where $E:\emptyset\to\y$ is a 4-manifold with boundary $\y$ and $\ebb=E\times\so$. 
We choose $k$ such that $\ih^\#(S^3)$ 
is supported in grading $0$. The proof that this grading is 
well-defined is the same as the case of the absolute mod 2 grading for $\ih(\ybb)$ as for example in 
\cite{d}; we get $\mathbb{Z}/4$ instead of $\mathbb{Z}/2$ because the characteristic classes
of the bundles are uniformly controlled in this case. We
give the argument for completeness, and compute the degrees of cobordism maps. 
We have chosen our conventions so that the degree formula 
aligns with that of \cite[Prop. 4.4]{kmu}.

\begin{prop}
The assignment \emph{$\connag\mapsto \text{gr}(\connag)$} gives a well-defined 
$\mathbb{Z}/4$-grading on $\chain^\#(\y)$ for which the differential has 
degree $-1$ and thus descends to a $\mathbb{Z}/4$-grading on $\ih^\#(Y)$. 
Given a cobordism $X:Y_1\to Y_2$ equipped with the data to form $\x^\#$ as in \S \ref{sec:cobs}, the 
degree of the induced map 
$\ih^\#(\x):\ih^\#(Y_1)\to\ih^\#(Y_2)$ is given by the expression for deg$(\x)$ in (\ref{degofmap}) taken
modulo 4. More generally, if $\ybb_i=\y_i\times\so$ and $\xbb:\ybb_1\to\ybb_2$ is possibly non-trivial and comes equipped with the 
data to form $\xbb^\#$, then the degree of the induced map $\ih^\#(\xbb):\ih^\#(Y_1)\to\ih^\#(Y_2)$ is given by
\begin{equation}
	-\frac{3}{2}(\chi(\x)+\sigma(\x)) + \frac{1}{2}(b_1(\y_2)-b_1(\y_1)) + 2\relp(\xbb) \mod 4\label{eq:degmaster}
\end{equation}
where the invariant $\relp(\xbb)\in\mathbb{Z}/2$ is defined by 
\[
	\relp(\xbb) \equiv [S]\cdot [S] \mod 2.
\]
Here $S\subset\x$ is a surface in the interior of $\x$, $[S]\in H_2(X;\mathbb{F}_2)$, and 
the image of $[S]$ in $H_2(X,\partial X;\mathbb{F}_2)$ is Poincar\'{e} dual to $w_2(\xbb)$.\label{gradingprop}
\end{prop}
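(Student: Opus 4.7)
Here is the plan. First, for \emph{well-definedness} of $\text{gr}(\connag)$, given two choices $E_1, E_2$ bounding $\y$, I form the closed 4-manifold $Z = E_1 \cup_\y \overline{E_2}$. By the compatibility between the $\natural$ and $\Join$ operations noted in \S \ref{sec:cobs}, gluing $\ebb_1 \natural \ssbb$ to $\overline{\ebb_2 \natural \ssbb}$ along their common boundary $\y \# T^3$ yields a closed bundle over $Z \# (T^2 \times S^2)$, where the $T^2 \times S^2$ factor arises from doubling the $D^2$-factor of $W' = T^2 \times D^2$. The index gluing formula (\ref{glue}) together with the closed-manifold formula (\ref{closed}) express $\ind(\ebb_1 \natural \ssbb, \connag) + \ind(\overline{\ebb_2 \natural \ssbb}, \connag)$ in terms of $p_1$, $\chi$, and $\sigma$ of the glued closed manifold. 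For $\so$-bundles $p_1 \equiv w_2^2 \pmod{4}$ forces $-2p_1 \equiv 0 \pmod{4}$, and the topological additivities $\sigma(Z) = \sigma(E_1) - \sigma(E_2)$ (Novikov) and $\chi(Z) = \chi(E_1) + \chi(E_2)$ (since $\chi(\y)=0$) yield the cancellation that shows the right-hand side of the grading formula is independent of $E$ modulo~$4$.

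Second, the \emph{differential has degree $-1$} because, with $E$ fixed, the identity (\ref{glue}) applied to $\ebb \natural \ssbb$ with varying flat limit gives
\[
\ind(\ebb \natural \ssbb, \connbg) - \ind(\ebb \natural \ssbb, \connag) = \ind(\connag, \connbg),
\]
with no $h^0 + h^1$ correction since limits are acyclic. Matrix entries of $\partial$ come from $\check{\moduli}(\connag, \connbg)_0$, i.e.\ $\ind(\connag, \connbg) = 1$, so $\text{gr}(\connbg) - \text{gr}(\connag) = -1 \pmod{4}$. An analogous computation handles a general cobordism bundle $\xbb:\ybb_1 \to \ybb_2$: fix $E_1$ bounding $\y_1$, let $E_2 = E_1 \cup_{\y_1} \x$, and apply (\ref{glue}) to the decomposition of $\ebb_2 \natural \ssbb$ as the composite of $\ebb_1 \natural \ssbb$ and $\xbb^\#$. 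For entries contributing to $\ih^\#(\xbb)$, $\ind(\connag_1, \xbb^\#, \connbg_2) = 0$, so $\text{gr}(\connbg_2) - \text{gr}(\connag_1)$ modulo $4$ reduces to the difference $(b_+(E_2) - b_+(E_1)) - (b_1(E_2) - b_1(E_1)) - (b_1(\y_2) - b_1(\y_1))$ plus the $-2p_1$-correction coming from the non-trivial part of $\xbb$ (which is absent when $\xbb = \x \times \so$).

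Third, the topological additivities $\chi(E_2) = \chi(E_1) + \chi(\x)$ and $\sigma(E_2) = \sigma(E_1) + \sigma(\x)$, combined with $2b_+(E) = \sigma(E) + b_2(E)$ and a Mayer--Vietoris accounting for the $H_1(\y_1;\mathbb{R})$ kernel, collapse this expression to $-\tfrac{3}{2}(\chi(\x) + \sigma(\x)) + \tfrac{1}{2}(b_1(\y_2) - b_1(\y_1))$ of (\ref{degofmap}). The residual $-2p_1(\xbb) \pmod{4}$ is, via $p_1 \equiv w_2^2 \pmod{4}$ and the description of $\relp$ as $[S]\cdot[S]$ modulo $2$ for a dual surface $S$, exactly $2\relp(\xbb)$.

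The main obstacle will be the topological bookkeeping in the third step: controlling $b_+$ and $b_1$ additivity across the 3-manifold $\y_1$, since neither is strictly additive in the presence of the Mayer--Vietoris kernel from $H_1(\y_1;\mathbb{R})$, and since $b_+$ only sees the non-degenerate quotient of the intersection form on a manifold with boundary. Pinning down the constants so that the explicit formula (\ref{degofmap}) appears exactly, and tracking the sign of the $p_1$ correction so that it contributes $+2\relp(\xbb)$ rather than $-2\relp(\xbb)$, will be the most delicate point.
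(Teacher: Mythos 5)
Your proposal matches the paper's approach: well-definedness via index gluing and the closed-manifold index formula, with the $T^2\times S^2$ contribution vanishing mod 4, degree $-1$ for the differential via a direct application of the gluing formula, and the cobordism degree via the same additivity and Mayer--Vietoris bookkeeping, with $\mathscr{P}(\xbb)$ identified with $\widetilde{w}_2(\xbb)^2$ as the paper does. One small imprecision to fix: the statement ``$p_1\equiv w_2^2 \pmod 4$ forces $-2p_1\equiv 0\pmod 4$'' is false in general (it fails already for a bundle over $-\mathbb{CP}^2$ with $w_2\neq 0$, where $w_2^2=1$); what you actually need, and what holds in this situation, is that $w_2$ of the glued bundle is Poincar\'e dual to the $T^2$-factor in $T^2\times S^2$, so $w_2^2=0$ and hence $p_1\equiv 0\pmod 4$. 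You should also note that the gluing formula (\ref{glue}) contributes no $h^0+h^1$ correction along $\connag$ precisely because the critical points in $\mathfrak{C}^\#$ are acyclic after perturbation.
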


\begin{proof}
Let $E':Y\to\emptyset$ and $\ebb'=E'\times\so$, 
and let $\wbb''$ be the reverse of $\wbb'$. In particular, we may write $\mathbb{W}'':\mathbb{T}^3\to\emptyset$. 
Then by (\ref{glue}) we have
\begin{equation}
	\mu(\ebb\;\natural\;\wbb',\connag) + \mu(\connag,\ebb'\;\natural\;\wbb'') = \mu((\ebb'\circ\ebb)\#(\wbb''\circ\wbb')).\label{eq:gradeglue}
\end{equation}
By (\ref{glue}) we may write the right hand side as
\[
	\mu(\ebb'\circ\ebb) + 3 + \mu(\wbb''\circ\wbb').
\]
Note $\wbb''\circ\wbb'$ is a bundle over $T^2\times S^2$, which necessarily has $p_1$ congruent to 
$0$ mod $4$. Also, $(1-b_1+b_+)(T^2\times S^2)=0$. By (\ref{closed}) we conclude that $\mu(\wbb''\circ\wbb')$ is 
congruent to $0$ mod $4$. Noting that $\ebb'\circ\ebb$ is a trivial bundle, 
(\ref{eq:gradeglue}) is mod 4 congruent to 
\[
 	\mu(\ebb'\circ\ebb) + 3 = 3(b_1-b_+)(E'\circ E)
\]
which by a Mayer-Vietoris argument (see \S \ref{sec:homor}) is mod 4 congruent to
\[
	-b_1(E)-b_1(E') + b_+(E) + b_+(E')+ b_1(Y).
\]
It follows that the expression
\[
	\text{gr}(\connag) - \mu(\connag,\ebb'\;\natural\;\wbb'') = b_1(E') - b_+(E') - 2b_1(Y)+k  \mod 4
\]
is independent of $\ebb$, and thus so is $\text{gr}(\connag)$.
In other words, $\text{gr}(\connag)$ is a well-defined $\mathbb{Z}/4$-grading on $\chain^\#(Y)$. 
Suppose $\connag,\connbg\in\mathfrak{C}^\#(Y)$ with $\mu(\connag,\mathbb{R}\times\ybb^\#,\connbg)=1$. 
Then
\[
	\mu(\ebb\;\natural\;\wbb',\connag)+\mu(\connag,\mathbb{R}\times\ybb^\#,\connbg) = \mu(\ebb\;\natural\;\wbb',\connbg)
\]
yields $\text{gr}(\connbg)-\text{gr}(\connag) = -1$. It follows that the differential 
lowers the grading by 1. Now we compute the degree of a map $\ih^\#(\x)$ induced by a cobordism 
$\x:\y_1\to\y_2$.
Let $\xbb=\x\times\so$ and form $\vbb = \xbb\Join(\tbb^3\times [0,1])$. 
Let $\connag\in \mathfrak{C}^\#(\y_1)$ and $\connbg\in \mathfrak{C}^\#(\y_2)$ 
with $\mu(\connag,\vbb,\connbg)=0$. 
Let $E:\emptyset\to\y_1$ and $\ebb=E\times\so$. Then (\ref{glue}) and $\mu(\connag,\vbb,\connbg)=0$ yield 
$\mu(\vbb\circ(\ebb\;\natural\;\wbb'),\connbg) = \mu(\ebb\;\natural\;\wbb',\connag)$. 
Thus $\text{deg}(\x) \equiv \text{gr}(\connbg)-\text{gr}(\connag)$ is given by
\[
	-b_1(\x\circ E) + b_+(\x\circ E) - b_1(Y_2) + b_1(E) - b_+(E) + b_1(Y_1).
\]
From the discussion in \S \ref{sec:homor}, $-b_1(\x\circ E) + b_+(\x\circ E)$ is equal to
\[
	 -b_1(E)-b_1(X) + b_+(E) + b_+(X) + b_1(Y_1).
\]
We obtain the simplified expression
\begin{equation}
	\text{deg}(X) \equiv -b_1(X) + b_+(X) + 2b_1(Y_1) - b_1(Y_2) \mod 4.\label{eq:degprop}
\end{equation}
Using the assumption that $\x$, $\y_1$ and $\y_2$ 
are connected and non-empty, we have $\chi(X) = 1-b_1(X)+b_2(X)-b_3(X)$.
Poincar\'{e}-Lefschetz duality tells us $b_3(X)=b_1(X,\partial X)$, and by the long 
exact sequence for the pair $(X,\partial X)$ with real coefficients we obtain
\[
	d-b_2(X)+b_1(\partial X)-b_1(X)+b_1(X,\partial X) -b_0(\partial X) + b_0(X) = 0,
\]
where $d$ is the dimension of the image of the map $H_2(X)\to H_2(X,\partial X)$.
Note $b_0(\partial X)=2$ and $b_0(X)=1$. On the other hand, $d=b_+(X)+b_-(X)$ 
and $\sigma(X) = b_+(X)-b_-(X)$. We obtain
\[
	\chi(X) = -2b_1(X) + b_1(Y_1) + b_1(Y_2) + d,\quad \sigma(X) = 2b_+(X) - d.
\]
Plugging this data into expression (\ref{degofmap}), rewritten here as
\[
	-\frac{3}{2}(\chi(\x)+\sigma(\x)) + \frac{1}{2}(b_1(\y_2)-b_1(\y_1)),
\]
yields, modulo 4, the expression for deg$(\x)$ in (\ref{eq:degprop}). Now we approach the more general statement, supposing that $\xbb:\ybb_1\to\ybb_2$ is possibly non-trivial.
We write
\begin{equation*}
\text{deg}(\xbb) \equiv \text{deg}(\x) + 2\relp(\xbb) \mod 4,\label{eq:grp1}
\end{equation*}
where $\relp(\xbb)$ is to be determined.
Let $E_1:\emptyset\to\y_1$ and $\ebb_1=E_1\times\so$. Given $\connag\in\mathfrak{C}^\#(\y_1)$, choose $\connbg\in\mathfrak{C}^\#(\y_2)$ such that 
$\mu(\connag,\xbb^\#,\connbg)\equiv 0$. Write $\xbb_\text{tr}=\x\times\so$. Then
\begin{align*}
	\text{deg}(\xbb)-\text{deg}(\x) &\equiv \mu(\xbb\circ\ebb_1\;\natural\;\wbb',\connbg)- \mu(\xbb_\text{tr}\circ\ebb_1\;\natural\;\wbb',\connbg).
\end{align*}
After closing up bundles using some $E_2:\y_2\to\emptyset$ with $\ebb_2=E_2\times\so$ 
and cancelling out the contribution from the bundle over $T^2\times S^2$ as above, this difference is seen from (\ref{closed}) to be
\[
	-2p_1(\ebb_2\circ\xbb\circ\ebb_1) = \frac{1}{4\pi^2}\int_{E_2\circ X\circ E_1}\text{tr}(F_A^2),
\]
where $A$ is any connection. We can choose $A$ to be trivial away from the interior of $X$, thus
\[
	\relp(\xbb) \equiv \frac{1}{8\pi^2}\int_X \text{tr}(F_A^2) \mod 2
\]
where $A$ is any connection on $\xbb$ that restricts to trivial connections on each $\ybb_i$.
In other words, $\mathscr{P}(\xbb)\equiv p_1(\xbb')$ mod $2$, where $\xbb'$ 
is any trivial extension of $\xbb$ over a closed 4-manifold. 
Thus
\begin{equation*}
	\relp(\xbb)\equiv \widetilde{w}_2(\xbb)^2 \mod 2,
\end{equation*}
where $\widetilde{w}_2(\xbb)$ is a lift of $w_2(\xbb)$ to $H^2(\x,\partial\x;\mathbb{F}_2)$. 
The result follows.
\end{proof}
\vspace{10px}

\subsection{Duality}\label{sec:dual}

The chain group $\chain^\#(\overline{\y})$ is the same as $\chain^\#(\y)$ but 
with the differential maps transposed. It follows that $\ih^\#(\y)$ and $\ih^\#(\overline{Y})$ 
are isomorphic over $\mathbb{Q}$. More precisely, given a homology orientation of $\y$, 
i.e. an orientation of $H_1(\y;\mathbb{R})$, we get an isomorphism
\begin{equation}
	\ih^\#(\overline{\y};\mathbb{Q})_i \simeq \ih^\#(\y;\mathbb{Q})_{b_1(\y)-i}^\ast.\label{eq:dual}
\end{equation}
The homology orientation is required to identify the chain groups. The 
grading shift in (\ref{eq:dual}) is explained as follows. Let $E_1:\emptyset \to \y$ and 
$E_2:\y\to\emptyset$, and $\connag\in\mathfrak{C}^\#(\y)$. Write $\overline{\connag}$ 
for the corresponding class in $\mathfrak{C}^\#(\overline{\y})$. 
From (\ref{glue}) and (\ref{closed}) we obtain 
\[
	\mu(\ebb_1\;
	\natural\; \mathbb{W}',\connag) + \mu(\overline{\connag},\ebb_2\;
	\natural\;\mathbb{W}'') = 3(b_1(E)-b_+(E))
\] 
where $\ebb_i=E_i\times\so$, $E=E_2\circ E_1$, and $\mathbb{W}''$ is the reverse of $\mathbb{W}'$. The bundle $\mathbb{W}''\circ\mathbb{W}'$ over $T^2\times S^2$ has been removed from 
the expression just as in \S \ref{sec:framedgr}. Using that 
$b_1(E)-b_+(E)$ is equal to
\[
	b_1(E_1)+b_1(E_2) - b_+(E_1) - b_+(E_2)-b_1(Y),
\]
see \S \ref{sec:homor}, we obtain $\text{gr}(\connag) + \text{gr}(\overline{\connag})\equiv b_1(Y) + 2k$. We claim $k$ is even. Let $\connag$ be the generator of $I^\#(S^3)$, represented by a flat connection on $T^3\simeq S^3\# T^3$. Recall that $k$ is chosen so that $I^\#(S^3)$ is supported in grading $0$, so we have $\text{gr}(\connag) =0$ (also see \S \ref{sec:examples}). In the definition of $\text{gr}(\connag)$, choose $\mathbb{E}:\emptyset\to S^3$ to be a trivial bundle over a 4-ball. Then
\[
	0 \equiv \text{gr}(\connag) \equiv -\mu(\mathbb{W}',\connag) + k \mod 4.
\]
Recall from the proof of Prop. \ref{gradingprop} that $\mu(\mathbb{W}''\circ\mathbb{W}')\equiv 0$ mod $4$, where $\mathbb{W}'':\mathbb{T}^3\to\emptyset$ is the reverse bundle-cobordism of $\mathbb{W}'$. By the index gluing formula (\ref{glue}) we then have $-\mu(\mathbb{W}',\connag) \equiv \mu(\connag,\mathbb{W}'')$ mod $4$. Since $W'$ is diffeomorphic to its orientation reversal, which is $W''$, we also have  $\mu(\mathbb{W}',\connag) \equiv \mu(\connag,\mathbb{W}'')$ mod $4$, as follows from the Atiyah-Patodi-Singer index formula \cite[Thm. 3.10]{aps}. Thus $k \equiv \mu(\mathbb{W}',\connag) \equiv 0$ mod $2$. It follows that
\[
	\text{gr}(\connag) + \text{gr}(\overline{\connag})\equiv b_1(Y) \mod 4,
\]
establishing the grading shift in (\ref{eq:dual}).\\

\subsection{Exact Triangles}

In this section we state a few exact triangles for framed instanton homology. 
For these it is necessary to allow non-trivial bundles. 
In the above constructions, take $\ybb^\#$ to be geometrically 
represented by $\lambda\cup\omega$ where $\lambda\subset\y$ and $\omega$ is an 
$S^1$-factor of $T^3$. We obtain a group $\ih^\#(\y;\lambda)$ that is 
now only relatively $\mathbb{Z}/4$-graded. It is isomorphic to four consecutive
gradings of the relatively $\mathbb{Z}/8$-graded group $\ih(\ybb^\#)$. 
The isomorphism class of $\ih^\#(\y;\lambda)$ depends only on the oriented homeomorphism 
type of $\y$ and the class $[\lambda]\in H_1(\y;\mathbb{F}_2)$.

Let $\y$ be a closed, oriented 3-manifold and $\lambda\subset\y$ a closed, unoriented 
1-manifold as above. Let $K$ be a framed knot in $\y$ disjoint from $\lambda$. 
Denote by $\y_i$ the result of $i$-surgery on $K$. 
Let $\mu$ be the core of the knot $K$ as viewed in $\y_0$. Then we have an 
exact triangle
\begin{equation*}
\cdots \ih^\#(\y;\lambda)\to\ih^\#(\y_0;\lambda\cup\mu)\to \ih^\#(\y_{1};\lambda) \to \ih^\#(\y;\lambda)\cdots
\end{equation*}
There are two other exact triangles corresponding to the two other rows in Figure \ref{fig:ses}. 
For example, if we view $\mu$ as the core of the knot inside $\y_i$ where $i=\infty$ or $i=1$, 
the exact sequence has $\mu$ appearing in the twisting for the group of $\y_i$, and not the other two.
Each of these is an application of Floer's original exact triangle, Theorem \ref{thm:floer},
obtained by connected summing each 3-manifold with $T^3$
and performing the surgeries away from $T^3$, with the appropriate overlying bundles.

By changing the framing of $K$, we obtain variants of the above triangles 
that are computationally handy. Let $l$ and $m$ be the longitude and meridian 
of $K$, respectively. Suppose the meridian is unchanged but the longitude 
is changed to $-pm+l$. Then we have
\begin{equation*}
\cdots \ih^\#(\y;\lambda)\to\ih^\#(\y_{p};\lambda\cup\mu)\to \ih^\#(\y_{p+1};\lambda)\to\ih^\#(\y;\lambda) \cdots
\end{equation*}
where again the core $\mu$ can be arranged in two other ways. 
Alternatively, keep the longitude the same but change the meridian to $m-ql$. Then
we have
\begin{equation*}
\cdots \ih^\#(\y_0;\lambda)\to\ih^\#(\y_{1/(q+1)};\lambda\cup\mu)\to \ih^\#(\y_{1/q};\lambda)\to\ih^\#(\y_0;\lambda) \cdots
\end{equation*}
where the same freedom with the placement of $\mu$ is understood. For other 
variants, we refer the reader to \cite[\S 42.1]{kmm}.

For an alternative perspective, one can begin with a 3-manifold $Z$ with torus boundary and consider 
the possible ordered triplets of Dehn fillings of $Z$ that are compatible with a surgery 
triangle description. 
This is the viewpoint taken in \cite[\S 42.1]{kmm} and \cite{os}.
 
We mention that the mod 2 degrees of the the cobordism maps 
in these exact triangles is the same as 
the monopole case, and is explained in \cite[\S 42.3]{kmm}. 
There are always non-trivial bundles amongst the 3 cobordism maps, 
even if the three framed groups are untwisted.
For in this case the composite of three 
consecutive cobordism bundles, call it $\xbb_{03}$ as in \S \ref{sec:x},
has $\relp(\xbb_{03})\equiv 1\mod 2$. This is because $\xbb_{03}$ is 
trivial away from a copy of $\cp$ minus a thickened $S^1$; over this area it restricts 
to a non-trivial bundle $\ebb$ which is easily seen to have $\relp(\ebb)\equiv 1$.
Then, by the additivity of $\relp(\xbb)$, at least one of $\xbb_{i,i+1}$ has 
$\relp(\xbb_{i,i+1})\equiv 1$. 
Note that, after computing $\text{deg}(\x_{03})=1$, we see $\text{deg}(\xbb_{03}) \equiv -1\mod 4$.\\

\subsection{Examples}\label{sec:examples} In this section we consider the 
framed instanton homology of $S^3$ and $S^1\times S^2$. 
To compute $\ih^\#(S^3)$ it 
suffices to compute $\ih(\tbb^3)$. This is well-known and elementary, see \cite{bd}. 
Let $N$ be a regular neighborhood of the geometric representative for $\tbb^3$. 
The flat connections modulo even gauge on $\tbb^3$ are in 
correspondence with the set
\[
	\{\rho\in\text{Hom}(\pi_1( T^3\setminus N),\su)\; | \; \rho(\nu)=-1\}/\su,
\]
where $\nu$ is a small meridian around $N$, and the $\su$-action is by conjugation. 
A computation shows that this set consists of two elements; 
these two elements are non-degenerate and irreducible. The two classes as 
generators for $\chain(\tbb^3)$ differ by degee $4$.
It follows that $\chain^\#(S^3)$ has one generator, and we obtain
\[
	\ih^\#(S^3)\simeq\mathbb{Z}_{0},
\]
where, as usual, the subscript indicates the grading.
We usually assume a 
distinguished generator for $\ih^\#(S^3)$ has been fixed.

Next, we compute $\ih^\#(S^1\times S^2)$. For this we adapt \cite[Lemma 8.3]{kmu}. 
By placing the twisting $\mu$ at an $S^3$, we have an exact sequence
\[
	\cdots \ih^\#(S^3) \xrightarrow{\alpha} \ih^\#(S^1\times S^2)
	\xrightarrow{\beta}\ih^\#(S^3)\xrightarrow{\gamma} \ih^\#(S^3)\cdots
\]
We apply the grading formula (\ref{degofmap}). 
The map $\alpha$ comes from the cobordism $D^2\times S^2\setminus \text{int}(D^4)$ 
from $S^3$ to $S^1\times S^2$. The overlying bundle is necessarily trivial. 
We compute $\text{deg}(\alpha) \equiv -1$. 
The map $\beta$ is the same cobordism, but reversed, and $\text{deg}(\beta)\equiv -2$. 
By the previous section, we know the sum of the degrees of the three maps is $-1$ mod 4,
so $\text{deg}(\gamma)\equiv 2$. This can be computed directly by observing that
$\gamma$ comes from the cobordism $\cp$ minus two 4-balls, from 
$S^3$ to $S^3$, with a non-trivial bundle. 
Because $\gamma:\mathbb{Z}_0\to\mathbb{Z}_0$ has degree $2$, 
it must be $0$. By exactness, we conclude
\[	
	\ih^\#(S^1\times S^2)\simeq \mathbb{Z}_{2}\oplus\mathbb{Z}_{3},
\]
where, as usual, the subscripts indicate gradings.
As is evident by the above computation, 
a canonical generator in grading $3$ for $\ih^\#(S^1\times S^2)$ is 
given by $[D^2\times S^2]^\#$. Recall from \S \ref{sec:cobs} that $[D^2\times S^2]^\#$ is the notation for the relative invariant induced by the cobordism $D^2\times S^2:\emptyset\to S^1\times S^2$. A canonical homology orientation is used here. 
The element $[S^1\times D^3]^\#$ generates the summand in grading 2. This 
is seen by identifying $S^1\times S^2$ with its orientation-opposite in 
a standard way, and viewing 
$[D^2\times S^2]_\#$ as a map $\ih^\#(S^1\times S^2)\to \mathbb{Z}$. 
For then we have
\[
	[D^2\times S^2]_\#[S^1\times D^3]^\#  =  \pm 1,
\]
since $S^1\times D^3$ and $D^2\times S^3$ glue along $S^1\times S^2$ to give $S^4$. 
However, the element $[S^1\times D^3]^\#$ is not canonically 
homology oriented; it requires an orientation of 
$H_1(S^1\times S^2;\mathbb{R})$. Thus a generator for 
$\mathbb{Z}_2\subset \ih^\#(S^1\times S^2)$ is distinguished by 
orienting $H_1(S^1\times S^2;\mathbb{R})$.\\

\subsection{The K{\"u}nneth Formula}\label{sec:prod}
Let $\y$ and $\y'$ be closed, oriented and connected 3-manifolds. If either 
one of $\ih^\#(\y)$ or $\ih^\#(\y')$ is torsion-free, there is a graded isomorphism
\[
	\ih^\#(\y\#\y')\simeq \ih^\#(\y)\otimes \ih^\#(\y').
\]
This is a special case of \cite[Cor. 5.9]{kmu}, and follows from Floer's original 
excision theorem. Further, this isomorphism is natural for split cobordisms, 
in the following sense. Let $\x:\y_1\to\y_2$ and $\x':\y_1'\to\y_2'$ be cobordisms 
with paths chosen so that the composite $\x\Join\x'$ is defined. Suppose 
the above product isomorphism holds for $\y_1\#\y_1'$ and $\y_2\#\y_2'$; then we 
have a commutative diagram
\[
\begin{CD}
	\ih^\#(\y_1\#\y_1')@>>{\simeq}> \ih^\#(\y_1)\otimes \ih^\#(\y_1')\\
	@V \ih^\#(\x\Join\x') VV	   @VV \ih^\#(\x)\otimes \ih^\#(\x') V      \\
	\ih^\#(\y_2\#\y_2')  @>{\simeq}>> \ih^\#(\y_2)\otimes \ih^\#(\y_2') \\
\end{CD}
\]
We do not address the arrangement of homology orientations here, as we will not require it.\\

\subsection{A Connected Sum of $S^1\times S^2$'s}\label{sec:example} Let $\y$ 
be a 3-manifold with $\y\simeq \#^k S^1\times S^2$. From the K{\"u}nneth 
formula it is clear that $\ih^\#(\y) \simeq \otimes^k (\mathbb{Z}_2\oplus\mathbb{Z}_3)$. 
The subscripts here indicate gradings. Let $\mu_\y$ be an orientation of $H_1(Y;\mathbb{R})$.
In this section we construct an isomorphism 
\[
	\phi:\ext^\ast( H_1(\y;\mathbb{Z})) \to\ih^\#(\y)
\]
which only depends on $\y$ and $\mu_\y$, not the decomposition $\y\simeq \#^k S^1\times S^2$. The choice of $\mu_\y$ only affects the overall sign of $\phi$. The exterior power here, and for most of the paper, is over the ring $\mathbb{Z}$.

Choose oriented, closed, embedded curves $c_1,\ldots,c_k$ in $\y$ such that 
there exists a diffeomorphism $\y\simeq\#^k_{i=1} S^1\times S^2$ sending 
$c_i$ to $S^1\times\text{pt}$ in the $i^\text{th}$ copy of $S^1\times S^2$. 
Given $J=\{i_1,\ldots,i_l\}\subset\{1,\ldots,k\}$ we define a 
cobordism $\x_J:\emptyset\to\y$ by starting with $\y\times[0,1]$ 
and attaching a 2-handle to each $c_i\times\{0\}$ if $i\in J$, and on top of this, attaching 3-handles and a 4-handle in a way such that $\partial \x_J = \y\times \{1\}$ and
\begin{equation}\label{handleattach}
	\x_J \simeq \x_1 \natural \cdots \natural \x_k, \qquad \x_i = 
	\begin{cases} S^1\times D^3 &\mbox{if $i\not\in J$}\\
	D^2\times S^2 & \mbox{if $i\in J$} \end{cases} 
\end{equation}
with $\partial\x_i$ the $i^\text{th}$ copy of $S^1\times S^2$ in the decomposition 
$\y\simeq\#^k_{i=1} S^1\times S^2$. Let $\{1,\ldots k\}\setminus J = \{i_{l+1},\ldots,i_{k}\}$
be such that $\mu_\y = [c_{i_1}\wedge \cdots \wedge c_{i_k}]$. To homology 
orient $\x_J$ we orient $\mathcal{L}(\x_J)= H_1(\x_J;\mathbb{R})$ 
by $[c_{i_{l+1}}\wedge\cdots\wedge c_{i_k}]$. Define
$\psi:\ext^\ast(c_1,\ldots,c_k)\to\ih^\#(\y)$ by
\[
	\psi(c_{i_1}\wedge\cdots\wedge c_{i_l})=[\x_J]^\#.
\]
This map is an isomorphism by the case $k=1$ and the K{\"u}nneth formula.

With the help of the orientation $\mu_\y$ of $H_1(\y;\mathbb{R})$, we can define a 
bilinear form
\[
	\langle \cdot,\cdot \rangle: I^\#(Y)\otimes I^\#(Y)\to \mathbb{Z},
\]
see also (\ref{eq:dual}). 
The elements $[\x_J]^\#$ as $J$ runs over subsets of $\{1,\ldots,k\}$ 
form a basis for $\ih^\#(\y)$, so it suffices to define the form on these.
Given $J,K\subset \{1,\ldots,k\}$, let $\x_J$ and $\x_K$ be as above with homology orientations 
$\mu_J$ and $\mu_K$, respectively. Then we have elements $[\x_J]^\#,[\x_K]^\#\in\ih^\#(\y)$.
Consider $\overline{\x}_K:\y\to\emptyset$ and homology orient it by $\mu_{\y}\wedge\mu_{K}$. 
This yields $[\overline{\x}_K]_\#:\ih^\#(Y)\to\mathbb{Z}$.
Then the bilinear form $\langle\cdot,\cdot\rangle$ is given by
\[
	\langle [\x_K]^\#,[\x_J]^\#\rangle = [\overline{\x}_K]_\#[\x_J]^\#=[\overline{\x}_K\circ\x_J]_\# = A_{JK}\in\mathbb{Z}.
\]
Now observe that
\begin{equation}\label{splits}
	\x_{K} \circ \x_{J} \simeq \x_1 \# \cdots \# \x_k, \qquad \x_i \simeq 
	\begin{cases} S^1\times S^3 &\mbox{if $i\not\in J\cup K$}\\
	S^2\times S^2 &\mbox{if $i\in J\cap K$}\\
	S^4 & \mbox{otherwise} \end{cases} 
\end{equation}
Note $[S^2\times S^2]^\#=0$, because the degree of the 
cobordism $S^3\to S^3$ given by $S^2\times S^2$ minus two $4$-balls is odd, 
and similarly for $[S^1\times S^3]^\#$. 
Using the naturality with respect to split cobordisms of the K{\"u}nneth formula,
we conclude that $A_{JK}\neq 0$ if and only 
if $J$ and $K$ are complementary, and in this case 
$A_{JK}=\pm 1$.
This sign may be determined by using Definition \ref{def:homcom}, but we will not need it.
It is clear that this bilinear form is non-degenerate. Note that $\langle \cdot,\cdot \rangle$ depends on $c_1,\ldots,c_k$ (which determine an identification of $\y$ with $\overline{\y}$).

We argue that $\psi$ is independent of the 2-handle framings 
chosen to construct the $\x_J$. First construct 
cobordisms $\x_J$ for each subset $J\subset\{1,\ldots,k\}$ as above. 
Choose some $J$, and construct a cobordism $\x'_J$ by attaching 
the 2-handles using possibly different framings as was done for $\x_J$, 
subject to the constraint that $\x'_J$ 
is of the form (\ref{handleattach}). Then
\[
	\overline{\x}_{K} \circ \x'_{J} \simeq \x_1 \# \cdots \# \x_k
\]
just as in (\ref{splits}), except now if $i\in J\cap K$ then 
$\x_i$ is a possibly non-trivial $S^2$-bundle over $S^2$, in which case $[\x_i]^\#=0$.
We homology orient $\x_J'$ in the 
same way as $\x_J$. 
It is easily seen that $[\x_J']^\#$ has 
all the same values $A_{JK}$ as $[\x_J]^\#$ under the bilinear pairing, and thus 
$[\x_J']^\# = [\x_J]^\#$.

Now we see how $\psi$ changes when we change the loops $c_i$. 
Consider replacing the oriented loop $c_1$ by an oriented connected sum 
$c_1\#c_2$. There are many ways of forming this connected sum. 
Let $\x_{c_1\# c_2}$ be the cobordism $\emptyset\to\y$ obtained 
by attaching to $\y\times [0,1]$ a 2-handle along $c_1\# c_2\times\{0\}$ and 3-handles and a 4-handle as above. Supposing $\mu_Y = [c_1\wedge \cdots \wedge c_k]$, we homology orient $\x_{c_1\# c_2}$ by $[c_2\wedge\cdots \wedge c_k]$, just as we homology orient $\x_{\{1\}}$ and $\x_{\{2\}}$. Then
\[
	[\x_{c_1\#c_2}]^\# = [\x_{\{1\}}]^\# + [\x_{\{2\}}]^\#.
\]
Viewing $\x_{c_1\# c_2}:\emptyset\to\y$ and $\overline{\x}_J:\y\to\emptyset$, this follows from computing
\[
	\overline{\x}_{J}\circ\x_{c_1\# c_2} \simeq \x \# \x_3 \# \cdots \# \x_k,  
	\qquad \begin{cases} \x\simeq S^4 &\mbox{if $|\{1,2\}\cap J| =1$}\\
	\text{deg}(\x)\text{ is odd} & \mbox{otherwise} \end{cases} 
\]
where each $\x_i\simeq S^4$ if $i\in J$ and $\text{deg}(\x_i)$ is odd otherwise, and then appealing to the non-degeneracy of our bilinear form. 
A similar argument shows $[\x_{c_1\#c_2\cup J}]^\# = 
[\x_{\{1\}\cup J}]^\# + [\x_{\{2\}\cup J}]^\#$
where $J$ is any subset of $\{3,\ldots,k\}$. 

As a consequence, $\psi$ induces a well-defined isomorphism
\[
	\phi:\ext^\ast (H_1(\y;\mathbb{Z})) \to \ih^\#(\y).
\]
This is because any two sets of loops $c_1,\ldots,c_k$ in $\y$ as above (having the 
property that there exists a diffeomorphism $\y\simeq \#^k S^1\times S^2$ sending 
each $c_i$ to a factor $S^1\times \text{pt}$) are related by sequences of connected sums (and the reverse operation) as in the previous paragraph. Indeed, these are just handle-slides, and a result of Laudenbach and 
Po\'{e}naru \cite{lp}, as cited in \cite[Rmk. 4.4.1]{gs}, says that any self-diffeomorphism 
of $\#^k S^1\times S^2$ extends to a diffeomorphism of $\natural^k S^1\times D^3$, a 
bounding 1-handlebody, which can be written as a composite of 1-handle slides. In fact, this result also says that the way in which the 3-handles and 4-handle are attached to construct $\x_J$ above is essentially unique.

In summary, $\phi$ is defined by choosing an orientation $\mu_Y$ of $H_1(Y;\mathbb{R})$, a diffeomorphism 
$\y\simeq \#^k S^1\times S^2$, oriented loops $c_1,\ldots,c_k$ corresponding to the 
$S^1\times\text{pt}$ factors, and setting
\[
	\phi([c_{i_1}]\wedge\cdots\wedge[c_{i_l}])=[\x_J]^\#
\]
where the element $[\x_J]^\#$ is 
defined as above. The content of the above discussion is that this map is 
well-defined and is an isomorphism. We mention that for 
$x\in\ext^i (H_1(\y;\mathbb{Z}))$ with $b_1(\y)=k$, the grading 
of $\phi(x)$ in $\ih^\#(\y)$ is given by $2k + i$ mod $4$.\\

\subsection{A Spectral Sequence}

The spectral sequence of Theorem \ref{ss1} leads to one for the groups 
$\ih^\#(\y)$. The setup is as follows. Again we have an $m$-component framed link 
$\lt$ in $\y$. We view $\lt$ as a link in $\y\# T^3$, and we choose a 
family of bundles over the surgered manifolds $\y_v\# T^3$ which for 
$v\in \{0,1\}^m$ are of the form $(\y_v\times\so)\#\tbb^3$, at the expense of 
having possibly non-trivial bundles (so twisted framed groups) 
for the indices $v\in\{0,1,\infty\}^m\setminus\{0,1\}^m$. We are using 
the third row of Figure \ref{fig:ses} to achieve this setup. 
This forces the bundle over $\y\# T^3$ to be geometrically represented 
by the link $L$ together with an $S^1$-factor of $T^3$.
More general spectral sequences may be obtained by allowing twisting 
in the $E^1$-page.

Before stating the resulting theorem, 
we discuss how to lift the previous $\mathbb{Z}/2$-grading $\text{gr}[\chainbf]$ 
for the $E^1$-page of the link surgeries spectral sequence to a $\mathbb{Z}/4$-grading, 
in the special case
where $[L]=0\in H_1(\y;\mathbb{F}_2)$. 
Write $\text{gr}[\y]$ for the $\mathbb{Z}/4$-grading on $\ih^\#(\y)$ 
and $\ybb_v^\#=\ybb_v\#\tbb^3$, where for $v\in\{0,1\}^m\cup\{\boldsymbol{\infty}\}$ 
we have $\ybb_v=\y_v\times\so$. Recall that we 
conflate $\boldsymbol{\infty}$ and $-\mathbf{1}$.
Also write $\xbb_{vw}^\#=\xbb_{vw}\Join(\tbb^3\times [0,1])$ for 
the surgery cobordism bundles.
For $v\in\{0,1\}^m\cup\{\boldsymbol{\infty}\}$ 
we may view each $\chain(\ybb^\#)$ as two copies of $\chain^\#(\y_v)$, 
$\mathbb{Z}/4$-graded by $\text{gr}[\y_v]$. For $v\in\{0,1\}^m$ and 
$x\in\chain(\ybb_v^\#)\subset\chainbf$
of homogeneous $\text{gr}[\y_v]$ grading, we define
\begin{equation}
	\text{gr}[\chainbf](x) = \text{gr}[\y_v](x)  - \text{deg}(\xbb_{\boldsymbol{\infty}v}) - |v|_1 \mod 4.\label{eq:grt}
\end{equation}
The verification that $\partialbf$ lowers this grading by 1, 
and that the quasi-isomorphism $\mathbf{Q}:\chain(\ybb^\#)\to \chainbf$ 
preserves the relevant $\mathbb{Z}/4$-gradings, is the same as in \S \ref{sec:linksgr}.

\begin{theorem}\label{thm:framed}
Let $\lt$ be an oriented framed link with $m$ components in $\y$ and for 
each $v\in \{\infty,0,1\}^m$ denote by $\y_v$ the result of $v$-surgery 
on $\lt$ in $\y$. There are surgery cobordisms $\x_{vw}$ for $v<w$ 
from $\y_v$ to $\y_w$ with homology orientations $\mu_{vw}$ satisfying 
$\mu_{uw}\circ\mu_{vu}=\mu_{vw}$ whenever $v<u<w$, and an appropriate 
bundle $\xbb_{vw}$ over each $\x_{vw}$, such that there is a spectral sequence $(E^r,d^r)$ 
with
\[
	E^1 = \bigoplus_{v\in\{0,1\}^m} \ih^\#(\y_v), \qquad d^1 = 
	\sum_{\substack{v < w\\ |w-v|_1 =1}} (-1)^{\delta(v,w)}\ih^\#(\xbb_{vw})
\]
where $\delta(v,w)$ is as in Theorem \ref{ss1}. 
The spectral sequence is graded 
by $\mathbb{Z}/2\times\mathbb{Z}$, where $d^r$ has bi-degree $(1,r)$,
and it converges 
by the $E^{m+1}$-page to the possibly twisted group 
\[
	\ih^\#(\y;L).
\]
The $\mathbb{Z}/2$-grading induced by the spectral 
sequence agrees with the 
$\mathbb{Z}/2$-grading of $\ih^\#(\y;L)$. If $[L]=0\in H_1(\y;\mathbb{F}_2)$, 
then we can lift the $\mathbb{Z}/2$-grading 
of the $E^1$-page to a $\mathbb{Z}/4$-grading by (\ref{eq:grt}), such that the induced 
$\mathbb{Z}/4$-grading agrees with the one on $\ih^\#(\y)$. The differential 
for the $\mathbb{Z}/4\times\mathbb{Z}$-grading has bi-degree $(-1,r)$.
\end{theorem}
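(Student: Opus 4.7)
The plan is to deduce this as a direct consequence of Theorem \ref{ss1} applied to $L$ viewed as a link inside $\y \# T^3$, with the specific choice of bundles over the surgered manifolds dictated by the bottom row of Figure \ref{fig:ses}. First I would form the surgery cube of bundles $\ybb_v^\#$ over $\y_v \# T^3$ for $v \in \{0,1,\infty\}^m$, arranging that for $v \in \{0,1\}^m$ the bundle is the standard connect-sum $(\y_v \times \so) \# \tbb^3$, so that four consecutive gradings of $\ih(\ybb_v^\#)$ recover $\ih^\#(\y_v)$. As explained in \S \ref{sec:geomrep}, this forces the meridional cores introduced by the $0$-surgery operations to accumulate on the $\y$-side rather than on the $\y_v$-sides, so the bundle over $\y \# T^3$ is geometrically represented by $L$ together with an $S^1$-factor of $T^3$, and its instanton homology recovers four consecutive gradings of the (possibly twisted) group $\ih^\#(\y;L)$. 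Admissibility is automatic because each bundle restricts non-trivially to a 2-torus inside the $T^3$ summand.

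Next I would invoke Theorem \ref{ss1} directly, which produces a spectral sequence from $\bigoplus_{v\in\{0,1\}^m}\ih(\ybb_v^\#)$ converging to $\ih(\ybb^\#)$ with $d^1$ given by $\sum (-1)^{\delta(v,w)} \ih(\xbb_{vw}^\#)$ where $\xbb_{vw}^\# = \xbb_{vw} \Join (\tbb^3 \times [0,1])$. Because $\ih(\ybb_v^\#)$ is $4$-periodic and the entire construction is equivariant under the relevant degree-$4$ shift, one can restrict to four consecutive gradings throughout the spectral sequence to obtain one from $\bigoplus_{v\in\{0,1\}^m}\ih^\#(\y_v)$ to $\ih^\#(\y;L)$. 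By the definition of the framed cobordism maps in \S \ref{sec:cobs}, the restriction of $\ih(\xbb_{vw}^\#)$ to the relevant gradings is exactly $\ih^\#(\xbb_{vw})$, so the $d^1$ differential takes the claimed form with the claimed signs.

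For the $\mathbb{Z}/2$-grading statement, the result is immediate from the corresponding part of Theorem \ref{ss1} together with the observation that the mod $2$ grading on four consecutive classes of $\ih(\ybb_v^\#)$ coincides with the one on $\ih^\#(\y_v)$. For the $\mathbb{Z}/4$-refinement under the hypothesis $[L]=0\in H_1(\y;\mathbb{F}_2)$, I would verify that \eqref{eq:grt} defines a mod $4$ grading, following the reasoning of \S \ref{sec:linksgr} with the mod $2$ cobordism degree replaced by the mod $4$ degree of Prop. \ref{gradingprop}. The key computation reduces to checking that $\dim G_{vw} + \text{deg}(\xbb_{vw}^\#)$ gives the correct mod $4$ shift for $\partialbf$, which follows from $\dim G_{vw} = |w-v|_1 - 1$ and additivity of $\text{deg}$ along composed cobordisms.

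The main obstacle will be verifying that the induced $\mathbb{Z}/4$-grading on $E^\infty$ agrees with the intrinsic grading of $\ih^\#(\y)$. This hinges on the invariant $\relp(\xbb_{\boldsymbol{\infty}v}^\#)$ appearing in the degree formula \eqref{eq:degmaster} for the maps built into the quasi-isomorphism $\mathbf{Q}$. Under the assumption $[L]=0$ mod $2$, the dual class $w_2(\xbb_{\boldsymbol{\infty}v}^\#)$ admits a lift in $H^2(\x_{\boldsymbol{\infty}v}^\#,\partial;\mathbb{F}_2)$ whose self-intersection vanishes mod $2$ (the contribution from the $T^3$ summand cancels because the relevant $S^1$-factor is null-homologous after filling in), forcing $\relp\equiv 0$ and preserving the mod $4$ grading under $\mathbf{Q}$. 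Once this bookkeeping is in place, the theorem follows.
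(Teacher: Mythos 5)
Your proposal follows essentially the same route as the paper: view $L$ inside $\y\#T^3$, choose bundles according to the third row of Figure~\ref{fig:ses} so that $\ybb_v^\# = (\y_v\times\so)\#\tbb^3$ for $v\in\{0,1\}^m$, apply Theorem~\ref{ss1}, view $\chain(\ybb_v^\#)$ as two copies of $\chain^\#(\y_v)$ and pass to one copy, and then lift the $\mathbb{Z}/2$-grading to $\mathbb{Z}/4$ via~(\ref{eq:grt}) with the verification patterned on \S\ref{sec:linksgr}. The first three paragraphs are correct and faithful to the paper.

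The final paragraph, however, contains an incorrect claim. You assert that $[L]=0\bmod 2$ forces $\relp(\xbb_{\boldsymbol{\infty}v}^\#)\equiv 0$, and that this vanishing is what makes $\mathbf{Q}$ preserve the $\mathbb{Z}/4$-grading. This is false in general: in the branched double cover application of this very theorem, Lemma~\ref{lem:gr} establishes $\relp(\xbb_{\boldsymbol{\infty}\mathbf{1}})\equiv\sigma(\x_{\boldsymbol{\infty}\mathbf{1}})\bmod 2$, which is generically non-zero even though $[L']=0\bmod 2$ there. The vanishing is also unnecessary: the grading formula~(\ref{eq:grt}) is built from the full cobordism degree $\text{deg}(\xbb_{\boldsymbol{\infty}v})$, which already incorporates the $2\relp$ term via~(\ref{eq:degmaster}). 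Consequently, when one repeats the calculation from \S\ref{sec:linksgr} --- each component $m_G(\xbb_{\boldsymbol{\infty}v})$ of $\mathbf{Q}$ shifts the grading by $\dim G + \text{deg}(\xbb_{\boldsymbol{\infty}v}) = |v|_1 + \text{deg}(\xbb_{\boldsymbol{\infty}v})$ --- this cancels exactly against the shift built into~(\ref{eq:grt}), with no appeal to vanishing of $\relp$. The actual role of the hypothesis $[L]=0\in H_1(\y;\mathbb{F}_2)$ is different: it ensures that $\ybb_{\boldsymbol{\infty}}$ is a trivial bundle (so that the absolute degree $\text{deg}(\xbb_{\boldsymbol{\infty}v})$ of Prop.~\ref{gradingprop} is defined, since that proposition requires trivial boundary bundles), and that the limiting group $\ih^\#(\y;L)=\ih^\#(\y)$ carries an absolute, rather than merely relative, $\mathbb{Z}/4$-grading against which the induced grading can be compared.
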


\vspace{10px}
\section{Branched Double Covers}\label{sec:oddkh}

In this section we complete the proof of Theorem \ref{thm:1}. 
First, we define reduced odd Khovanov homology in \S \ref{sec:oddkh},
following Bloom's description from \cite{bloom2}. We give an alternative 
description of the differential that will suit our goals.
We then discuss how to compose homology orientations in \S \ref{sec:homor}. 
This is the framework we use to understand the signs 
in our spectral sequence.
In \S \ref{sec:branched}, we identify the $E^1$-page of 
the spectral sequence (\ref{sps}) with the reduced odd Khovanov chain complex. 
Finally, in \S \ref{sec:finalgr}, we discuss the $\mathbb{Z}/4$-grading of the spectral sequence 
and the conclusion of Corollary \ref{cor:3}.\\

\begin{figure}[t]
\includegraphics[scale=.8]{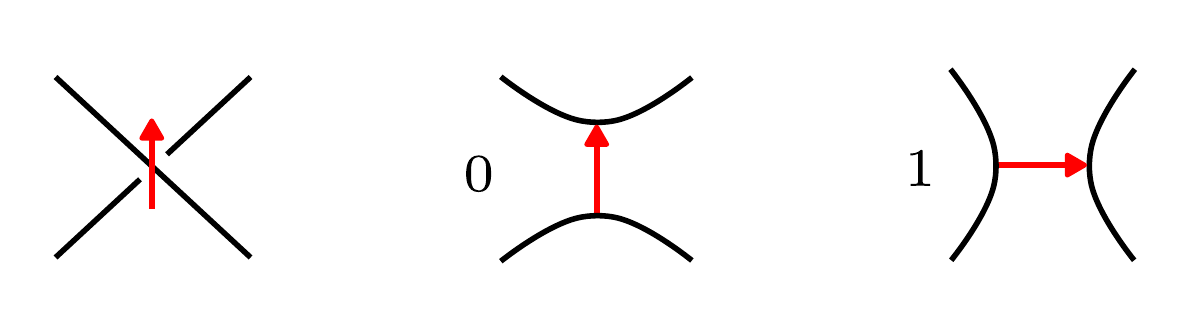}
\caption{Resolution conventions for the arc-decorated
diagrams in reduced odd Khovanov homology. There two choices for the placement of an arc at a given crossing; in the left-most picture, the arc can be pointing up (as depicted) or down. In the latter case, the arcs in the resolution pictures are correspondingly reversed.}
\label{fig:oddkh}
\end{figure}

\subsection{Odd Khovanov Homology}{\label{sec:oddkh}}
Let $\lt$ be an oriented link and $\dt$ a planar diagram for $\lt$. Suppose $\dt$ 
has $m$ crossings. We assume that each crossing has an arrow drawn over it, 
as in Figure \ref{fig:oddkh}.
Then for each $v\in\{0,1\}^m$ we can define a resolution diagram $\dt_v$ 
according to the rules of Figure \ref{fig:oddkh}. 
Each $\dt_v$ is a disjoint union of planar-embedded unoriented circles 
together with a disjoint union of planar-embedded oriented arcs, 
each arc beginning and ending at a circle.
Suppose $\dt_v$ has $k+1$ circles. Then we have a rank $k$ 
abelian group $V_v$ defined by 
\[
	V_v=\mathbb{Z}\{\text{arcs}\}/\text{ker}\left(\mathbb{Z}\{\text{arcs}\}\to\mathbb{Z}\{\text{circles}\}\right)
\]
where the map involved sends an arc to the circle at which it begins minus 
the circle at which it ends. A basis for $\text{V}_v$ is given by any $k$ 
arcs that touch all $k+1$ circles in $\dt_v$. Otherwise said, a basis is given by the edges of any spanning tree of the graph whose vertices are the circles of $\dt_v$ and edges are the arcs. We define
\[
	\chain_v = \ext^\ast (V_v), \qquad \chain = \bigoplus_{v\in\{0,1\}^m}\chain_v.
\]
For each $v,w\in\{0,1\}^m$ with $v<w$ and $|w-v|_1=1$ 
we introduce a map $\partial_{vw}':\chain_v\to \chain_w$. There is a single 
arc $x_{vw}$ in each of $\dt_v$ and $\dt_w$ that 
changes from a $0$-resolution position to a $1$-resolution position. There 
are two cases to consider, corresponding to two circles merging or splitting:
\[
	\partial'_{vw}(x) := \begin{cases} x_{vw} \wedge x &\mbox{if $0=x_{vw}\in \chain_v$ (split)}\\
	x & \mbox{if $0\neq x_{vw}\in \chain_v$ (merge)} \end{cases} 
\]
In these expressions we use the symbol $x_{vw}$ to stand both for 
an arc and its equivalence class in $V_v$. 
We call the collection of $\partial'_{vw}$ 
the \textit{pre-differential}.
The differential for $\chain$ is defined by
\[
	\partial = \sum \partial_{vw} = \sum \varepsilon_{vw}\partial'_{vw}
\]
where each $\varepsilon_{vw}$ is $+1$ or $-1$, and the sums are over $v,w$ 
with $v<w$ and $|w-v|_1=1$. The signs $\varepsilon_{vw}$ are chosen to satisfy two conditions. 
The first condition is that $\partial^2=0$. The second condition is as follows. 
Let $v<t,u$ with $|t-v|_1=|u-v|_1=1$ be three vertices where the 
arcs $x_{vu}$ and $x_{vt}$ are arranged 
in $\dt_v$ as in the left of Figure \ref{fig:typexy}. Let $w$ be the vertex with $w >t,u$ and 
$|w-t|_1=|w-u|_1=1$. Any four such vertices $v,u,t,w$ 
will be called a {\em type X} face. A {\em type Y} face 
is obtained by reversing one of either $x_{vu}$ or $x_{vt}$. The second condition
is that for a type X face, the sign
\[
	\varepsilon_{vu}\varepsilon_{vt}\varepsilon_{tw}\varepsilon_{uw}
\]
is always $+1$ or always $-1$; and the same product for a type Y face is also
always $+1$ or always $-1$, and is minus the type X sign. 
We call the collection of $\varepsilon_{vw}$ a {\em valid edge assignment} 
if it satisfies these two conditions. The reduced odd Khovanov homology 
of $\lt$ is then defined to be $\kh(\lt) = H_\ast(\chain, \partial)$. 
The well-defined-ness and invariance is proved in \cite{ors}.

The group $\kh(\lt)$ is bigraded by a homological grading $\hgr$ and 
and quantum grading $\qgr$. For an element $x\in\ext^{|x|}(V_v)$ 
where $k=\dim(V_v)$, these are defined by
\[
	\hgr(x) = |v|_1 - n_-,
\]
\[
	\qgr(x) = k -2|x|+n_+ -2n_- + |v|_1.
\]
Here $n_\pm$ is the number of $\pm$ crossings in $D$.
We are interested in a $\mathbb{Z}/4$-grading defined by
\begin{equation}
	 \frac{3}{2}\qgr -\hgr +\frac{1}{2}\left(\sigma+\nu\right) \mod 4\label{eq:oddgr}
\end{equation}
where $\sigma$ is the signature of $\lt$ and $\nu$ the nullity.

\begin{figure}[t]
\includegraphics[scale=.9]{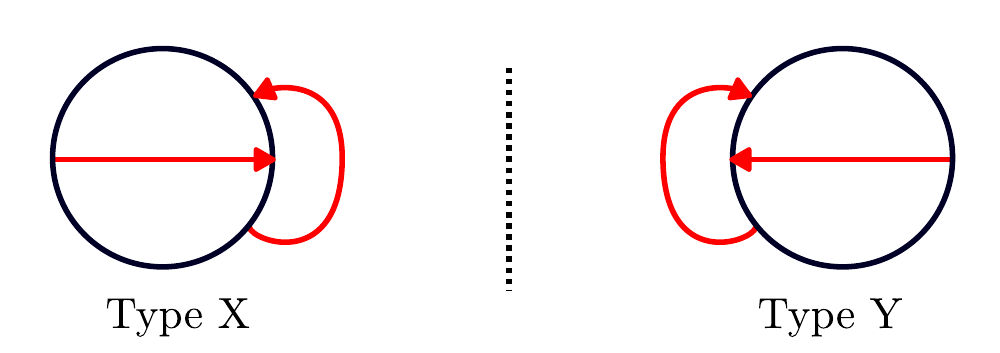}
\caption{The configurations of the two relevant arcs in 
the initial vertex of a type X and type Y face.}
\label{fig:typexy}
\end{figure}

%
%

\vspace{10px}

\subsection{Composing Homology Orientations}\label{sec:homor}

Suppose we are given $\x_1:\y_1\to\y_2$ and $\x_2:\y_2\to\y_3$. Let $\x_{12} = \x_2\circ\x_1$.
In this section we describe the rule we use to orient $\mathcal{L}(\x_{12})$ given 
orientations of $\mathcal{L}(\x_1)$ and $\mathcal{L}(\x_2)$. We recall
\[
	\mathcal{L}(\x_1) =H_1(\y_1;\mathbb{R})\oplus H_1(\x_1;\mathbb{R})\oplus H_2^+(\x_1;\mathbb{R}).
\]
Typically, 
an orientation of $\mathcal{L}(\x_i)$ will be denoted $\mu_i$. Although
the composition of homology orientations originates from the determinants of 
the relevant Fredholm operators, in our applications we prefer to have a concrete, algebro-topological description of such a rule.
Perhaps the two most important formal properties of a composition rule compatible with a construction of framed instanton homology are associativity and the existence of units. In other words,
\[
	(\mu_3\circ\mu_2)\circ \mu_1 = \mu_3\circ(\mu_2\circ\mu_1)
\]
whenever $\mu_i$ is a homology orientation of $\x_i:\y_i\to\y_{i+1}$ for $i=1,2,3$, 
and for $Y\times [0,1]$ there exists a distinguished homology orientation $\mu_{\y}^{\text{id}}$ 
such that 
\[
	\mu_{\y}^{\text{id}}\circ \mu = \mu, \qquad \mu\circ\mu_{\y}^{\text{id}}=\mu
\]
whenever $\mu$ is a homology orientation and these compositions make sense. We will first define a composition rule in an algebro-topological fashion and then show it has these two properties. At the end of this section, we will describe how the rule we have defined can be described using Fredholm determinant line bundles, using the setup of Kronheimer and Mrowka \cite[\S 20.2]{kmm}, ensuring that our rule is compatible with a construction of framed instanton homology. In this section all homology groups are assumed to have real coefficients.

We proceed to construct the composition rule.
As in the previous sections, we maintain the assumption
that the $\x_i$ and $\y_i$ are connected.
For background on the following setup, see \cite[Thm. 27.5]{dfn} and \cite[\S 7]{as}.
Let $f_{12}:H_1(\y_2)\to H_1(\x_1)\oplus H_1(\x_2)$ 
be the map in the Mayer-Vietoris sequence.
Consider the following exact sequences:
\begin{equation}
0 \to\text{im}(f_{12})\to H_1(\x_1)\oplus H_1(\x_2)\to H_1(\x_{12})\to 0\label{eq:exseq1}
\end{equation}
\begin{equation}
0 \to \text{ker}(f_{12}) \to H_1(\y_2)\to \text{im}(f_{12}) \to 0\label{eq:exseq2}
\end{equation}
\begin{equation}
0 \to H_2^+(\x_1)\oplus H_2^+(\x_2) \to H_2^+(\x_{12}) \to \text{ker}(f_{12})\to 0\label{eq:exseq3}
\end{equation}
The first exact sequence is extracted from the Mayer-Vietoris sequence, and the 
second is naturally associated to the map $f_{12}$. Our convention is 
that $f_{12}(x)=(x,-x)$ on the chain level. For the third sequence, we
choose the positive definite subspace $H_2^+(\x_{12})$ so that 
it contains the image of $H_2^+(\x_1)\oplus H_2^+(\x_2)$ under 
the map $H_2(\x_1)\oplus H_2(\x_2)\to H_2(\x_{12})$. 
The map $H_2^+(\x_{12}) \to \text{ker}(f_{12})$ is a restriction of the 
Mayer-Vietoris boundary map $H_2(\x_{12})\to H_1(\y_2)$.

There is a concrete interpretation of (\ref{eq:exseq3}). 
Upon splitting the sequence it says we can write 
\[
	H_2^+(\x_{12})=H_2^+(\x_1)\oplus H_2^+(\x_2)\oplus \text{ker}(f_{12}).
\]
To interpret the summand $\text{ker}(f_{12})$, we write down a section $s$ for the map 
$H_2^+(\x_{12}) \to \text{ker}(f_{12})$. We define
$s:\text{ker}(f_{12})\to H_2^+(\x_{12})$ on a basis of 1-cycle classes $[\gamma]$ 
in $\text{ker}(f_{12})\subset H_1(\y_2)$ as follows. 
For each such 1-cycle $\gamma$ 
in $\y_2$, choose a 2-cycle $\Sigma$ in $\y_2$ such that $\#(\gamma\cap\Sigma)=1$, and extend 
$\gamma$ to a 2-cycle $\Gamma$ in $\x_{12}$. Then $s[\gamma] = [\Gamma]+[\Sigma]$.

Choosing splittings of the above three exact sequences, summing, cancelling a copy of $\text{ker}(f_{12})$ on both sides, and then moving summands around
yields an identification
\begin{equation}
	\mathcal{L}(\x_{12}) \oplus \text{im}(f_{12})^{\oplus 2} =
	\mathcal{L}(\x_1) \oplus \mathcal{L}(\x_2). \label{eq:iso}
\end{equation}
Thus we can orient $\mathcal{L}(\x_{12})$ by using given orientations of $\mathcal{L}(\x_{1})$
and $\mathcal{L}(\x_{2})$ and equipping the two copies of $\text{im}(f_{12})$ with the 
same orientation. We will give an explicit rule for doing this, designed so as to 
be associative. We choose splittings of the above exact sequences, 
in their respective order:
\begin{equation}
F_{12}:\text{im}(f_{12})\oplus H_1(\x_{12})\xrightarrow{\sim}H_1(\x_1)\oplus H_1(\x_2),\label{eq:split1}
\end{equation}
\begin{equation}
G_{12}:\text{ker}(f_{12})\oplus\text{im}(f_{12})\xrightarrow{\sim} H_1(\y_2),\label{eq:split2}
\end{equation}
\begin{equation}
H_{12}:H_2^+(\x_1)\oplus H_2^+(\x_2)\oplus\text{ker}(f_{12})\xrightarrow{\sim} H_2^+(\x_{12}). \label{eq:split3}
\end{equation}
The space of such splittings is contractible, so these particular choices
do not matter for the following definition.

\begin{definition}\label{def:homcom}
For $i=1,2$ let $\x_i:\y_i\to \y_{i+1}$ 
be two connected cobordisms between connected, non-empty
3-manifolds. Write $\x_{12}=\x_2\circ\x_1$.
Suppose $\mu_i$ is a homology orientation of $\x_i$, i.e. an orientation of  $\mathcal{L}(\x_i)$, 
for $i=1,2$. Write $\mu_i = \beta_i\wedge\alpha_i\wedge\gamma_i$ where $\alpha_i$ is 
an orientation for $H_1(\y_i)$, $\beta_i$ for $H_1(\x_i)$, and $\gamma_i$ for $H_2^+(\x_i)$. 
Choose any orientation $\delta_{12}$ of $\text{im}(f_{12})$. 
Choose splittings of the exact sequences (\ref{eq:exseq1})-(\ref{eq:exseq3}) written 
as in (\ref{eq:split1})-(\ref{eq:split3}).
Equip $H_1(\x_{12})$ with an orientation 
$\beta_{12}$ given by the condition 
\[
	F_{12}(\delta_{12}\wedge\beta_{12}) = \beta_1\wedge\beta_2.
\]
Similarly, equip $\text{ker}(f_{12})$ with an orientation $\zeta_{12}$ which satisfies
\[
	G_{12}(\zeta_{12} \wedge \delta_{12}) = \alpha_2.
\]
Then define the composition of $\mu_1$ with $\mu_2$, which 
is an orientation of $\mathcal{L}(\x_{12})$, by
\[
	\mu_{2}\circ\mu_{1} = (-1)^{s}\beta_{12}\wedge\alpha_1\wedge H_{12}(\gamma_1\wedge\gamma_2\wedge\zeta_{12}),
\]
\[
	s = \frac{1}{2}\left(d_{12}^2-d_{12}\right) + b_1(\x_1)b_1(\y_2) + b_1(\x_1)b_2^+(\x_2)+ b_1(\y_2)b_2^+(\x_2).
\]
Here $d_{12}=\dim\left[\text{im}(f_{12})\right]$.
\end{definition}

\begin{prop}
	This composition rule for homology orientations is associative.
\end{prop}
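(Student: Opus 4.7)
The strategy is to show $(\mu_3 \circ \mu_2) \circ \mu_1 = \mu_3 \circ (\mu_2 \circ \mu_1)$ by expanding both sides via Definition \ref{def:homcom} and reducing the comparison to a universal sign identity. Both expressions orient the same vector space $\mathcal{L}(\x_{123})$, where $\x_{123} := \x_3 \circ \x_2 \circ \x_1$, and both arise from unfolding the three exact sequences (\ref{eq:exseq1})-(\ref{eq:exseq3}) twice. What needs proof is that the two unfoldings induce the same orientation. My plan is to introduce a symmetric triple composition rule, show that both iterated pairings agree with it, and thereby deduce associativity.

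First I would set up the triple Mayer-Vietoris decomposition of $\x_{123}$ along $\y_2 \sqcup \y_3$. This produces a map
\begin{equation*}
f : H_1(\y_2) \oplus H_1(\y_3) \to H_1(\x_1) \oplus H_1(\x_2) \oplus H_1(\x_3),
\end{equation*}
three exact sequences analogous to (\ref{eq:exseq1})-(\ref{eq:exseq3}), and chosen splittings. This data, together with an orientation of $\text{im}(f)$ and the $\mu_i$, specifies a symmetric triple orientation $\widetilde{\mu}$ of $\mathcal{L}(\x_{123})$ via a direct three-fold analog of the formula in Definition \ref{def:homcom}. The relevant dimension identities, namely
\begin{gather*}
b_1(\x_i \circ \x_{i-1}) = b_1(\x_{i-1}) + b_1(\x_i) - d_{i-1,i}, \\
b_2^+(\x_i \circ \x_{i-1}) = b_2^+(\x_{i-1}) + b_2^+(\x_i) + b_1(\y_i) - d_{i-1,i},
\end{gather*}
are immediate from (\ref{eq:exseq1}) and (\ref{eq:exseq3}), and there are analogous identities relating the triple and iterated $\text{im}(f)$'s.

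Next, I would refine the splittings used in $(\mu_3 \circ \mu_2) \circ \mu_1$ so that they factor through the triple decomposition, expressing this iterated orientation as $\widetilde{\mu}$ multiplied by an explicit Koszul sign $\sigma_L$ depending only on the above dimensions. Doing the same for $\mu_3 \circ (\mu_2 \circ \mu_1)$ yields $\widetilde{\mu}$ times a sign $\sigma_R$ of similar form. Associativity then reduces to the identity $\sigma_L \equiv \sigma_R \pmod{2}$.

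The main obstacle will be verifying this sign identity. Both $\sigma_L$ and $\sigma_R$ are quadratic polynomials in the dimensions $d_{12}, d_{23}, d_{12,3}, d_{1,23}$ and the Betti numbers $b_1(\x_i), b_1(\y_j), b_2^+(\x_i)$, and their equality is the entire content of the sign convention $s$ in Definition \ref{def:homcom}. I expect the dimension relations above, together with the compatibility of the binary Mayer-Vietoris sequences and the triple one (which gives a linear relation among $d_{12}, d_{23}, d_{12,3}, d_{1,23}$ and $\dim\ker(f)$), to imply the identity modulo 2 after bookkeeping. A conceptual alternative, which I would pursue if the direct verification becomes unwieldy, is to match the algebraic rule to the Fredholm determinant composition $\text{det}(D_A) \otimes \text{det}(D_{A'}) \simeq \text{det}(D_{A \cup A'})$ of (\ref{detiso}); associativity on the analytic side is automatic because tensor product of lines is associative, and this would bypass the explicit quadratic-sign computation entirely.
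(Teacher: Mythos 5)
Your overall strategy is in fact the one the paper uses: the paper introduces the combined map $f = f_{12}+f_{23}\colon A_2\oplus A_3\to B_1\oplus B_2\oplus B_3$ and shows that each of the two iterated compositions can be rewritten in terms of the splittings $F,G$ associated to $f$, which plays the role of your symmetric triple orientation $\widetilde{\mu}$. But two steps that you treat as routine are precisely where the actual work sits. First, the claim that you can ``refine the splittings so that they factor through the triple decomposition'' is a nontrivial lemma, not a matter of bookkeeping. Concretely, one has to show that the composite of the two binary splitting maps agrees with a single splitting of the triple sequence, e.g.\ $(\mathrm{id}\oplus F_{12,3}^{-1})(F_{12}^{-1}\oplus\mathrm{id}) = F^{-1}$ on $\beta_1\wedge\beta_2\wedge\beta_3$, and similarly for the $G$'s and $H$'s. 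The paper proves this by abstracting to a relative linear map $\phi\colon(V,V')\to(W,W')$, introducing inner products so that complements can be taken coherently, and then verifying an explicit decomposition $\Phi = (\Phi'\oplus\mathrm{id})(\Phi''\oplus\mathrm{id})$. Without this, the two iterated orientations are not literally being compared inside the same frame, and your reduction to a sign identity has no content.

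Second, you defer the sign identity $\sigma_L\equiv\sigma_R\pmod 2$ as ``bookkeeping,'' but this identity is exactly what determines whether the coefficient $s$ in Definition \ref{def:homcom} is admissible at all: a different choice of $s$ would make the rule non-associative. The paper carries this computation through (after pulling everything into the $F,G$-normal form, the residual exponent is $s_{1,23}+d_{23}(d_{1,23}+b_1+a_2)+a_2(a_3+c_3)+s_{12,3}+d_{12}(d_{12,3}+a_3+c_3)$, and evenness follows using $d_{23}+d_{1,23}=d_{12}+d_{12,3}$). You should either carry it out or acknowledge it as an essential verification rather than expected bookkeeping. Finally, the Fredholm alternative is not the shortcut you suggest. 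Associativity of the determinant-line composition is not ``automatic from associativity of tensor product''; one must check that the homotopy-based gluing of lines composes coherently, which is a theorem (cf.\ \cite[\S 20.2]{kmm}), and the paper's own comparison at the end of \S\ref{sec:homor} shows that the algebro-topological rule matches the Fredholm one only up to a nontrivial sign automorphism $\mu\mapsto\overline{\mu}$. So that route replaces, rather than avoids, the sign analysis.
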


\begin{proof}
	We first rephrase the problem in terms of linear algebra. 
	For $i=1,2$ consider quadruples $\mathscr{A}_i=(A_i,B_i,C_i,\mu_i)$ where $A_i,B_i,C_i$ 
	are vector spaces and $\mu_i$ is an orientation of $A_i\oplus B_i\oplus C_i$.
	In our application we have $A_i=H_1(\y_i)$, $B_i=H_1(\x_i)$, and $C_i=H_2^+(\x_i)$. 
	Given a linear map
	\[
		f_{12}:A_2\to B_1\oplus B_2,
	\]
	we can compose $\mathscr{A}_1$ and $\mathscr{A}_2$ along $f_{12}$ to form
	\[
		\mathscr{A}_{2}\circ_{f_{12}}\mathscr{A}_1 =(A_1,\text{coker}(f_{12}),C_1\oplus C_2\oplus \text{ker}(f_{12}),\mu_{2}\circ\mu_1).
	\]
	The orientation $\mu_{12}=\mu_{2}\circ\mu_{1}$ is adapted from Definition \ref{def:homcom} as follows. Write 
	$\mu_i = \beta_i\wedge \alpha_i\wedge\gamma_i$ where $\alpha_i,\beta_i,\gamma_i$ 
	are respective orientations of $A_i,B_i,C_i$. Choose an orientation $\delta_{12}$ 
	of $\text{im}(f_{12})$. Choose isomorphisms
	\begin{equation}
		F_{12}:\text{im}(f_{12})\oplus \text{coker}(f_{12})\xrightarrow{\sim} B_1\oplus B_2,\label{eq:iso1}
	\end{equation}
	\begin{equation}
		G_{12}:\text{ker}(f_{12})\oplus \text{im}(f_{12})\xrightarrow{\sim} A_2\label{eq:iso2}
	\end{equation}
	that are splittings of the naturally associated exact sequences.
	Orient $\text{coker}(f_{12})$ by $\beta_{12}$ and 
	$\text{ker}(f_{12})$ by $\zeta_{12}$ using 
	the conditions
	\begin{equation*}
		F_{12}(\delta_{12}\wedge\beta_{12})=\beta_1\wedge\beta_2, \quad G_{12}(\zeta_{12}\wedge\delta_{12})=\alpha_2. 
	\end{equation*}
	Then the composition $\mu_{12}$ is given by
	\[
	\mu_{12} = (-1)^{s_{12}}\beta_{12}\wedge\alpha_1\wedge\gamma_1\wedge\gamma_2\wedge\zeta_{12},
	\]
	\[
		s_{12}= b_1a_2+b_1c_2+a_2c_2 + (d_{12}^2-d_{12})/2,
	\]
	where $a_i=\dim A_i$, $b_i=\dim B_i$, $c_i =\dim C_i$, and $d_{12} = \dim \left[\text{im}(f_{12})\right]$. 
	Now suppose we have a third quadruple $\mathscr{A}_3=(A_3,B_3,C_3,\mu_3)$ and a linear map $f_{23}:A_3\to B_2\oplus B_3$. Consider
	\[
		f=f_{12}+f_{23}:A_2\oplus A_3\to B_1\oplus B_2\oplus B_3.
	\]
	The map $f$ induces further maps
	\[
		f_{1,23}:A_2\to B_1\oplus\text{coker}(f_{23}), \quad f_{12,3}:A_3\to \text{coker}(f_{12})\oplus B_3.
	\]
	We write $F_{23}, G_{23}$ for the isomorphisms associated to $f_{23}$ as in (\ref{eq:iso1}), (\ref{eq:iso2}); 
	$F_{12,3},G_{12,3}$ associated to $f_{12,3}$; and $F_{12,3}, G_{12,3}$ to $f_{1,23}$.
	We have identifications
	\begin{equation}
		\text{coker}(f_{1,23})=\text{coker}(f) = \text{coker}(f_{12,3}),\label{eq:cokerid}
	\end{equation}
	\begin{equation}
		\text{ker}(f_{23})\oplus \text{ker}(f_{1,23}) = \text{ker}(f) =\text{ker}(f_{12})\oplus\text{ker}(f_{12,3}).\label{eq:kerid}
	\end{equation}
The cokernel identifications are natural. The kernel identifications depend on some choices. 
For instance, $\text{ker}(f_{12})\oplus \text{ker}(f_{12,3}) = \text{ker}(f)$ is established as follows. 
Clearly $\text{ker}(f_{12})\subset \text{ker}(f)$. 
Now suppose $a\in\text{ker}(f_{12,3})\subset A_3$. Then $\pi_{12}(f(a))\in\text{im}(f_{12})$ where 
$\pi_{12}$ projects onto $B_1\oplus B_2$. Thus $\pi_{12}(f(a))=f_{12}(b)$ for some $b\in A_2$. Let $\sigma_{12}:\text{im}(f_{12})\to A_2$ be such that $f_{12}\sigma_{12}=\text{id}_{\text{im}(f_{12})}$.
Then we may take $b=\sigma_{12}(\pi_{12}(f(a)))$, and 
the assignment $a\mapsto(-b,a)$ injects $\text{ker}(f_{12,3})$ into $\text{ker}(f)$. 
In this way we obtain a map from $\text{ker}(f_{12})\oplus\text{ker}(f_{12,3})$ to 
$\text{ker}(f)$ which is easily seen to be an isomorphism.
 With these identifications, the associativity of our rule in Definition \ref{def:homcom} is nearly equivalent to
	\begin{equation}
		\mathscr{A}_3\circ_{f_{12,3}}(\mathscr{A}_2\circ_{f_{12}}\mathscr{A}_1) = (\mathscr{A}_3\circ_{f_{23}}\mathscr{A}_2)\circ_{f_{1,23}}\mathscr{A}_1.\label{eq:linalgass}
	\end{equation}
We have only left out the roles of the $H_{12}$ maps; these are not essential and 
we remark on their absence at the end of the proof. We proceed to establish (\ref{eq:linalgass}).
	Let us write out $\mu_{12,3} = \mu_3\circ\mu_{12}$, the orientation associated to the left side of (\ref{eq:linalgass}). 
	Let $\mu_3=\beta_3\wedge\alpha_3\wedge\gamma_3$ where $\alpha_3,\beta_3,\gamma_3$ are orientations 
	of $A_3,B_3,C_3$, respectively. Let $\delta_{12,3}$ orient $\text{im}(f_{12,3})$. 
	Orient $\text{coker}(f_{12,3})$ by $\beta_{12,3}$ and $\text{ker}(f_{12,3})$ by 
	by $\zeta_{12,3}$, where
	\begin{equation*}
		F_{12,3}(\delta_{12,3}\wedge\beta_{12,3}) = \beta_{12}\wedge \beta_3, \quad G_{12,3}(\zeta_{12,3}\wedge\delta_{12,3}) = \alpha_3.
	\end{equation*}
	Then we use our composition rule to obtain
	\[
		\mu_{12,3} = (-1)^{s_{12,3}}\beta_{12,3}\wedge\alpha_1\wedge(\gamma_1\wedge\gamma_2\wedge\zeta_{12})\wedge\gamma_3\wedge\zeta_{12,3},
	\]
	\[
		s_{12,3} = s_{12} + b_{12}a_3 +b_{12}c_3+ a_3c_3 + (d_{12,3}^2-d_{12,3})/2.
	\]
	Here $d_{12,3}=\dim\left[\dim(f_{12,3})\right]$ and $b_{12}=\dim\left[\text{coker}(f_{12})\right]$,
	so in particular
	\[
		b_{12} = b_1+b_2 -d_{12}.
	\]
	Now we write out the orientation associated to the right side 
	of (\ref{eq:linalgass}). We first write
	\[
		\mu_{23} = \mu_3\circ\mu_2 = (-1)^{s_{23}}\beta_{23}\wedge\alpha_2\wedge\gamma_2\wedge\gamma_3\wedge\zeta_{23},
	\]
	\[
		s_{23} = b_2a_3 + b_2c_3+ a_3c_3 +(d_{23}^2-d_{23})/2,
	\]
	where, given an orientation $\delta_{23}$ of $\text{im}(f_{23})$, we have imposed
	\begin{equation*}
		F_{23}(\delta_{23}\wedge \beta_{23}) = \beta_{2}\wedge\beta_3, \quad G_{23}(\zeta_{23}\wedge\delta_{23}) = \alpha_3.
	\end{equation*}
	Now we can also write
	\[
	\mu_{1,23} = (-1)^{s_{1,23}}\beta_{1,23}\wedge\alpha_1\wedge\gamma_1\wedge(\gamma_2\wedge\gamma_3\wedge\zeta_{23})\wedge\zeta_{1,23},
	\]
	\[
		s_{1,23} = s_{23} + b_1a_2+b_1c_{23}+a_2c_{23} + (d_{1,23}^2-d_{1,23})/2,
	\]
	where $c_{23}=\dim\left[C_2\oplus C_3 \oplus \text{ker}(f_{23})\right]$, so that
	\[
		c_{23} = c_2+c_3+a_3-d_{23},
	\]
	and, given an orientation $\delta_{1,23}$ of $\text{im}(f_{1,23})$, we have the conditions
	\begin{equation*}
		F_{1,23}(\delta_{1,23}\wedge\beta_{1,23}) = \beta_1\wedge\beta_{23}, \quad G_{1,23}(\zeta_{1,23}\wedge\delta_{1,23})=\alpha_2.
	\end{equation*}
	
	We will now show that $\mu_{12,3} = \mu_{1,23}$. We choose identifications
	\[
		\text{im}(f_{12})\oplus\text{im}(f_{12,3})=\text{im}(f)=\text{im}(f_{23})\oplus\text{im}(f_{1,23}).
	\]
	These depend on $F_{12}$ and $F_{23}$. For instance, 	
	let $\tau_{12}:\text{coker}(f_{12})\to B_1\oplus B_2$ be the map
	extracted from $F_{12}$ (and conversely it may define $F_{12}$). 
	Then $\text{im}(f_{12,3})$ maps into $\text{im}(f)$ by 
	$a\mapsto (\tau_{12}(\pi(a)),\pi_3(a))$ where $\pi$ projects onto 
	$\text{coker}(f_{12})$ and $\pi_3$ onto $B_3$. Since $\text{im}(f_{12})$ is 
	naturally a subset of $\text{im}(f)$, we then obtain a map from $\text{im}(f_{12})\oplus\text{im}(f_{12,3})$ into $\text{im}(f)$ which yields the above identification.
	We can thus orient $\text{im}(f)$ by $\delta_{12}\wedge\delta_{12,3}$ or 
	by $\delta_{23}\wedge\delta_{1,23}$. It suffices to show
	\begin{equation}
		\delta_{12}\wedge \delta_{12,3}\wedge\mu_{12,3}\wedge \delta_{12}\wedge \delta_{12,3} =  \delta_{23}\wedge \delta_{1,23}\wedge\mu_{1,23}\wedge \delta_{23}\wedge \delta_{1,23}\label{eq:suffices}
	\end{equation}
	as orientations of $\text{im}(f)\oplus V\oplus\text{im}(f)$, 
	where $V$ is the total space of either side of (\ref{eq:linalgass}) 
	for which $\mu_{1,23}$ and $\mu_{12,3}$ are orientations. We compute the left side of (\ref{eq:suffices}):
	\begin{align*}
		(-1)&^{s_{12,3}+d_{12}d_{12,3}}\delta_{12}\wedge \delta_{12,3} \wedge \beta_{12,3}\wedge\alpha_1\wedge\gamma_1\wedge\gamma_2\wedge\zeta_{12}\wedge\gamma_3\wedge\zeta_{12,3}\wedge\delta_{12,3}\wedge \delta_{12} \\
					&= (-1)^{s_{12,3}+d_{12}d_{12,3}+d_{12}(a_{3}+c_3)+a_2c_3} \left[( \text{id}_{\text{im}(f_{12})}\oplus F_{12,3}^{-1})(F_{12}^{-1}\oplus\text{id}_{B_3})\right](\beta_1\wedge\beta_2\wedge\beta_3)\\
					& \hfill\hfill\qquad \wedge\alpha_1\wedge\gamma_1\wedge\gamma_2\wedge\gamma_3\wedge\left[G_{12}^{-1}\oplus G_{12,3}^{-1}\right](\alpha_2\wedge\alpha_3).
	\end{align*}
	Now, choose splitting isomorphisms
	\[
		F:\text{im}(f)\oplus\text{coker}(f)\xrightarrow{\sim} B_1\oplus B_2\oplus B_3,
	\]
	\[
		G:\text{ker}(f)\oplus \text{im}(f)\xrightarrow{\sim} A_2\oplus A_3
	\]
	for the naturally associated short exact sequences. We claim we have
	\begin{equation}
		\left[(\text{id}_{\text{im}(f_{12})}\oplus F_{12,3}^{-1})(F_{12}^{-1}\oplus\text{id}_{B_3})\right](\beta_1\wedge\beta_2\wedge\beta_3) = F^{-1}(\beta_1\wedge\beta_2\wedge\beta_3),\label{eq:F}
	\end{equation}
	\begin{equation}
		\left[G_{12}^{-1}\oplus G_{12,3}^{-1}\right](\alpha_2\wedge\alpha_3) = G^{-1}(\alpha_2\wedge\alpha_3).\label{eq:G}
	\end{equation}
	We consider (\ref{eq:F}). To abstract the underlying problem, consider 
	a linear map $\phi:V\to W$ and distinguished subspaces $V'\subset V$ and $W'\subset W$ such 
that $\phi(V')\subset W'$. In other words, we have a relative linear map $\phi:(V,V')\to (W,W')$. Choose an isomorphism
\[
	\Phi:\text{im}(\phi)\oplus\text{coker}(\phi)\xrightarrow{\sim} W
\]
associated to the natural short exact sequence. Similarly, choose
\[
	\Phi':\text{im}(\phi')\oplus\text{coker}(\phi')\xrightarrow{\sim} W'
\]
where $\phi':V'\to W'$ is a restriction of $\phi$. Also, with $\phi'':V/V'\to \text{coker}(\phi')\oplus W/W'$ choose
\[
	\Phi'':\text{im}(\phi'')\oplus\text{coker}(\phi'')\xrightarrow{\sim}\text{coker}(\phi')\oplus W/W'.
\]
We can identify $\text{coker}(\phi'')=\text{coker}(\phi)$ and $\text{im}(\phi)=\text{im}(\phi')\oplus\text{im}(\phi'')$ just as we have done in our setting above. We also choose an identification $W/W'\oplus W' = W$. Then (\ref{eq:F}) is equivalent to
\[
	\text{det}\Big[\Phi^{-1}(\Phi'\oplus\text{id}_{W/W'} )(\Phi''\oplus\text{id}_{\text{im}(\phi')})\Big] > 0,
\]
by setting $\phi=f$, $V= A_2\oplus A_3$, $V'=A_2$, $W=B_1\oplus B_2\oplus B_3$, and $W'=B_1\oplus B_2$. In fact, we can choose the data so that, under these identifications,
\begin{equation}
	\Phi = (\Phi'\oplus\text{id}_{W/W'} )(\Phi''\oplus\text{id}_{\text{im}(\phi')}).\label{eq:thedecomp}
\end{equation}
This can be seen as follows. We may equip $V$ and $W$ with inner products so that we may freely take complements. 
In the following, we use the notation $V_1^{\perp}\subset V_2$ to mean that the complement $V_1^\perp$ (with $V_2$ possibly inside a larger space) was taken inside $V_2$. 
We may then identify $\text{coker}(\phi) = \text{im}(\phi)^{\perp} \subset W$, 
$\text{coker}(\phi')=\text{im}(\phi')^{\perp} \subset W'$ and 
$\text{im}(\phi'')= \text{im}(\phi')^{\perp} \subset \text{im}(\phi)$. We also identify $W/W'$ with $W'^\perp \subset W$. We use these identifications 
to define $\Phi,\Phi',\Phi''$ in the natural way.
Then $\Phi$ is just the identification $\text{im}(\phi)\oplus \text{im}(\phi)^\perp = W$. 
On the other hand, we view $\Phi''\oplus\text{id}_{\text{im}(\phi')}$ as a map
\[
	\text{im}(\phi)\oplus \text{im}(\phi)^\perp \to \text{im}(\phi')\oplus \text{im}(\phi')^\perp\oplus W'^\perp
\]
where $\text{im}(\phi')^\perp\subset W'$. This last expression uses the identification
$\text{im}(\phi) = \text{im}(\phi') \oplus \text{im}(\phi')^\perp$ 
where $\text{im}(\phi')^\perp\subset \text{im}(\phi)$, followed by 
the identification $\text{im}(\phi')^\perp \oplus \text{im}(\phi)^\perp = \text{im}(\phi')^{\perp}\oplus W'^\perp$, where on the left $\text{im}(\phi')^\perp \subset \text{im}(\phi)$ but on the right we have the larger complement $\text{im}(\phi')^\perp \subset W'$. These are just two different decompositions of $\text{im}(\phi')^\perp \subset W$. Then, $\Phi'\oplus\text{id}_{W/W'} $, viewed as a map
\[
	\text{im}(\phi')\oplus \text{im}(\phi')^\perp\oplus W'^\perp\to W,
\]
where again $\text{im}(\phi')^\perp\subset W'$, first uses the identification 
$\text{im}(\phi')\oplus \text{im}(\phi')^\perp = W'$, and then the identification 
$W'\oplus W'^\perp=W$. From this perspective, from which everything happens inside $W$ 
and uses its various orthogonal decompositions, (\ref{eq:thedecomp}) is clear, 
and thus (\ref{eq:F}) is established; (\ref{eq:G}) is similar. We return to establishing (\ref{eq:suffices}). 
We now know the left hand side is
	\begin{align*}
		& (-1)^{s_{12,3}+d_{12}d_{12,3}+d_{12}(a_{3}+c_3)+a_2c_3} F^{-1}(\beta_1\wedge\beta_2\wedge\beta_3)\\
					& \hfill\hfill\qquad \wedge\alpha_1\wedge\gamma_1\wedge\gamma_2\wedge\gamma_3\wedge G^{-1}(\alpha_2\wedge\alpha_3).
	\end{align*}
We can also compute the right side of (\ref{eq:suffices}):
\begin{align*}
		(-1)&^{s_{1,23}+d_{23}d_{1,23}}\delta_{23}\wedge \delta_{1,23} \wedge \beta_{1,23}\wedge\alpha_1\wedge\gamma_1\wedge\gamma_2\wedge\gamma_3\wedge\zeta_{23}\wedge\zeta_{1,23}\wedge\delta_{1,23}\wedge \delta_{23} \\
					&= (-1)^{s_{1,23}+d_{23}d_{1,23}+d_{23}(b_1+a_2)+a_2a_3}F^{-1}(\beta_1\wedge\beta_2\wedge\beta_3)\\
					& \qquad \qquad\wedge\alpha_1\wedge\gamma_1\wedge\gamma_2\wedge\gamma_3\wedge G^{-1}(\alpha_2\wedge\alpha_3).
\end{align*}
We have used the necessary analogues of (\ref{eq:F}) and (\ref{eq:G}).
Thus (\ref{eq:suffices}) holds if the quantity
	\begin{eqnarray*}
		s_{1,23}+d_{23}(d_{1,23}+b_1+a_2)+ a_2(a_3+c_3)+s_{12,3}+d_{12}(d_{12,3}+a_{3}+c_3)
	\end{eqnarray*}
	is even. Using $d_{23}+d_{1,23}=d_{12}+d_{12,3}$, this is easily verified. This 
establishes (\ref{eq:linalgass}).
Finally, we remark on the absence of the $H_{12}$ maps in our setup. In our application, we can choose the relevant maps $H_{12}$ and $H_{12,3}$ so that
\[
	(H_{12,3})(H_{12}\oplus \text{id}_{H_2^+(X_3)\oplus \text{ker}(f_{12,3})}) = H
\]
where we are using some chosen map
\[
	H: H_{2}^+(X_2)\oplus H_2^+(X_2)\oplus H_2^+(X_3)\oplus\text{ker}(f)\xrightarrow{\sim} H_{2}^+(X_{123})
\]
associated to the natural short exact sequence, and the identifications (\ref{eq:kerid}). 
This is established just as was (\ref{eq:F}). 
$H_{23}$ and $H_{1,23}$ can be chosen similarly, and this compatibility allows the above argument 
to carry through.
\end{proof}
\vspace{10px}

Now we define distinguished identity homology orientations. If $\x=\y\times [0,1]$, 
then $\mathcal{L}(\x)=H_1(Y)\oplus H_1(\x)$. Let $\alpha$ be 
any orientation of $H_1(Y)$, and 
choose an orientation $\beta$ of $H_1(\x)$ such that $\alpha=\beta$ under the natural 
identification of $H_1(\x)$ with $H_1(\y)$. Then define
\[
	\mu_{\y}^{\text{id}} := (-1)^{\frac{1}{2}(b_1(Y)^2+b_1(Y))}\beta\wedge\alpha
\]
to be the distinguished identity homology orientation of $\y\times[0,1]$.

\begin{prop}
	Whenever $\mu$ is a homology orientation of a cobordism $\x$ with incoming boundary $\y$, we 
	have $\mu\circ\mu_{\y}^\text{id}=\mu$. Similarly, if $\x$ has outgoing boundary $\y$, then 
	$\mu_{\y}^\text{id}\circ\mu=\mu$.
\end{prop}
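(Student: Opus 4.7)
The plan is to verify the first identity $\mu \circ \mu_{Y}^{\text{id}} = \mu$ by direct computation using Definition \ref{def:homcom}; the second identity follows by an analogous argument with incoming and outgoing boundaries swapped. Write $X_1 = Y \times [0,1]$ and $X_2 = X$ (with incoming boundary $Y$), so that $X_{12} = X$, and let $n = b_1(Y)$.

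The first step is to record the simplifications coming from $X_1$ being a cylinder. Since the inclusion $Y \to Y \times [0,1]$ is a homotopy equivalence, $H_2^+(X_1) = 0$ (so $\gamma_1$ is trivial), and the Mayer--Vietoris map $f_{12} \colon H_1(Y_2) \to H_1(X_1) \oplus H_1(X_2)$ has its first component an isomorphism, hence is injective. Thus $\ker(f_{12}) = 0$ (so $\zeta_{12}$ is trivial), $d_{12} = n$, and the sequence (\ref{eq:exseq3}) identifies $H_2^+(X_{12})$ with $H_2^+(X)$ naturally.

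Next I would compute $\beta_{12}$ explicitly. Since $\mu_{Y}^{\text{id}}$ is independent of its internal choice of orientation, take $\alpha_1 = \alpha_2$ (so $\beta_1 = \alpha_2$ as well, all viewed as orientations of $H_1(Y)$). Choose the splitting $s(a) = (0,a)$ of (\ref{eq:exseq1}), identifying $H_1(X_{12})$ with $H_1(X_2)$ by projection, and set $\delta_{12} = f_{12}(\alpha_2)$. A short wedge-product expansion then shows that the defining condition $F_{12}(\delta_{12} \wedge \beta_{12}) = \beta_1 \wedge \beta_2$ holds with $\beta_{12} = \beta_2$ and no extra sign: the $\iota_*$-contributions in the $H_1(X_2)$-factor of $f_{12}$ get killed by the wedge with $\beta_2$. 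The condition defining $\zeta_{12}$ is vacuously satisfied since $\ker(f_{12})=0$.

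Finally, the signs must cancel. Plugging $b_1(X_1) = n$, $b_1(Y_2) = n$, $b_2^+(X_1) = 0$, $d_{12} = n$ into the formula for $s$ in Definition \ref{def:homcom} gives
\[
s = \tfrac{1}{2}(n^2 - n) + n^2 + 2n\, b_2^+(X),
\]
which combined with the normalization exponent $\tfrac{1}{2}(n^2+n)$ of $\mu_{Y}^{\text{id}}$ yields total exponent $2n^2 + 2n\, b_2^+(X) \equiv 0 \pmod 2$. Hence $\mu \circ \mu_{Y}^{\text{id}} = \beta_2 \wedge \alpha_2 \wedge \gamma_2 = \mu$. The second identity is handled by the same strategy, now with $X_1 = X$ and $X_2 = Y \times [0,1]$; in that case the reordering of wedge products produces an additional sign $(-1)^{n(b_1(X)+1)}$ in $\beta_{12}$ that cancels against the contribution $n b_1(X)$ to $s$ and the normalization $(-1)^{(n^2+n)/2}$. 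Throughout, the main obstacle is the sign bookkeeping: the normalization factor in $\mu_{Y}^{\text{id}}$ was tailored precisely so that these cancellations occur, and once the conventions are pinned down the verification is routine.
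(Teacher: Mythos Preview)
Your proof is correct and follows essentially the same approach as the paper's own proof: both set $X_1 = Y\times[0,1]$, $X_2 = X$, take the section $y\mapsto(0,y)$, observe that $\ker(f_{12})=0$ and $d_{12}=n$, verify $\beta_{12}=\beta_2$ with no sign, and check that the exponent $s$ in Definition~\ref{def:homcom} reduces mod~$2$ to $\tfrac{1}{2}(n^2+n)$, cancelling the normalization of $\mu_Y^{\text{id}}$. Your treatment of the second identity (with the extra sign $(-1)^{n(b_1(X)+1)}$ arising from the reordering in $F_{12}^{-1}(\beta_1\wedge\beta_2)$) also matches the paper's computation.
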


\begin{proof}
Suppose $\x$ has incoming boundary $\y$, i.e. $\x:\y\to\y'$. We let $\x_1=\y\times [0,1]$ 
and $\x_2=\x$ and use the notation of Definition \ref{def:homcom}. We have $\text{im}(f_{12})=H_1(\y)$ 
and thus $d_{12}=b_1(\y)$. We identify $\x_{12}$ with $\x_2=\x$. Choose the section of the exact sequence (\ref{eq:exseq1}), which is 
a map $H_1(\x)\to H_1(\y\times [0,1])\oplus H_1(\x)$, to be of the form $y\mapsto (0,y)$. The induced isomorphism 
$F_{12}:H_1(\y)\oplus H_1(\x)\to H_1(\y\times [0,1])\oplus H_1(\x)$ is of the form $(x,y)\mapsto (x,y-\pi(x))$
where $\pi:H_1(\y)\to H_1(\x)$ is induced by inclusion. Let $\mu = \mu_2 = \beta_2\wedge\alpha_2\wedge\gamma_2$ 
where $\beta_2,\alpha_2,\gamma_2$ are respective orientations of $H_1(\x),H_1(\y),H_2^+(\x)$. 
Write $\mu_1 = \mu_{\y}^{\text{id}} = (-1)^{\frac{1}{2}(b_1(Y)^2+b_1(Y))}\beta_1\wedge\alpha_1$ as above, 
where $\alpha_1=\alpha$ and $\beta_1=\beta$. 
Choose $\delta_{12} = \alpha_1$. Then
\[
	F_{12}^{-1}(\beta_1\wedge\beta_2) = \delta_{12}\wedge\beta_{12}
\]
where $\beta_{12}=\beta_2$. We can choose $\alpha_2=\alpha_1$ so that the condition $\zeta_{12}\wedge\delta_{12} = \alpha_2$ ($G_{12}$ implicit) forces $\zeta_{12}$ to be the canonical $+1$ orientation 
of the $0$-vector space. Similarly, $\gamma_1$ is taken to be $+1$, and the expression $H_{12}(\gamma_1\wedge\gamma_2\wedge \zeta_{12})$ may be regarded as equal to $\gamma_2$. The sign $s$ in Definition \ref{def:homcom} is 
equal to $\frac{1}{2}(b_1(Y)^2+b_1(Y))$, and so cancels with the sign in $\mu_\y^\text{id}$. 
All together, Definition \ref{def:homcom} yields
\[
	\mu\circ\mu_{\y}^\text{id}=\beta_2\wedge\alpha_2\wedge\gamma_2 = \mu.
\]
Next, suppose $\x$ has outgoing boundary $\y$, i.e. $\x:\y'\to\y$. 
Now we write $\x=\x_1=\x_{12}$ and $\y\times[0,1]=\x_2$ and, correspondingly, we swap the indices 
for the above orientations and write
$\mu = \mu_1 = \beta_1\wedge\alpha_1\wedge\gamma_1$ 
and $\mu_\y^\text{id} = (-1)^{\frac{1}{2}(b_1(Y)^2+b_1(Y))}\beta_2\wedge\alpha_2 =  \mu_2$.
Choose the section of the exact sequence (\ref{eq:exseq1}), which is 
a map $H_1(\x)\to H_1(\x)\oplus H_1(\y\times [0,1])$, to be of the form $y\mapsto (y,0)$.
Now the induced map $F_{12}:H_1(\y)\oplus H_1(\x)\to H_1(\x)\oplus H_1(\y\times [0,1])$ is of the form $(x,y)\mapsto (y+\pi(x),-x)$. Choose $\delta_{12}=\alpha_2=\beta_2$ and so on, just as above. Then
\[
	F_{12}^{-1}(\beta_1\wedge\beta_2)=\delta_{12}\wedge\beta_{12}
\]
where $\beta_{12}=(-1)^{b_1(\y)b_1(\x)+b_1(\y)}\beta_1=(-1)^t \beta_1$. The exponent $s$ in 
Definition \ref{def:homcom} is given by
\[
	\frac{1}{2}\left(b_1(\y)^2-b_1(\y)\right) + b_1(\y)b_1(\x) \mod 2.
\]
We see that $s+t\equiv \frac{1}{2}(b_1(Y)^2+b_1(Y)) \mod 2$. This cancels with the sign 
put in front of $\mu_\y^\text{id}$, and we obtain from Definition \ref{def:homcom} 
the identity $\mu_{\y}^\text{id}\circ\mu=\mu$, just as before.
\end{proof}
\vspace{10px}

In the remainder of this section, we describe how our composition rule can be described in the setting of Fredholm determinant line bundles, as in \cite[\S 20.2]{kmm}, the purpose of which is to show that our rule is compatible with a construction of instanton homology. As such, the following details are not needed to understand the rest of the paper. 

In the Fredholm setting, a homology orientation of $X$ is an orientation of
$\text{det}(D)$, where $D$ is the operator $-d^\ast \oplus d^+$ acting on suitably weighted Sobolev spaces over $X$ with cylindrical ends attached. Recall that
\[
	\text{det}(D) = \ext^{\text{max}}(\text{ker}(D))\otimes \ext^{\text{max}}(\text{coker}(D)^\ast).
\]
The Sobolev weights are chosen such that we have natural identifications
\[
	\text{ker}(D) = H^1(X), \quad \text{coker}(D) = H^1(Y)\oplus H_+^2(X),
\]
where $Y$ is the incoming end of $X$, cf. \cite[Prop. 3.15]{d}. Note that an orientation of a vector space induces, in a natural way, an orientation of its dual space. Since we are working with real coefficients, homology and cohomology groups are dual to one another, so an orientation of $\text{det}(D)$ is the same as an orientation of $\mathcal{L}(X)$.

Let us now suppose we are in the situation of Definition \ref{def:homcom}, so that $\mu_i$ is an orientation of $\mathcal{L}(X_i)$, or equivalently $\text{det}(D_i)$, for $i=1,2$. We again write $\mu_i = \beta_i\wedge\alpha_i\wedge\gamma_i$ where now we view $\beta_i$ as orienting $\text{ker}(D)$ and $\alpha_i\wedge\gamma_i$ as orienting $\text{coker}(D)$ (or its dual). We will denote the composition of $\mu_1$ and $\mu_2$ as given in this setting by
\[
	\mu_2 \;\overline{\circ}\; \mu_1
\]
to distinguish it from our previous rule. The composition $\mu_2 \;\overline{\circ}\; \mu_1$ goes in two steps. First, we use the $\mu_i$ to orient $\text{det}(D_1\oplus D_2)$, which is identified with
\[
	\ext^\text{max}(\text{ker}(D_1)\oplus \text{ker}(D_2)) \otimes  \ext^\text{max}(\text{coker}(D_1)\oplus \text{coker}(D_2))^\ast.
\]
We use the following general rule for doing this: if $K_i\wedge C_i$ is an orientation for $\text{det}(D_i)$ where $K_i$ orients $\text{ker}(D_i)$ and $C_i$ orients $\text{coker}(D_i)$ (or its dual), then we orient $\text{det}(D_1\oplus D_2)$ by
\[
	 (-1)^{\dim\text{coker}(D_2)\text{index}(D_1)}(K_2\wedge K_1)\wedge (C_1\wedge C_2).
\]
This is a slight modification of the rule in \cite[Lemma 20.2.1]{kmm} but is easily seen to be associative; the difference between the two rules is the sign $(-1)^s$ where
\[
	s= \dim\text{coker}(D_1)\dim\text{ker}(D_2) + \text{index}(D_2)\dim\text{ker}(D_1).
\]
Applying this procedure to $\mu_1$ and $\mu_2$, we obtain the orientation
\[
	\mu' := (-1)^{(a_2 + c_2)(a_1+b_1+c_1)}(\beta_2\wedge\beta_1)\wedge(\alpha_1\wedge\gamma_1\wedge \alpha_2\wedge\gamma_2) 
\]
of $\text{det}(D_1\oplus D_2)$, where $a_i=\dim H^1(Y_i)$, $b_i=\dim H^1(X_i)$ and $c_i=\dim H_+^2(X_i)$.

The second step in describing the composition rule in this setting involves relating $\text{det}(D_1\oplus D_2)$ to $\text{det}(D_{12})$ by means of a (Fredholm) homotopy from the operator $D_1\oplus D_2$ to $D_{12}$, where $D_{12}$ is the operator associated to $X_{12}$. We will use the notation of \cite[\S 20.2]{kmm}. Let $P_s$ for $s\in [0,1]$ be such a homotopy, so that $P_0=D_1\oplus D_2$ and $P_1=D_{12}$. To be precise, we should understand these two aforementioned operators as having the same domain and codomain; this may be achieved using the finite cylinder setup as in \cite{kmm}. Denoting our codomain by $B$, choose $J\subset B$ so that $P_s^{-1}J + J = B$ for all $s$. We have for each $s$ an exact sequence
\begin{equation}
	0 \to \text{ker}(P_s) \xrightarrow[]{j}  P_s^{-1}J \xrightarrow[]{k} J \xrightarrow[]{l} \text{coker}(P_s) \to 0.\label{eq:detexseq}
\end{equation}
We use the following general rule for orienting $\text{det}(P_s)$  given an orientation $\mu''$ of the line $\ext^\text{max}P_s^{-1}J \otimes \ext^\text{max} J^\ast$ using the exact sequence (\ref{eq:detexseq}): write
\begin{equation}
	\mu'' = (K \wedge D) \wedge (k(D) \wedge C)\label{eq:mupp}
\end{equation}
where $K$ is an orientation of $\text{im}(j)$, $D$ of $\text{im}(j)^\perp$, and $C$ of $k(\text{im}(j)^\perp)^\perp$; then orient $\text{det}(P_s)$ by
\begin{equation}
	(-1)^{\phi(d)}j^{-1}(K)\wedge l(C)\label{eq:genrule}
\end{equation}
where $\phi(x) := (x^2-x)/2$ and $d:=\dim(\text{im}(j)^\perp)$. In our situation, we choose $J$ to be a complement of $\text{im}(P_0) = \text{im}(D_1\oplus D_2)$, and we make the identification
\[
	J = H^1(Y_1)\oplus H_+^2(X_1) \oplus H^1(Y_2) \oplus H^2_+(X_2).
\]
We choose the homotopy so that $P_s^{-1}J = \text{ker}(P_0)=\text{ker}(D_1\oplus D_2)$ for all $s$, so that
\[
	P_s^{-1}J = H^1(X_1)\oplus H^1(X_2).
\]
In particular, we have an identification of $\ext^\text{max}P_1^{-1}J \otimes \ext^\text{max} J^\ast$ with $\text{det}(D_1\oplus D_2)$, which is oriented by $\mu'$. Noting that the maps in (\ref{eq:detexseq}) for $s=1$ come from the Mayer-Vietoris maps as in Definition \ref{def:homcom}, we can write $\mu''$ from $\mu'$ as in (\ref{eq:mupp}):
\[
	\mu'' = (-1)^t(\beta_{12}\wedge \delta_{12}) \wedge (\delta_{12}\wedge \gamma_{12}).
\]
In this expression, and in all to follow, the maps $F_{12}$, $G_{12}$ and $H_{12}$ from Definition \ref{def:homcom} as well as the maps in (\ref{eq:detexseq}) will be implicitly understood, e.g. $F_{12}(\delta_{12}\wedge\beta_{12})$ is the same as $\delta_{12}\wedge\beta_{12}$. The orientation $\beta_{12}$ plays the role of $K$ above, $\gamma_{12}$ that of $C$, and $\delta_{12}$ that of $D$. The sign $(-1)^t$ is given by
\[
	t = (a_2 + c_2)(a_1+b_1+c_1) + d_{12}(b_1+b_2 + d_{12}) + b_1b_2,
\]
where $d_{12}$ is as in Definition \ref{def:homcom}. The first term in $t$ is from $\mu'$ and the rest are added to ensure that $\beta_{12}$ is defined by the condition $\delta_{12}\wedge \beta_{12} = \beta_1\wedge \beta_2$, to match Definition \ref{def:homcom}. The orientation $\gamma_{12}$ is defined by the condition $\delta_{12} \wedge\gamma_{12} = \alpha_1 \wedge \gamma_1\wedge\alpha_2 \wedge \gamma_2$. The general rule that takes $\mu''$ to (\ref{eq:genrule}), applied to our $\mu''$, tells us the final orientation of $\text{det}(D_{12})$:
\[
	\mu_2 \;\overline{\circ}\; \mu_1 = (-1)^{\phi(d_{12})+t}\beta_{12}\wedge\gamma_{12}.
\]
Now write $\alpha_2=\zeta_{12}\wedge\delta_{12}$ as in Definition \ref{def:homcom}. We compute
\[
	\mu_2 \;\overline{\circ}\; \mu_1 = (-1)^{r}\beta_{12}\wedge\alpha_1\wedge\gamma_1\wedge\gamma_2\wedge\zeta_{12},
\]
\[
	r = \phi(d_{12}) + t + d_{12}(a_2 + d_{12} + c_1 + a_1) + c_2(d_{12}+a_2).
\]
The sign given by $r$ does not match the sign given by $s$ in Definition \ref{def:homcom}, and so this composition rule is not the same as the one previously defined. However, there is an automorphism $\mu\mapsto \overline{\mu}$ on the class of all homology orientations that intertwines the two rules. Given a homology orientation $\mu$ of a cobordism $X$, we set
\[
	\overline{\mu} = (-1)^{\phi(b_1(X)) + \phi(b_1(Y)+b_2^+(X))}\mu
\]
where $Y$ is the incoming end of $X$. Then we have
\[
	\overline{(\overline{\mu_1}\;\overline{\circ}\;\overline{\mu_2})} = \mu_1\circ\mu_2.
\]
The verification is a straightforward computation that we omit. It follows that the composition rule $\mu_1\circ\mu_2$ of Definition \ref{def:homcom} is compatible with a construction of Floer homology, and it is this rule that we will use in our computations below.

\vspace{10px}

\subsection{The $E^1$-page}\label{sec:branched}

In this section we identify the $E^1$-page of (\ref{sps}) 
with the chain complex that computes reduced odd Khovanov homology.
We fix as before a diagram $D$ for the $m$-component link $\lt$ with crossings decorated by 
arcs as in \S \ref{sec:oddkh}. We let $\y_v=\Sigma(D_v)$
for each $v\in\{0,1\}^m$ so that $\y_v$ is homeomorphic to $\#^k S^1\times S^2$ 
when $D_v$ has $k+1$ circles. The $E^1$-page and differential of
the spectral sequence we are considering is given by
\[
	E^1=\bigoplus_{v\in\{0,1\}^m} \ih^\#(\y_v), \quad d^1=\sum (-1)^{\delta(v,w)}\ih^\#(\x_{vw}),
\] 
where the sum runs over $v<w$ with $|w-v|_1=1$ and $v,w\in\{0,1\}^m$.
In writing $d^1$, 
we have chosen homology orientations $\mu_{vw}$ of the $\x_{vw}$ so that $\mu_{uw}\circ\mu_{vu}=
\mu_{tw}\circ\mu_{vt}$ always holds.
We are also using that the relevant bundles $\xbb_{vw}$ 
are trivial. This is because each such bundle lies over a cobordism which is
$D^2\times S^2\setminus \text{int}(D^4)$ running along a product cobordism, see (\ref{eq:easycob}); 
since we have arranged that the restriction of each such bundle over the boundary is trivial, 
for topological reasons the bundle must be trivial.

Let $\chain = \bigoplus \chain_v$ be the reduced odd Khovanov chain group for the diagram
$D$ and $\partial'=\sum \partial_{vw}'$ its pre-differential. For each $v\in\{0,1\}^m$ we define 
an isomorphism
\[
	\Phi_v:\chain_v\to  \ih^\#(\y_v)
\]
defined as a composition $\Phi_v = \phi_v\circ\rho_v$ 
where $\phi_v:\ext^\ast(H_1(\y_v;\mathbb{Z}))
	\to\ih^\#(\y_v)$ is from 
\S \ref{sec:example} and $\rho_v:\chain_v \to\ext^\ast(H_1(\y_v;\mathbb{Z}))$ is defined by lifting arcs in $D_v$ to loops in $\y_v$, and is explained in the following paragraph. For the $\phi_v$ maps, we of course fix orientations $\mu_v$ for each $H_1(\y_v;\mathbb{R})$. We write $\Phi:\chain\to E^1$ for the sum of the $\Phi_v$ maps.

Recall $\chain_v=\ext^\ast(V_v)$, 
and that $\y_v$ is branched over $D_v\subset S^3$. Let 
$S$ be the union of disks in the plane enclosed by the circles in $\dt_v$. 
They can be pushed out so that they are disjoint and form a Seifert surface for the union 
of circles. Let $N$ be a neighborhood of the circles, a union of solid tori. Then 
$\y_v$ can be written as a gluing
\[
	\y_v = \y_- \cup N \cup \y_+
\]
where $\y_{\pm}=S^3\setminus(S\cup N)$. Distinguishing one of the copies of $S^3\setminus(S\cup N)$, 
say $\y_+$, allows us to lift an arc $x$ in $\dt_v$ to an 
\textit{oriented} loop $\widetilde{x}$ in $\y_v$: the orientation is obtained by 
locally lifting the orientation of $x$ to the part of $\widetilde{x}$ in $\y_+$. 
Then $x \mapsto [\widetilde{x}]$ is an 
isomorphism from $V_v$ to $H_1(\y_v;\mathbb{Z})$, and $\rho_v$ is taken to 
be the extension of this map to exterior algebras. 
We can construct the $\rho_v$ in this way so that it is uniform among all $v$, in the 
sense that there are natural ways of identifying $\y_v$ with $\y_w$ away from surgery 
(or resolution) areas, and in these areas we can lift arcs the same way. 

In summary, the map 
$\Phi_v$ is described as follows. Let $x=x_1\wedge\cdots\wedge x_i$ be a wedge 
of arcs in $\chain_v$. Lift the arcs to embedded loops $\widetilde{x}_{j}$ in the branched double cover $\y_v$ as above. Choose $x_{i+1},\ldots,x_k$ and their lifts such that $\mu_v=[\widetilde{x}_1\wedge \cdots \wedge \widetilde{x}_k]$. Attach 2-handles to $\widetilde{x}_{1},\ldots,\widetilde{x}_i$ and 3-handles and a 4-handle as in \S \ref{sec:example} to 
obtain a cobordism $\x:\emptyset\to\y_v$ homology oriented by 
$[\widetilde{x}_{i+1}\wedge\cdots\wedge\widetilde{x}_k]$. Then $\Phi_v(x)=[\x]^\#$.
The following completes the proof of 
Theorem \ref{thm:1} up to gradings, which are dealt with in the next section.

\begin{lemma}
$\Phi^{-1}d^1\Phi=\sum \varepsilon_{vw}\partial_{vw}'$ where $\varepsilon_{vw}$ is a valid edge assignment.
\end{lemma}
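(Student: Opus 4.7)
The plan is to analyze the edge maps $\Phi_w^{-1} \ih^\#(\xbb_{vw})\Phi_v$ for each $v<w$ with $|w-v|_1=1$, show each equals $\varepsilon_{vw}\partial'_{vw}$ for some sign $\varepsilon_{vw}\in\{\pm 1\}$, and then verify that the resulting collection of signs satisfies both conditions of a valid edge assignment. First I would describe $\xbb_{vw}$ geometrically: since the bundle is trivial, $\xbb_{vw} = \x_{vw}\times\so$ where $\x_{vw}$ is built from $\y_v\times[0,1]$ by a single 2-handle attachment along the lift $\widetilde{x}_{vw}\subset \y_v$ of the resolution arc $x_{vw}\subset D_v$, using the lifting convention set up in the definition of $\rho_v$. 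This follows from the topology of branched double covers under crossing-resolution, together with our forced choice of trivial bundles over the $\y_v$ for $v\in\{0,1\}^m$.

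Next, I would compute $\ih^\#(\xbb_{vw})\Phi_v(x_{i_1}\wedge\cdots\wedge x_{i_l})$ using the handlebody model of $\Phi_v$ from \S \ref{sec:example}. The composite $\xbb_{vw}\circ(\x_J\times\so)$ is another handlebody cobordism of the same form, now with one additional 2-handle along $\widetilde{x}_{vw}$, and one reads off its relative invariant by the same recipe that defines $\Phi_w$. In the merge case $0\neq x_{vw}\in V_v$, the class $[\widetilde{x}_{vw}]$ is primitive in $H_1(\y_v;\mathbb{Z})$, the new 2-handle kills it, and the remaining wedge factors descend unchanged to a basis of $H_1(\y_w;\mathbb{Z})$, matching $\partial'_{vw}(x)=x$. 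In the split case $0=x_{vw}\in V_v$, the loop $\widetilde{x}_{vw}$ is null-homologous in $\y_v$, so the 2-handle cobordism introduces a new essential 2-sphere whose dual generator contributes to $H_1(\y_w;\mathbb{Z})$; identifying the resulting relative invariant with $\Phi_w(x_{vw}\wedge x)$ matches $\partial'_{vw}(x)=x_{vw}\wedge x$. In both cases the identification is up to an overall sign $\varepsilon_{vw}$, determined by the orientations $\mu_{vw}$, $\mu_v$, $\mu_w$ and the factor $(-1)^{\delta(v,w)}$ present in $d^1$.

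The condition $\partial^2=0$ for $\partial=\sum\varepsilon_{vw}\partial'_{vw}$ then follows immediately from $(d^1)^2=0$. The remaining condition—constancy of $\varepsilon_{vu}\varepsilon_{vt}\varepsilon_{uw}\varepsilon_{tw}$ within each face type, with opposite overall signs on type X versus type Y faces—is the main technical obstacle. To handle it, I would analyze the two composites $\ih^\#(\xbb_{uw})\ih^\#(\xbb_{vu})$ and $\ih^\#(\xbb_{tw})\ih^\#(\xbb_{vt})$ around a face $v<u,t<w$. Both compute $\ih^\#(\xbb_{vw})$ with $|w-v|_1=2$ by the composition law and the cocycle condition $\mu_{uw}\circ\mu_{vu}=\mu_{tw}\circ\mu_{vt}$ enforced on the chosen I-orientations, so any sign discrepancy between the two routes comes solely from the composition rule for homology orientations applied to the handlebody models used to define $\Phi$. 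I would then track these orientations through the two routes, using Definition \ref{def:homcom}, and verify that the discrepancy is governed by the parity of the algebraic intersection in $\y_v$ of surfaces cobounding the lifted arc configurations of $\widetilde{x}_{vu}$ and $\widetilde{x}_{vt}$. The key geometric input, established by inspecting the resolution pictures, is that this parity is even for type X arrangements and odd for type Y arrangements, producing exactly the required sign distinction.
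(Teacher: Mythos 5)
Your overall plan follows the paper's own proof closely: analyze the split and merge edge maps, absorb the condition $\partial^2=0$ via $(d^1)^2=0$, and reduce the type~X/type~Y face condition to a comparison of homology-orientation compositions along the two routes around a face, using Definition~\ref{def:homcom} together with the coherence relation $\mu_{uw}\circ\mu_{vu}=\mu_{tw}\circ\mu_{vt}$. This much is correct. However, your crucial final claim --- that the residual sign discrepancy is ``the parity of the algebraic intersection in $\y_v$ of surfaces cobounding the lifted arc configurations of $\widetilde{x}_{vu}$ and $\widetilde{x}_{vt}$'' --- is not a well-defined quantity (two surfaces in a $3$-manifold meet in dimension one, not in points), and it is not what the orientation bookkeeping actually produces. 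Carrying out Definition~\ref{def:homcom} around a face shows that the discrepancy is an intersection number of two $2$-cycles in the \emph{$4$-manifold} $\x_{vw}$: explicitly $\varepsilon=([S_u]+[T_u])\cdot([S_t]+[T_t])$, where $S_u\subset\y_u$ is an oriented surface with $[S_u]\cdot[\widetilde{x}_{uw}]=1$, $T_u\subset\x_{vw}$ is an oriented surface with $T_u\cap\y_u=\widetilde{x}_{uw}$, and similarly for $t$. The needed input is that this intersection number depends only on the topology of $\x_{vw}$ as a branched double cover of $S^4$ minus two $4$-balls, branched over a torus with two disks removed, and that it changes sign when one of the two arcs is reversed; that is exactly what yields the type~X/type~Y dichotomy.

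Two further caveats. Your assertion that the parity is ``even for type~X and odd for type~Y'' overshoots what is needed and what is established: the valid-edge-assignment condition only requires the product $\varepsilon_{vu}\varepsilon_{vt}\varepsilon_{uw}\varepsilon_{tw}$ to be constant on type~X faces and constant-with-opposite-sign on type~Y faces; the actual type~X value is neither pinned down nor needed. Also, the contributions of the $(-1)^{\delta(v,w)}$ factors and of the signs $\varepsilon''_{vw}$ relating $\mu_w$ with $x_{vw}\wedge\mu_v$ (split) or $\mu_v$ with $\mu_w\wedge x_{vw}$ (merge) each cancel around any face; these cancellations are routine but must be verified before the analysis collapses to the intersection computation above, and your sketch leaves them implicit.
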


\begin{proof}
Let $v,w\in\{0,1\}^m$ with $v < w$ and $|w-v|_1=1$.
There are two cases to consider,
depending on whether $D_{vw}$ is a split or a merge diagram. 
We retain the convention from \S \ref{sec:homor} that singular homology $H_\ast(X)$ 
is taken with real coefficients. 
For most of the proof, we conflate the symbols $x$ and $\widetilde{x}$, 
where $x$ is an arc (usually viewed as a class in $V_v$) and $\widetilde{x}$ is its lift to $Y_v$ 
(usually viewed as a class in $H_1(\y_v)$). That is, the maps $\rho_v$ from above are implicit.
Suppose first we are in the split case.
Let $k=b_1(\x_{vw})$. Note that
$b_2^+(\x_{vw})=0$, and $\x_{vw}:\y_v\to \y_w$ is homeomorphic to
\begin{equation}
	\left(\y_v\times [0,1]\right)\Join\left(D^2\times S^2\setminus \text{int}(D^4)\right).\label{eq:easycob}
\end{equation}
We note that we may also view $\x_{vw}$ as the branched double cover of a pair of pants properly 
embedded in $S^3\times [0,1]$. We have $\mathcal{L}(\x_{vw})=H_1(\y_v)\oplus H_1(\x_{vw})$.
We will follow the notation of Definition \ref{def:homcom}, 
setting $\x_1=\x$ and $\x_2=\x_{vw}$. 
Choose orientations $\alpha_2$ and $\beta_2$ of $H_1(\y_v)$ and $H_1(\x_{vw})$,
respectively. We can identify $H_1(\y_v)=H_1(\x_{vw})$ using the map induced by inclusion, 
and we choose to impose the condition $\alpha_2=\beta_2$. Define $\varepsilon_{vw}'=\pm 1$ by
\[
	\mu_{vw} = \varepsilon_{vw}'\beta_2\wedge\alpha_2.
\]
Let $x=x_1\wedge\cdots\wedge x_i\in \chain_v$.
Recall that $\Phi_v(x)=[\x]^\#$ where $\x$ is obtained by attaching 2-handles to 
$x_1,\ldots,x_i$ along with some 3-handles and a 4-handle. Choose $x_{i+1},\ldots,x_k$ so that 
$\mu_v=[x_1\wedge\cdots\wedge x_k]$. Then $\mathcal{L}(\x)=H_1(\x)$ is generated by 
$x_{i+1},\ldots,x_k$ and $\x$ is homology 
oriented by $\beta_1 := [x_{i+1}\wedge\cdots\wedge x_k]$.
We can identify $\mathcal{L}(\x_{vw}\circ\x)=H_1(\x_{vw}\circ\x)$. 
Note that $\text{im}(f_{12})=H_1(\y_v)$, so $d_{12}=k$.
Choose the section in the exact sequence (\ref{eq:exseq1}), 
which in this case is a map $H_1(\x_{vw}\circ\x)\to H_1(\x)\oplus H_1(\x_{vw})$,
to be of the form $y \mapsto (y,0)$. The induced 
isomorphism $F_{12}:H_1(\y_v)\oplus H_1(\x_{vw}\circ\x)\to H_1(\x)\oplus H_1(\x_{vw})$, 
written as in (\ref{eq:split1}), can be written
\[
	F_{12}:\mathbb{R}\{x_1,\ldots,x_{k}\}\oplus\mathbb{R}\{x_{i+1},\ldots,x_k\}\to\mathbb{R}\{x_{i+1},\ldots,x_k\}\oplus\mathbb{R}\{x_1\ldots,x_{k}\},
\]
\[
	F_{12}(x_p,x_q)=(x_q + \pi(x_p),-x_p),
\]
where $\pi:H_1(\y_v)\to H_1(\x)$ is a projection induced by inclusion.
Writing $\beta_2=\delta_{12}$, we have
\[
	F_{12}^{-1}(\beta_1\wedge\beta_2) =(-1)^{k}\beta_1\wedge\beta_2=\delta_{12}\wedge\beta_{12}
\]
where $\beta_{12}=(-1)^{(k-i)k + k}\beta_1=(-1)^{ki}\beta_1$. Using Definition \ref{def:homcom}, we obtain
\[
	\ih^\#(\x_{vw})\Phi_v(x) = (-1)^{(k^2+k)/2}\varepsilon_{vw}' [\x_{vw}\circ\x]^\#
\]
where $\x_{vw}\circ\x$ is homology oriented by $\beta_1$. 
The sign $(-1)^{(k^2+k)/2}$ is obtained by computing
\[
	 ki + \left((k^2-k)/2 + (k-i)k\right),
\]
where the term $ki$ is from $\beta_{12}$, and the the expression
inside the parentheses is from Definition \ref{def:homcom}. 
We mention that the condition $G_{12}(\zeta_{12}\wedge\delta_{12})=\alpha_2$ 
holds by $\alpha_{2}=\delta_{12}=\beta_2$ and setting $\zeta_{12}$ to 
be the canonical $+1$ orientation of the $0$ vector space.
Note that $[\x_{vw}\circ\x]^\# = \Phi_w(x_{vw}\wedge x)$ if and only if $\mu_w = [x_{vw}\wedge x_1 \wedge\cdots \wedge x_k]=x_{vw}\wedge \mu_v$; otherwise they differ in sign.
We record a sign $\varepsilon''_{vw}=\pm 1$ measuring this possible discrepancy between $\mu_v$ and $\mu_w$:
\[
	\mu_w = \varepsilon_{vw}'' x_{vw}\wedge\mu_v.
\]
Recalling that $d_{vw}^1=(-1)^{\delta(v,w)}\ih^\#(\x_{vw})$ and $\partial_{vw}'(x)=x_{vw}\wedge x$, we conclude
\[
	\Phi_{w}(\partial_{vw}'(x))=\varepsilon_{vw}d^1_{vw}(\Phi_v(x))
\]
where $\varepsilon_{vw}=\pm 1$ is given by
\[
	\varepsilon_{vw}=(-1)^{(k^2+k)/2+\delta(v,w)}\varepsilon'_{vw}\varepsilon''_{vw}.
\]
Now suppose we are in the merge case. Again, let $k=b_1(\x_{vw})$. 
As before, $b_2^+(\x_{vw})=0$ and
the cobordism $\x_{vw}:\y_v\to \y_w$ is now homeomorphic to
\[
	\left(\y_w\times [0,1]\right)\Join\left(D^2\times S^2\setminus \text{int}(D^4)\right).
\]
We identify $H_1(\x_{vw})=H_1(\y_w)$, and 
write $\mathcal{L}(\x_{vw})=H_1(\y_v)\oplus H_1(\y_w)$. 
Note the natural codimension 1 inclusion $H_1(\y_w)\subset H_1(\y_v)$. 
A complement for $H_1(\y_w)$ is generated by $x_{vw}$.
Let $\alpha_2$ be an orientation for $H_1(\y_v)$.
Define $\varepsilon_{vw}'=\pm 1$ by
\[
	\mu_{vw} = \varepsilon_{vw}' \beta_2 \wedge \alpha_2, \quad \beta_2 = \alpha_2 \;\llcorner\;  x_{vw}.
\]
The condition $\beta_2 = \alpha_2 \;\llcorner\;  x_{vw}$ is equivalently expressed (or is defined) by $\beta_2\wedge x_{vw} = \alpha_2$. Let $x=x_1\wedge\cdots\wedge x_i\in\ext^i(V_v)$.
If $x_{vw}$ is among 
$x_1,\ldots,x_i$ (or linearly dependent on them), the 4-manifold $\x$ constructed by attaching 
2-handles to $x_1,\ldots,x_i$ and some 3-handles and a 4-handle,
once paired with $\x_{vw}$ to form $\x_{vw}\circ\x$, contains 
a non-trivial $S^2$-bundle over $S^2$ as in \S \ref{sec:example}, so $[\x_{vw}\circ\x]^\#=0$.
Choose $x_{i+1},\ldots,x_{k+1}$ so that $\mu_v=[x_1\wedge\cdots \wedge x_{k+1}]$; we may assume that $x_{vw}=x_{k+1}$.
We may also set $\alpha_2 = \mu_v$, so that $\beta_2 = [x_1\wedge\cdots\wedge x_k]$. 
Recall $\Phi_v(x)=[\x]^\#$ where $\x$ is homology oriented by 
$\beta_1=[x_{i+1}\wedge\cdots\wedge x_{k+1}]$.
There is a codimension 1 
inclusion $H_1(\x_{vw}\circ\x)\subset  H_1(\x)$. The vector space 
$H_1(\x_{vw}\circ\x)$ is generated by $x_{i+1},\ldots,x_{k}$ and a complement for 
$H_1(\x_{vw}\circ\x)$ in $H_1(\x)$ is generated by $x_{vw}=x_{k+1}$.
Choose the section in the exact sequence (\ref{eq:exseq1}), 
which is a map $H_1(\x_{vw}\circ\x)\to H_1(\x)\oplus H_1(\x_{vw})$,
to be of the form $y\mapsto (y,0)$. 
As in the split case, $\text{im}(f_{12})=H_1(\y_v)$. We obtain an isomorphism
$F_{12}:H_1(\y_v)\oplus H_1(\x_{vw}\circ\x) \to H_1(\x)\oplus H_1(\x_{vw})$ that 
takes the form
\[
	F_{12}:\mathbb{R}\{x_1,\ldots,x_{k+1}\}\oplus\mathbb{R}\{x_{i+1},\ldots,x_{k}\}\to\mathbb{R}\{x_{i+1},\ldots,x_{k+1}\}\oplus\mathbb{R}\{x_1,\ldots,x_{k}\},
\]
\[
	F_{12}(x_p,x_q)=(x_q + \pi_1(x_p),-\pi_2(x_p)),
\]
where $\pi_1:H_1(\y_v)\to H_1(\x)$ and $\pi_2:H_1(\y_v)\to H_1(\x_{vw})$ are projections 
induced by inclusion maps. 
\begin{figure}[t]
\includegraphics[scale=1]{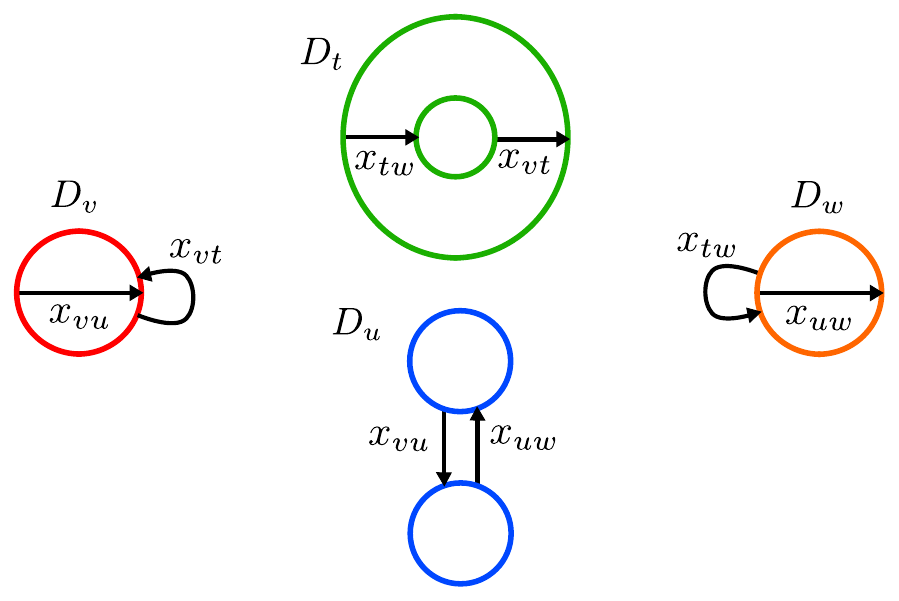}
\caption{Local pictures for four diagrams 
appearing in a type X face, starting at the diagram $D_v$ and ending at $D_w$. 
The circles in each diagram are colored so as to distinguish their roles in
Figure \ref{fig:thesign}.}
\label{fig:face}
\end{figure}
In particular, $\pi_1(x_p)=x_p$ if $p\geq i+1$ and is otherwise $0$, and 
$\pi_2(x_p)=x_p$ if $p\neq k+1$ and $\pi_2(x_{k+1})=0$. Recalling that 
$\beta_2=\alpha_2\;\llcorner\; x_{vw}$ and choosing $\delta_{12}=\alpha_2$, we have
\[
	F_{12}^{-1}(\beta_1\wedge\beta_2) =  (\beta_1\;\llcorner\; x_{vw})\wedge\alpha_2 = \delta_{12}\wedge\beta_{12}
\]
where $\beta_{12}=(-1)^{ki+i}(\beta_1\;\llcorner\; x_{vw})$. Note $\beta_1\;\llcorner\; x_{vw} =[x_{i+1}\wedge\cdots\wedge x_k]$. From Definition \ref{def:homcom} we obtain
\[
	I^\#(\x_{vw})[\x]^\# = (-1)^{(k^2-k)/2+1}\varepsilon_{vw}'[\x_{vw}\circ\x]^\#
\]
where $\x_{vw}\circ\x$ is homology oriented by $\beta_1\;\llcorner\; x_{vw}$.
We have computed the sign from
\[
	ki+i+\left(((k+1)^2-(k+1))/2 +(k+1-i)(k+1) \right),
\]
where $ki+i$ is from $\beta_{12}$, and 
the expression inside the parentheses is from the sign in Definition \ref{def:homcom}.
On the other hand, $[\x_{vw}\circ\x]^\# = \Phi_w(x)$ exactly when $\mu_w = [x_1\wedge \cdots \wedge x_k] = \mu_v\;\llcorner\; x_{vw}$.  Accounting for this, we define $\varepsilon''_{vw}=\pm 1$ by
\[
	\mu_v = \varepsilon_{vw}''\mu_w\wedge x_{vw}.
\]
Recalling that $\partial'_{vw}(x)=x$, we conclude
\[
	\Phi_{w}(\partial_{vw}'(x))=\varepsilon_{vw}d^1_{vw}(\Phi_v(x))
\]
where $\varepsilon_{vw}=\pm 1$ is given by
\[
	\varepsilon_{vw}=(-1)^{(k^2-k)/2+1+\delta(v,w)}\varepsilon'_{vw}\varepsilon''_{vw}.
\]
In summary, we have shown that
\[
	\Phi^{-1}d^1\Phi=\sum_{\substack{v<w\\|w-v|_1=1}}  \varepsilon_{vw}\partial'_{vw}
\]
where we have determined $\varepsilon_{vw}$ in the split and merge cases separately.
It remains to show that $\varepsilon_{vw}$ is a valid edge assignment.
The first condition, that the total differential squares to zero, already falls 
out from the spectral sequence. We now show that the $\varepsilon_{vw}$ satisfy the second 
condition, that is, if $v,u,t,w$ form a type X face, then the product
\begin{equation}
	\varepsilon_{vu}\varepsilon_{vt}\varepsilon_{uw}\varepsilon_{tw}\label{eq:eps}
\end{equation}
is always +1 or -1, 
independently of the particular face chosen; and if they form a type Y face, the same is true, 
and the sign is opposite the type X case.
We fix such a type X face. Note
\[
	\delta(v,u) + \delta(v,t) + \delta(u,w) + \delta(t,w) \equiv 0 \mod 2.
\]
Next we consider the $\varepsilon_{vw}''$ terms. We compute
\[
	x_{vu}\wedge\mu_v = \varepsilon_{vu}''\mu_u = \varepsilon_{vu}''\varepsilon_{uw}''\mu_w\wedge x_{uw}.
\]
Since $x_{vu}=-x_{uw}$ in $D_u$, the above can be abbreviated to $\mu_v = (-1)^{k+1}\varepsilon_{vu}''\varepsilon_{uw}''\mu_w$. Similarly, we obtain $\mu_v = (-1)^{k+1}\varepsilon_{vt}''\varepsilon_{tw}''\mu_w$, implying $\varepsilon_{vu}''\varepsilon_{vt}''\varepsilon_{uw}''\varepsilon_{tw}''=1$. A similar argument in the type Y case yields $\varepsilon_{vu}''\varepsilon_{vt}''\varepsilon_{uw}''\varepsilon_{tw}''=1$ as well.  
To summarize, we may reconsider the problem with (\ref{eq:eps}) replaced by 
the expression $\varepsilon_{vu}'\varepsilon_{vt}'\varepsilon_{uw}'\varepsilon_{tw}'$.

Note $\mathcal{L}(\x_{vu}) = H_1(\y_v)\oplus H_1(\x_{vu})$, 
and, as this is a split cobordism, we have a natural identification
$H_1(\y_v)= H_1(\x_{vu})$. Choose respective orientations $\alpha_1$ and $\beta_1$ of 
$H_1(\y_v)$ and $H_1(\x_{vu})$ that agree under this identification. Recall that 
$\varepsilon_{vu}'$ has been defined by
\[
	\mu_{vu} = \varepsilon_{vu}'\beta_1\wedge\alpha_1.
\]
On the other hand, $\mathcal{L}(\x_{uw})=H_1(\y_u)\oplus H_1(\x_{uw})$, 
and, as this is a merge cobordism, there is a codimension 1 inclusion 
$H_1(\x_{uw})\subset H_1(\y_u)$ with a complement generated by $x_{uw}$. 
Let $\alpha_2$ be an orientation of $H_1(\y_u)$ and set $\beta_2 =\alpha_2\;\llcorner \; x_{uw}$. 
Then $\varepsilon_{uw}'$ has been defined by
\[
	\mu_{uw} = \varepsilon_{uw}'\beta_2\wedge\alpha_2.
\]
In this situation, the map $f_{12}$ of (\ref{eq:exseq1}) has a 1-dimensional 
kernel spanned by $x_{uw}$. 
In this way $\text{im}(f_{12})$ can be identified with $H_1(\y_v)$ and $H_1(\x_{vu})$. 
Let a section for the exact sequence (\ref{eq:exseq1}), here a map $H_1(\x_{vw})\to H_1(\x_{vu})\oplus H_1(\x_{uw})$, 
be given by $y\mapsto (y,0)$. The map $F_{12}:\text{im}(f_{12})\oplus H_1(\x_{vw}) \to 
H_1(\x_{vu})\oplus H_1(\x_{uw})$ of (\ref{eq:split1}) can be written
\[
	F_{12}:\mathbb{R}\{x_1,\ldots,x_{k}\}\oplus\mathbb{R}\{x_1,\ldots,x_k\}\to\mathbb{R}\{x_1,\ldots,x_k\}\oplus\mathbb{R}\{x_1,\ldots,x_{k}\},
\]
\[
	F_{12}(x_p,x_q)=(x_q + x_p,-x_p).
\]
Proceeding with the conditions of Definition \ref{def:homcom}, we find
\[
	F_{12}^{-1}(\beta_1\wedge\beta_2)= \delta_{12}\wedge\beta_{12}
\]
where $\delta_{12}=\alpha_1=\beta_2$ and $\beta_{12}=\beta_1$. 
We can arrange that $\beta_2=\beta_1$ under the appropriate identification.
The condition $G_{12}(\zeta_{12}\wedge\delta_{12})=\alpha_2$, 
having that $\alpha_2 =  \beta_2\wedge x_{uw}$, yields $\zeta_{12}=(-1)^k x_{uw}$. 
Using Definition \ref{def:homcom} we obtain
\[
	\mu_{uw}\circ\mu_{vu} = (-1)^{(k^2-k)/2+k(k+1)+k}\varepsilon_{vu}'\varepsilon_{uw}'\beta_1\wedge\alpha_1\wedge H^u_{12}(x_{uw})
\]
where we've used $k=b_1(\x_{vu})=d_{12}$. The superscript $u$ in $H^u_{12}$ distinguishes this map from the map $H^t_{12}$ which appears when $u$ is replaced by $t$. We obtain a similar equation 
for $\mu_{tw}\circ\mu_{vt}$ with $x_{uw}$ replaced by $x_{tv}$ and 
$\varepsilon_{vu}'\varepsilon_{uw}'$ replaced by $\varepsilon_{vt}'\varepsilon_{tw}'$. 
Because our setup includes the compatibility condition $\mu_{tw}\circ\mu_{vt}=\mu_{uw}\circ\mu_{vu}$, 
we conclude
\[
	\varepsilon_{vu}'\varepsilon_{vt}'\varepsilon_{tw}'\varepsilon_{uw}' = H^u_{12}(x_{uw})/H^t_{12}(x_{uw})=:\varepsilon.
\]
\begin{figure}[t]
\includegraphics[scale=1.20]{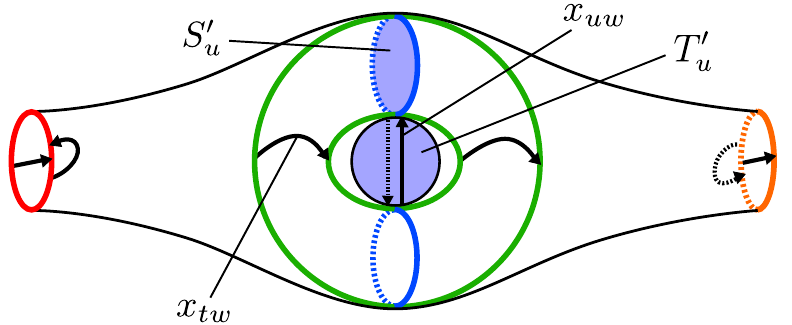}
\caption{This is an illustration (missing a dimension) of $S^4$ minus two 4-balls, 
with a properly embedded surface $F$, a torus with two disks removed, with
the local portions of the diagrams of the type X face from Figure \ref{fig:face} embedded; 
the circles of the diagrams lie on $F$, while only the endpoints of the arcs lie on $F$. 
The cobordism $\x_{vw}$ is the double cover over $S^4$ minus two 4-balls 
branched over $F$. The disk $S_u'$ lifts to a 2-sphere $S_u\subset\x_{vw}$ 
intersecting $\widetilde{x}_{uw}$ (the lift of $x_{uw}$) in one point. 
The disk $T_u'$ lifts to a 2-sphere $T_u\subset \x_{vw}$ intersecting 
$\y_u$ in $\widetilde{x}_{uw}$.}
\label{fig:thesign}
\end{figure}
In summary, 
we see that $\varepsilon$ is the sign determined by comparing the result of orienting
$H_2^+(\x_{vw})$ by $x_{uw}$ versus the result by using $x_{tw}$. 
Using the interpretation of the splitting map (\ref{eq:exseq3}) 
from \S \ref{sec:homor}, we obtain the following interpretation of $\varepsilon$. 
Here is a suitable moment to reintroduce the distinction between each arc $x$ 
and its lift $\widetilde{x}$.
Choose an oriented surface $S_u\subset \y_u$ transverse to 
$\widetilde{x}_{uw}$ with intersection product $[S_u]\cdot[\widetilde{x}_{uw}]=1$.
Choose an oriented surface $T_u$ with $T_u\cap \y_u =\widetilde{x}_{uw}$. 
Then 
\[
	[S_u] + [T_u] = H^u_{12}(\widetilde{x}_{uw}).
\] 
To illustrate this, we supply Figure \ref{fig:thesign}, where we use that
$\x_{vw}$ is a double cover of $S^4$ minus two 4-balls branched over 
a properly embedded torus with two disks removed.
Similarly, we can write $[S_t]+[T_t]=H^t_{12}(\widetilde{x}_{tw})$.
The sign $\varepsilon$ is then the intersection product of these classes:
\[
	\varepsilon = ([S_u] + [T_u])\cdot([S_t]+[T_t]).
\]
In fact, $[T_t]=\varepsilon[S_u]$.
From this it is clear that $\varepsilon$ only depends 
on the topology of the type X configuration. A type Y face is obtained from a 
type X face by reversing the direction of either $\widetilde{x}_{uw}$ 
or $\widetilde{x}_{tw}$, and $\varepsilon$ correspondingly changes sign.
\end{proof}
\vspace{10px}

\subsection{Gradings}\label{sec:finalgr}
In this section we prove that the spectral sequence preserves 
the relevant $\mathbb{Z}/4$-gradings, completing the proof of Theorem \ref{thm:1}. 
We then deduce Corollary \ref{cor:2}.
As usual, let $k=\dim(V_v)$. For $x\in\ext^i(V_v)\subset\chain$, the grading of $\Phi(x)$ in $E^1$ is given in (\ref{eq:grt}) by
\begin{equation}
	\text{gr}[E^1](\Phi(x))\equiv \text{gr}[\y_v](\Phi(x)) -\text{deg}(\xbb_{\boldsymbol{\infty}v})-|v|_1 \mod 4.\label{eq:gr4}
\end{equation}
We know, by the remark at the end of \S \ref{sec:example}, 
that $\text{gr}[\y_v](\Phi(x))\equiv 2k+i$. 
We have $\text{deg}(\xbb_{\boldsymbol{\infty}v})=\text{deg}(\xbb_{\boldsymbol{\infty}\mathbf{1}})-\text{deg}(\x_{v\mathbf{1}})$, since $\xbb_{v\mathbf{1}}$ is trivial. From (\ref{degofmap}) we compute
\[
	\text{deg}(\x_{v\mathbf{1}}) = -\frac{3}{2}(m-|v|_1)+\frac{1}{2}(b_1(\y_\mathbf{1})-k)
\]
using $\chi(\x_{vw})=|w-v|_1$, $\sigma(\x_{v\mathbf{1}})=0$ and $b_1(\y_v)=k$. We also compute
\[
	\text{deg}(\x_{\boldsymbol{\infty}\mathbf{1}}) = -\frac{3}{2}(2m + \sigma(\x_{\boldsymbol{\infty}\mathbf{1}}))+\frac{1}{2}(b_1(\y_\mathbf{1})-b_1(\Sigma(L)))
\]
knowing $\Sigma(L)=\overline{\y}_{\boldsymbol{\infty}}$. Recall from (\ref{eq:degmaster}) that $\text{deg}(\xbb_{\boldsymbol{\infty}\mathbf{1}}) \equiv \text{deg}(\x_{\boldsymbol{\infty}\mathbf{1}})+2\relp(\xbb_{\boldsymbol{\infty}\mathbf{1}})$.

\begin{lemma}\label{lem:gr}
$\relp(\xbb_{\boldsymbol{\infty}\mathbf{1}})\equiv \sigma(\x_{\boldsymbol{\infty}\mathbf{1}}) \mod 2$.
\end{lemma}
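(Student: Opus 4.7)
The plan is to interpret $X:=\x_{\boldsymbol{\infty}\mathbf{1}}$ as the double branched cover of $S^3\times[0,1]$ along a ribbon cobordism $F$ from $L$ to the all-$1$ resolution $D_{\mathbf{1}}$, and to show that both $\relp(\xbb_{\boldsymbol{\infty}\mathbf{1}})$ and $\sigma(X)$ are congruent mod $2$ to the self-intersection $[F]\cdot[F]_X$. First I would construct $F$ explicitly as $L\times[0,1]$ with one band attached near each of the $m$ crossings of $D$ to perform the $1$-resolution, so that $F$ is properly embedded in $S^3\times[0,1]$ with $\partial F=L\sqcup D_{\mathbf{1}}$, and identify its double branched cover with $X$ (matching the target with $\y_{\mathbf{1}}=\Sigma(D_{\mathbf{1}})$ on one end and $\overline{\Sigma(L)}$ on the other).

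Next I would verify that the bundle $\xbb=\xbb_{\boldsymbol{\infty}\mathbf{1}}$ constructed in \S\ref{sec:x} via the $\psi_\Lambda$-twists has $w_2(\xbb)\in H^2(X;\mathbb{F}_2)$ Poincar\'e-Lefschetz dual to $[F]\in H_2(X,\partial X;\mathbb{F}_2)$. This is a local verification in each building block $\xbb_{i,i+1}$: the non-even part of $\Lambda$, namely the $\Psi$-factor, contributes one unit of $w_2$ dual to the core disk of the corresponding 2-handle, which is precisely the portion of $F$ added at that stage. Because $[L']=0\in H_1(\overline{\Sigma(L)};\mathbb{F}_2)$ (by the blackboard coloring of \S\ref{sec:main}) and the analogous statement holds at the $\y_{\mathbf{1}}$ end, $[F]$ lifts to a class in $H_2(X;\mathbb{F}_2)$, and the characterization at the end of the proof of Proposition \ref{gradingprop} gives $\relp(\xbb)\equiv [F]\cdot[F]_X\pmod 2$.

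To compute $\sigma(X)\pmod 2$ in the same terms, I would cap off to produce a closed double branched cover and apply the G-signature (Hirzebruch-Viro) formula. Explicitly, let $F_0\subset B^4$ be a pushed-in Seifert surface for $L$ and $F_1\subset B^4$ be a disjoint union of disks bounding $D_{\mathbf{1}}$. Then $\bar F=F_0\cup F\cup F_1$ is a closed surface in $S^4$, and its double branched cover is a closed 4-manifold $Z=W\cup X\cup V$ where $W=\Sigma(B^4,F_0)$ has $\partial W=\Sigma(L)$ and $V=\Sigma(B^4,F_1)\simeq\natural^k S^1\times D^3$ has $\partial V=\y_{\mathbf{1}}$. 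The G-signature formula gives $\sigma(Z)=-[\bar F]\cdot[\bar F]_Z$ (since $\sigma(S^4)=0$), while Novikov additivity gives $\sigma(Z)=\sigma(W)+\sigma(X)$ (the contribution from $V$ is zero). Modulo $2$, the self-intersection $[\bar F]\cdot[\bar F]_Z$ splits as $[F_0]\cdot[F_0]_W+[F]\cdot[F]_X$ (with the $F_1$ piece contributing zero since $F_1$ is a union of disks with trivial normal bundle), and applying the same G-signature formula to $W\to B^4$ yields $\sigma(W)\equiv [F_0]\cdot[F_0]_W\pmod 2$. Substituting gives $\sigma(X)\equiv[F]\cdot[F]_X\pmod 2$, so combining with the identification of $\relp$ above completes the proof.

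The hardest step will be the identification $w_2(\xbb_{\boldsymbol{\infty}\mathbf{1}})\leftrightarrow[F]$, which requires unpacking the specific $\Psi\Lambda'$ twist used to build each surgery cobordism bundle and checking that the cumulative effect agrees with the branched cover interpretation; the signature calculation is largely routine once the branched cover identification and the surface-contribution accounting for $\bar F$ are in place.
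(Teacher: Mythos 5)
Your overall strategy---identify $w_2$ of the surgery bundle with a surface class, then show that self-intersection and signature agree mod $2$---matches the shape of the paper's argument, but the route you take to both halves is different and, as written, has concrete gaps.

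The most immediate problem is the handle count. The cobordism $\x_{\boldsymbol{\infty}\mathbf{1}}$ of the paper is the iterated surgery cobordism $\x_{\mathbf{0}\mathbf{1}}\circ\x_{\boldsymbol{\infty}\mathbf{0}}$, built from a maximal chain of length $|\mathbf{1}-(-\mathbf{1})|_1 = 2m$, and hence has $2m$ two-handles; accordingly it decomposes into $m$ copies of $\cp\setminus(S^1\times D^3)$ as in \S\ref{sec:decomp2}. A cobordism of branched double covers over a ribbon surface with one band per crossing, as you describe $F$, has only $m$ two-handles: that is a different 4-manifold (roughly, the ``direct'' $\infty\to 1$ trace, which does not contain the $\cp$ summands). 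Either you should first reduce to $\x_{\boldsymbol{\infty}\mathbf{0}}$ as the paper does---there the $m$-band branch surface picture does match---or describe $F$ with two bands per crossing (passing through the $0$-smoothing), and then the downstream bookkeeping in your capping-off argument changes.

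The second gap is in the signature step. The closed $G$-signature formula gives $\sigma(Z) = -\tfrac12[\bar{F}]\cdot[\bar{F}]_{S^4} = 0$ simply because $H_2(S^4)=0$, so the equality $\sigma(Z)=-[\bar{F}]\cdot[\bar{F}]_Z$ holds only vacuously (both sides vanish). After Novikov additivity you are therefore left needing $\sigma(W)\equiv[\widetilde{F}_0]\cdot[\widetilde{F}_0]_W\pmod 2$ for $W=\Sigma(B^4,F_0)$ with $F_0$ a pushed-in Seifert surface, which you attribute to ``the same $G$-signature formula.'' It does not follow from that theorem: for a manifold with boundary the formula acquires boundary corrections (indeed $\sigma(W)=\sigma(L)$ is computed from the symmetrized Seifert form, not from any naive self-intersection, and the relative self-intersection of $\widetilde{F}_0$ depends on a choice of framing of $L\subset\partial W$). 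This is where the actual content lives, and it needs an independent argument. Thirdly, the identification of $w_2(\xbb)$ with the branch locus class deserves more justification: the paper identifies $w_2(\xbb)$, via the reducible structure, with a closed surface $S'$ built from the 2-handle cores and the lift of the checkerboard-coloring regions, and whether $[S']$ agrees with your (closed-off) branch locus class in $H_2(\x,\partial\x;\mathbb{F}_2)$ is not self-evident from the local analysis of the $\Psi$-twist alone. For contrast, the paper sidesteps all of this by reducing to $\x_{\boldsymbol{\infty}\mathbf{0}}$, writing a relative Kirby diagram, computing $[S']\cdot[S'] = -|v|_1$ directly, and appealing to Bloom's identification of $\sigma(\x_{\boldsymbol{\infty}\mathbf{0}})\pmod 2$ with the vertex weight of a one-circle resolution.
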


\noindent Before proving this lemma, we make our conclusion. In \cite{bloom}, Bloom computes 
$\sigma(\x_{\mathbf{0}\boldsymbol{\infty}})=\sigma-n_+$ and $b_1(\Sigma(L))=\nu$, where 
$\sigma$ and $\nu$ are the signature and nullity of $L$, respectively, 
and $n_{\pm}$ is the number of $\pm$ crossings of the diagram $D$. Note that $\x_{\boldsymbol{\infty}\mathbf{1}}$ and $\x_{\mathbf{1}\boldsymbol{\infty}}$ compose along $\y_\mathbf{1}$ to give a cobordism which, away from a manifold of signature $0$, has $m$ copies of $\cp$ connected summed to it (cf. $E$ from \S \ref{sec:decomp1}). In addition, since $\sigma(\x_{\mathbf{0}\mathbf{1}})=0$, we have $\sigma(\x_{\boldsymbol{\infty}\mathbf{1}})=\sigma(\x_{\boldsymbol{\infty}\mathbf{0}})$. Additivity of the signature again implies that $\sigma(\x_{\boldsymbol{\infty}\mathbf{1}}) = -m-\sigma+n_+$. Note $m=n_+ +n_-$. All together, (\ref{eq:gr4}) computes to 
\[
	i + 2n_- +\frac{3}{2}\left(n_+ + k\right)+\frac{1}{2}\left(|v|_1+\nu+\sigma\right) \mod 4,
\]
which is congruent to (\ref{eq:oddgr}). This completes the proof of Theorem \ref{thm:1}.

\begin{figure}[t]
\includegraphics[scale=.37]{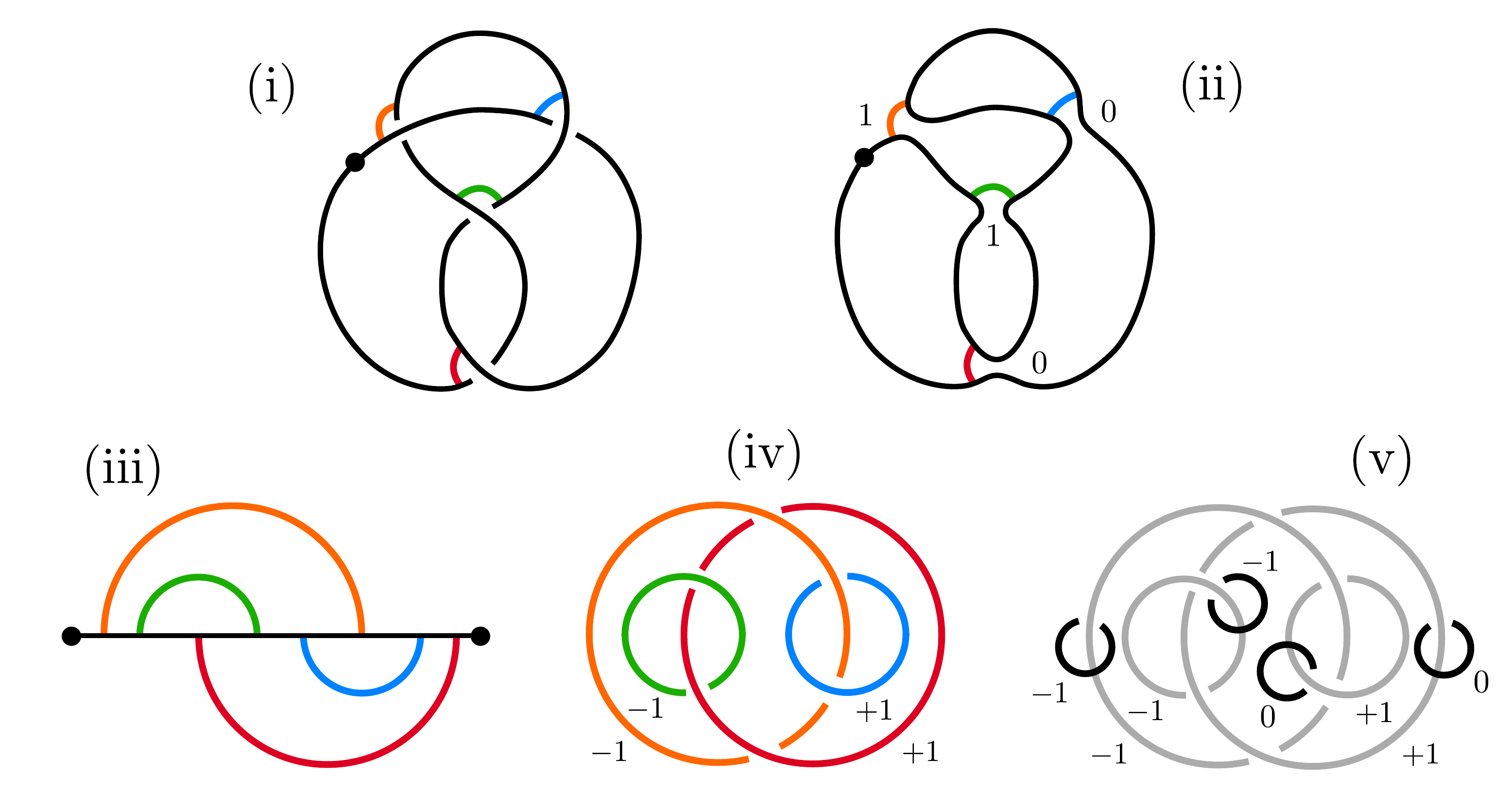}
\caption{To obtain a relative Kirby diagram for $(\x_{\boldsymbol{\infty}\mathbf{0}},\y_{\boldsymbol{\infty}})$ 
where $\y_{\boldsymbol{\infty}}=\overline{\Sigma(\lt)}$, we borrow some constructions from Bloom \protect\cite{bloom}.
With a diagram of the figure eight knot in (i) as an example, we first choose a resolution 
that yields one connected circle as in (ii), drawing small arcs where crossings used to be. 
We then cut the connected circle at the dot, and straighten it out, as in (iii). 
Reflecting this picture across the line, we obtain a surgery diagram (iv) for $\y_{\boldsymbol{\infty}}$ 
by choosing a $+1$ framing for each circle corresponding to a $0$-resolution, 
and a $-1$ framing for $1$-resolution circles. Finally, 
the relative Kirby diagram (v) is obtained by placing a small meridional circle on 
each circle in (iv) framed by $0$ or $-1$, depending on whether the circle 
corresponds to a $0$- or $1$-resolution, respectively.}
\label{fig:figureeight}
\end{figure}

\begin{proof}[Proof of Lemma \ref{lem:gr}]
By additivity and the fact that $\relp(\xbb_{\mathbf{0}\mathbf{1}})\equiv \sigma(\xbb_{\mathbf{0}\mathbf{1}})\equiv 0$ mod 2, it suffices to show that $\relp(\xbb_{\boldsymbol{\infty}\mathbf{0}})\equiv \sigma(\x_{\boldsymbol{\infty}\mathbf{0}})$. Write $\xbb=\xbb_{\boldsymbol{\infty}\mathbf{0}}$ and $\x$ for its base space. We have
\[
	\xbb = ([0,1]\times\ybb)\cup_{\lbb}(\cup^m_{i=1}\hbb)
\]
where $\lbb=\lbb_1\cup\cdots\cup\lbb_m$ 
is an $\so$-thickening of $L=L_1\cup\cdots\cup L_m$, and each $\lbb_i:\hbb_1\to\ybb\times\{1\}$ is as in \S \ref{sec:x}. 
Here we are viewing
\[
	\ybb = (\y\times\so)_\Psi(\lbb)
\]
as a bundle over $\y=\y_{\boldsymbol{\infty}}=\overline{\Sigma(L)}$ 
built from the geometric representative $L$ as in \S \ref{sec:geomrep}. 
In \S \ref{sec:main} 
we saw that $L$ is the boundary of a surface $S\subset \y$, 
so in fact $\ybb$ is a trivial bundle.
Note $\xbb$ is reducible to an $S^1$-bundle by its very construction. 
Let $\mathscr{L}$ be the associated complex line bundle. 
The Poincar\'{e} dual of a pre-image of $c_1(\mathscr{L})\in H^2(X;\mathbb{Z})$ in $H^2(X,\partial X;\mathbb{Z})$
is represented by the closed surface $S'\subset \text{int}(\x)$ 
which is the union of the cores of the 2-handles together with $S\subset \y\times\{1\}$. 
Indeed, it is straightforward to define a section of $\mathscr{L}$ with zero set $S'$. 
By the definition of $\relp(\xbb)$, it suffices to show that
\[
	[S']\cdot [S']\equiv \sigma(\x)\mod 2
\]
where $[S']\cdot [S']$ is the intersection product.
To do this we write down a relative Kirby diagram for $(\x,\y)$. 
We start by writing a surgery diagram for $\y=\overline{\Sigma(L)}$ 
using the chosen diagram $D$. 
For this we follow Bloom \cite{bloom}. 
First, choose $v\in\{0,1\}^m$ for which the resolution $D_v$ has 1 circle. 
We can always choose $D$ so that there is such a resolution.
Then, in $D_v$, having placed arcs where crossings once were,
cut the lone circle at an isolated point $p$ and 
unravel it, with the arcs attached, into a horizontal segment; 
then double it as in Figure \ref{fig:figureeight} (iv). Place a $+1$ framing on a circle 
in the resulting picture if that circle came from a $0$-resolution, 
and a $-1$ framing otherwise. This gives a surgery diagram 
for $\y=\overline{\Sigma(L)}$.

To turn this into a relative Kirby diagram for $(\x,\y)$, 
we simply add small meridians around each circle, 
framed with a $0$ if the circle is +1 framed and a $-1$ if 
the circle is $-1$ framed. The intersection number $[S']\cdot [S']$ 
is concentrated at the attaching locations of the 2-handles,
represented by the meridional circles in the relative Kirby diagram.
Thus there is a $-1$ contribution to $[S']\cdot [S']$ from each $1$-resolution in $D_v$. 
We conclude $[S']\cdot [S']=-|v|_1$. 
According to \cite{bloom} Prop. 1.7 and Lemma 9.4, the signature $\sigma(\x)$ is mod 2 
congruent to the vertex weight of a 1-circle resolution.
\end{proof}

We now discuss Corollary \ref{cor:2}. As alluded to in the introduction, 
\cite[Thm. 1]{man}, \cite[\S 5]{ors} and the remarks in \cite[\S 9.3]{ls} 
tell us that, for a quasi-alternating link $L$, the gradings $q$ and $t$ of $\kh(L)$ 
satisfy $q/2-t-\sigma/2=0$. Let us write $\delta=q/2-t-\sigma/2$; then
we may say that $\kh(L)$ is supported in $\delta$-grading $0$. 
Note that $\nu=0$ when $L$ is quasi-alternating. 
Further, as is described in \cite{man}, 
the rank of $\kh(L)_{q,t}$ is given by $|a_q|$, where
\[
	J_{L}(x)=\sum a_q x^q
\]
is the Jones polynomial, with conventions as given in \cite{ors}. 
The grading (\ref{eq:oddkhgr}) of Theorem \ref{thm:1}, 
which we shall call $\delta^\#$, is given by $\delta^\# = \delta + q + \sigma$ 
for quasi-alternating links. Note that $\delta$ and $\delta^\#$ agree modulo 2, 
implying that the spectral sequence collapses at the $E^2$-page.
Write $\kh(L)_j$ with $j\in\{0,2\}\subset\mathbb{Z}/4\mathbb{Z}$ for the $\delta^\#$-grading.
Then
\[
	\text{rk}_\mathbb{Z}\kh(L)_j = \sum_{q + \sigma \equiv j} |a_q|
\]
where the congruence is modulo 4. We remark that $\sum |a_q| = \text{det}(L)$, and
the sign of $a_q$ is $(-1)^{q/2+\sigma/2}$, 
cf. \cite[\S 9.1]{bloom}. It follows that
\[
	\text{rk}_\mathbb{Z}\kh(L)_j = \frac{1}{2}\left[\sum |a_q|+(-1)^{j/2}\sum a_q\right].
\]
Now we obtain the result of Corollary \ref{cor:2} using the fact that 
$J_L(1)=\sum a_q=2^{m-1}$, where $m$ is the number of components of $L$.
\vspace{10px}
\section{Connected Sum Formulas}\label{sec:connect}
 
A closed, connected, oriented 3-manifold $\y$ is called an integral homology 3-sphere
if $H_1(\y;\mathbb{Z})=0$, or equivalently, if $\y$ has 
the same integral homology as the 3-sphere. In this section we study 
$\ih^\#(\y)$ when $\y$ is an integral homology 3-sphere. We relate the
$\mathbb{Z}/4$-graded group $\ih^\#(\y)$ to Floer's original $\mathbb{Z}/8$-graded
instanton homology $\ih(\y)$ using the trivial connection and $u$-maps studied 
by Donaldson \cite{d} and Fr{\o}yshov \cite{froy}.\\

\subsection{Gradings} Let $\y$ be an integral homology 3-sphere.
Let $\theta$ be the distinguished trivial connection on $\y\times\so$.
We can use $\theta$ to fix an absolute $\mathbb{Z}/8$-grading on 
$\ih(\y)$ as was done in Floer's original 
construction \cite{f1}. For a connection $\conna$ on $\y\times\so$ we set
\[
	\text{gr}(\conna) = -3-\mu(\theta,\conna)
\]
and on $\mathscr{G}$-classes $\connag$ this descends to a function with 
$\text{gr}(\connag)\in\mathbb{Z}/8$. Note that $\mathscr{G}_\text{ev}=\mathscr{G}$ 
in this setting. When $a$ is irreducible, $\text{gr}(\conna)=\mu(\conna,\theta)$. 
The trivial connection has $\text{gr}(\theta)=0$. 
The differential shifts this grading by $-1$
and the grading descends 
to the $\mathbb{Z}/2$-grading defined in \S \ref{sec:instgrading}. 
We write $\triv$ for the $\mathscr{G}$-class of $\theta$.
Our $\ih(\y)_i$ agrees with Donaldson's $\text{HF}(\y)_i$ in \cite{d}. 
Note that $\ih(\overline{\y})_i$ is the same as the cohomology group 
$\ih(\y)^{5-i}$. In particular, 
by the universal coefficients theorem, the vector spaces 
$\ih(\overline{\y})_i\otimes\mathbb{Q}$ and $\ih(\y)_{5-i}\otimes\mathbb{Q}$
are isomorphic. Our $\ih(\y)_i$ is the same as Fr{\o}yshov's $\text{HF}(\y)^{5-i} = 
\text{HF}(\overline{\y})_i$ from \cite{froy}.\\

\subsection{Other Boundary Maps}

From here on we fix a field $\ring$ which has char$(F)\neq 2$
and take all homology with $\ring$-coefficients. 
With an integral homology 3-sphere $\y$ fixed, we write $\chain_i=\chain(\y)_i$ and 
$\ih_i=\ih(\y)_i$. Following \cite{d,froy} we have maps
\[
	\delta:\chain_1\to \ring,\qquad \delta':\ring \to\chain_4
\]
defined using the trivial connection. 
For $\connag\in\mathfrak{C}^\text{irr}(\y)$ with $\text{gr}(\conna)\equiv 1$ we define 
$\delta(\connag)=\#\check{\moduli}(\connag,\triv)_0$, and for $\connbg$ 
with $\text{gr}(\connbg)\equiv -4$,
we define $\langle\delta'(1),\connbg\rangle=\#\check{\moduli}(\triv,\connbg)_0$. 
More precisely, one writes $\ring=\ring\Lambda(\triv)$ and 
$\epsilon[A]:\Lambda(\triv)\to\Lambda(\connag)$, and $\delta=\sum\epsilon[A]$ for each 
$[A]\in\check{\moduli}(\connag,\triv)_0$, and so on, as in \S \ref{sec:instantongroups}.
We will often conflate $\delta'$ with $\delta'(1)\in\chain_4$. These are chain maps, 
in the sense that $\delta\partial=\partial\delta'=0$, and we write
\[
	\boldsymbol{\delta}:\ih_1\to \ring, \qquad \boldsymbol{\delta}':\ring \to \ih_4
\]
for the induced maps on homology.

We also have maps that record data from the 3-dimensional 
moduli spaces $\check{\moduli}(\connag,\connbg)_3$,
\[
	v:\chain_i\to\chain_{i+4}.
\]
Our $v$ is $1/2$ times the $v$ of Fr{\o}yshov, and $4$ times the $U$ of Donaldson. 
That is, it is defined, roughly, by evaluating the 4-dimensional class $2\mu(\text{pt})$ 
over 4-dimensional moduli spaces $\moduli(\connag,\connbg)_4$. We refer to \cite[\S 3.1]{froy} and 
\cite[\S 7.3.1]{d} for precise definitions of $v$. We have in mind the following interpretation. First suppose $\check{\moduli}(\connag,\connbg)_3$ is connected.
We obtain a map $h:\check{\moduli}(\connag,\connbg)_3\to\so$ by 
evaluating the holonomy of a connection along the path from $(-\infty,y)$ to $(\infty,y)$ on the cylinder
$\mathbb{R}\times\y$. With some modifications, see \cite[\S 7.3.2]{d}, 
$\langle v(\connag),\connbg\rangle = \deg(h)$. If $\check{\moduli}(\connag,\connbg)_3$ has more than one component, the evaluation is done on each component, and then added together.

The map $v$ is not quite a chain map. As explained in \cite[\S 7.3.3]{d}, 
when $\text{gr}(\connag)\equiv 1$ and $\text{gr}(\connbg)\equiv -4$, 
there are ends of $\check{\moduli}(\connag,\connbg)_4$ 
modelled on $\so$, i.e. cylinders $\mathbb{R}\times\so$, 
one for each pair of instantons in $\check{\moduli}(\connag,\triv)_0\times\check{\moduli}(\triv,\connbg)_0$.
Each copy of $\so$ records the choices for gluing parameters. The holonomy 
at a cross-section is captured by the gluing parameter and has degree 1. Accounting for the 
other usual ends, modelled on 
$\mathbb{R}\times\check{\moduli}(\connag,\conncg)_i\times\check{\moduli}(\conncg,\connbg)_j$,
where $i=0$ and $j=3$, or vice versa, one is led to the relation
\begin{equation}
	\partial v - v \partial + \delta'\delta = 0, \label{eq:umap}
\end{equation}
see \cite[Thm. 4]{froy} and \cite[Prop. 7.8]{d}. Here $\delta = 0$ in 
gradings different from $1\in\mathbb{Z}/8$. In particular we obtain the maps
\begin{align*}
	\boldsymbol{v}:\ih_i\to\ih_{i+4},\qquad i\neq 0,1 \mod 8\\
	\boldsymbol{v}:\ih_0\to\text{coker}(\boldsymbol{\delta}'), \qquad \boldsymbol{v}:\text{ker}(\boldsymbol{\delta})\to\ih_5.
\end{align*}
\vspace{10px}

\subsection{Reduced Instanton Groups}

Fr{\o}yshov defined a $\mathbb{Z}/8$-graded group $\widehat{\ih}=\widehat{\ih}(\y)$ by
cutting down $\ih(\y)$ using the maps introduced above. Precisely,
\begin{align*}
	&\widehat{\ih}_i = \ih_i, \quad i \not\equiv 0,1,4,5 \mod 8\\
	&\widehat{\ih}_0 = \ih_0/\left(\text{$\sum$}\text{im}(\boldsymbol{v}^{2k+1}\boldsymbol{\delta}')\right),
	&\widehat{\ih}_4 = \ih_4/\left(\text{$\sum$}\text{im}(\boldsymbol{v}^{2k}\boldsymbol{\delta}')\right),\\
	&\widehat{\ih}_1 = \bigcap \text{ker}(\boldsymbol{\delta}\boldsymbol{v}^{2k})\subset\ih_1,
	&\widehat{\ih}_5 = \bigcap \text{ker}(\boldsymbol{\delta}\boldsymbol{v}^{2k+1})\subset\ih_5.
\end{align*}
Using these groups Fr{\o}yshov defined his $h$-invariant by
\[
	h(\y) = -\frac{1}{2}\left(\chi(\ih(\y))-\chi(\widehat{\ih}(\y))\right).
\]
This has several nice properties, among them
\[
	h(\overline{\y})=-h(\y),\qquad h(\y\#\y')=h(\y)+h(\y').
\]
It also descends to a homomorphism $h:\Theta_H^3\to\mathbb{Z}$, 
where $\Theta_H^3$ is the integral homology cobordism group.
Fr{\o}yshov showed that both $\ih$ and $\widehat{\ih}$ are 4-periodic
(recall that we are working with $\ring$-coefficients). By the chain level 
relation (\ref{eq:umap}) either $\boldsymbol{\delta}$ or $\boldsymbol{\delta}'$ is 
zero. It follows that, over $\mathbb{Q}$, 
we can go between $\ih$ and $\widehat{\ih}$ using only $h$. 
For example, if $h(\y)=0$, then $\widehat{\ih}=\ih$, whereas if $h(\y)>0$ then
$\widehat{\ih}_i=\ih_i$ for $i\neq 0,4$ and $\text{rk}(\widehat{\ih}_i)=\text{rk}(\ih_i)-h(\y)$ 
for $i=0,4$.

The maps $\boldsymbol{v}$ above induce maps $\widehat{v}:\widehat{\ih}(\y)_i\to\widehat{\ih}(\y)_{4+i}$ for 
each grading $i\in\mathbb{Z}/8$. As mentioned, 
this is half of Fr{\o}yshov's $u$ mentioned in 
the introduction:
\[
	\widehat{v} = u/2.
\]
We've chosen this normalization to avoid writing in certain factors of $2$.
Fr{\o}yshov showed that each $\widehat{v}$ is an isomorphism, and that 
$\widehat{v}^2-16$ is nilpotent, i.e.
\[
	(\widehat{v}^2-16)^n = 0
\]
for some $n>0$.
If $\ybb$ is admissible and $b_1(\y)>0$, there is no trivial connection to work with, 
and the maps $v:\chain_i\to\chain_{i+4}$ are indeed chain maps, inducing maps 
$\widehat{v}:\ih(\ybb)_i\to\ih(\ybb)_{i+4}$ 
for each grading $i$ (here we arbitrarily fix an absolute grading). 
Again, each $\widehat{v}$ is half of Fr{\o}yshov's $u$, is an isomorphism,
and $\widehat{v}^2-16$ is nilpotent. The hat notation in this case is 
used only for uniformity.\\

\subsection{Connected Sums}

In this section we recall the connected sum theorem of Fukaya \cite{fukaya}, reviewing the proof 
exposited by Donaldson in \cite[\S 7.4]{d}. This problem was also considered 
in \cite{li}.
 In the following sections we will 
adapt the proof to the settings of interest to us. Let $\y_1$ and $\y_2$ be 
integral homology 3-spheres. For $i=1,2$ write $\chain_{(i)}=\chain(\y_i)$ and 
$\partial_{(i)}$ for the corresponding differentials, and
$\delta_{(i)},\delta_{(i)}',v_{(i)}$ for the relevant boundary maps. For a graded $\ring$-module
$A$ define the shifted module $A[n]$ by $A[n]_i = A_{i-n}$. 
We define a chain complex
\[
	\chain = \left( \chain_{(1)}\otimes\chain_{(2)}\right) \oplus \left( \chain_{(1)}[3]\otimes\chain_{(2)}\right) \oplus \left(\chain_{(1)}\otimes \ring \right) \oplus \left( \ring \otimes\chain_{(2)}\right)
\]
\[
\partial = \left(
\begin{array}{cccc}
	\partial_{(12)} & 0 & 0 & 0\\
	v_{(12)} & -\partial_{(12)} & 
		1\otimes\delta'_{(2)} & \delta_{(1)}'\otimes 1 \\
	-1\otimes\delta_{(2)} & 0 & \partial_{(1)}\otimes 1 & 0 \\
	\delta_{(1)}\otimes 1 & 0 & 0 & \epsilon \otimes \partial_{(2)}\\
\end{array}\right)
\]
where $\partial_{(12)}=\partial_{(1)}\otimes 1 + \epsilon \otimes \partial_{(2)}$, 
$v_{(12)}=v_{(1)}\otimes 1 + 1 \otimes v_{(2)}$, and $\epsilon$ is equal,  in grading $k$, to $(-1)^k$ times the identity map on $\chain_{(1)}$.

\begin{theorem}[Fukaya] As $\mathbb{Z}/8$-graded $\ring$-modules, {\em $\ih(\y_1\#\y_2)\simeq H_\ast(\chain,\partial)$}.
\end{theorem}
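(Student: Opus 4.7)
The plan is to follow Donaldson's exposition of Fukaya's theorem in \cite[\S 7.4]{d}, transferring it into the notation of this paper; the necessary technical arguments (compactness, gluing, neck-stretching) are essentially in place there and we only need to identify the resulting complex with the one described. The central device is a one-parameter family of metrics $g_T$ on $\y_1\#\y_2$ in which the connecting neck is isometric to $S^2\times[-T,T]$ (with a fixed small round metric on $S^2$). As $T\to\infty$, the neck becomes infinitely long and we can regard $\y_1$ and $\y_2$ separately, each with a cylindrical end modeled on $S^2\times[0,\infty)$, on which finite-energy instantons must decay to a flat connection on $S^2$, which is necessarily trivial.

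The heart of the proof is the analysis of finite-energy instantons on $\mathbb{R}\times(\y_1\#\y_2)$ with the stretched metric, asymptotic to irreducible critical points of $\textbf{cs}$. First, I would invoke the standard compactness/decay theorems to show that any sequence of such instantons, with $T\to\infty$, converges after passing to a subsequence to a pair $(A_1,A_2)$ where $A_i$ is a finite-energy instanton on $\mathbb{R}\times\y_i$ (with a cap attached) whose limit on the neck end is trivial. Conversely, I would invoke the Morse--Bott/obstructed gluing theory of \cite{d,mrowka,mmr} to glue such pairs back to instantons on $\y_1\#\y_2$ for large $T$. Since the trivial connection $\theta$ has stabilizer $\so$, the gluing parameter space is a principal $\so/\text{Stab}$-bundle; for a pair of irreducibles converging to $\theta$ on each side, this parameter space is $\so$ itself.

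With this setup in place, I would introduce an auxiliary chain complex $(\mathbf{C}',\partial')$ whose generators are stratified according to the asymptotic behavior of the gluing:
\begin{itemize}
\item pairs of irreducibles with generic (transverse) gluing, contributing $\chain_{(1)}\otimes\chain_{(2)}$;
\item pairs of irreducibles glued with a non-trivial $\so$-parameter, contributing $\chain_{(1)}[3]\otimes\chain_{(2)}$ (with the shift by $3=\dim\so$);
\item pairs where the $\y_1$ side ends at $\theta_{(1)}$ and the $\y_2$ side is irreducible, contributing $F\otimes\chain_{(2)}$, and symmetrically $\chain_{(1)}\otimes F$.
\end{itemize}
The differential entries are then read off from the boundary of one-dimensional glued moduli spaces: the diagonal entries $\pm\partial_{(12)}$ come from the usual cylinder-breaking; the $v_{(12)}$ entry records the degree of the holonomy map on the $\so$ gluing parameter, which is precisely how $v$ is defined; and the $\delta$, $\delta'$ entries count instantons from or to the trivial connection on one side, tensored with the identity on the other. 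The sign $\epsilon$ and the relative signs in the matrix are forced by orienting the stratified moduli spaces coherently, as in \cite[\S 7.3]{d}.

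Given this identification, the remaining step is to compare $(\mathbf{C}',\partial')$ with $\chain(\y_1\#\y_2)$ for large $T$. This is done by showing that, once $T$ is large enough, the gluing construction gives a bijection between generators of $\chain(\y_1\#\y_2)$ and generators of $\mathbf{C}'$, and an identification of differentials. Since $\ih$ is metric-independent, passing to homology yields $\ih(\y_1\#\y_2)\simeq H_\ast(\mathbf{C}',\partial')$, and a direct inspection matches $(\mathbf{C}',\partial')$ with the displayed $(\mathbf{C},\partial)$. The main obstacle is not the existence of the gluing or the compactness statement --- those are standard --- but the careful accounting of orientations and signs across the four strata, in particular establishing the signs $-1$, $\epsilon$, and the placement of $v_{(12)}$ and $\delta_{(i)}$, $\delta'_{(i)}$ in the matrix; this bookkeeping is the content of Donaldson's exposition and we would follow it verbatim modulo notational changes.
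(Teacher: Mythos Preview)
Your proposal claims to follow Donaldson's exposition in \cite[\S 7.4]{d}, but what you describe is not that argument. Donaldson (and the paper) do \emph{not} stretch the connect-sum neck in the 3-manifold $\y_1\#\y_2$ and attempt a direct bijection of generators. Instead, the proof introduces the pair-of-pants cobordisms $\x:\y_1\sqcup\y_2\to\y_1\#\y_2$ and its reverse $W$, and builds explicit chain maps $m_\x:\chain\to\chain'$ and $m_W:\chain'\to\chain$, each with four components ($i_\x,v_\x,{\delta'_\x}_{(1)},{\delta'_\x}_{(2)}$ and their $W$-analogues) coming from moduli spaces on the cobordism with various limiting configurations. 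One then shows $m_\x m_W\simeq\text{id}$ by stretching along the internal $\y_1\sqcup\y_2$, invoking the surgery property (Theorem~\ref{thm:surgery}) to identify the resulting holonomy map $m(Z,\gamma)$ on $Z=\x\circ W$ with the identity after surgering out $\gamma$; and $m_Wm_\x$ is analyzed by stretching the internal connect-sum region in $W\circ\x$, where most of the sixteen components die by index arguments and the survivors give the identity plus a nilpotent off-diagonal piece.

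Your direct approach---degenerate the 3-manifold, read off a Morse--Bott critical set consisting of $\so$-families (for pairs of irreducibles) and isolated points (for $(\triv,\text{irred})$ pairs), then perturb---is a plausible alternative strategy, and the two copies of $\chain_{(1)}\otimes\chain_{(2)}$ do morally come from $H_0(\so)$ and $H_3(\so)$ of those critical components. But as written there is a genuine gap: the sentence ``the gluing construction gives a bijection between generators of $\chain(\y_1\#\y_2)$ and generators of $\mathbf{C}'$'' is not correct without a substantial Morse--Bott-to-Morse perturbation argument, and your identification of the $v_{(12)}$ entry as ``the degree of the holonomy map on the $\so$ gluing parameter'' conflates the role of the gluing parameter in the \emph{critical set} with its role in the \emph{ends of 4-dimensional moduli spaces}; in Donaldson's argument the latter is what produces $v_{(12)}$, via the mechanism of (\ref{eq:umap}) applied on the cobordism. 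The cobordism approach sidesteps the Morse--Bott issue entirely by never working with a degenerate metric on $\y_1\#\y_2$; the neck-stretchings that do occur are on closed 4-dimensional cobordisms, where the analysis is standard. If you want to pursue your route, you would need to set up the cascade/Morse--Bott complex for the $\so$-critical manifolds and verify that its differential matches the displayed $\partial$; this is more work than the paper's argument, not less.
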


\noindent For example, let $\y$ be the Poincar\'{e} homology 3-sphere $\Sigma(2,3,5)$. 
The reader can verify that
\[
	\ih(\y\#\y)\simeq \ring_1^2\oplus \ring_5^2, \qquad \ih(\y\#\overline{\y})=0
\]
using that $\chain(\y)=\ring_1\oplus \ring_5$ and $\delta,v$ are isomorphisms. 
Recall that subscripts indicate gradings.
These examples appear in \cite{fukaya}. 
Note that, generally, the $\delta,\delta',v$ maps for $\overline{\y}$ are the duals 
of the maps $\delta',\delta,v$ for $\y$, respectively.

\begin{figure}[t]
\includegraphics[scale=.45]{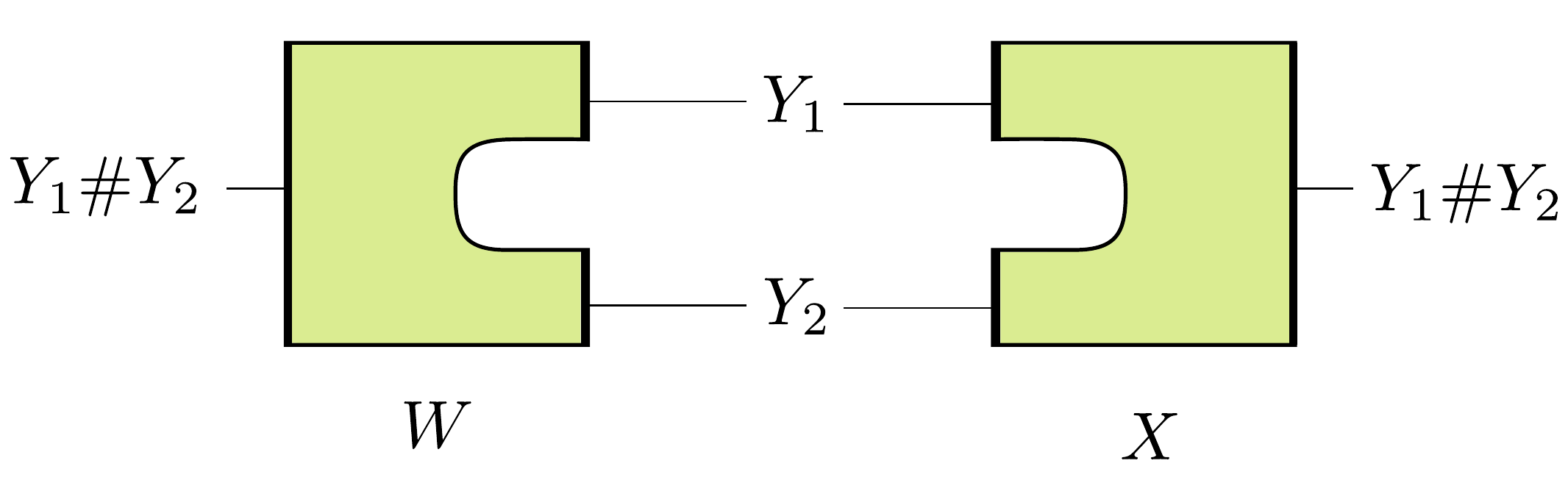}
\caption{The cobordism $W:\y_1\#\y_2\to\y_1\sqcup\y_2$ and its reverse, $\x$.}
\label{fig:cobs}
\end{figure}

We now review the proof that appears in \cite[\S 7.4]{d}. We mention at the outset 
that to avoid certain factors of $2$ that appear in the composition law (since 
we will glue along a disconnected 3-manifold), we enlarge the gauge transformation 
group when necessary, as in \cite[\S 5.1]{kmu}.
Let $\chain' = \chain(\y_1\#\y_2)$ and $\partial'$ 
be its differential. Let $\x:\y_1\sqcup\y_2\to\y_1\#\y_2$ be the cobordism which is $([0,1]\times\y_1)\natural ([0,1]\times\y_2)$, where the boundary sum is taken near $1$, and let $W:\y_1\#\y_2\to\y_1\sqcup\y_2$ be the corresponding cobordism when the boundary sum is taken near $0$. See Figure \ref{fig:cobs}. We define chain maps
\[
	m_\x:\chain \to \chain', \quad m_W:\chain'\to\chain
\]
as follows. The map $m_\x$ is given by four components:
\begin{align*}
	v_\x &:\chain_{(1)}\otimes\chain_{(2)} \to \chain',\\
	i_\x &:\chain_{(1)}[3]\otimes\chain_{(2)}\to\chain',\\
	{\delta'_\x}_{(2)} &:\chain_{(1)}\otimes \ring\to\chain',\\
	{\delta'_\x}_{(1)} &:\ring\otimes\chain_{(2)}\to\chain'.
\end{align*}
In the following, $\connag\in\mathfrak{C}^\text{irr}(\y_1)$, $\connbg\in\mathfrak{C}^\text{irr}(\y_2)$, 
and $\conncg\in\mathfrak{C}^\text{irr}(\y_1\#\y_2)$.
The map $i_\x$ counts 0-dimensional moduli spaces $\moduli(\connag,\connbg,\x,\conncg)_0$. 
The map $v_\x$ evaluates the holonomy of 3-dimensional 
moduli spaces $\moduli(\connag,\connbg,\x,\conncg)_3$
along a curve $\gamma_\x$ running from $\y_1$ to $\y_2$ on the incoming end of $\x$. The map 
${\delta'_\x}_{(2)}$ counts 0-dimensional moduli spaces $\moduli(\connag,\triv,\x,\conncg)_0$ 
where $\triv$ is a trivial connection class on $\y_2$, 
and ${\delta'_\x}_{(1)}$ is defined similarly, with $\triv$ on $\y_1$.
Now, $m_\x$ is a chain map because of the following relations. First,
\[
	i_\x\partial_{(12)}=\partial'i_\x
\]
is the usual relation for the map involving only irreducibles. Second, 
\begin{align}
	& i_\x v_{(12)} + v_\x\partial_{(12)} + {\delta'_\x}_{(1)}(\delta_{(1)}\otimes 1) - {\delta'_\x}_{(2)}(1\otimes \delta_{(2)}) = \partial' v_\x \label{eq:pieces}
\end{align}
records how the holonomy interacts with the ends of a 4-dimensional moduli space $\moduli(\connag,\connbg,\x,\conncg)_4$. This is essentially \cite[Thm. 6]{froy}. See Figure \ref{fig:pieces}.
Third, the relation
\[
	i_\x(\delta_{(1)}'\otimes 1) + {\delta'_\x}_{(1)}(\epsilon\otimes\partial_{(2)}) = \partial'{\delta'_\x}_{(1)} 
\]
and its analogue with indices swapped, records the ends of a 1-dimensional moduli 
space $\moduli(\triv,\connbg,\x,\conncg)_1$ where $\triv$ is the trivial connection class on $\y_1$. 
This is a variation of \cite[Lemma 1]{froy}.

\begin{figure}[t]
\includegraphics[scale=.4]{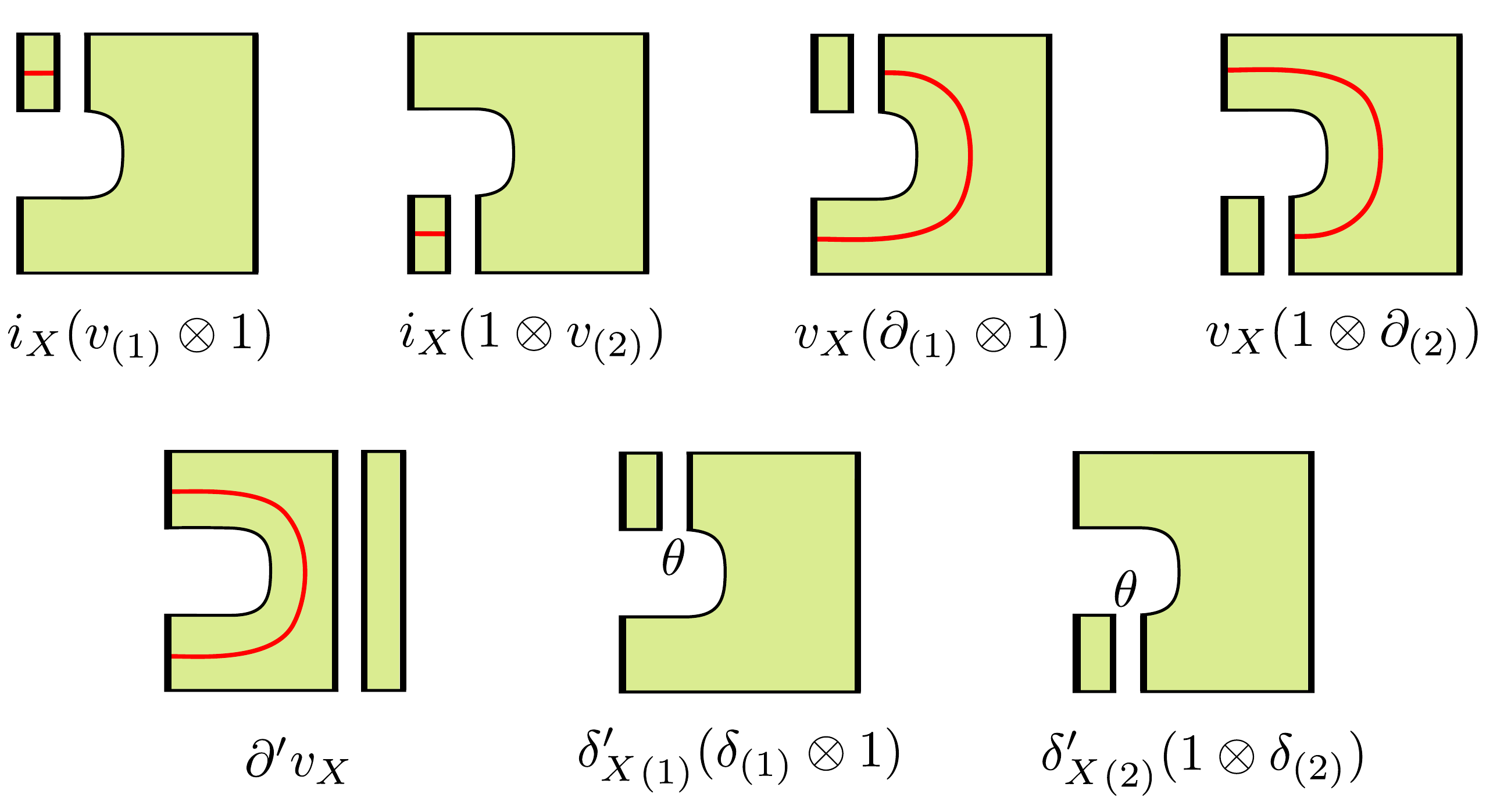}
\caption{A representation of the terms appearing in (\ref{eq:pieces}). 
The pieces represent counts of isolated instantons, 
unless there is a curve present in the interior, 
indicating a contribution from a $v$-map. 
All limiting connections are irreducible, except in the last two 
diagrams, where trivial limits $\theta$ are present.
The first two diagrams make up $i_\x v_{(12)}$ and the second two 
make up $v_\x \partial_{(12)}$.}
\label{fig:pieces}
\end{figure}

The map $m_W$ is defined similarly, this time with components
\begin{align*}
	i_\text{W} &:\chain'\to\chain_{(1)}\otimes\chain_{(2)}, \\
	v_\text{W} &:\chain'\to\chain_{(1)}[3]\otimes\chain_{(2)},\\
	{\delta_\text{W}}_{(2)} &:\chain'\to\chain_{(1)}\otimes \ring,\\
	{\delta_\text{W}}_{(1)} &:\chain'\to \ring\otimes\chain_{(2)}.
\end{align*}
Now, we argue that $m_\x$ and $m_W$ are chain homotopy inverse to one another. 
First consider $m_\x m_W$. We have
\[
	m_\x m_W = v_\x i_W + i_\x m_W +{\delta'_\x}_{(2)}{\delta_W}_{(2)} +{\delta'_\x}_{(1)}{\delta_W}_{(1)}.
\]
We claim that $m_\x m_W$ 
is chain homotopic to the map $m(Z,\gamma):\chain'\to\chain'$ obtained by evaluating $2\mu(\gamma)$ on the 
composite $Z=\x\circ W$ where $\gamma=\gamma_\x\cup\gamma_{W}$. This 
is the the same as the map defined by taking the degrees of modified holonomy maps 
$\moduli(\connag,Z,\conndg)_3\to\so$ 
along $\gamma$, see \cite[\S 5.1.2]{dk}. The chain homotopy is obtained by 
stretching the middle copies of $\y_1$ and $\y_2$. The 3-dimensional space $\moduli(\connag,Z,\conndg)_3$
where $\connag,\conndg$ are irreducible has four components after stretching:
\begin{align*}
	&\moduli(\connag,\x,\connbg,\conncg)_0\times\moduli(\connbg,\conncg,W,\conndg)_3\\
	&\moduli(\connag,\x,\connbg,\conncg)_3\times\moduli(\connbg,\conncg,W,\conndg)_0\\
	&\moduli(\connag,\x,\connbg,\triv)_0\times\so\times\moduli(\connbg,\triv,W,\conndg)_0\\
	&\moduli(\connag,\x,\triv,\conncg)_0\times\so\times\moduli(\triv,\conncg,W,\conndg)_0
\end{align*}
As in (\ref{eq:umap}), in the last two cases the holonomy is 
captured by the gluing space $\so$. The four components correspond, in order, 
to the four components of $m_\x m_W$ above. In this way, the chain homotopy 
from $m_\x m_\text{W}$ to $m(Z,\gamma)$ may be defined as a map using the 1-dimensional 
metric family that simultaneously stretches along $\y_1,\y_2$.

The next step is to use a surgery property, interesting in its own right,
due to Donaldson. 
We state it in a form convenient for our purposes. Let $\xbb:\ybb_1\to\ybb_2$ be an $\so$-bundle 
over a cobordism which restricts to admissible bundles over its boundary components. Let
$\gamma$ be a loop in the interior of the base of $\xbb$. Let $\xbb_\gamma$ be the bundle obtained 
by cutting out a neighborhood $S^1\times D^3\times\so$ lying over $\gamma$ and gluing back in 
a copy of $D^2\times S^2\times\so$. Denote by $m(\xbb,\gamma):\chain(\ybb_1)\to\chain(\ybb_2)$ 
the map obtained by evaluating $\mu(\gamma)$ on 3-dimensional moduli spaces $\moduli(\connag,\xbb,\connbg)_3$.

\begin{theorem}[see \cite{d} Thm. 7.16]
	$m(\xbb,\gamma)$ is chain homotopic to $m(\xbb_\gamma)$.\label{thm:surgery}
\end{theorem}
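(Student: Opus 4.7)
The plan is to stretch a long neck along the hypersurface $T = S^1\times S^2$ bounding a tubular neighborhood $N\simeq S^1\times D^3$ of $\gamma$ in the base $X$, using a 1-parameter family of metrics $G$ on $X$. Applying equation (\ref{met1}) together with the continuous $\mu(\gamma)$-evaluation (which remains supported inside $N$ throughout the stretch), I would obtain a chain homotopy from $m(\xbb,\gamma)$ to a cut-metric map. Stretching $\xbb_\gamma$ along the same $T$, now bounding $D^2\times S^2$ in place of $N$, analogously gives a chain homotopy between $m(\xbb_\gamma)$ and its corresponding cut-metric map. The proof then reduces to comparing the two cut-metric maps.

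Each cut-metric map decomposes as a fiber product over the space $\mathfrak{T}$ of flat $\so$-connection classes on $T$, which by Lemma \ref{lem:flat} is the interval $[-1,1]$. In each case one factor is the moduli on $\xbb$ restricted to $X\setminus\text{int}(N)$, with flat limits varying over $\mathfrak{T}$; this factor is identical in both situations, so the comparison reduces to the respective capping contributions. Over the trivial $\so$-bundle on $N$, all finite-energy instantons are flat (since $H^2(N;\mathbb{R})=0$), and by the same argument used in the proof of Lemma \ref{chainhom}, the limit map $\lambda\colon\moduli(\mathfrak{T},N\times\so)^{\text{flat}}\to\mathfrak{T}$ is a diffeomorphism. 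Over the trivial $\so$-bundle on $D^2\times S^2$, flat limits on $T$ must have trivial $S^1$-holonomy and hence sit at the endpoints $\partial\mathfrak{T}$, contributing two reducible instanton classes in parallel with Lemma \ref{lem:met}.

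The crucial step is matching these two weighted counts. The holonomy $h\colon\moduli(\mathfrak{T},N\times\so)^{\text{flat}}\to\so$ along $\gamma$ records exactly the conjugacy class parameterized by $\mathfrak{T}\simeq[-1,1]$, so evaluating $\mu(\gamma)$ on the $N$-cap factor reduces, via the degree interpretation of holonomy evaluation, to evaluating a degree-1 map between $\mathfrak{T}$ and a conjugacy class representative. Because $\mathfrak{T}$ is an interval with boundary $\partial\mathfrak{T}$, this degree evaluation localizes to the boundary points, which are precisely the two flat connection classes realized by instantons on $D^2\times S^2$. This shows the $N$-cap contribution weighted by $\mu(\gamma)$ matches the unweighted $D^2\times S^2$-cap contribution up to sign, completing the comparison.

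The main obstacle will be the Morse-Bott gluing at reducible flat limits on $\partial\mathfrak{T}$, where the stabilizer of the flat connection jumps from $S^1$ to $\so$; this is the same technical issue confronted in the proofs of Lemmas \ref{lem:fiber} through \ref{lem:met}, and can be handled using the analytic machinery of \cite{mmr,t}. A secondary point is to track I-orientations through both parallel neck-stretches so that signs agree, which should follow by choosing an I-orientation on $\xbb_\gamma$ induced from one on $\xbb$ together with a canonical orientation on the $D^2\times S^2$ cap.
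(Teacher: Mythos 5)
The paper does not actually supply a proof of this theorem; it cites Donaldson's \cite[Thm. 7.16]{d} and immediately applies it. Your overall strategy---stretch a neck along $T=S^1\times S^2$ bounding a tubular neighborhood $N$ of $\gamma$, decompose the moduli over $\mathfrak{T}$, and compare the $N$-cap to the $D^2\times S^2$-cap---is indeed the skeleton of Donaldson's argument, and you correctly flag the Morse-Bott and orientation subtleties. However, two of your steps are wrong.

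First, the $D^2\times S^2$ cap does not contribute ``two reducible instanton classes.'' Since $D^2\times S^2$ is simply connected, $H^1(D^2\times S^2;\mathbb{F}_2)=0$ so $\mathscr{G}_{\text{ev}}=\mathscr{G}$ there, and the only flat connection class is the trivial one; it restricts to a single point of $\partial\mathfrak{T}$, not both. Index counting (using $\ind(A_{D^2\times S^2})=-2p_1-3-h(c)$ and the vanishing of the intersection form on $H_2(D^2\times S^2)$, which forces $p_1=0$ for reducibles) shows this trivial class is the only contribution in the $0$-dimensional moduli relevant to $m(\xbb_\gamma)$. The analogy you draw with Lemma \ref{lem:met} does not hold: that lemma's two points arise from the two endpoint metrics of the $1$-parameter family $G_T$ on the $E$-cap, whereas here there is no metric family on the cap at all.

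Second, and more fundamentally, the proposed $\mu(\gamma)$-accounting has a dimensional mismatch. The class $\mu(\gamma)$ is a degree-$3$ class on $\mathscr{B}^\ast$ (this is why it is paired with $3$-dimensional moduli spaces), whereas the holonomy of a flat connection on $N$ records only the conjugacy class, a codimension-$1$ datum parameterized by $\mathfrak{T}$. After stretching, the holonomy map on the fiber-product moduli factors through the $1$-dimensional flat-limit map $\lambda$, so its ``degree'' as a map to $\so$ vanishes rather than contributing $\pm 1$; your phrase ``evaluating a degree-$1$ map between $\mathfrak{T}$ and a conjugacy class representative'' does not repair this. There is also no mechanism by which the evaluation ``localizes to the boundary points'' of $\mathfrak{T}$. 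In Donaldson's proof the correct count is recovered from boundary and compactness phenomena during the neck stretch (the instanton that concentrates on the cap is what matches $m(\xbb_\gamma)$), not from a degree computation over the interval. As it stands your argument would give $0$ at the cut metric, not the desired identification, so the central comparison step needs to be reworked.
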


\noindent In our situation, observe that the surgered manifold $Z_\gamma$ is the 
product $[0,1]\times(\y_1\#\y_2)$. 
It follows that $m_\x m_W$ is chain homotopic to the identity.

Now consider $m_W m_\x$. This has 16 components
\[
	i_Wv_\x,\quad v_Wi_\x,\quad {\delta_W}_{(1)}{\delta'_\x}_{(1)},\quad \ldots
\]
It is chain homotopic to a map $f$ that counts similar data on the cobordism $W\circ\x$ 
with metric stretched very long along the internal connected sum portion between $[0,1]\times\y_1$ 
and $[0,1]\times\y_2$. The map $f$ has components corresponding to the components of $m_W m_\x$, 
but most of them vanish. For instance, the 7 components of $f$ corresponding to
\[
	i_Wi_\x, \quad {\delta_W}_{(i)}i_\x, \quad i_W{\delta'_\x}_{(i)},\quad {\delta_W}_{(i)}{\delta'_\x}_{(j)}\quad (i\neq j)
\]
all vanish by index arguments. Each counts instantons $A$ with $\mu(A)=0$ obtained by gluing an instanton $A_1$ on $\mathbb{R}\times\y_1$ to an instanton $A_2$ over $\mathbb{R}\times\y_2$ along a 3-sphere. For $i=1,2$ at least one of the limits on $\mathbb{R}\times\y_i$ is irreducible. Thus both $A_1,A_2$ are irreducible. It follows from $0=\mu(A)=\mu(A_1)+\mu(A_2)+3$ and $\mu(A_i)\geq 0$ that no such $A$ exist. Similarly, the 4 components of $f$ corresponding to
\[
	{\delta_W}_{(i)}v_\x,\quad v_W{\delta'_\x}_{(i)}
\]
are zero. These components require 3-dimensional moduli spaces. However, 
with the neck stretched, the relevant 3-dimensional moduli spaces are $\moduli(\connag,\connbg)_0\times\so\times\moduli(\conncg,\conndg)_0$
where one of $\connag,\connbg,\conncg,\conndg$ is the trivial class $\triv$ and $\connag,\connbg$ are connection classes on $\y_1$ and $\conncg,\conndg$ on $\y_2$. But $\moduli(\connag,\triv)_0$ is empty for any irreducible $\connag$. Next, the 4 components of $f$ corresponding to
\[
	i_\text{W}v_\x,\quad v_\text{W}i_\x, \quad {\delta_{\text{W}}}_{(i)}{\delta'_{\x}}_{(i)}
\]
are identity maps. For instance, the first one uses 3-dimensional spaces modelled on 
$\moduli(\connag,\connbg)_0\times\so\times\moduli(\conncg,\conndg)_0$ from gluing; the holonomy map $v_\x$ captures 
the gluing parameter just as in (\ref{eq:umap}), leaving us to count 
$\moduli(\connag,\connbg)_0\times\moduli(\conncg,\conndg)_0$. 
Of course $\moduli(\connag,\connbg)_0$ forces $\connag=\connbg$ and has one translation invariant irreducible flat connection. 
Finally, we are left with 1 component of $f$ corresponding to 
\[
	v_Wv_\x
\]
which may be nonzero. However, we know that $f$ is the identity plus this off-diagonal term, 
and thus induces an isomorphism on homology. So $m_W m_\x$ also induces an isomorphism on homology. 
Because $m_\x m_W$ induces the identity on $\ih(\y_1\#\y_2)$, so does $m_\text{W} m_\x$. 
This completes the proof.\\

\subsection{Connected sum with non-trivial bundles}

In this section we state two variants of the connected sum theorem, 
when one or both of $\y_1$ and $\y_2$ is replaced by a non-trivial admissible bundle. 
We then explain how the proof above adapts to these cases. These are
simpler than the above, having fewer trivial connections to deal with.

We first consider the case where $\ybb_1$ is trivial 
and $\y_1$ is an integral homology 3-sphere, 
but $\ybb_2$ is non-trivial and admissible. Let 
$\chain_{(1)}=\chain(\ybb_1)$ with maps $\partial_{(1)},\delta,\delta',v_{(1)}$. 
Let $\chain_{(2)}=\chain(\ybb_2)$ with maps $\partial_{(2)},v_{(2)}$. Define
\[
	\chain = \left( \chain_{(1)}\otimes\chain_{(2)}\right) \oplus \left( \chain_{(1)}[3]\otimes\chain_{(2)}\right)\oplus \left( \ring\otimes\chain_{(2)}\right)
\]
\[
\partial = \left(
\begin{array}{ccc}
	\partial_{(12)} & 0 & 0\\
	v_{(12)} & -\partial_{(12)} &  \delta'\otimes 1 \\
	\delta\otimes 1 & 0  & \epsilon \otimes \partial_{(2)}\\
\end{array}\right)
\]
with notation as before.

\begin{theorem} 
	 Let $\ybb_1$ and $\ybb_2$ be admissible bundles, with $\ybb_1$ trivial and $\ybb_2$ non-trivial. As $\mathbb{Z}/8$-graded $\ring$-modules, {\em $\ih(\ybb_1\#\ybb_2)\simeq H_\ast(\chain,\partial)$}.\label{thm:connect2}
\end{theorem}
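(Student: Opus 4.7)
The plan is to adapt Donaldson's proof of Fukaya's connected sum theorem nearly verbatim, the essential simplification being that $\ybb_2$ admits no reducible flat connections, so only the single trivial summand $\ring\otimes\chain_{(2)}$ appears in $\chain$ and only a single pair $\delta,\delta'$ enters. Write $\chain' = \chain(\ybb_1\#\ybb_2)$. First I would construct bundle cobordisms $\xbb\colon\ybb_1\sqcup\ybb_2\to\ybb_1\#\ybb_2$ and $\wbb\colon\ybb_1\#\ybb_2\to\ybb_1\sqcup\ybb_2$ as $\natural$-sums of the product cobordisms $[0,1]\times\ybb_i$ near $t=1$ and $t=0$ respectively (as in Figure \ref{fig:cobs}), enlarging the gauge group across the disconnected ends as in \cite[\S 5.1]{kmu}.

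I would then define chain maps $m_\xbb\colon\chain\to\chain'$ and $m_\wbb\colon\chain'\to\chain$, each with three components, one per summand. For $m_\xbb$ these are $i_\xbb$ on $\chain_{(1)}\otimes\chain_{(2)}$ counting isolated instantons between irreducible limits; $v_\xbb$ on $\chain_{(1)}[3]\otimes\chain_{(2)}$ evaluating holonomy on $3$-dimensional moduli spaces along a path from $\y_1$ to $\y_2$ on the incoming end; and $\delta'_\xbb$ on $\ring\otimes\chain_{(2)}$ counting isolated instantons whose limit over $\y_1$ is the trivial class. The map $m_\wbb$ is defined dually. Chain-map-ness unpacks into three relations matching the three rows of $\partial$; the only nontrivial one, the analogue of (\ref{eq:pieces}), reads
\[
\partial' v_\xbb = i_\xbb v_{(12)} + v_\xbb \partial_{(12)} + \delta'_\xbb(\delta\otimes 1),
\]
the absence of a symmetric $1\otimes\delta_{(2)}$ term from Fukaya's case reflecting precisely the admissibility of $\ybb_2$.

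To show $m_\xbb m_\wbb\simeq \mathrm{id}_{\chain'}$, I would stretch the $\y_1$ and $\y_2$ necks in $Z=\xbb\circ\wbb$; a $2$-dimensional metric family yields a chain homotopy from $m_\xbb m_\wbb$ to the map $m(Z,\gamma)$ evaluating $2\mu(\gamma)$ on $Z$, where $\gamma$ is the concatenation of the evaluation curves used for $v_\xbb$ and $v_\wbb$. Since the surgered cobordism $Z_\gamma$ is the product $[0,1]\times(\ybb_1\#\ybb_2)$, Donaldson's surgery theorem (Theorem \ref{thm:surgery}) identifies $m(Z,\gamma)$ with the identity up to chain homotopy.

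For $m_\wbb m_\xbb$, I would stretch the internal connected-sum $S^3$ in $\wbb\circ\xbb$. Of the nine matrix components, five vanish: $i_\wbb i_\xbb$, $\delta_\wbb i_\xbb$, $i_\wbb\delta'_\xbb$ by the index identity $\mu(A_1)+\mu(A_2)+3=0$ forcing a negative index for any gluing of irreducible instantons across the trivial connection on $S^3$, and $\delta_\wbb v_\xbb$, $v_\wbb\delta'_\xbb$ because they require $\moduli(\connag,\triv)_0$ with $\connag\in\mathfrak{C}^\text{irr}(\ybb_2)$, which is empty. The three components $i_\wbb v_\xbb$, $v_\wbb i_\xbb$, $\delta_\wbb\delta'_\xbb$ each reduce, after the $\so$ gluing parameter is absorbed either by a holonomy evaluation as in (\ref{eq:umap}) or by the presence of a trivial limit, to a count of isolated instanton pairs on the two cylinders $\mathbb{R}\times\ybb_i$, which together assemble into the identity on $\chain$ with respect to the shift $\chain_{(1)}[3]\otimes\chain_{(2)} \simeq \chain_{(1)}\otimes\chain_{(2)}$ underlying those matrix entries. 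The one surviving off-diagonal term is $v_\wbb v_\xbb$, so the chain-homotopic endomorphism of $\chain$ is unipotent and induces the identity on homology. The main technical obstacle I anticipate is the orientation and sign bookkeeping needed to match the signs in the matrix $\partial$ for the displayed chain-map identity; the analytic input, however, imports directly from Donaldson's argument in \cite[\S 7.4]{d}.
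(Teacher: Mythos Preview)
Your overall strategy is exactly the paper's: construct three-component chain maps $m_\xbb$, $m_\wbb$ between $\chain$ and $\chain'$, then show each composite is chain homotopic to the identity via neck-stretching and Theorem~\ref{thm:surgery}. However, you have interchanged the domains of $i_\xbb$ and $v_\xbb$. In the paper (and in Donaldson's original argument), the holonomy map $v_\xbb$ acts on the \emph{unshifted} summand $\chain_{(1)}\otimes\chain_{(2)}$, while the $0$-dimensional count $i_\xbb$ acts on the shifted summand $\chain_{(1)}[3]\otimes\chain_{(2)}$. With your swapped assignment, the chain-map condition restricted to the first summand would read
\[
\partial' i_\xbb \;=\; i_\xbb\partial_{(12)} + v_\xbb v_{(12)} + \delta'_\xbb(\delta\otimes 1),
\]
whereas the analytic identity from ends of $1$-dimensional moduli is simply $\partial' i_\xbb = i_\xbb\partial_{(12)}$; the extra terms do not vanish, so $m_\xbb$ would fail to be a chain map. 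The displayed relation you wrote down is the correct analytic identity---it is the paper's formula for $\partial' v_\xbb$---but it encodes the chain-map condition on the first summand only once $v_\xbb$ is placed there.

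A second, smaller slip: in ruling out $\delta_\wbb v_\xbb$ and $v_\wbb\delta'_\xbb$ you invoke emptiness of $\moduli(\connag,\triv)_0$ with $\connag\in\mathfrak{C}^{\text{irr}}(\ybb_2)$, but $\ybb_2$ carries no trivial connection. The trivial limit $\triv$ lives on $\ybb_1$; the relevant empty spaces are $\moduli(\connag,\triv)_0$ on $\mathbb{R}\times\ybb_1$ with $\connag$ irreducible, empty by translation invariance. With these two corrections your outline coincides with the paper's proof.
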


\noindent As before, we let $\chain'=\chain(\ybb_1\#\ybb_2)$ and let $\partial'$ be its differential. 
Let
\[
	\xbb:\ybb_1\sqcup\ybb_2 \to \ybb_1\#\ybb_2
\]
be the cobordism bundle obtained 
from a boundary sum between $[0,1]\times\ybb_1$ and $[0,1]\times\ybb_2$ near $1$, 
making some inessential choices in gluing the bundles. 
Let $\wbb$ be the cobordism in the reverse direction obtained
from the boundary sum near $0$. We define chain maps
\[
	m_\xbb:\chain\to\chain', \quad m_\wbb:\chain'\to\chain.
\]
The map $m_\xbb$ is given by three components:
\begin{align*}
	v_\xbb &:\chain_{(1)}\otimes\chain_{(2)} \to \chain',\\
	i_\xbb &:\chain_{(1)}[3]\otimes\chain_{(2)}\to\chain',\\
	{\delta'_\xbb} &:\ring \otimes\chain_{(2)}\to\chain'.
\end{align*}
The map $i_\xbb$ counts instantons in 0-dimensional moduli spaces on $\xbb$ with all limits irreducible;
the map $v_\xbb$ evaluates holonomy along a path $\gamma_\xbb$ from $\y_1$ to $\y_2$ on 
3-dimensional moduli spaces with irreducible limits on $\xbb$; the map $\delta'_\xbb$ counts 0-dimensional moduli 
spaces over $\xbb$ where the limit over $\y_1$ is trivial. The map $m_\xbb$ is a chain map because of the following.
First, we have the usual relation for the map involving only irreducibles, $i_\xbb\partial_{(12)}=\partial'i_\xbb$. Second, 
\begin{align*}
	& i_\xbb v_{(12)} + v_\xbb\partial_{(12)} + {\delta'_\xbb}(\delta\otimes 1) = \partial' v_\xbb.
\end{align*}
These relations are the same as before, except that all terms involving a trivial connection
on $\y_2$ do not arise. In particular, all diagrams in Figure \ref{fig:pieces} are 
relevant except the last. Third, we have the relation
\[
	i_\xbb(\delta'\otimes 1)  = \partial'{\delta'_\xbb}
\]
which again is the same as before but with the term involving a trivial connection 
on $\y_2$ absent. The map $m_\wbb$ is 
defined similarly to $m_\text{W}$, with the component involving the trivial connection on $\y_2$ 
thrown out.

We proceed as before. The first composite is $m_\xbb m_\wbb=i_\xbb v_\wbb + v_\xbb i_\wbb$, and this is 
chain homotopic to $m(\mathbb{Z},\gamma)$ where $\mathbb{Z}=\xbb\circ\wbb$ and 
$\gamma = \gamma_\wbb\cup\gamma_\xbb$ by stretching along $\y_1,\y_2$. The surgery theorem 
\ref{thm:surgery} applies, so $m_\xbb m_\wbb$ is chain homotopic to $m(\mathbb{Z}_\gamma)$, 
which is the identity. The other composite $m_\wbb m_\xbb$ now has only 9 components. We stretch the neck 
as before, so that $m_\wbb m_\xbb$ is chain homotopic to a map $f$; 
the terms of $f$ corresponding to the 9 components all vanish except the diagonal ones, 
which are the identity, and possibly $v_\wbb v_\xbb$. As before, $m_\wbb$ and $m_\xbb$ 
are chain homotopy inverses, and the proof follows through.

Next, we consider the case where both $\ybb_1$ and $\ybb_2$ are non-trivial. 
For $i=1,2$ let 
$\chain_{(i)}=\chain(\ybb_i)$ with maps $\partial_{(i)},v_{(i)}$. Define
\[
	\chain = \left( \chain_{(1)}\otimes\chain_{(2)}\right) \oplus \left( \chain_{(1)}[3]\otimes\chain_{(2)}\right)
\]
\[
\partial = \left(
\begin{array}{cc}
	\partial_{(12)} & 0 \\
	v_{(12)} & -\partial_{(12)} \\
\end{array}\right)
\]
with notation as before.

\begin{theorem} 
	 Let $\ybb_1$ and $\ybb_2$ be non-trivial admissible bundles. As $\mathbb{Z}/8$-graded $\ring$-modules, {\em $\ih(\ybb_1\#\ybb_2)\simeq H_\ast(\chain,\partial)$}.\label{thm:connect3}
\end{theorem}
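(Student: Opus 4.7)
The proof will proceed in close parallel with the proofs of Fukaya's theorem and Theorem \ref{thm:connect2} given in the excerpt, with the considerable simplification that neither $\ybb_1$ nor $\ybb_2$ supports a trivial connection class. Consequently, every term in the differential on $\chain$ and every component of the cobordism maps that involved a $\delta$ or $\delta'$ map in the previous arguments simply disappears, which is why the resulting complex $\chain$ has only the two summands displayed.

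Let $\chain' = \chain(\ybb_1 \# \ybb_2)$ with differential $\partial'$, and let $\xbb:\ybb_1\sqcup\ybb_2\to\ybb_1\#\ybb_2$ and $\wbb:\ybb_1\#\ybb_2\to\ybb_1\sqcup\ybb_2$ be the bundle cobordisms obtained by boundary summing $[0,1]\times\ybb_1$ with $[0,1]\times\ybb_2$ near $1$ and near $0$, respectively, and by gluing the corresponding bundles using a choice of path of identifications. The plan is to define chain maps $m_\xbb: \chain \to \chain'$ and $m_\wbb: \chain' \to \chain$ split into components according to the summands of $\chain$. For $m_\xbb$, let $i_\xbb:\chain_{(1)}\otimes\chain_{(2)}\to\chain'$ count isolated points in $\moduli(\connag,\connbg,\xbb,\conncg)_0$, and let $v_\xbb:\chain_{(1)}[3]\otimes\chain_{(2)}\to\chain'$ evaluate holonomy along a path $\gamma_\xbb\subset\xbb$ running from $\ybb_1$ to $\ybb_2$ over $\moduli(\connag,\connbg,\xbb,\conncg)_3$. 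Define $m_\wbb$ analogously, with components $i_\wbb$ and $v_\wbb$. That these are chain maps reduces, after deleting the trivial-connection terms from the boundary analysis of (\ref{eq:pieces}), to the two relations $i_\xbb\partial_{(12)} = \partial'i_\xbb$ and $i_\xbb v_{(12)} + v_\xbb\partial_{(12)} = \partial' v_\xbb$, along with their analogues for $\wbb$.

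To show $m_\xbb$ and $m_\wbb$ are chain homotopy inverse, I will run the same two composite arguments as before. First, $m_\xbb m_\wbb = i_\xbb v_\wbb + v_\xbb i_\wbb$ is chain homotopic, via the $1$-parameter metric family that simultaneously stretches along $\ybb_1$ and $\ybb_2$, to the map $m(\mathbb{Z},\gamma)$ on $\chain'$ given by evaluating holonomy along $\gamma=\gamma_\wbb\cup\gamma_\xbb$ over the composite $\mathbb{Z}=\xbb\circ\wbb$; here only two ends of the stretched $3$-dimensional moduli spaces contribute, corresponding exactly to the two components of $m_\xbb m_\wbb$. Donaldson's surgery result, Theorem \ref{thm:surgery}, then identifies $m(\mathbb{Z},\gamma)$ with $m(\mathbb{Z}_\gamma)$, and $\mathbb{Z}_\gamma$ is diffeomorphic (as a bundle) to the cylinder on $\ybb_1\#\ybb_2$, so this is the identity on $\chain'$.

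For the other composite $m_\wbb m_\xbb$, I will stretch the neck along the internal connected-sum region to produce a chain homotopy to a block map $f$ whose four components correspond to $i_\wbb v_\xbb$, $v_\wbb i_\xbb$, $i_\wbb i_\xbb$, and $v_\wbb v_\xbb$. The two diagonal components $i_\wbb v_\xbb$ and $v_\wbb i_\xbb$ reduce to the identity on their respective summands, by the same argument used for the terms $i_W v_\x$ and $v_W i_\x$ in the excerpt: the stretched moduli spaces take the form $\check{\moduli}(\connag,\connbg)_0 \times \so \times \check{\moduli}(\conncg,\conndg)_0$, with the $\so$-factor absorbed by the holonomy evaluation, leaving only the diagonal translation-invariant contributions with $\connag=\connbg$ and $\conncg=\conndg$. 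The off-diagonal $i_\wbb i_\xbb$ vanishes by the same index count as before: any contributing instanton decomposes as irreducible pieces $A_1$ on $\mathbb{R}\times\ybb_1$ and $A_2$ on $\mathbb{R}\times\ybb_2$ glued across an $S^3$ with trivial flat limit $\theta$, and $0 = \mu(A_1)+\mu(A_2) + h^0(\theta) + h^1(\theta) = \mu(A_1)+\mu(A_2)+3$ contradicts $\mu(A_i)\geq 0$. The remaining off-diagonal $v_\wbb v_\xbb$ may be nonzero, but it is strictly off-diagonal in the $2\times 2$ block structure of $f$, so $f$ is the identity plus a nilpotent term and hence induces an isomorphism on homology. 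Together with the previous step, this implies $m_\xbb$ is a quasi-isomorphism.

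The main obstacle, as in the earlier proofs of this series, will be the bookkeeping of signs and orientations in the block differential on $\chain$, particularly verifying that the $v_{(12)}$ entry linking the two summands is normalized correctly so that $\partial^2=0$ and so that the chain map relation for $m_\xbb$ comes out with the signs forced by Kronheimer--Mrowka's orientation conventions used in Section 4. All of these sign issues arise at exactly the same places as in the proofs of Fukaya's theorem and Theorem \ref{thm:connect2}, and the absence of trivial-connection terms can only simplify them, so no new conventions need to be introduced.
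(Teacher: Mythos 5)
Your proposal follows the same strategy as the paper: define chain maps from the $\Join$-type cobordisms $\xbb$ and $\wbb$, use the metric-stretching maps and Donaldson's surgery result (Theorem \ref{thm:surgery}) to show $m_\xbb m_\wbb$ is chain homotopic to the identity, and then analyze $m_\wbb m_\xbb$ by stretching the connected-sum neck. The analysis of which blocks are identities, which vanish by index counting, and which may be nonzero but are strictly off-diagonal matches the paper exactly, as does the use of the gluing formula $\mu(A)=\mu(A_1)+\mu(A_2)+3$ with $h^0(\theta)=3$ on the separating 3-sphere.

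There is, however, a notational slip that should be repaired: you assign the domains of $i_\xbb$ and $v_\xbb$ the wrong way around. You write $i_\xbb:\chain_{(1)}\otimes\chain_{(2)}\to\chain'$ and $v_\xbb:\chain_{(1)}[3]\otimes\chain_{(2)}\to\chain'$, whereas the correct assignment (used implicitly in the rest of your argument and matching the paper's convention across Theorems \ref{thm:connect2} and the Fukaya case) is $v_\xbb:\chain_{(1)}\otimes\chain_{(2)}\to\chain'$ and $i_\xbb:\chain_{(1)}[3]\otimes\chain_{(2)}\to\chain'$. With your stated assignment the composition $i_\xbb v_\wbb$ in your formula $m_\xbb m_\wbb = i_\xbb v_\wbb + v_\xbb i_\wbb$ fails to compose (the codomain of $v_\wbb$ would be $\chain_{(1)}[3]\otimes\chain_{(2)}$ while the domain of $i_\xbb$ would be $\chain_{(1)}\otimes\chain_{(2)}$), and your identifications of $i_\wbb v_\xbb$, $v_\wbb i_\xbb$ as the diagonal blocks and $i_\wbb i_\xbb$, $v_\wbb v_\xbb$ as the off-diagonal blocks of $f$ would all flip. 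Since you clearly use the correct grouping in the $f$-analysis, this is just a mislabeling at the start, but it should be fixed for the argument to read consistently.
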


\noindent This is the simplest case of all. Let $\chain'=\chain(\ybb_1\#\ybb_2)$ with differential $\partial'$.
Let
\[
	\xbb:\ybb_1\sqcup\ybb_2 \to \ybb_1\#\ybb_2
\]
be the cobordism bundle obtained 
from a boundary sum between $[0,1]\times\ybb_1$ and $[0,1]\times\ybb_2$ near $1$, 
making some inessential gluing choices. Let $\wbb$ be the cobordism in the reverse direction obtained
from the boundary sum near $0$. As before, we can define chain maps 
$m_\xbb$ and $m_\wbb$. Here $m_\xbb$ is given by two components, 
$v_\xbb :\chain_{(1)}\otimes\chain_{(2)} \to \chain'$ and 
$i_\xbb :\chain_{(1)}[3]\otimes\chain_{(2)}\to\chain'$.
As usual, $i_\xbb$ counts 0-dimensional moduli spaces on $\xbb$ with all limits irreducible,
and $v_\xbb$ takes holonomy along a path $\gamma_\xbb$ from $\y_1$ to $\y_2$ on 
3-dimensional moduli spaces with irreducible limits on $\xbb$. The relations that 
make $m_\xbb$ a chain map are just $i_\xbb\partial_{(12)}=\partial'i_\xbb$ and $i_\xbb v_{(12)} 
+ v_\xbb\partial_{(12)} = \partial' v_\xbb$, and follow from the previous cases, 
with the terms involving trivial connections thrown out. This latter relation 
is represented by Figure \ref{fig:pieces} with the last two diagrams omitted. 
The rest of the argument is the same 
as before.\\

\subsection{Framed homology for integral homology 3-spheres}\label{sec:inthomproof}

Now we apply the above results to compute $\ih^\#(\y)$ with $\ring$-coefficients 
for an integral homology 3-sphere $\y$, proving Theorem \ref{thm:integerhom}. 
Recall that $\ring$ is a field with char$(\ring)\neq 2$. Let 
$\tbb^3$ be a non-trivial bundle over $T^3$ geometrically represented 
by an $S^1$-factor of $T^3$. Let $V=\ring_0\oplus \ring_4$ 
be the chain complex that computes $\ih(\tbb^3)$. 
Write $\tau:V\to V$ for the $v$-map on $\tbb^3$, with which our normalization 
may be written as the degree 4 involution that multiplies by 4. Write $\chain=\chain(\y)$ 
and $\partial,v,\delta,\delta'$ for its relevant maps. Now let
\[
	\chainbf = \left(\chain\otimes V \right)\oplus \left(\chain[3]\otimes V \right)\oplus \left(\ring\otimes V \right)
\]
\[
\partialbf = \left(
\begin{array}{ccc}
	\partial\otimes 1 & 0 & 0\\
	v\otimes 1 + 1\otimes\tau& -\partial\otimes 1 &  \delta'\otimes 1 \\
	\delta\otimes 1 & 0  & 0\\
\end{array}\right)
\]
Theorem \ref{thm:connect2} tells us that
\[
	H_\ast(\chainbf,\partialbf)\simeq \ih(\ybb\#\tbb^3)=\ih^\#(\y)[4]\oplus\ih^\#(\y)
\]
where $\ybb$ is the trivial bundle over $\y$. 
Consider the filtration on $(\chainbf,\partialbf)$ given by
\[
	0 \subset \chain[3]\otimes V \subset \left(\chain[3]\otimes V\right)\oplus \left(\ring \otimes V\right)\subset \chainbf.
\]
This induces a spectral sequence with $E^2$-page
\[
	\left(\text{ker}(\boldsymbol{\delta})\otimes V\right)
	\oplus \left(\text{ker}(\boldsymbol{\delta}')/\text{im}(\boldsymbol{\delta})\otimes V\right)
	\oplus \left(\text{coker}(\boldsymbol{\delta}')[3]\otimes V\right)
\]
with the only non-zero component of the differential coming from
\[
	\phi:=\boldsymbol{v}\otimes 1 + 1\otimes\tau:\text{ker}(\boldsymbol{\delta})\otimes V\to\text{coker}(\boldsymbol{\delta}')[3]\otimes V.
\]
We are writing all modules as $\mathbb{Z}/8$-graded modules; for example, 
$\text{ker}(\boldsymbol{\delta})_i=\ih(\y)_i$ when $i\neq 1$, and similarly $\text{coker}(\boldsymbol{\delta}')_i=\ih(\y)_i$ when $i\neq 4$. Also note that the component $\text{ker}(\boldsymbol{\delta}')/\text{im}(\boldsymbol{\delta})$ is 
supported in grading $0$ and is either $\ring$ or $0$. Write
\[
	\phi_i = (\boldsymbol{v}\oplus\boldsymbol{v}+\sigma)_i:\text{ker}(\boldsymbol{\delta})_i\oplus\text{ker}(\boldsymbol{\delta})_{i+4}\to\text{coker}(\boldsymbol{\delta}')_{i+4}\oplus\text{coker}(\boldsymbol{\delta}')_{i}
\]
where $\sigma(x,y)=(4y,4x)$ is the degree 4 involution induced by $\tau$. Then
\begin{align*}
	&\ih^\#(\y)_0 \simeq \text{ker}(\phi_0)\oplus\text{coker}(\phi_1)\oplus\text{ker}(\boldsymbol{\delta}')/\text{im}(\boldsymbol{\delta}),\\
	&\ih^\#(\y)_i \simeq \text{ker}(\phi_i)\oplus\text{coker}(\phi_{i+1}), \quad i=1,2,3.
\end{align*}
Recall that $\widehat{v}$ is a degree 4 automorphism of $\widehat{\ih}(\y)$. We claim that
\begin{align*}
	&\text{ker}(\phi_0)\simeq\text{ker}(\widehat{v}^2-16)_0\oplus\text{im}(\boldsymbol{\delta}')_4,\\
	&\text{ker}(\phi_i)\simeq\text{ker}(\widehat{v}^2-16)_i, \quad i=1,2,3.
\end{align*}
To prove Theorem \ref{thm:integerhom} it 
suffices to consider the case in which $h(\y)\leq 0$, so that $\boldsymbol{\delta}'=0$ and 
$\widehat{\ih}_i=\ih_i$ for $i\neq 1,5$. For if $h(\y)>0$, then the theorem applies for $\overline{\y}$, 
which has $h(\overline{\y})=-h(\y)<0$, and the $\ring[\widehat{v}]$-module $\widehat{\ih}$ dualizes upon 
orientation reversal. Thus for $i=0,2,3$ we have
\[
	\phi_i:\ih_i\oplus\ih_{i+4}\to\ih_{i+4}\oplus\ih_i
\]
and each $\widehat{\ih}_i=\ih_i$ with $\boldsymbol{v}=\widehat{v}$ an isomorphism. The isomorphisms
$\text{ker}(\phi_i)\simeq\text{ker}(\widehat{v}^2-16)_i$ for $i=0,2,3$ are given by $(x,y)\mapsto x$ with inverse 
$x\mapsto (x,-\widehat{v}x/4)$. Next, consider
\[
	\phi_1:\text{ker}(\boldsymbol{\delta})_1\oplus\ih_5\to\ih_1\oplus\ih_5.
\]
We have an isomorphism $\text{ker}(\phi_1)\simeq\text{ker}(\boldsymbol{v}^2-16)_1\subset\text{ker}(\boldsymbol{\delta})_1$ given by $(x,y)\mapsto x$ with inverse $x\mapsto (x,-\boldsymbol{v}^{-1}x/4)$, 
using the isomorphism $\boldsymbol{v}:\ih_5\to\ih_1$. The natural inclusion $\widehat{\ih}_1\to\ih_1$ 
induces an injection $\text{ker}(\widehat{v}^2-16)_1\to\text{ker}(\boldsymbol{v}^2-16)_1$. Note here that $\widehat{v}$ is just the restriction of $\boldsymbol{v}$ to $\widehat{I}$. We show 
that this is surjective and hence an isomorphism. Given $x\in\ih_1$ with $\boldsymbol{\delta}x=0$ 
and $\boldsymbol{v}^2x=16x$ we must show $x\in\text{ker}(\boldsymbol{\delta}\boldsymbol{v}^{2k})$ 
for all $k>0$. But $\boldsymbol{v}^2x=16x$ implies $\boldsymbol{\delta}\boldsymbol{v}^{2k}x=4^k\boldsymbol{\delta}x=0$. Having computed $\text{ker}(\phi_i)$, dimension counting then yields
\begin{align*}
	&\text{coker}(\phi_1)\simeq\text{ker}(\widehat{v}^2-16)_1\oplus\text{im}(\boldsymbol{\delta}),\\
	&\text{coker}(\phi_i)\simeq\text{ker}(\widehat{v}^2-16)_i, \quad i=0,2,3.
\end{align*}
Using in our case that $\dim(\text{im}(\boldsymbol{\delta}))+\dim(\text{ker}(\boldsymbol{\delta}')/\text{im}(\boldsymbol{\delta}))=1$, 
we deduce that
\[
	\ih^\#(\y) \simeq \text{ker}(\widehat{v}^2-16)\otimes H_\ast(S^3)\oplus H_\ast(\text{pt.})
\]
where it is understood that $\widehat{v}^2-16$ is acting on $\bigoplus_{i=0}^3\widehat{\ih}_i$. This 
proves the first part of Thm. \ref{thm:integerhom}.\\

\subsection{Framed homology for non-trivial bundles}

Let $\ybb$ be a non-trivial admissible bundle over $\y$ 
geometricially represented by $\lambda \subset \y$.
We now write $\ih^\#(\y;\lambda)$ in terms of $\ih(\ybb)$. 
Let $V=\ring_0\oplus \ring_4$ 
and $\tau: V\to V$ be as before. Write $\chain=\chain(\ybb)$ 
and $\partial,v$ for its maps, and set
\[
	\chainbf = \left(\chain\otimes V \right)\oplus \left(\chain[3]\otimes V \right)
\]
\[
\partialbf = \left(
\begin{array}{cc}
	\partial\otimes 1 &  0\\
	v\otimes 1 + 1\otimes\tau& -\partial\otimes 1
\end{array}\right)
\]
Theorem \ref{thm:connect3} tells us that
\[
	H_\ast(\chainbf,\partialbf) \simeq \ih(\ybb\#\tbb^3)=\ih^\#(\y;\lambda)[4]\oplus\ih^\#(\y;\lambda).
\]
This is a degeneration of the computation in \S \ref{sec:inthomproof}. We want the kernel and cokernel of
\[
	\widehat{v}\oplus \widehat{v} + \sigma: \ih[4]\oplus\ih\to\ih\oplus\ih[4].
\]
The kernel is isomorphic to $\text{ker}(\widehat{v}^2-16)$ by the assignment $(x,y)\mapsto x$, inverse to $x\mapsto (x,-\widehat{v}x/4)$. 
The cokernel is of course the same, and we obtain
\begin{equation}
	\ih^\#(\y;\lambda) \simeq \text{ker}(\widehat{v}^2-16)\otimes H_\ast(S^3)\label{eq:nontriv}
\end{equation}
as relatively $\mathbb{Z}/4$-graded $\ring$-modules, where it is understood 
that $\widehat{v}^2-16$ is acting on four consecutively 
graded summands of $\ih(\ybb)$. This proves the second part of Thm. \ref{thm:integerhom}.\\

\subsection{Degenerations}

In this section we briefly consider a few cases in which the 
isomorphisms obtained degenerate into stricter relations between 
framed instanton homology and Floer's instanton homology, 
and in particular, we prove Corollaries \ref{cor:3}, \ref{cor:4} and \ref{cor:5}.

First, suppose $\ybb$ is a non-trivial admissible bundle over $\y$ 
geometrically represented by $\lambda$. As mentioned in \cite[\S 6]{froy}, 
when there exists a surface $\Sigma\subset\y$ of genus $\leq 2$ with
$\ybb|_\Sigma$ non-trivial, then $u^2=64$ on $\ih(\ybb)$. In this case
(\ref{eq:nontriv}) yields
\[
	\ih^\#(\y;\lambda)\otimes H_\ast(S^4) \simeq \ih(\ybb)\otimes H_\ast(S^3)
\]
as relatively $\mathbb{Z}/4$-graded $\ring$-modules. The term $H_\ast(S^4)$ appears because we take the full $\mathbb{Z}/8$-graded group $\ih(\ybb)$ on the right, instead of four consecutive summands as before. Now suppose 
$K$ is a knot in $S^3$ of genus $\leq 2$. Denote the result of $r$-surgery 
on $K$ by $\y_r$. For $r=1$, the exact triangle, combined with passing to the reduced groups $\widehat{I}$, yields a map
\begin{equation}
	\ih(\ybb_0)\to \widehat{\ih}(\y_{1})\label{eq:0to1}
\end{equation}
where $\ybb_0$ is a non-trivial bundle over $\y_0$. This map is an \textit{injection}. This follows from Fr{\o}yshov's observation after Thm. 10 in \cite{froy} that, in this situation, when passing to the reduced groups $\widehat{I}$, the surgery triangle retains exactness at the homology 3-spheres (but not $\ih (\ybb_0)$). If $r=-1$, we similarly obtain a \textit{surjection} $\widehat{\ih}(\y_{-1})\to \ih(\ybb_0)$. In either case, we
can form a surface $\Sigma$ in $\y_0$ by capping off a Seifert surface for $K$ of genus $\leq 2$ by a meridional
disk for the new framed knot in $\y_0$. The bundle $\ybb_0|_{\Sigma}$ is non-trivial, and so $u^2=64$ on $\ih(\ybb_0)$. Thus $u^2=64$ on $\widehat{\ih}(\y_{\pm1})$. With Theorem \ref{thm:integerhom}, this implies Corollary \ref{cor:3}.

Now we consider the proofs of Corollaries \ref{cor:4} and \ref{cor:5}. By the remarks in the introduction of \cite{froy}, we have
\[
	h(\Sigma(2,3,6k+1))=0, \quad h(\Sigma(2,3,6k-1))>0.
\]
Fintushel and Stern \cite{fs} compute
\[
	I(\Sigma(2,3,6k+1)) = F_1^{\lfloor k/2 \rfloor} \oplus F_3^{\lceil k/2 \rfloor}\oplus F_5^{\lfloor k/2 \rfloor} \oplus F_7^{\lceil k/2 \rfloor},
\]
from which Corollary \ref{cor:4} follows, as $\Sigma(2,3,6k+1)$ is $+1$-surgery on a twist knot with $k$ full twists, a knot of genus 1. On the other hand, $\Sigma(2,3,6k-1)$ is $-1$-surgery on a twist knot $K$ with $2k-1$ half twists. Since $K$ is also genus $1$, the inequality of Corollary 1 of \cite{froyh} yields $h(\Sigma(2,3,6k-1))=1$. Combined with Fintushel and Stern's computation from \cite{fs}, we obtain
\[
	\widehat{I}(\Sigma(2,3,6k-1))=F_1^{\lceil k/2 \rceil -1}\oplus F_3^{\lfloor k/2 \rfloor}\oplus F_5^{\lceil k/2 \rceil -1}\oplus F_7^{\lfloor k/2 \rfloor}.
\]
Now Corollary \ref{cor:5} follows from Corollary \ref{cor:3}.\\
\vspace{10px}
\section{The Euler Characteristic}\label{sec:euler}

In this section we prove Cor. \ref{cor:euler}, which computes 
the Euler characteristic of $\ih^\#(\y;\lambda)$
where $\y$ is any closed, oriented 3-manifold and $\lambda$ is any 
unoriented closed 1-manifold in $\y$.
The claim is that $\chi(\ih^\#(\y;\lambda)) = |H_1(\y;\mathbb{Z})|$, 
where the expression on the right side means the cardinality of $H_1(\y;\mathbb{Z})$ if it is finite, 
and is zero otherwise.

\begin{proof}[of Cor. \ref{cor:euler}]
We make the abbreviations 
\[
	i(\y;\lambda)=\chi(\ih^\#(\y;\lambda)), \quad |\y|=|H_1(\y;\mathbb{Z})|. 
\]
Note that \S \ref{sec:prod} implies the multiplicativity
\begin{equation}\label{eq:multip}
	i(\y;\lambda) i(\y';\lambda')=i(\y\#\y';\lambda\cup\lambda').
\end{equation}
We also note that $i(\y)=1$ when $|\y|=1$ by 
Theorem \ref{thm:integerhom}.

Next, we claim the result is true for rational homology 
3-spheres $\y$ that are obtained by integral surgery on an algebraically split link. 
That is, $\y$ is the result of $(p_1,\ldots,p_k)$-surgery on a framed 
link $L=L_1\cup\cdots\cup L_k$ in $S^3$ whose pairwise linking numbers vanish.
Thus $|\y|=|p_1\cdots p_k|$. Assume the result true 
for $|\y|<n$. Since the case $|\y|=1$ has already been established, 
we may assume that $\y$ is not an integral homology 3-sphere, and (by reordering) that $|p_1| > 1$. 
Let $Z_p$ be $(p,p_2,\ldots,p_k)$-surgery on $L$. We have 
an exact sequence
\[
\cdots \ih^\#(Z_{\infty};{\lambda})\to\ih^\#(Z_{p_1-1};{\lambda\cup\mu})\to \ih^\#(Z_{p_1};{\lambda})
\to\ih^\#(Z_{\infty};{\lambda})\cdots
\]
The degree of the first map is odd, while the other two are even, cf. \cite[\S 42.3]{kmm}.
Observing that $Z_{p_1}=\y$, we obtain
\[
	i(\y;\lambda) = i(Z_{p_1-1};{\lambda\cup\mu}) + i(Z_\infty;\lambda).
\]
By the induction hypothesis, the right side is
\[
	|(p_1-1)p_2\cdots p_k| + |p_2\cdots p_k| = n,
\]
establishing the result for all rational homology 3-spheres which are obtained by
integral surgeries on algebraically split links.

We now prove the result for all rational homology 3-spheres $\y$. 
We use the fact that for any such 3-manifold, there is a framed algebraically 
split link $L\subset S^3$ such that some integral surgery on $L$ yields 
$Z=\y\# \y'$, where $\y'$ is a connected sum of lens spaces 
of type $L(p,1)$, cf. \cite[Cor. 2.5]{ohtsuki}. 
Since $\y'$ is integral surgery on an algebraically split link, 
$i(\y')=|\y'|$. Then (\ref{eq:multip}) yields
\[
	i(\y;\lambda) = i(Z;\lambda)/i(\y') = |Z|/|\y'| = |\y|,
\]
establishing the result for all rational homology 3-spheres.

Finally, we consider the case in which $b_1(\y)>0$.
We can always find $Z$ 
and a framed knot $\knot\subset Z$ such that $\y$ is 0-surgery on $K$ 
and $b_1(Z)+1=b_1(\y)$. We have an exact sequence 
\[
\cdots \ih^\#(Y;{\lambda}) \to \ih^\#(Z_{1};{\lambda\cup\mu})\to\ih^\#(Z;{\lambda})\to \ih^\#(Y;{\lambda})\cdots
\]
where $Z_1$ is the result of 1-surgery on $K$. 
The degree of the first two maps are even, while the third is odd, again cf. \cite[\S 42.3]{kmm}. Thus
\[
	i(\y;\lambda)=i(Z_{1};{\lambda\cup\mu})-i(Z;{\lambda}).
\]
The proof is again by induction. If $b_1(\y)=1$, then the right side is known, 
because $Z_{1}$ and $Z$ are rational homology spheres; we have $|Z_1|=|Z|$, so the right side vanishes.
Now suppose the result has been proven for $0<b_1<n$. If $b_1(\y)=n$, both terms on the right side vanish
by the induction hypothesis, and the proof is complete.
\end{proof}

\vspace{10px}
\bibliographystyle{plain}

\end{document}